\title{Type systems and maximal subgroups of Thompson's group~$V$}
\author{James Belk, Collin Bleak, Martyn Quick, and Rachel Skipper}
\newcommand{\AddrSet}{\Omega}
\newcommand{\branchPartQ}{\Part[Q]^{\dagger}}
\newcommand{\Cant}{\mathfrak{C}}
\newcommand{\card}[1]{\mathopen{|}#1\mathclose{|}}
\newcommand{\cofsubset}{\subseteq_{cf}}
\newcommand{\coker}{\operatorname{coker}}
\newcommand{\cone}[1]{#1\mathfrak{C}}
\newcommand{\coCF}{$\mathrm{co}\kern .07em\mathcal{CF}$}
\newcommand{\emptyword}{\varepsilon}
\newcommand{\Fix}[2]{\operatorname{Fix}_{#1}(#2)}
\newcommand{\supt}[1]{\operatorname{Supt}(#1)}
\newcommand{\graph}[1]{\Gamma(#1)}
\newcommand{\Ideal}{I}  
\newcommand{\infPart}{\Part^{\ast}} 
\newcommand{\length}[1]{\mathopen{|}#1\mathclose{|}}
\newcommand{\letters}{\{0,1\}}
\newcommand{\Nat}{\mathbb{N}}
\newcommand{\Part}[1][P]{\mathcal{#1}}  
\newcommand{\prefix}{\preccurlyeq}
\newcommand{\properprefix}{\prec}
\newcommand{\nprefix}{\not\preccurlyeq}
\newcommand{\presentation}[2]{\langle\,#1\mid#2\,\rangle}
\newcommand{\rhorel}{\mathrel{\rho}}
\newcommand{\sd}[1]{\mathrm{sd}(#1)}  
\newcommand{\seq}[1]{\mbox{\boldmath$#1$}}
\newcommand{\set}[2]{\{\,#1\mid#2\,\}}
\newcommand{\sgp}[1]{S(#1)}
\newcommand{\smalladdr}[1]{\Sigma_{#1}}
\newcommand{\Stab}[2]{\operatorname{Stab}_{#1}(#2)}
\newcommand{\swap}[2]{(#1\;\;#2)}
\newcommand{\threecycle}[3]{(#1\;\;#2\;\;#3)}
\newcommand{\stype}[1]{\operatorname{s-type}(#1)}
\newcommand{\stypep}[2]{\operatorname{s-type}_{#1}(#2)}
\newcommand{\addrs}[1]{\operatorname{Addr}(#1)}
\newcommand{\Zint}{\mathbb{Z}}
\renewcommand{\emptyset}{\varnothing}
\renewcommand{\leq}{\leqslant}
\renewcommand{\geq}{\geqslant}
\newcommand{\AND}{\qquad\text{and}\qquad}
\newcommand{\IFF}{\qquad\text{if and only if}\qquad}
\newcommand{\nbd}{\nobreakdash-}
\newcommand{\spc}{\vspace{\baselineskip}}
\newcommand{\viceversa}{\emph{vice versa}}
\newcommand{\qed}{\hspace*{\fill}$\square$}
\newenvironment{proof}{%
  \begin{trivlist}
  \item\textsc{Proof:}}{\qed\end{trivlist}}
\newtheorem{thm}{Theorem}[section]
\newtheorem{lemma}[thm]{Lemma}
\newtheorem{cor}[thm]{Corollary}
\newtheorem{prop}[thm]{Proposition}
\newtheorem{qu}{Question}
\newtheorem{defn}[thm]{Definition}
\newtheorem{notate}[thm]{Notation}
\newtheorem{rem}[thm]{Remark}
\newtheorem{example}[thm]{Example}
\renewcommand{\labelenumi}{{\normalfont\theenumi}}
\newcommand{\labelcaret}[3]{
  \begin{tikzpicture}
    \node[draw,circle] (a) at (0,0) {$\mathsf{#1}$};
    \node[draw,circle] (b) at (-1,-1.5) {$\mathsf{#2}$};
    \node[draw,circle] (c) at (1,-1.5) {$\mathsf{#3}$};
    \draw (a) -- (b);
    \draw (a) -- (c);
  \end{tikzpicture}}
\begin{document}

\maketitle

\begin{abstract}
We introduce the concept of a type system~$\Part$, that is, a partition on the set of finite words over the alphabet~$\{0,1\}$ compatible with the partial action of Thompson's group~$V$, and associate a subgroup~$\Stab{V}{\Part}$ of~$V$.  We classify the finite simple type systems and show that the stabilizers of various simple type systems, including all finite simple type systems, are maximal subgroups of~$V$.  We also find an uncountable family of pairwise non-isomorphic maximal subgroups of~$V$.  These maximal subgroups occur as stabilizers of infinite simple type systems and have not been described in previous literature: specifically, they do not arise as stabilizers in $V$ of finite sets of points in Cantor space.  Finally, we show that two natural conditions on subgroups of $V$ (both related to primitivity) are each satisfied only by $V$ itself, giving new ways to recognise when a subgroup of $V$ is not actually proper.

\end{abstract}  

\section*{Introduction}

In this article, we investigate the maximal subgroups of Richard J. Thompson's group~$V$. The group $V$ was first introduced in 1965 \cite{Thompson} as part of a triple of groups $F < T < V$ where it was shown that these groups are all infinite, finitely presented groups, and that $T$ and $V$ are simple.  These groups were then the first  examples of infinite, finitely presented groups that were known to be simple. Indeed, until the arrival of the finitely presented infinite simple groups of Burger and Mozes in 1997 \cite{BurgerMozes}, all known examples of infinite, finitely presented simple groups were in some way modelled on, or built by extending, the constructions of $T$ or $V$.

The Thompson groups in general have had a wide impact, appearing in the theory of many fields of mathematics, and playing host as examples of or counterexamples to numerous questions and conjectures about the existence of groups with various unusual properties.   Some examples of where these groups arise include the word problem for groups \cite{thompsonmckenzie, Thompson76}, in homotopy and shape theory \cite{Dydak1,Dydak2,FreydHeller}, in the algebra of string rewriting \cite{Coh93}, in the theory of diagram groups over semigroup presentations \cite{GS97}, in group cohomology \cite{brown3,brown4,browngeoghegan1}, and even in dynamical systems and analysis \cite{GhysSergiescu}.  There are now hundreds of research articles strongly connected to these groups, and all three groups have difficult questions associated with them (e.g., the question of whether or not $F$ is amenable is a particularly well-known open question). 

A particular area of abiding mystery for R.\ Thompson's group $V$ is its subgroup structure: both in terms of determining its lattice of subgroups as well as trying to understand the isomorphism types of its subgroups.  Since all finite symmetric groups and hence all finite groups embed into~$V$, as well as $\Zint^{n}$ and all free products of finite groups, one might guess that most naturally occurring finitely generated groups embed into~$V$. However, the paper \cite{bleak-salazar1} shows that even the group $\Zint^{2}*\Zint$ fails to embed as a subgroup of $V$. Furthermore, it is also known that the Baumslag--Solitar groups~$B(m,n)$, for $m\neq \pm n$, do not embed into~$V$, as cyclic groups are undistorted in~$V$ (see \cite{BBGGHMS, BMN16,BCR18}).  On the positive side, many known \coCF\ groups embed in $V$ and a form of Lehnert's Conjecture is that a group is a \coCF\ group if and only if it embeds as a finitely generated subgroup of~$V$ (see, e.g., \cite{lehnertthesis,BMN16,FarleyFSS}).  In terms of the lattice structure of the subgroups of~$V$, a first observation is that, as $V$~is finitely generated, every proper subgroup of~$V$ is contained in some maximal proper subgroup.

A standard way of representing the group $V$ is as a subgroup of the automorphism group of the standard binary Cantor space $\Cant$.  It is a folklore result that the stabilizer in $V$ of a point in $\Cant$ is a maximal subgroup of~$V$ (see Corollary~\ref{cor:Stab-tailclass}), with an explicit proof of this result for $F$ in \cite{Savchuk}. In 2017 Golan and Sapir produced the first maximal subgroups of $F$ that do not arise as point stabilizers \cite{Golan-Sapir17a} or as index $p$ subgroups of $F$ for $p$ a prime, and this has begun an active area of research for~$F$ \cite{Golan-Sapir17b,Golan-Sapir17c, Aiello-Nangnibeda21a21a,Aiello-Nangnibeda21a21b}. The groups arising in the above research can be interpreted as groups preserving certain sub-families of dyadic rationals.  In contrast to this, our approach involves partitioning the set of finite prefixes of points in Cantor space and looking at the subgroups of $V$ which preserve the given partition. This is a wholly new way of constructing maximal subgroups for Thompson-type groups.

It is well known that the partial action of~$V$ on the Cantor algebra of clopen subsets of~$\Cant$ shares some characteristics with the natural action of a finite symmetric group, see, for example,~\cite{BQ17}.  Our study of maximal subgroups of~$V$ further exploits this connection.  Recall that a subgroup of the symmetric group of degree~$n$ is either intransitive (in which case it is contained in a direct product of smaller symmetric groups), transitive but imprimitive (in which case it is contained in a wreath product), or primitive.  As a consequence, one has a description of the maximal subgroups of the symmetric group~$S_{n}$ and the famous O'Nan--Scott Theorem~\cite{AS85} is often used to provide further information on the possibilities for primitive maximal subgroups.  The subgroups that we introduce can be viewed as analogous to the intransitive and imprimitive maximal subgroups of~$S_{n}$.  Let $\AddrSet$ be the set of finite (perhaps empty) strings on the alphabet $\{0,1\}$.  We think of these as being addresses of basic clopen sets in $\Cant$. We say that a partition $\Part$ of $\AddrSet$ is a \emph{type system} if the corresponding equivalence relation $\sim$ satisfies
\[
\alpha\sim \beta \quad\Longleftrightarrow\quad \text{$\alpha 0\sim \beta 0$ and $\alpha 1 \sim \beta 1$}
\]
for all $\alpha,\beta\in\AddrSet$.  The elements of $\Part$ are called \emph{types}.  Such a type system is \textit{simple} if the partition $\Part$ is not a refinement of any other non-trivial type system.  Every type system has an associated subgroup $\Stab{V}{\Part}$ that we define in Definition~\ref{def:basic}\ref{i:Fix&Stab}.
This can be viewed as the set of transformations in~$V$ that preserve a partition on the set of cones in~$\Cant$ and is therefore the analogue of the wreath product from the permutation groups setting.  Indeed, we prove that under natural conditions, $\Stab{V}{\Part}$~is a maximal subgroup of Thompson's group~$V$.

\begin{restatable*}{thm}{FiniteSimpleThm}
\label{thm:finitesimple}
Let $\Part$~be a finite simple type system on~$\AddrSet$.  Then $\Stab{V}{\Part}$~is a maximal subgroup of~$V$.
\end{restatable*}

A major step in proving the theorem is to first fully classify which finite type systems can be simple.  To do so, we consider an associated graph, called the \emph{type graph}, which has one vertex for each type with directed edges from the type of $\alpha$ to the types of $\alpha 0$ and $\alpha 1$ for all $\alpha\in\AddrSet$.  We refer to a strongly connected component of this graph as a \emph{nucleus}, and we classify simple type systems based on the number and characteristics of the nuclei.

\begin{restatable*}{thm}{FiniteTypes}
\label{thm:finitetypes}
A finite simple type system on~$\AddrSet$ is either nuclear, atomic binuclear, or atomic quasinuclear.
\end{restatable*}

Theorem \ref{thm:FixIsMatui} shows that the maximal subgroups of $V$ associated with finite simple nuclear type systems are isomorphic to full groups of irreducible, branching subshifts of finite type (or occasionally, they are finite index overgroups of such a group).  These groups were introduced by Matsumoto in \cite{Matsumoto} and studied extensively by Matui in \cite{Matui}.

The taxonomy of simple type systems is somewhat complex, and also includes systems which need not be finite but which nonetheless lead to maximal subgroups.

\begin{restatable*}{thm}{BranchType}
\label{thm:branchtype}
Let $\Part$~be a simple type system on~$\AddrSet$ that is either multinuclear or atomic branching quasinuclear.  Then $\Stab{V}{\Part}$~is a maximal subgroup of~$V$.
\end{restatable*}

It is easy to specify the point stabilizer of any given ``irrational'' point in Cantor space as the stabilizer of an infinite simple type system. Thus,  constructing uncountably many distinct maximal subgroups of $V$ is not a difficult task.  However, it was not known if $V$ admitted uncountably many pairwise non-isomorphic maximal subgroups.  Also, all but one (up to automorphic image) of the previously known maximal subgroups arose as stabilizers of finite sets of points with the same infinite tail class (with the exception being $T$ and its automorphic images).  In contrast to these examples, our technique (see Section \ref{sec:uncountIso}) provides for the construction of an uncountable family of pairwise non-isomorphic maximal subgroups of $V$.  Furthermore, these maximal subgroups are all ``new'': they have never been described in the literature, none of these subgroups stabilize any finite set of points in $\Cant$, and finally, they are not related in any obvious way to  automorphic images of $T$ (specifically, they do not preserve any circular order on a dense set of points in $\Cant$).

\begin{restatable*}{thm}{UncountablyManyIso}
\label{thm:manynonIso}
There is an uncountable family $\mathcal{M}$ of pairwise non-isomorphic maximal subgroups of\/~$V$.  Moreover, every $G$ in~$\mathcal{M}$ arises as a stabilizer of a simple nuclear type system, does not preserve any circular order on any dense set of points in $\Cant$, and does not stabilize any finite set of points in~$\Cant$.

\end{restatable*}


As alluded to above, there is an obvious family of (maximal) subgroups of $V$ which do not preserve any type system, namely Thompson's group $T$ and its set of images under automorphisms of $V$.  If we were to view stabilizers of type systems as analogues of imprimitive groups, then these last groups would play the role analogous to the primitive subgroups of $V$.  We do not know if these are the only ``primitive'' maximal subgroups of $V$, and we mention various questions around this in Section \ref{sec:questions}.

A second goal of this article is to provide theorems which can be used to detect when a subgroup of $V$ is actually proper.  In this vein, we consider notions in the setting of the partial action of~$V$ on the cones of~$\Cant$ that relate to transitivity and to an alternative version of primitivity.

\begin{restatable*}{thm}{TwoFoldTrans}
\label{thm:2foldtrans}
Let $G$~be a subgroup of Thompson's group~$V$ that acts $2$\nbd fold transitively on the set of proper cones of $\Cant$.  Then $G = V$.
\end{restatable*}

We call our alternative form of primitivity (in the setting of partial actions on cones) \emph{swap-primitivity} and prove the following:

\begin{restatable*}{thm}{swapPrimitiveIsV}
\label{thm:swap-primitiveisV}
Let $G$~be a subgroup of~$V$ and suppose that $G$~is swap-primitive.  Then $G = V$.
\end{restatable*}

Section \ref{sec:questions} discusses several questions that we have not been able to address.  The first of which is below.

\begin{restatable}{qu}{QMain}\label{ques:main}
Suppose $G$ is a maximal subgroup of $V$ which does not preserve a type system. Is $G$ an automorphic image of $T$? 
\end{restatable}

\subsection{Acknowledgements}
The authors would like to thank Peter Cameron and James Hyde for helpful conversations and suggestions.

The first and second authors have been partially supported by EPSRC grant EP/R032866/1 during the creation of this paper.  The first author is also grateful for support from the National Science Foundation under Grant No.\ \mbox{DMS-185436}. The fourth author was partially supported by the GIF grant I-198-304.1-2015, ``Geometric exponents of random walks and intermediate growth groups", LMS Grant 21806, NSF DMS--2005297 ``Group Actions on Trees and Boundaries of Trees", and the European Research Council (ERC) under the European Union’s Horizon 2020 research and innovation program (grant agreement No.725773).

\section{Background}

In this section, we define Thompson's group $V$ and set up the notation for the rest of the paper. 

First, let $\Cant$ denote the Cantor set consisting of the infinite words over the alphabet $\letters$.  Let $\AddrSet$ be the set of all finite words in the same alphabet $\letters$.  We refer to the set $\AddrSet$ as the set of \emph{addresses}. Any address $\alpha$ determines a \emph{cone}~$\cone{\alpha}$, which is one of the basic clopen subsets of Cantor space~$\Cant$:
\[
\cone{\alpha} = \set{ \alpha w }{w \in \Cant }.
\]
We write~$\length{\alpha}$ for the \emph{length} of the address~$\alpha$ as a word in $\letters$.  If $\alpha$~is any finite word, then $\overline{\alpha}$~denotes the element $\alpha\alpha\dots$ of~$\Cant$ obtained by repeating~$\alpha$ infinitely many times.

We say a set of cones $\cone{\alpha_{1}}$,~$\cone{\alpha_{2}}$, \dots,~$\cone{\alpha_{n}}$ (respectively, a set of addresses $\alpha_{1}$,~$\alpha_{2}$, \dots,~$\alpha_{n}$) has \emph{small support} if $\cone{\alpha_{1}}\cup \cone{\alpha_{2}} \cup \dots \cup \cone{\alpha_{n}}\neq \Cant$.  We say two cones $\cone{\alpha}$ and $\cone{\beta}$ (respectively, two addressses $\alpha$ and $\beta$) are \emph{incomparable} if $\cone{\alpha} \cap \cone{\beta} = \emptyset$.  We write $\alpha \perp \beta$ to denote that the addresses $\alpha$~and~$\beta$ are incomparable.  Note that two cones are either incomparable or there is an inclusion of one into the other. If $\cone{\alpha} \supseteq \cone{\beta}$, we will write $\alpha \prefix \beta$ for the addresses.  This is equivalent to the word~$\alpha$ being a prefix of the word~$\beta$.  For this reason, for $\alpha \in \AddrSet$ and $w \in \Cant$, we shall also write $\alpha \prefix w$ if $\alpha$~is a prefix of~$w$.

Let $\cone{\alpha_{1}}$,~$\cone{\alpha_{2}}$, \dots,~$\cone{\alpha_{n}}$ and $\cone{\beta_1}$,~$\cone{\beta_{2}}$, \dots,~$\cone{\beta_{n}}$ be two partitions of $\Cant$ into disjoint cones.  We can then define a map $\Cant \to \Cant$ given by $\alpha_{i}w \mapsto \beta_{i}w$ for $i = 1$,~$2$, \dots,~$n$ and all $w \in \Cant$.  Such a map will be called a \emph{prefix substitution map}.  If $\alpha$~and~$\beta$ are incomparable addresses, we denote by~$\swap{\alpha}{\beta}$ the prefix substitution map given by $\alpha w \mapsto \beta w$ and $\beta w \mapsto \alpha w$ for $w \in \Cant$, and with all other points of~$\Cant$ fixed.  We shall refer to such prefix substitution maps as \emph{transpositions}.  We extend this in the natural way to define disjoint cycle notation for prefix substitution maps that arise as finite permutations of disjoint cones.

We are now ready to define Thompson's group $V$.

\begin{defn}
\emph{Thompson's group}~$V$ is the set of all prefix substitution maps on~$\Cant$ with multiplication given by function composition.
\end{defn}

Note that Thompson's group~$V$ has an alternative definition using paired tree diagram. We direct the reader to \cite{CFP} for a detailed description of this viewpoint.

The action of $V$ on the Cantor set induces a partial right action of~$V$ on the set~$\AddrSet$ of addresses.

\begin{defn}
Let $g \in V$ and $\alpha$~be an address.  If $(\cone{\alpha})^{g} = \cone{\beta}$ for some address~$\beta$ and $g$~is given by the prefix substitution $\alpha \mapsto \beta$ on~$\cone{\alpha}$, then we define $\alpha^{g} = \beta$. If there is no address~$\beta$ for which this holds, then $\alpha^{g}$~is undefined.
\end{defn}

It follows immediately from the definition that if $g \in V$ then $\alpha^{g}$~is defined for all but finitely many addresses~$\alpha \in \Omega$.  The following observations are straightforward and will be used without comment in what follows.

\begin{lemma}
\label{lem:partialact}
Let $g$~and~$h$ be elements of~$V$ and $\alpha$~and~$\beta$ be addresses in~$\AddrSet$.
\begin{enumerate}
\item If $\alpha^{g}$~is defined and $\alpha \prefix \beta$, then $\beta^{g}$~is defined.  Moreover, if $\beta = \alpha \eta$ for some word~$\eta$, then $\beta^{g} = (\alpha^{g})\eta$.
\item Suppose that $\alpha^{g}$~is defined.  Then $(\alpha^{g})^{h}$~is defined if and only if $\alpha^{gh}$~is defined and, when it is, $(\alpha^{g})^{h} = \alpha^{gh}$.
\end{enumerate}
\end{lemma}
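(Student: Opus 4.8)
The plan is simply to unwind the definition of the partial right action of~$V$ on~$\AddrSet$. The single point that requires care is that ``$\alpha^{g}$ is defined'' encodes two conditions at once: that $(\cone{\alpha})^{g}$ is again a cone, say $\cone{\delta}$, \emph{and} that $g$ restricted to~$\cone{\alpha}$ is the prefix substitution $\alpha\mapsto\delta$, equivalently that $(\alpha u)^{g} = \delta u$ for every $u\in\Cant$. An element of~$V$ can carry $\cone{\alpha}$ onto itself while nontrivially permuting its two halves, so the second condition really is extra information, and the whole argument amounts to carrying it along.

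For part~(i), set $\delta = \alpha^{g}$ and write $\beta = \alpha\eta$, which is possible precisely because $\alpha\prefix\beta$ (with $\eta = \emptyword$ in the trivial case $\beta = \alpha$). Applying the identity $(\alpha u)^{g} = \delta u$ with $u = \eta w$ gives $(\beta w)^{g} = (\delta\eta)w$ for every $w\in\Cant$. This one identity simultaneously shows that $(\cone{\beta})^{g} = \cone{\delta\eta}$ and that $g$ acts on~$\cone{\beta}$ as the prefix substitution $\beta\mapsto\delta\eta$; hence $\beta^{g}$ is defined and equals $(\alpha^{g})\eta$.

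For part~(ii), again put $\delta = \alpha^{g}$, so $(\alpha u)^{g} = \delta u$ for all $u\in\Cant$. If $(\alpha^{g})^{h} = \delta^{h}$ is defined, say equal to~$\epsilon$, then $(\delta u)^{h} = \epsilon u$ for all~$u$, hence $(\alpha u)^{gh} = ((\alpha u)^{g})^{h} = \epsilon u$ for all~$u$, so $\alpha^{gh}$ is defined with value~$\epsilon$. Conversely, if $\alpha^{gh}$ is defined, say equal to~$\epsilon$, then $(\delta u)^{h} = ((\alpha u)^{g})^{h} = (\alpha u)^{gh} = \epsilon u$ for all~$u$, which is exactly the statement that $\delta^{h}$ is defined with value~$\epsilon$; thus $(\alpha^{g})^{h} = \alpha^{gh}$. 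There is no genuine obstacle beyond this bookkeeping: the proof is a routine verification, which is why the authors use the statement without further comment.
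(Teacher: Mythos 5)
Your proof is correct: both parts follow exactly as you argue, by unwinding the definition that $\alpha^{g}=\delta$ means $(\alpha u)^{g}=\delta u$ for all $u\in\Cant$, and your observation that ``defined'' packages both the cone-to-cone condition and the prefix-substitution condition is the right point of care. The paper omits the proof entirely, declaring the lemma straightforward, and your argument is precisely the routine verification the authors have in mind, so there is nothing to add or compare.
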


%
%
%


\section{Stabilizers of finite sets of fixed tail class}\label{subs:stabilizersoffinitetails}

As a starting point for our study of maximal subgroups of $V$, we will prove that the stabilizer of any finite set of points in $\Cant$ which all have the same tail class is a maximal subgroup. To do this, we will first apply a criterion for primitive group actions given in \cite{DixonMort}.  Given a group~$G$ acting transitively on a set~$S$ and points $a,b \in S$, denote by $\Delta = \set{(a,b)^{g}}{g \in G}$ the orbit of the pair~$(a,b)$ and by~$\Gamma(a,b)$ the associated \emph{orbital graph}; that is, $\Gamma(a,b)$~has vertex set~$S$ and edge set~$\Delta$.  The required criterion is:

\begin{prop}[\protect{\cite[Theorem~3.2A]{DixonMort}}]
\label{prop:DM-prim}
Let $G$~be a group acting transitively on a set~$S$.  Then the action of~$G$ on~$S$ is primitive if and only if the orbital graph $\Gamma(a,b)$ is connected for every distinct $a,b\in S$.
\end{prop}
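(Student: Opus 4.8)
The plan is to use the standard dictionary between block systems for a transitive action and $G$\nbd invariant equivalence relations on~$S$. Recall that the action of~$G$ on~$S$ is primitive precisely when the only equivalence relations on~$S$ preserved by~$G$ are the diagonal $\{(x,x) : x \in S\}$ and the universal relation $S \times S$. The key structural observation is that the edge set $\Delta = \set{(a,b)^{g}}{g \in G}$ of an orbital graph is by construction a single $G$\nbd orbit on $S \times S$, hence $G$\nbd invariant; therefore $G$ acts on $\Gamma(a,b)$ by graph automorphisms and in particular permutes its connected components. Throughout I take ``connected'' to mean connected in the underlying undirected graph, which is the convention under which the cited statement is phrased.

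For the forward implication I would argue by contraposition. Suppose $\Gamma(a,b)$ is disconnected for some distinct $a,b\in S$, and define a relation $\sim$ on~$S$ by declaring $x \sim y$ when $x$ and~$y$ lie in the same connected component of $\Gamma(a,b)$. This is visibly an equivalence relation, and since $G$ preserves~$\Delta$ it preserves connected components, so $\sim$ is a $G$\nbd congruence. It is not the universal relation because $\Gamma(a,b)$ has more than one component, and it is not the diagonal because $(a,b)\in\Delta$ is an edge, so $a$ and~$b$ lie in one component while $a\neq b$. Hence $\sim$ is a non-trivial block system and $G$ acts imprimitively on~$S$.

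For the reverse implication, again by contraposition, suppose $G$ acts imprimitively, and fix a non-trivial block~$B$, so that $|B|\geq 2$ and $B\neq S$. Choose distinct $a,b\in B$. Every edge of $\Gamma(a,b)$ has the form $(a^{g},b^{g})$ for some $g\in G$; since $a,b\in B$ and $G$ permutes the blocks, both $a^{g}$ and~$b^{g}$ lie in the single block~$B^{g}$. Consequently every edge of $\Gamma(a,b)$, hence every path, hence every connected component, is contained in a single block. As $B\neq S$, the component of~$a$ is a proper subset of~$S$, so $\Gamma(a,b)$ is disconnected.

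The point requiring the most care is verifying that the congruence produced in the forward direction is non-trivial at \emph{both} ends: the choice $a\neq b$ is exactly what prevents all blocks from being singletons, while the disconnectedness hypothesis is exactly what forces more than one block. A minor secondary issue is the directed-versus-undirected distinction in the notion of connectedness; everything above is carried out with weak connectedness, and for a transitive action the two notions in fact coincide, so no generality is lost. Since this is precisely Theorem~3.2A of~\cite{DixonMort}, in the paper I would simply cite it, but the argument above is the one I would reconstruct.
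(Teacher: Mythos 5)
Your proof is correct, and it is essentially the only comparison possible here: the paper does not prove this proposition at all, but simply cites Dixon--Mortimer (Theorem~3.2A), and your argument is precisely the standard proof of that theorem --- connected components of a disconnected orbital graph form a non-trivial $G$\nbd invariant partition (giving imprimitivity), while conversely an edge orbit generated by a pair inside a non-trivial block never leaves the block system, so the component of $a$ stays inside a proper block and the graph on all of $S$ is disconnected. Both directions are carried out with undirected (weak) connectedness, which is the convention both in the cited statement and in the way the paper applies it (in Theorem~\ref{thm:2k-trans->primitive} the orbital graph is explicitly shown to be connected ``as an undirected graph''), so the argument meshes correctly with its use. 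The only inaccuracy is your parenthetical claim that weak and strong connectedness of an orbital graph coincide for any transitive action: this is true for finite $S$ but fails in general (for example $\mathbb{Z}$ acting on itself by translation, with orbital $\{(n,n+1)\}$, gives a weakly but not strongly connected graph), and the application in this paper is to an infinite set; fortunately the remark is inessential, since nothing in your proof or in the paper's use of the proposition ever invokes strong connectedness.
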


\begin{thm}
\label{thm:2k-trans->primitive}
Let $G$~be a group that acts $2k$\nbd transitively on a set~$X$ with $\card{X} \geq 2k+1$ for some positive integer~$k$.  Then $G$~acts primitively on the set of subsets of~$X$ with cardinality~$k$.
\end{thm}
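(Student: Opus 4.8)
The plan is to apply the Dixon--Mortimer primitivity criterion, Proposition~\ref{prop:DM-prim}, to the action of~$G$ on the set of $k$\nbd subsets of~$X$. Thus two things are needed: that $G$~is transitive on the $k$\nbd subsets of~$X$, and that the orbital graph~$\graph{A,B}$ is connected for every pair of distinct $k$\nbd subsets $A$~and~$B$. The transitivity is immediate: a $2k$\nbd transitive action on a set of size at least~$2k$ is $m$\nbd transitive for every $m \le 2k$, hence in particular $k$\nbd transitive, and any $k$\nbd transitive group is transitive on $k$\nbd subsets (here $\card{X} \geq 2k+1 > k$).

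For the orbital graphs, fix distinct $k$\nbd subsets $A$~and~$B$ and set $d = \card{A \cap B}$, so that $0 \le d \le k-1$. The first step is to identify the $G$\nbd orbit of the pair~$(A,B)$ as exactly $\set{(C,D)}{\card{C \cap D} = d}$, where $C$~and~$D$ run over $k$\nbd subsets of~$X$. One inclusion holds because $G$~preserves the sizes of intersections; for the reverse inclusion I would list the $2k - d$ distinct points of~$A \cup B$ in the order ``the $d$~common points, then the remaining points of~$A$, then the remaining points of~$B$'', list the points of a target pair $(C,D)$ with $\card{C \cap D} = d$ in the same fashion, and apply $(2k - d)$\nbd transitivity (legitimate since $2k - d \le 2k$ and $\card{X}$ is large enough). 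Consequently $\graph{A,B}$~is the graph~$\Gamma_{d}$ whose vertices are the $k$\nbd subsets of~$X$ and in which $C$~and~$D$ are joined precisely when $\card{C \cap D} = d$; this relation is symmetric, so there is no difficulty with edge orientation.

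It remains to show that $\Gamma_{d}$~is connected for every~$d$ with $0 \le d \le k-1$, and this is the heart of the argument. I would reduce it to the standard fact that the Johnson graph on $k$\nbd subsets of~$X$ (adjacency: the subsets differ in exactly one element) is connected whenever $\card{X} \ge k+1$, by showing that each single-element swap --- replacing some $y \in C$ by some $x \notin C$ to obtain $D' = (C \setminus \{y\}) \cup \{x\}$ --- is bridged by a path inside~$\Gamma_{d}$. For this, put $S = C \cap D' = C \setminus \{y\}$, a set of size $k-1$, and choose a $k$\nbd subset~$E$ consisting of any $d$~elements of~$S$ together with $k - d$ elements of $X \setminus (C \cup \{x\})$. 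Such an~$E$ exists precisely because $d \le k-1 = \card{S}$ and $\card{X \setminus (C \cup \{x\})} = \card{X} - (k+1) \ge k \ge k - d$; this inequality is exactly where the hypothesis $\card{X} \ge 2k+1$ is used. Since $E$~meets neither~$\{x\}$ nor~$\{y\}$, one checks that $\card{E \cap C} = \card{E \cap D'} = \card{E \cap S} = d$ and that $E$ is distinct from both $C$ and~$D'$ (because $d < k$), so $C - E - D'$ is a path in~$\Gamma_{d}$, and connectivity of~$\Gamma_{d}$ follows.

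The step I expect to be the genuine obstacle is this last one, the connectivity of~$\Gamma_{d}$. Note that for $d = 0$ the graph~$\Gamma_{0}$ is the Kneser graph on $k$\nbd subsets of~$X$, which fails to be connected when $\card{X} = 2k$ (it is then a perfect matching on its vertex set); thus the strict inequality $\card{X} \ge 2k+1$ must be used in an essential way, and the two-step detour through~$E$ above is designed to exploit it uniformly in~$d$. The only genuinely fiddly points are the cardinality bound guaranteeing that~$E$ exists and the verification that $E$ is adjacent in~$\Gamma_{d}$ to both $C$ and~$D'$.
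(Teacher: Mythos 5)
Your proof is correct and follows essentially the same route as the paper: both apply the Dixon--Mortimer orbital-graph criterion, use $2k$\nobreakdash-transitivity to place pairs with the right intersection size in the orbital, and bridge each single-element swap $C \to (C\setminus\{y\})\cup\{x\}$ by a two-step path through an auxiliary $k$\nobreakdash-set avoiding $x$ and $y$ (your $E$ is the paper's $D$), concluding by connectivity of the single-swap (Johnson) moves. Your explicit identification of the orbital as all pairs with intersection size $d$ is only a cosmetic elaboration of what the paper does implicitly.
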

\begin{proof}
Let $S$~denote the set of subsets of~$X$ with cardinality~$k$.  Since $G$~is, in particular, $k$\nbd transitive on~$X$, it is the case that $G$~acts transitively on~$S$.  Let $A$~and~$B$ be a pair of distinct subsets of~$X$ with $\card{A} = \card{B} = k$, let $\Delta = \set{(A,B)^{g}}{g \in G}$ be the orbit of the pair~$(A,B)$ and $\Gamma$~be the orbital graph as above.  
We shall show that $\Gamma$~is connected as an undirected graph.

Let $C$~be any subset of~$X$ with $\card{C} = k$.  Take $x \in X \setminus C$ and $y \in C$.  Define $C'$~to be the union of~$C \setminus \{y\}$ with~$\{x\}$.  Since $\card{X} \geq 2k+1$, there is a subset~$D$ of~$X$ such that $\card{C \cap D} = \card{A \cap B}$, \ $\card{D} = k$, and $x,y \notin D$.  As $G$~acts $2k$\nbd transitively, there exists $g \in G$ such that $(A,B)^{g} = (C,D)$; that is, $(C,D) \in \Delta$.  Now as $D$~does not contain $x$~or~$y$, it is also the case that $\card{C' \cap D} = \card{C \cap D}$.  Hence, using $2k$\nbd transitivity again, there exists $h \in G$ such that $(C,D)^{h} = (C',D)$.  Thus $(C',D) \in \Delta$.  This shows that there are (undirected) edges joining $C$~and~$D$ and joining $C'$~and~$D$ in~$\Gamma$.  Hence we may travel from the subset~$C$ to a subset obtained by removing one element and replacing it by another.  By induction there is a path in~$\Gamma$ from~$C$ to any other subset of cardinality~$k$.  The result now follows by Proposition~\ref{prop:DM-prim}.
\end{proof}

Recall that two points $u$~and~$v$ in~$\Cant$ have the same \emph{tail class} if $u = \alpha w$ and $v = \beta w$ for some finite words $\alpha,\beta \in \AddrSet$ and some tail~$w \in \Cant$.  Fix a tail class~$X$.  Then since a prefix substitution preserves the tail class, Thompson's group~$V$ acts on~$X$.  Moreover, it is straightforward to show that this action is highly transitive; that is, $V$~is $n$\nbd transitive for every positive integer~$n$.  In particular, we may apply Theorem~\ref{thm:2k-trans->primitive} for any value of~$k$.  Hence $V$~acts primitively on the set of subsets of~$X$ of cardinality~$k$.  The corresponding stabilizer is therefore a maximal subgroup of~$V$ and we have established the following:

\begin{cor}
\label{cor:Stab-tailclass}
Let $D$~be a non-empty finite subset of~$\Cant$ consisting of elements of the same tail class. Then $\Stab{V}{D}$~is a maximal subgroup of~$V$. \qed
\end{cor}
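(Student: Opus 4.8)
The plan is to realise $\Stab{V}{D}$ as the stabiliser of a point in a primitive action of~$V$ and then apply the standard fact that, for a transitive action, a point stabiliser is maximal precisely when the action is primitive; Theorem~\ref{thm:2k-trans->primitive} will supply the primitivity. Concretely, I would fix the tail class~$X$ containing the points of~$D$, set $k = \card{D}$, observe that prefix substitution maps preserve tail classes so that $V$~acts on~$X$, and identify $S = \{\,\text{$k$-element subsets of~$X$}\,\}$ as a $V$-set on which $D$~is a point whose stabiliser is exactly $\Stab{V}{D}$.

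The substantive step is to show that the action of~$V$ on~$X$ is highly transitive, i.e.\ $n$-transitive for every~$n$. The combinatorial heart is the observation that any finite subset of a single tail class has a common tail: if $u_{1},\dots,u_{n}$ pairwise share tails, then, by induction on~$n$ and using that any two tails of a fixed point of~$\Cant$ are comparable, there is a single $w\in\Cant$ with $u_{i}=\alpha_{i}w$ for suitable addresses~$\alpha_{i}$. Given distinct $u_{1},\dots,u_{n}\in X$, I would take $N$ large enough that their length-$N$ prefixes $p_{1},\dots,p_{n}$ are pairwise distinct, hence pairwise incomparable; the tails of the~$u_{i}$ after position~$N$ are all tails of the common tail~$w$, so by the same fact they have a common tail, and writing these in gives $u_{i}=a_{i}r$ with the~$a_{i}$ pairwise incomparable (they extend the incomparable~$p_{i}$) and $r$~a common tail. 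Applying this to a target tuple $v_{1},\dots,v_{n}\in X$ and then replacing the two resulting common tails by a common tail of both, we arrive at $u_{i}=a_{i}r$, $v_{i}=b_{i}r$ with $\{a_{1},\dots,a_{n}\}$ and $\{b_{1},\dots,b_{n}\}$ each consisting of pairwise incomparable addresses. Extending these to partitions $\cone{a_{1}},\dots,\cone{a_{m}}$ and $\cone{b_{1}},\dots,\cone{b_{m}}$ of~$\Cant$ (subdividing cones so the two partitions have equally many parts), the prefix substitution $g\colon a_{i}w\mapsto b_{i}w$ lies in~$V$ and satisfies $u_{i}^{g}=v_{i}$ for all~$i$; hence $V$~is $n$-transitive on~$X$.

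Granting this, the remaining steps are short: $X$~is (countably) infinite so $\card{X}\geq 2k+1$, and $V$~is $2k$-transitive on~$X$, so Theorem~\ref{thm:2k-trans->primitive} gives that $V$~acts primitively on~$S$; since $V$~is also $k$-transitive on~$X$ this action is transitive, and therefore the stabiliser $\Stab{V}{D}$ of the point $D\in S$ is a maximal subgroup of~$V$. I expect the main obstacle to be the verification of high transitivity --- in particular the bookkeeping required to arrange a common tail together with pairwise incomparable prefixes simultaneously for the source and target tuples; once that is set up, the construction of the element of~$V$ is routine, and the deduction of maximality from $2k$-transitivity via Theorem~\ref{thm:2k-trans->primitive} is standard.
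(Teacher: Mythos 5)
Your proposal is correct and follows essentially the same route as the paper: realise $\Stab{V}{D}$ as the stabiliser of the point $D$ in the action of~$V$ on $k$\nbd element subsets of the tail class~$X$, invoke high transitivity of~$V$ on~$X$, and apply Theorem~\ref{thm:2k-trans->primitive} to get primitivity and hence maximality. The only difference is that you spell out the high-transitivity argument (common tails plus incomparable prefixes), which the paper simply records as straightforward, and your sketch of that step is sound.
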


\section{\boldmath Labelling subgroups of~\texorpdfstring{$V$}{V}}

In this section, we use the partial action of~$V$ on~$\AddrSet$ to define a collection of subgroups of~$V$ that will provide many of our examples of maximal subgroups.

\begin{defn}\label{defn:typesystem}
Let $\sim$~be an equivalence relation on the set~$\AddrSet$ of addresses.
\begin{enumerate}
\item We say that $\sim$~is \emph{coherent} if, for all addresses $\alpha$~and~$\beta$, \ $\alpha \sim \beta$ implies that both $\alpha0 \sim \beta0$ and $\alpha1 \sim \beta1$.
\item We say that $\sim$~is \emph{reduced} if it has the property that if $\alpha$~and~$\beta$ are addresses such that both $\alpha0 \sim \beta0$ and $\alpha1 \sim \beta1$, then $\alpha \sim \beta$. 
\item If $\sim$~is an equivalence relation on~$\AddrSet$ that is both coherent and reduced, we refer to the resulting collection~$\Part$ of equivalence classes on~$\AddrSet$ as a \emph{type system}.
\item If $\Part$ is a type system, we refer to the equivalence classes in~$\Part$ as \emph{types}.  For any address~$\alpha$, if $\alpha$~belongs to the equivalence class $P \in \Part$ then we say that $\alpha$~has \emph{$\Part$\nbd type}~$P$.  When it is clear which type system is under consideration, we shall simply refer to the \emph{type} of~$\alpha$.
\end{enumerate}
\end{defn}

The definition of coherent and reduced for an equivalence relation~$\sim$ on~$\AddrSet$ ensures that type systems interact well with the partial action of Thompson's group~$V$.
Notationally, if $P$~is a type in some type system~$\Part$, then we shall denote by $P_{0}$~the type of~$\alpha0$ where $\alpha$~is any address whose type is~$P$.  Analogously we denote~$P_{1}$ to be the type of~$\alpha1$.  The assumption that $\sim$~is coherent ensures that $P_{0}$~and~$P_{1}$ are well-defined types and hence the free monoid with basis~$\{0,1\}$ has an action on the type system~$\Part$.  

\begin{rem}
Under the assumption that $\sim$~is coherent and reduced, the equivalence relation actually determines a congruence on a free Cantor algebra. Then $\Part$~embeds in a natural way in the quotient Cantor algebra determined by this congruence.  We shall, however, not make explicit use of such Cantor algebras here and instead will explore this in a future article.
\end{rem}

We now introduce certain subsets of~$V$ determined by an equivalence relation on the addresses (or, equivalently, on the set of cones in~$\Cant$).

\begin{defn}
\label{def:basic}
Let $\sim$~be an equivalence relation on the set~$\AddrSet$ of addresses.
\begin{enumerate}
\item We say that an element~$g$ of~$V$ \emph{fixes the parts} of~$\sim$ if for all but finitely many $\alpha \in \AddrSet$, \ $\alpha^g$~is defined and $\alpha^g \sim \alpha$. 

\item We say that $g \in V$ \emph{stabilizes}~$\sim$ if there is a subset $\AddrSet_{0} \subseteq \AddrSet$ with $\card{\AddrSet \setminus \AddrSet_{0}} < \infty$ such that $\alpha^g$~is defined for all $\alpha \in \AddrSet_{0}$ and, for $\alpha,\beta \in \AddrSet_{0}$, \ $\alpha \sim \beta$ if and only if $\alpha^{g} \sim \beta^g$.  (That is, for all but finitely many addresses, $g$~preserves equivalence of addresses under~$\sim$.)

\item \label{i:Fix&Stab}
Let $\Part$~denote the partition of~$\AddrSet$ determined by the equivalence relation~$\sim$.  We define
\[
\Fix{V}{\Part} = \set{g \in V}{\text{$g$~fixes the parts of~$\sim$}},
\]
i.e., the set of elements of~$V$ that \emph{fix the parts in~$\Part$}, and
\[
\Stab{V}{\Part} = \set{g \in V}{\text{$g$~stabilizes~$\sim$}},
\]
i.e., the set of elements of~$V$ that \emph{stabilize the partition~$\Part$}.
\end{enumerate}
\end{defn}

By definition, $\Fix{V}{\Part} \subseteq \Stab{V}{\Part}$ and we shall now observe that they are indeed subgroups of~$V$.  The lemma also indicates one of the gains of assuming our relation is coherent: one has greater control over when an element belongs to~$\Fix{V}{\Part}$.  In fact, there is little lost by assuming that $\Part$~is a type system as we shall observe in Lemma~\ref{lem:reduction}.  To aid our arguments, we shall use the notation $\AddrSet_{0} \cofsubset \AddrSet$ to indicate that $\AddrSet_{0}$~is a cofinite subset of~$\AddrSet$; that is, when $\card{\AddrSet \setminus \AddrSet_{0}} < \infty$.

\begin{lemma}
\label{lem:basic}
Let $\sim$~be an equivalence relation on the set~$\AddrSet$ of addresses and let $\Part$~be its associated partition.  Then:
\begin{enumerate}
\item \label{i:Fix-subgp}
$\Fix{V}{\Part}$~is a subgroup of~$V$.
\item \label{i:Stab-subgp}
$\Stab{V}{\Part}$~is a subgroup of~$V$.
\item \label{i:Fix-normal}
$\Fix{V}{\Part}$~is a normal subgroup of\/~$\Stab{V}{\Part}$.
\item \label{i:Fix-form}
Suppose that $\sim$~is coherent and let $g \in V$.  Then $g \in \Fix{V}{\Part}$ if and only if there exists a partition $\alpha_1\Cant,\ldots,\alpha_n\Cant$ of $\Cant$ into cones so that each $\alpha_i^g$ is defined and $\alpha_i^g \sim \alpha_i$. 
\end{enumerate}
\end{lemma}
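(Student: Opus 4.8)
The plan is to prove the four parts in order, with parts (i)--(iii) being essentially formal verifications and part (iv) being the one requiring a genuine argument. For part (i), I would check closure under products and inverses directly from the definition. If $g,h \in \Fix{V}{\Part}$, then $\alpha^g$ is defined with $\alpha^g \sim \alpha$ for all $\alpha$ in some cofinite set $\AddrSet_1 \cofsubset \AddrSet$, and similarly $\beta^h$ is defined with $\beta^h \sim \beta$ for $\beta$ in some cofinite $\AddrSet_2$. Using Lemma~\ref{lem:partialact}(ii), for any $\alpha$ in the cofinite set of addresses for which $\alpha^g \in \AddrSet_2$ (this is cofinite since $g$ moves only finitely many addresses outside any given cofinite set, using that $g$ and $g^{-1}$ have finite support on addresses), we get $\alpha^{gh} = (\alpha^g)^h$ defined, and $\alpha^{gh} \sim \alpha^g \sim \alpha$ by transitivity. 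For inverses, note that if $\alpha^g$ is defined and equals $\beta$ with $\beta \sim \alpha$, then $\beta^{g^{-1}} = \alpha \sim \beta$; the set of such $\beta$ is cofinite because it is the image under the partial bijection $g$ of a cofinite set. The identity clearly lies in $\Fix{V}{\Part}$.

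For part (ii), the argument is parallel: if $g,h$ stabilize $\sim$ via cofinite sets $\AddrSet_0^g, \AddrSet_0^h$, then on the cofinite set of $\alpha$ with $\alpha, \beta$ ranging over $\AddrSet_0^g$ and $\alpha^g, \beta^g \in \AddrSet_0^h$, we get $\alpha \sim \beta \iff \alpha^g \sim \beta^g \iff (\alpha^g)^h \sim (\beta^g)^h$, and the composite partial map is $gh$; inverses are handled by pushing the condition through the partial bijection as before. For part (iii), I would observe that $\Fix{V}{\Part} \subseteq \Stab{V}{\Part}$ holds immediately (if $\alpha^g \sim \alpha$ and $\beta^g \sim \beta$ cofinitely, then $\alpha \sim \beta \iff \alpha^g \sim \beta^g$ cofinitely by transitivity and symmetry), and then show normality: for $g \in \Stab{V}{\Part}$ and $f \in \Fix{V}{\Part}$, I want $g^{-1} f g \in \Fix{V}{\Part}$. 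On a cofinite set, $\alpha^{g^{-1}fg} = ((\alpha^{g^{-1}})^f)^g$; writing $\gamma = \alpha^{g^{-1}}$ we have $\gamma^f \sim \gamma$, and since $g$ stabilizes $\sim$ (applied to the pair $\gamma^f, \gamma$, both in the relevant cofinite set), $(\gamma^f)^g \sim \gamma^g = \alpha$, which is exactly what is needed.

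Part (iv) is where the coherence hypothesis earns its keep, and I expect this to be the main obstacle. The backward direction is immediate: given a partition $\alpha_1\Cant, \ldots, \alpha_n\Cant$ with each $\alpha_i^g$ defined and $\alpha_i^g \sim \alpha_i$, any address $\beta$ with $\beta^g$ undefined must be a \emph{strict} prefix of some $\alpha_i$ (there are finitely many such), so for all but finitely many $\beta$ we have $\alpha_i \prefix \beta$ for some $i$, say $\beta = \alpha_i \eta$; then $\beta^g = (\alpha_i^g)\eta$ by Lemma~\ref{lem:partialact}(i), and coherence applied repeatedly along the word $\eta$ gives $\beta^g = (\alpha_i^g)\eta \sim \alpha_i \eta = \beta$ (since $\alpha_i^g \sim \alpha_i$ forces $(\alpha_i^g)\eta \sim \alpha_i\eta$ by iterating the coherence implication). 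For the forward direction, suppose $g \in \Fix{V}{\Part}$, so $\alpha^g$ is defined and $\alpha^g \sim \alpha$ for all $\alpha$ outside a finite set $F$. Since $g \in V$, there is a partition of $\Cant$ into cones $\gamma_1\Cant, \ldots, \gamma_m\Cant$ on which $g$ acts as a prefix substitution; by subdividing this partition I can arrange that every $\gamma_j$ has length large enough that $\gamma_j \notin F$ and $\gamma_j$ is not a prefix of any element of $F$ (possible since each element of $F$ has only finitely many descendants of each length, and we can exceed $\max\{\length{\delta} : \delta \in F\}$; subdivision of a prefix-substitution partition is still a prefix-substitution partition for the same element). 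Then each $\gamma_j \notin F$, so $\gamma_j^g$ is defined and $\gamma_j^g \sim \gamma_j$, and $\{\gamma_1, \ldots, \gamma_m\}$ is the required partition. The subtlety to get right is the bookkeeping that subdivision preserves being a valid domain partition for $g$ and that we can simultaneously dodge both membership in $F$ and being an ancestor of $F$; I would state this as a short sub-step.
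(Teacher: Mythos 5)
Your proposal is correct and follows essentially the same route as the paper: cofinite-set bookkeeping for closure, inverses and conjugation in parts (i)--(iii), and for part (iv) the backward direction by iterating coherence along the suffix $\eta$ and the forward direction by subdividing the domain cones of the prefix substitution until all their addresses lie in the cofinite set where $\alpha^g \sim \alpha$. The extra care you take to avoid prefixes of the exceptional set is harmless but unnecessary (only membership in the cofinite set is needed), and your slightly loose phrasing about images of cofinite sets under the partial action is at the same level of detail the paper itself leaves as ``straightforward to observe.''
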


\begin{proof}
\ref{i:Fix-subgp}~Certainly the identity element belongs to~$\Fix{V}{\Part}$.
Suppose $g,h \in \Fix{V}{\Part}$.  Then there exist $\AddrSet_{1}, \AddrSet_{2} \cofsubset \AddrSet$ such that $\alpha^{g}$~is defined and satisfies $\alpha^{g} \sim \alpha$ for all $\alpha \in \AddrSet_{1}$ and such that $\alpha^{h}$~is defined and satisfies $\alpha^{h} \sim \alpha$ for all $\alpha \in \AddrSet_{2}$.
It is straightforward to observe $\beta^{g^{-1}} \sim \beta$ for all $\beta \in \AddrSet_{1}^{g}$ and $\AddrSet_{1}^{g} \cofsubset \AddrSet$, so $g^{-1} \in \Fix{V}{\Part}$.
Let $\AddrSet_{0} = \AddrSet_{1} \cap (\AddrSet_{2} \cap \AddrSet_{1}^{g})^{g^{-1}}$.  Then $\AddrSet_{0} \cofsubset \Omega$ and if $\alpha \in \AddrSet_{0}$, then $\alpha^{g}$~is defined, belongs to~$\AddrSet_{2}$ and satisfies $\alpha^{g} \sim \alpha$.  Hence $\alpha^{gh} = (\alpha^{g})^{h} \sim \alpha^{g} \sim \alpha$.  This shows that $gh \in \Fix{V}{\Part}$.  We conclude that $\Fix{V}{\Part}$~is indeed a subgroup of~$V$.

\ref{i:Stab-subgp}~Suppose $g,h \in \Stab{V}{\Part}$. Then there exist subsets $\AddrSet_{1}, \AddrSet_{2} \cofsubset \AddrSet$ such that $\alpha^{g}$~is defined for all $\alpha \in \AddrSet_{1}$, \ $\alpha^g \sim \beta^g$ if and only if $\alpha \sim \beta$ whenever $\alpha, \beta \in \AddrSet_{1}$, \ $\alpha^{h}$~is defined for all $\alpha \in \AddrSet_{2}$ and $\alpha^{h} \sim \beta^{h}$ if and only if $\alpha \sim \beta$ whenever $\alpha, \beta \in \AddrSet_{2}$.
As in~\ref{i:Fix-subgp}, it is straightforward to observe $\gamma^{g^{-1}} \sim \delta^{g^{-1}}$ if and only if $\gamma \sim \delta$ for $\gamma,\delta \in \AddrSet_{1}^{g}$ and hence $g^{-1} \in \Stab{V}{\Part}$.
Similarly, for $\AddrSet_{0} = \AddrSet_{1} \cap (\AddrSet_{2} \cap \AddrSet_{1}^g)^{g^{-1}} \cofsubset \AddrSet$, it is the case that $\alpha^{gh}$~is defined for all $\alpha \in \AddrSet_{1}$.  Let $\alpha, \beta \in \AddrSet_{0}$ and suppose $\alpha \sim \beta$.  Then $\alpha^{g} \sim \beta^{g}$ since $\alpha,\beta \in \AddrSet_{1}$ and then $\alpha^{gh} \sim \beta^{gh}$ since $\alpha^{g}, \beta^{g} \in \AddrSet_{2}$.  This argument reverses and hence $gh \in \Stab{V}{\Part}$.
We deduce that $\Stab{V}{\Part}$~is a subgroup of~$V$.

\ref{i:Fix-normal}~Let $h \in \Fix{V}{\Part}$ and $g \in \Stab{V}{\Part}$. Then there exist $\AddrSet_{1}, \AddrSet_{2} \cofsubset \AddrSet$ such that $\alpha^{h}$~is defined and $\alpha^{h} \sim \alpha$ for all $\alpha \in \AddrSet_{1}$ and such that $\alpha^{g}$~is defined for all $\alpha \in \AddrSet_{2}$ and, for $\alpha,\beta \in \AddrSet_{2}$, \ $\alpha \sim \beta$ if and only if $\alpha^{g} \sim \beta^{g}$.
Then $\bigl( ( \AddrSet_{1} \cap \AddrSet_{2} )^{h} \cap \AddrSet_{2} \bigr)^{h^{-1}}$ is a well-defined cofinite subset of~$\AddrSet$ contained in~$\AddrSet_{1}$.  Hence we can set $\AddrSet_{0} = \bigl( (\AddrSet_{1} \cap \AddrSet_{2})^{h} \cap \AddrSet_{2} \bigr)^{h^{-1}g} \cofsubset \AddrSet$.
If $\alpha \in \AddrSet_0$, then $\alpha^{g^{-1}h} \in \AddrSet_{2}$ and so $\alpha^{g^{-1}hg}$~is defined.  Furthermore, $\alpha^{g^{-1}} \in \bigl( ( \AddrSet_{1} \cap \AddrSet_{2} )^{h} \cap \AddrSet_{2} \bigr)^{h^{-1}} \subseteq \AddrSet_{1}$ and so $\alpha^{g^{-1}h} \sim \alpha^{g^{-1}}$.  Since both $\alpha^{g^{-1}}, \alpha^{g^{-1}h} \in \AddrSet_{2}$, we deduce $\alpha^{g^{-1}hg} \sim \alpha^{g^{-1}g} = \alpha$.  This shows $g^{-1}hg \in \Fix{V}{\Part}$.
  
\ref{i:Fix-form}~Suppose that $g$~is given by a prefix replacement map with domain cones $\cone{\alpha_{1}}$,~$\cone{\alpha_{2}}$, \dots,~$\cone{\alpha_{k}}$ and that $\alpha_{i}^g \sim \alpha_{i}$ for $i = 1$,~$2$, \dots,~$k$.
Let $\AddrSet_{0}$~be the set of addresses having some~$\alpha_{i}$ as a prefix, so that $\AddrSet_{0} \cofsubset \AddrSet$.  If $\alpha \in \AddrSet_{0}$, say $\alpha = \alpha_{i}\eta$ for some word~$\eta$, then by repeated use of the fact that $\sim$~is coherent, we conclude that $\alpha^{g} = (\alpha_{i}^{g})\eta \sim \alpha_{i}\eta = \alpha$.  Hence $g \in \Fix{V}{\Part}$.
  
Conversely, suppose $g \in \Fix{V}{\Part}$.  Let $\AddrSet_{0} \cofsubset \AddrSet$ such that $\alpha^{g}$~is defined for all $\alpha \in \AddrSet_{0}$ and satisfies $\alpha^{g} \sim \alpha$.  By repeated subdivision, we can express~$g$ as a prefix replacement map with all domain cones~$\cone{\alpha_{i}}$ indexed by addresses~$\alpha_{i}$ in~$\AddrSet_{0}$.  Then $\alpha_{i}^{g} \sim \alpha_{i}$ for all~$i$, as required.
\end{proof}

We shall view an equivalence relation~$\sim$ as providing a ``labelling'' of the addresses according to which equivalence classes they belong to; that is, in the case of a type system, according to the type of the addresses.  For a coherent equivalence relation~$\sim$ with corresponding partition~$\Part$, we shall call $\Fix{V}{\Part}$ and $\Stab{V}{\Part}$ the \emph{labelling subgroups} for~$\Part$.  

If $\Part$~has only finitely many finite classes, $\Fix{V}{\Part}$ is in fact the kernel of the action of $\Stab{V}{\Part}$ on the collection of infinite equivalence classes as we shall demonstrate (see Lemma \ref{lem:P*-action}\ref{i:Fix=ker} below).  Let $\sim$~be an equivalence relation on~$\AddrSet$ and $\Part$~be its associated partition.  If $g \in \Stab{V}{\Part}$ then there exists some subset $\AddrSet_{0} \cofsubset \AddrSet$ such that $\alpha^{g}$~is defined for all $\alpha \in \AddrSet_{0}$ and, for $\alpha,\beta \in \AddrSet_{0}$, \ $\alpha \sim \beta$ if and only if $\alpha^{g} \sim \beta^{g}$.  As a consequence, if $P \in \Part$ is an \emph{infinite} equivalence class, then $\alpha^{g} \sim \beta^{g}$ for all $\alpha,\beta \in P \cap \AddrSet_{0}$ and hence $(P \cap \AddrSet_{0})^{g} \subseteq Q$ for some, necessarily infinite, class~$Q \in \Part$.  We can accordingly define an action of~$\Stab{V}{\Part}$ on the infinite classes in~$\Part$.

\begin{defn}
If $P$~is an infinite class in~$\Part$ and $g \in \Stab{V}{\Part}$, define~$P^{g}$ to be the infinite class $Q \in \Part$ such that $\alpha^{g} \in Q$ for all but finitely many $\alpha \in P$.
\end{defn}

\begin{lemma}
\label{lem:P*-action}
Let $\sim$~be an equivalence relation on~$\AddrSet$ and let $\Part$~be the associated partition.  Suppose that the collection of infinite classes in~$\Part$, denoted $\infPart$, is non-empty.  Then:
\begin{enumerate}
\item \label{i:Stab-acts}
$\Stab{V}{\Part}$~acts on~$\infPart$ via the above definition and $\Fix{V}{\Part}$~is contained in the kernel of this action.
\item \label{i:Fix=ker}
If $\Part$~contains only finitely many finite classes, then $\Fix{V}{\Part}$~is the kernel of the action of\/~$\Stab{V}{\Part}$ on~$\infPart$.
\end{enumerate}
\end{lemma}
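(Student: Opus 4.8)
The plan is to treat parts~\ref{i:Stab-acts} and~\ref{i:Fix=ker} in turn, the first being essentially bookkeeping with cofinite subsets of~$\AddrSet$ and the second resting on one small observation.

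For part~\ref{i:Stab-acts}, I would first confirm that $P \mapsto P^{g}$ is well-defined for each infinite class~$P$ and $g \in \Stab{V}{\Part}$. Existence of the infinite class $Q = P^{g}$ is precisely the remark made just before the definition: a cofinite witness~$\AddrSet_{0}$ for $g \in \Stab{V}{\Part}$ carries the infinite set $P \cap \AddrSet_{0}$, all of whose members are $\sim$-equivalent, into a single (hence infinite) class~$Q$, and $P \cap \AddrSet_{0}$ is cofinite in~$P$. Uniqueness is immediate: two classes each containing $\alpha^{g}$ for all but finitely many $\alpha \in P$ would meet (as $P$~is infinite) and so coincide. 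The identity axiom $P^{1} = P$ is clear, and $P^{gh} = (P^{g})^{h}$ follows from the second part of Lemma~\ref{lem:partialact} together with the usual intersection argument: all but finitely many $\alpha \in P$ have $\alpha^{g} \in P^{g}$, and all but finitely many of \emph{those} have $(\alpha^{g})^{h} \in (P^{g})^{h}$ (only finitely many of the addresses $\alpha^{g}$ can fall in the finite exceptional set attached to~$h$ and~$P^{g}$), so $\alpha^{gh} = (\alpha^{g})^{h} \in (P^{g})^{h}$ for cofinitely many $\alpha \in P$. Finally, if $g \in \Fix{V}{\Part}$ with a cofinite set~$\AddrSet_{0}$ on which $\alpha^{g} \sim \alpha$, then for every infinite class~$P$ the infinite set $P \cap \AddrSet_{0}$ maps into~$P$, so $P^{g} = P$; thus $\Fix{V}{\Part}$ lies in the kernel.

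For part~\ref{i:Fix=ker} it remains to prove the reverse inclusion. Take $g \in \Stab{V}{\Part}$ in the kernel and a cofinite set~$\AddrSet_{0}$ on which $g$~is defined and preserves~$\sim$. Let $F$ be the union of the finite classes of~$\Part$; by hypothesis this is a finite union of finite sets, hence finite, so $\AddrSet_{1} = \AddrSet_{0} \setminus F$ is still cofinite. The point I would highlight is this: for $\alpha \in \AddrSet_{1}$ its class~$P$ is infinite, the whole set $P \cap \AddrSet_{0}$ is a single $\sim$-class's worth of addresses lying in~$\AddrSet_{0}$, so $g$~carries it into one class, which must be~$P^{g}$; and $P^{g} = P$ because $g$~is in the kernel. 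Since $\alpha \in P \cap \AddrSet_{0}$, this gives $\alpha^{g} \in P$, i.e.\ $\alpha^{g} \sim \alpha$. As this holds for all $\alpha$ in the cofinite set~$\AddrSet_{1}$, we conclude $g \in \Fix{V}{\Part}$.

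I expect the only genuinely delicate step to be this last one, and it is exactly where the hypothesis that $\Part$~has only finitely many finite classes is needed. Knowing merely that $g$~lies in the kernel tells us that each infinite class returns to itself only \emph{after} discarding some finite subset of that class, and a~priori there are infinitely many infinite classes, so one cannot sum up these exceptional sets. The trick is to use the \emph{one} cofinite set~$\AddrSet_{0}$ supplied by $g \in \Stab{V}{\Part}$: on $\AddrSet_{0}$ the relation~$\sim$ is preserved wholesale, so each class meets $\AddrSet_{0}$ in a set that moves to a single class in one step, and being in the kernel forces that class back to the original. Deleting~$F$ — the addresses whose classes are finite, on which none of this applies — costs only finitely many addresses thanks to the hypothesis, leaving the cofinite set we need.
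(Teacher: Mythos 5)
Your proof is correct and follows essentially the same route as the paper: part~(i) is the same routine verification (which the paper dismisses as straightforward), and for part~(ii) your set $\AddrSet_{0}\setminus F$ coincides with the paper's $\AddrSet_{0}\cap\bigcup\infPart$, with the same observation that $g$ carries $P\cap\AddrSet_{0}$ into the single class $P^{g}=P$.
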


\begin{proof}
For part~\ref{i:Stab-acts} the proof is straightforward.  
For part~\ref{i:Fix=ker}, suppose that $\Part$~contains only finitely many finite classes.  Let $g$~belong to the kernel of the action of~$\Stab{V}{\Part}$ on~$\infPart$.  There exists some $\AddrSet_{0} \cofsubset \AddrSet$ such that $\alpha^{g}$~is defined for all $\alpha \in \AddrSet_{0}$ and, for $\alpha,\beta \in \AddrSet_{0}$, \ $\alpha \sim \beta$ if and only if $\alpha^{g} \sim \beta^{g}$.  If $P \in \infPart$, then $P^{g} = P$ by definition, so $\alpha^{g} \in P$ for all but finitely many $\alpha \in P$.  In particular, there exists some $\alpha \in P \cap \AddrSet_{0}$ such that $\alpha^{g} \in P$ and then $\beta^{g} \sim \alpha^{g}$ for all $\beta \in P \cap \AddrSet_{0}$.  This shows $(P \cap \AddrSet_{0})^{g} \subseteq P$; that is, $\alpha^{g} \sim \alpha$ for all $\alpha \in P \cap \AddrSet_{0}$.

Take $\AddrSet_{1} = \AddrSet_{0} \cap \bigcup \infPart$; that is, the set of addresses in~$\AddrSet_{0}$ that belong to an infinite class.  Since $\Part$~has only finitely many finite classes, $\AddrSet_{1}$~is a cofinite subset of~$\AddrSet$.  Then $\alpha^{g}$~is defined for all $\alpha \in \AddrSet_{1}$ and, by the previous paragraph, $\alpha^{g} \sim \alpha$ for all $\alpha \in \AddrSet_{1}$.  This shows $g \in \Fix{V}{\Part}$ and completes the proof of part~\ref{i:Fix=ker}.
\end{proof}

\begin{lemma}
\label{lem:reduction}
Let $\sim$~be an equivalence relation on~$\Omega$ and $\Part$~be its associated partition.  Define, for $\alpha,\beta \in \AddrSet$, \ $\alpha \approx \beta$ if and only if $\alpha\eta \sim \beta\eta$ for all but finitely many words~$\eta$.  Then $\approx$~is a coherent and reduced equivalence relation on~$\AddrSet$ such that
\begin{enumerate}
\item \label{i:reduction-quot}
if $\sim$~is coherent and $\alpha \sim \beta$ for some $\alpha,\beta \in \AddrSet$, then $\alpha \approx \beta$;
\item \label{i:reductiongps}
if $\Part[Q]$~is the type system determined by~$\approx$, then $\Fix{V}{\Part[Q]} = \Fix{V}{\Part}$ and $\Stab{V}{\Part} \leq \Stab{V}{\Part[Q]}$.
\end{enumerate}
\end{lemma}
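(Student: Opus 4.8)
The plan is to verify the three assertions in order: first that $\approx$ is an equivalence relation that is both coherent and reduced, then~\ref{i:reduction-quot}, then~\ref{i:reductiongps}. Throughout I would use the notation $\eta$ for a variable word and phrase ``for all but finitely many $\eta$'' as ``for all $\eta$ outside a finite set $E$'', so that intersections of two such conditions remain of the same form (the union of two finite sets is finite). Reflexivity and symmetry of $\approx$ are immediate; for transitivity, if $\alpha\eta\sim\beta\eta$ for $\eta\notin E_1$ and $\beta\eta\sim\gamma\eta$ for $\eta\notin E_2$, then $\alpha\eta\sim\gamma\eta$ for $\eta\notin E_1\cup E_2$. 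For coherence: if $\alpha\approx\beta$, witnessed by a finite set $E$, then I claim $\alpha 0\approx\beta 0$; indeed $(\alpha 0)\eta=\alpha(0\eta)$ and $(\beta 0)\eta=\beta(0\eta)$, and $0\eta$ ranges over all words of length $\ge 1$ beginning with $0$, so $(\alpha 0)\eta\sim(\beta 0)\eta$ whenever $0\eta\notin E$, i.e.\ for all but finitely many $\eta$; similarly for $\alpha 1\approx\beta 1$. For reducedness, suppose $\alpha 0\approx\beta 0$ and $\alpha 1\approx\beta 1$, witnessed by finite sets $E_0$ and $E_1$. Then for a word $\eta$ of length $\ge 1$ we have $\eta=0\eta'$ or $\eta=1\eta'$, and $\alpha\eta\sim\beta\eta$ holds provided $\eta'\notin E_0$ (first case) or $\eta'\notin E_1$ (second case); the set of $\eta$ for which this can fail is contained in $\{0\eta':\eta'\in E_0\}\cup\{1\eta':\eta'\in E_1\}\cup\{\emptyword\}$, which is finite, so $\alpha\approx\beta$.

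For~\ref{i:reduction-quot}, assume $\sim$ is coherent and $\alpha\sim\beta$. An easy induction on $\length{\eta}$, using coherence at each step, gives $\alpha\eta\sim\beta\eta$ for \emph{every} word $\eta$; in particular $\alpha\approx\beta$ (with empty exceptional set).

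For~\ref{i:reductiongps}, I would argue the equality of the $\Fix$ subgroups first and the containment of the $\Stab$ subgroups second. For $\Fix{V}{\Part[Q]}=\Fix{V}{\Part}$: if $g\in\Fix{V}{\Part}$ then for some $\AddrSet_0\cofsubset\AddrSet$, $\alpha^g$ is defined and $\alpha^g\sim\alpha$ for all $\alpha\in\AddrSet_0$; since $\sim$ is coherent, part~\ref{i:reduction-quot} gives $\alpha^g\approx\alpha$, so $g\in\Fix{V}{\Part[Q]}$. Conversely, if $g\in\Fix{V}{\Part[Q]}$, take $\AddrSet_0\cofsubset\AddrSet$ with $\alpha^g$ defined and $\alpha^g\approx\alpha$ for $\alpha\in\AddrSet_0$; by Lemma~\ref{lem:partialact}(i), $(\alpha\eta)^g=(\alpha^g)\eta$ whenever $\alpha^g$ is defined, and by definition of $\approx$ there is a finite $E_\alpha$ with $(\alpha^g)\eta\sim(\alpha)\eta$ for $\eta\notin E_\alpha$. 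Using that $\AddrSet\setminus\AddrSet_0$ is finite together with the fact that an element of $V$ has only finitely many addresses on which it is undefined, one can choose a depth $n$ large enough that every address of length $n$ lies in $\AddrSet_0$, is a proper extension of none of the finitely many bad addresses, and the corresponding exceptional words are exhausted; then every address of length $\ge n$ is of the form $\alpha\eta$ with $\alpha\in\AddrSet_0$ of length $n$ and $\eta\notin E_\alpha$, hence satisfies $(\alpha\eta)^g\sim\alpha\eta$. This exhibits a cofinite set on which $g$ fixes the parts of $\sim$, so $g\in\Fix{V}{\Part}$. For the containment $\Stab{V}{\Part}\leq\Stab{V}{\Part[Q]}$: let $g\in\Stab{V}{\Part}$ with witnessing set $\AddrSet_0\cofsubset\AddrSet$. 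Given $\alpha,\beta\in\AddrSet_0$ with $\alpha\approx\beta$, I want $\alpha^g\approx\beta^g$. By definition $\alpha\eta\sim\beta\eta$ for all $\eta$ outside a finite set, and for $\eta$ long enough $\alpha\eta$ and $\beta\eta$ both lie in $\AddrSet_0$, whence $(\alpha\eta)^g\sim(\beta\eta)^g$, i.e.\ $(\alpha^g)\eta\sim(\beta^g)\eta$; since this holds for all but finitely many $\eta$, $\alpha^g\approx\beta^g$. The converse implication is obtained the same way applied to $g^{-1}$ (or directly, since all the equivalences reverse on the cofinite set), so $g$ stabilizes $\approx$ on a cofinite set and $g\in\Stab{V}{\Part[Q]}$.

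The main obstacle is the converse direction in the $\Fix$ equality, $\Fix{V}{\Part[Q]}\subseteq\Fix{V}{\Part}$: one must pass from the ``eventual'' relation $\approx$ back to $\sim$ itself, and this requires carefully controlling, simultaneously, the finitely many exceptional words attached to each of finitely many addresses of a fixed depth, together with the finitely many addresses where $g$ is undefined or where $\sim$-equivalence is not yet guaranteed, and showing a single depth $n$ works for all of them at once. The bookkeeping is routine once framed as ``finitely many finite obstructions, hence a uniform depth exists,'' but it is the only point where more than a one-line argument is needed.
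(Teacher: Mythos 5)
Your overall structure matches the paper's: a direct verification that $\approx$ is a coherent, reduced equivalence relation, part~(i) by induction on $\length{\eta}$ using coherence, and then the two subgroup statements, with the $\Stab$ containment argued just as in the paper. There is, however, one genuine flaw: in proving $\Fix{V}{\Part}\subseteq\Fix{V}{\Part[Q]}$ you invoke part~(i), and hence the coherence of $\sim$, but part~(ii) is asserted for an \emph{arbitrary} equivalence relation $\sim$ --- coherence is a hypothesis only of part~(i), and the whole point of the lemma is to pass from an arbitrary relation to a type system. The paper's argument avoids this: for $\alpha\in\AddrSet_0$ one has $\alpha\eta\in\AddrSet_0$ for all but finitely many $\eta$ (since $\AddrSet\setminus\AddrSet_0$ is finite), whence $(\alpha^g)\eta=(\alpha\eta)^g\sim\alpha\eta$ for all such $\eta$, so $\alpha^g\approx\alpha$ directly. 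This is exactly the device you already use in the $\Stab$ part, so the repair is a one-liner, but as written the step does not follow from the stated hypotheses.

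Secondly, in the converse inclusion $\Fix{V}{\Part[Q]}\subseteq\Fix{V}{\Part}$ your uniform-depth claim is misstated: with a single $n$ serving both as the prefix length and as the cutoff, it is not true that every address of length $\geq n$ has the form $\alpha\eta$ with $\length{\alpha}=n$ and $\eta\notin E_\alpha$ --- the finitely many addresses $\alpha\eta$ with $\eta\in E_\alpha$ are exceptions, and enlarging $n$ only creates new exceptional sets attached to the new prefixes. What is true, and suffices, is either to fix a depth $d$ with every length-$d$ address in $\AddrSet_0$ and take the cutoff $n$ greater than $d$ plus the maximal length of a word occurring in the finitely many sets $E_\alpha$ with $\length{\alpha}=d$, or simply to observe that the bad addresses $\set{\alpha\eta}{\length{\alpha}=d,\ \eta\in E_\alpha}$ together with the addresses of length less than $d$ form a finite set, so the set on which $\gamma^g\sim\gamma$ is cofinite. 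The paper handles the same bookkeeping slightly differently, choosing finitely many incomparable $\beta_i\in\AddrSet_0$ whose cones $\cone{\beta_i}$ partition $\Cant$ and taking $\AddrSet_1=\set{\beta_i\eta}{\eta\in\Delta_i}$; your idea is the same in spirit, and you correctly identify this as the only delicate point, but the statement of the step needs the correction above.
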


\begin{proof}
It is straightforward to verify that $\approx$~is an equivalence relation on~$\AddrSet$ and that it is both coherent and reduced.  If we assume that $\sim$~is coherent, then part~\ref{i:reduction-quot} also follows directly.

For part~\ref{i:reductiongps}, suppose that $g \in \Fix{V}{\Part}$.  Then there exists $\AddrSet_{0} \cofsubset \AddrSet$ such that $\alpha^{g}$~is defined and $\alpha^{g} \sim \alpha$ for all $\alpha \in \AddrSet_{0}$.  If $\alpha \in \AddrSet_{0}$, then $\alpha\eta \in \AddrSet_{0}$ for all but finitely many words~$\eta$ and $\alpha^{g}\eta = (\alpha\eta)^{g} \sim \alpha\eta$ for all such~$\eta$.  Hence $\alpha^{g} \approx \alpha$ for $\alpha \in \AddrSet_{0}$ and we deduce $g \in \Fix{V}{\Part[Q]}$.  Conversely, suppose $g \in \Fix{V}{\Part[Q]}$ and let $\AddrSet_{0} \cofsubset \AddrSet$ such that $\alpha^{g}$~is defined and $\alpha^{g} \approx \alpha$ for all $\alpha \in \AddrSet_{0}$.  There exist incomparable addresses $\beta_{1}$,~$\beta_{2}$, \dots,~$\beta_{k}$ in~$\AddrSet_{0}$ such that $\Cant$~is the disjoint union of the cones~$\cone{\beta_{i}}$.  Now, $\beta_{i}^{g} \approx \beta$ for each~$i$, so there exists $\Delta_{i} \cofsubset \AddrSet$ such that $(\beta_{i}^{g})\eta \sim \beta_{i}\eta$ for all $\eta \in \Delta_{i}$.  Set $\AddrSet_{1} = \set{\beta_{i}\eta}{\eta \in \Delta_{i}, \; 1 \leq i \leq k}$.  Then $\AddrSet_{1} \cofsubset \AddrSet$ and $\alpha^{g} \sim \alpha$ for all $\alpha \in \AddrSet_{1}$; that is, $g \in \Fix{V}{\Part}$.  This demonstrates that $\Fix{V}{\Part[Q]} = \Fix{V}{\Part}$.

Finally, let $g \in \Stab{V}{\Part}$.  There exists $\AddrSet_{2} \cofsubset \AddrSet$ such that $\alpha^{g}$~is defined for all $\alpha \in \AddrSet_{2}$ and, for $\alpha,\beta \in \AddrSet_{2}$, \ $\alpha \sim \beta$ if and only if $\alpha^{g} \sim \beta^{g}$.  By removing at most finitely many addresses, we may assume that $\alpha0,\alpha1 \in \AddrSet_{2}$ for all $\alpha \in \AddrSet_{2}$.  Let $\alpha,\beta \in \AddrSet_{2}$.  If $\alpha \approx \beta$ then $\alpha\eta \sim \beta\eta$ for all but finitely many words~$\eta$.  All such $\alpha\eta$~and~$\beta\eta$ are in~$\AddrSet_{2}$ by our assumption and hence $(\alpha\eta)^{g} \sim (\beta\eta)^{g}$ for all but finitely many~$\eta$; that is, $\alpha^{g}\eta \sim \beta^{g}\eta$.  Thus $\alpha^{g} \approx \beta^{g}$.

Conversely suppose $\alpha^{g} \approx \beta^{g}$.  Then $(\alpha\eta)^{g} = \alpha^{g}\eta \sim \beta^{g}\eta = (\beta\eta)^{g}$ for all but finitely many words~$\eta$.  Since $\alpha\eta,\beta\eta \in \AddrSet_{2}$, we deduce $\alpha\eta \sim \beta\eta$ for such~$\eta$.  Hence $\alpha \approx \beta$.  In conclusion, we have shown that $g \in \Stab{V}{\Part[Q]}$ and established the inclusion $\Stab{V}{\Part} \leq \Stab{V}{\Part[Q]}$.
\end{proof}

Lemma~\ref{lem:reduction} shows that we lose little by assuming that $\Part$~is a type system rather than working with some arbitrary equivalence relation.  Accordingly, we shall work with type systems from this point forward.  The following gives a characterization of when an element belongs to~$\Stab{V}{\Part}$ under this hypothesis.

\begin{lemma}
\label{lem:reducedStab}
Let $\Part$~be a type system on~$\AddrSet$ and $\sim$~be its associated equivalence relation.  Let $g \in V$ and let $\Gamma$ be any set of addresses with finite complement such that $\alpha^{g}$~is defined for all $\alpha \in \Gamma$.
Then $g \in \Stab{V}{\Part}$ if and only if, for all $\alpha, \beta \in \Gamma$, \ $\alpha \sim \beta$ if and only if $\alpha^{g} \sim \beta^{g}$.
\end{lemma}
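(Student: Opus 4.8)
The plan is to split into the two implications, noting that the backward one is immediate and the forward one carries all the content. For the backward direction, if the stated equivalence holds for all $\alpha,\beta \in \Gamma$, then since $\Gamma$ is a cofinite subset of $\AddrSet$ on which $g$ is everywhere defined, the set $\AddrSet_0 = \Gamma$ directly witnesses $g \in \Stab{V}{\Part}$ via Definition~\ref{def:basic}, and there is nothing more to do.

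For the forward direction, I would start from a cofinite witness set $\AddrSet_0$ for $g \in \Stab{V}{\Part}$ (so $\alpha^g$ is defined on $\AddrSet_0$ and $\alpha \sim \beta$ iff $\alpha^g \sim \beta^g$ for $\alpha,\beta \in \AddrSet_0$), and choose an integer $N$ exceeding the length of every address outside $\AddrSet_0$, so that every address of length at least $N$ lies in $\AddrSet_0$. Now fix $\alpha,\beta \in \Gamma$; the strategy is an ``up, across, down'' argument. If $\alpha \sim \beta$, then coherence propagates this to $\alpha\eta \sim \beta\eta$ for every word $\eta$; restricting to $\eta$ with $|\eta| \geq N$ puts $\alpha\eta$ and $\beta\eta$ inside $\AddrSet_0$, and since $\alpha^g$ is defined (as $\alpha \in \Gamma$) Lemma~\ref{lem:partialact} gives $(\alpha\eta)^g = (\alpha^g)\eta$ and $(\beta\eta)^g = (\beta^g)\eta$; the $\AddrSet_0$\nbd property then yields $(\alpha^g)\eta \sim (\beta^g)\eta$ for all $\eta$ with $|\eta| \geq N$. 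Finally, because $\Part$ is reduced, $(\alpha^g)\eta0 \sim (\beta^g)\eta0$ together with $(\alpha^g)\eta1 \sim (\beta^g)\eta1$ forces $(\alpha^g)\eta \sim (\beta^g)\eta$, so a downward induction on $|\eta|$ from $N$ to $0$ (carried out over all addresses of a given length simultaneously) delivers $\alpha^g \sim \beta^g$. Running the identical argument with the roles of $\{\alpha,\beta\}$ and $\{\alpha^g,\beta^g\}$ interchanged — legitimate since the $\AddrSet_0$\nbd property is a biconditional — gives the converse implication $\alpha^g \sim \beta^g \Rightarrow \alpha \sim \beta$, and the two together are exactly the claimed equivalence on $\Gamma$.

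I do not expect a serious obstacle: the whole point is that the defining property of $\Stab{V}{\Part}$ is asserted only on \emph{some} cofinite set while we need it on an \emph{arbitrary} one, and this gap is bridged precisely by the two halves of the type-system hypothesis — coherence to push $\sim$\nbd relations out to long addresses where $\AddrSet_0$ is in force, and reducedness to pull the transported relations back down to $\alpha^g,\beta^g$ (or to $\alpha,\beta$). Everything else is routine bookkeeping with cofinite sets and Lemma~\ref{lem:partialact}; the only place needing a little care is stating the downward induction cleanly.
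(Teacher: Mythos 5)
Your proposal is correct and follows essentially the same route as the paper's proof: take $\AddrSet_0 = \Gamma$ for the easy direction, and for the forward direction push $\sim$ up to addresses long enough to land in a witness set $\AddrSet_0$ via coherence, transport across with $(\alpha\eta)^g = (\alpha^g)\eta$, and pull back down via reducedness, then repeat symmetrically for the converse. The only difference is that you spell out the downward induction on length which the paper leaves implicit in the phrase "since $\sim$ is reduced."
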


\begin{proof}
Certainly if the condition holds then $g \in \Stab{V}{\Part}$ when we take $\AddrSet_{0} = \Gamma$ where $\AddrSet_0$ is as in Definition~\ref{def:basic}.
Conversely, suppose $g \in \Stab{V}{\Part}$.  Then there exists $\AddrSet_{0} \cofsubset \AddrSet$ such that $\alpha^{g}$~is defined for all $\alpha \in \AddrSet_{0}$ and, for $\alpha, \beta \in \AddrSet_{0}$, \ $\alpha \sim \beta$ if and only if $\alpha^g \sim \beta^g$.  As $\card{\AddrSet \setminus \AddrSet_{0}}$ is finite, there is a positive integer~$n$ such that $\delta \in \AddrSet_{0}$ whenever $\length{\delta} \geq n$.
  
Let $\alpha, \beta \in \Gamma$ and suppose $\alpha \sim \beta$.  If $\eta$~is any word of length~$n$, then $\alpha\eta, \beta\eta \in \AddrSet_{0}$ and $\alpha\eta \sim \beta\eta$ since $\sim$~is coherent.  Therefore $(\alpha^{g})\eta = (\alpha\eta)^{g} \sim (\beta\eta)^{g} = (\beta^{g})\eta$ for all such words~$\eta$.  Since $\sim$~is reduced, we now conclude $\alpha^{g} \sim \beta^{g}$.
A similar argument shows that if $\alpha^{g} \sim \beta^{g}$, then $\alpha \sim \beta$.
This establishes the required condition.
\end{proof}

\begin{example}\label{ex:stabilizerZero}Define a labelling of $\Omega$ by the alphabet $\{\mathsf{A},\mathsf{B}\}$ inductively as follows:
\begin{enumerate}
\item The empty word $\emptyword$ has label $\mathsf{A}$.
\item If $\alpha\in\Omega$ has label $\mathsf{A}$, then $\alpha 0$ has label $\mathsf{A}$, and $\alpha 1$ has label $\mathsf{B}$.
\item If $\alpha\in\Omega$ has label $\mathsf{B}$, then both $\alpha 0$ and $\alpha 1$ have label $\mathsf{B}$.
\end{enumerate}
Conditions (ii) and (iii) are summarized in Figure~\ref{fig:stabilizerZero}, which we refer to as the \textit{label diagram} for this labelling.  In general, a label diagram consists of one caret for each label and the labelling is completely determined by the label diagram together with the information that the empty word has label~$\mathsf{A}$.

\begin{figure}
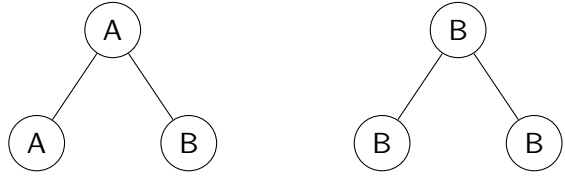

\begin{center}
  \labelcaret{A}{A}{B}
  \qquad\qquad
  \labelcaret{B}{B}{B}
\end{center}
  \caption{Label diagram for Example~\ref{ex:stabilizerZero}}
  \label{fig:stabilizerZero}
\end{figure}

This labelling defines a partition $\Part$ of $\AddrSet$ and this is an example of a type system.  In general, the equivalence relation determined by a label diagram is always coherent, and is reduced (and so defines a type system) as long as no two carets have the same labels on their leaves.

For this particular labelling, the addresses labeled~$\mathsf{A}$ are precisely the finite prefixes of the infinite word $\overline{0} = 000\cdots$.  Use of Lemma~\ref{lem:basic}\ref{i:Fix-form} and Lemma~\ref{lem:reducedStab} shows that the resulting labelling subgroups $\Fix{V}{\mathcal{P}}$ and $\Stab{V}{\mathcal{P}}$ are both equal to the stabilizer in~$V$ of the point $\overline{0} = 0000\cdots$.
\end{example}

\begin{example}\label{ex:stabilizerSet}
Let $S\subseteq \Cant$ and, for each $\alpha\in\Omega$, let
\[
S[\alpha] = \set{w\in \Cant}{\alpha w\in S}.
\]
Observe that $S[\alpha] = 0\bigl(S[\alpha0]\bigr) \cup 1\bigl(S[\alpha1]\bigr)$ for each address~$\alpha$ and hence the partition~$\Part$ determined by the equivalence relation~$\sim$ on~$\Omega$ defined by
\[
\alpha \sim \beta \IFF S[\alpha]=S[\beta]
\]
is a type system.  If $g \in V$ and $\alpha$~is any address such that $\alpha^{g}$~is defined, then one checks $S^{g}[\alpha^{g}] = S[\alpha]$.  From this one deduces that $\Fix{V}{\Part}$~equals the stabilizer in~$V$ of the subset~$S$.

For many sets~$S$ the subgroup $\Stab{V}{\Part}$ is equal to $\Fix{V}{\Part}$, but there are exceptions.  For example, if there are any elements of~$V$ that switch~$S$ with its complement (e.g., if $S$ is a clopen set), then these will lie in~$\Stab{V}{\Part}$ but not~$\Fix{V}{\Part}$.
\end{example}

\begin{example}\label{ex:SlopeFour}
Let $\Part$~be the type system determined by the label diagram in Figure~\ref{fig:SlopeFour}, where the empty word has label~$\mathsf{A}$.  Observe that an address $\alpha\in \Omega$ is labeled~$\mathsf{A}$ if and only if the length of~$\alpha$ is even. In this case,
\[
\Fix{V}{\Part} = \set{g\in V }{\text{$\length{\alpha^g} - \length{\alpha}$ is even for all $\alpha\in\AddrSet$ such that $\alpha^{g}$~is defined}}
\]
and
\[
\Stab{V}{\Part} = \Fix{V}{\Part} \cup \set{g\in V}{ \text{$\length{\alpha^g} - \length{\alpha}$ is odd for all $\alpha \in \AddrSet$ such that $\alpha^{g}$~is defined}}.
\]
In particular, $\Fix{V}{\Part}$ has index two in $\Stab{V}{\Part}$.

\begin{figure}
\begin{center}
  \labelcaret{A}{B}{B}
  \qquad\qquad
  \labelcaret{B}{A}{A}
\end{center}
  \caption{Label diagram for Example~\ref{ex:SlopeFour}}
  \label{fig:SlopeFour}
\end{figure}

\end{example}

\begin{defn}
\label{def:typeprops}
\mbox{}
\begin{enumerate}
\item A type system~$\Part[Q]$ (determined by the equivalence relation~$\approx$) is said to be a \emph{quotient} of the type system~$\Part$ (determined by the equivalence relation~$\sim$) if, for all $\alpha,\beta \in \AddrSet$, \ $\alpha \sim \beta$ implies $\alpha \approx \beta$.
\item The \emph{trivial relation} on~$\AddrSet$ is that given by equality: two addresses $\alpha$~and~$\beta$ are related under this relation only when $\alpha = \beta$.  The \emph{universal relation} is given by $\alpha \sim \beta$ for all $\alpha,\beta \in \AddrSet$.  The partitions determined by these two equivalence relations are both examples of type systems.
\item A type system~$\Part$ is said to be \emph{simple} if its only proper quotient is the one determined by the universal relation.
\item A type system~$\Part$ is said to be \emph{finite} if it consists of only finitely many equivalence classes on~$\AddrSet$.
\end{enumerate}
\end{defn}

The language of ``quotient'' here is appropriate since if $\Part[Q]$~is a quotient of the type system~$\Part$, then there is an associated surjective map $\Part \to \Part[Q]$.  Also observe that every type system is a quotient of the one determined by the trivial relation, while the type system for the universal relation is a quotient of every type system.  We now observe that for most type systems, $\Stab{V}{\Part}$ and $\Fix{V}{\Part}$ are in fact proper subgroups of~$V$.

\begin{lemma}
\label{lem:Stab-proper}
Let $\Part$~be a type system on~$\AddrSet$ with associated equivalence relation~$\sim$ that is neither the trivial nor the universal relation. Then $\Stab{V}{\Part} \neq V$.
\end{lemma}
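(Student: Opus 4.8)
The plan is to produce an explicit element $g\in V$ that does not lie in $\Stab{V}{\Part}$. By Lemma~\ref{lem:reducedStab} it is enough to find $g\in V$ and two addresses $p,q$ such that $g$ is defined on a cofinite set containing $p$ and $q$ and $p\sim q$ but $p^{g}\not\sim q^{g}$: taking $\Gamma$ to be the cofinite set of all addresses on which $g$ is defined, the bi-implication in Lemma~\ref{lem:reducedStab} then fails at the pair $(p,q)$, so $g\notin\Stab{V}{\Part}$. To build such a $g$ I shall locate four addresses: an incomparable \emph{equivalent} pair, $p\perp q$ with $p\sim q$, and an incomparable \emph{inequivalent} pair, $u\perp v$ with $u\not\sim v$, arranged moreover so that $\cone p\cup\cone q\neq\Cant$ and $\cone u\cup\cone v\neq\Cant$. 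Granting these, $\Cant$ is partitioned into $\cone p$, $\cone q$ and finitely many (at least one) further cones, and likewise into $\cone u$, $\cone v$ and finitely many further cones; after subdividing we may assume the two partitions have the same number of further cones, and then the prefix substitution map sending $\cone p\to\cone u$ (via $pw\mapsto uw$), $\cone q\to\cone v$ (via $qw\mapsto vw$), and the remaining cones to one another is an element $g\in V$ with $p^{g}=u$ and $q^{g}=v$. For this $g$ we have $p\sim q$ while $p^{g}=u\not\sim v=q^{g}$, as required.

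The equivalent pair comes from the hypothesis that $\sim$ is not trivial: fix distinct addresses $\alpha\sim\beta$. If $\alpha\perp\beta$, put $(p,q)=(\alpha,\beta)$. Otherwise one of $\alpha,\beta$ is a proper prefix of the other, say $\beta=\alpha i\zeta$ with $i\in\letters$ and $\zeta$ a (possibly empty) word; letting $j$ be the other letter, coherence gives $\alpha j\sim\beta j=\alpha i\zeta j$, and the addresses $\alpha j$ and $\alpha i\zeta j$ first disagree in position $\length\alpha+1$, hence are incomparable, so we put $(p,q)=(\alpha j,\beta j)$. Finally I replace $p$ by $p0$ and $q$ by $q0$: these still satisfy $p0\sim q0$ by coherence and are still incomparable, and since both now have length at least $2$ the pair $\{\cone{p0},\cone{q0}\}$ is not the partition $\{\cone0,\cone1\}$, which is the only partition of $\Cant$ into two cones; hence $\cone{p0}\cup\cone{q0}\neq\Cant$, and we rename this pair $(p,q)$.

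The inequivalent pair is the one place where the \emph{reduced} property of a type system is used. Suppose, for contradiction, that every pair of incomparable addresses were equivalent. Any non-empty address $\gamma$ begins with $0$ or with $1$; in the first case $\gamma\perp1$ and in the second $\gamma\perp0$, and also $0\perp1$, so in every case $\gamma\sim0$. In particular $00\sim0$, and $01\sim0\sim1$; applying the reduced property to the addresses $\emptyword$ and $0$ — for which $\emptyword0=0\sim00$ and $\emptyword1=1\sim01$ — gives $\emptyword\sim0$. Hence every address is equivalent to $0$, i.e.\ $\sim$ is the universal relation, contrary to hypothesis. So there exist incomparable $u\perp v$ with $u\not\sim v$. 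If it should happen that $\{u,v\}=\letters$ — again the only way two disjoint cones can cover $\Cant$ — then by the reduced property at least one of $(u0,v0)$, $(u1,v1)$ is still inequivalent, and it is still incomparable; replacing $(u,v)$ by it secures $\cone u\cup\cone v\neq\Cant$.

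With $p,q,u,v$ chosen we form $g$ as in the first paragraph and conclude from Lemma~\ref{lem:reducedStab} that $\Stab{V}{\Part}\neq V$. The only substantive step is the argument of the third paragraph, that non-universality of $\sim$ forces the existence of an incomparable inequivalent pair; this genuinely requires reducedness (a merely coherent equivalence relation — for instance one with a type occurring only at $\emptyword$ — can fail it), and it is the step I expect to be the main obstacle. Everything else is routine manipulation of cones and prefix substitutions.
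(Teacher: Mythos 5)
Your proof is correct, and it reaches the conclusion by the same basic mechanism as the paper --- exhibit a single element of $V$ that sends a $\sim$\nbd related pair to an unrelated pair and invoke Lemma~\ref{lem:reducedStab} --- but the witness element is constructed differently. The paper arranges for a \emph{pairwise incomparable triple} $\alpha\sim\beta\nsim\gamma$ (its existence argument: an incomparable related pair exists by non-triviality plus coherence, and if every incomparable triple were related then all addresses of length at least~$2$ would be related, whence reducedness would force $\sim$ to be universal); with such a triple the single transposition $\swap{\alpha}{\gamma}$ fixes~$\beta$ and immediately violates the condition of Lemma~\ref{lem:reducedStab}, so no further construction is needed. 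You instead decouple the two pairs: an incomparable related pair $p\sim q$ and an incomparable unrelated pair $u\nsim v$, each arranged to have small support, and then build a bespoke prefix substitution carrying $(p,q)$ to $(u,v)$ by matching complementary cones after subdivision. This costs you the small-support bookkeeping and the partition-matching step --- all of which you carry out correctly; in particular your uses of coherence (to replace a comparable related pair by an incomparable one) and of reducedness (both to rule out ``every incomparable pair is related'' via pulling $\emptyword$ into the class, and to push inequivalence from $\{0,1\}$ down to children) are sound --- but it buys you the freedom of not having to make the three relevant addresses jointly incomparable. Both arguments rest on exactly the same ingredients (non-triviality, non-universality, coherence, reducedness, Lemma~\ref{lem:reducedStab}); the paper's is shorter only because the transposition trick removes the need to construct a general element of~$V$.
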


\begin{proof}
Since $\sim$~is not the trivial relation, there are distinct addresses $\alpha$~and~$\beta$ such that $\alpha \sim \beta$.  If $\alpha$~were a prefix of~$\beta$, then there is a choice of letter~$x \in \letters$ such that $\alpha x \perp \beta x$.  Hence, by coherence, there is a pair of incomparable addresses related under~$\sim$.  If it were the case that all triples of incomparable addresses were related under~$\sim$, then any pair of addresses of length at least~$2$ would be related under~$\sim$ and, as it is reduced, we would conclude $\sim$~is the universal relation.  Hence there exist pairwise incomparable addresses $\alpha$,~$\beta$ and~$\gamma$ such that $\alpha \sim \beta \nsim \gamma$.  Then $\swap{\alpha}{\gamma} \notin \Stab{V}{\Part}$ by Lemma~\ref{lem:reducedStab} and so $\Stab{V}{\Part}$~is a proper subgroup of~$V$.
\end{proof}

We shall associate two additional structures to a type system.  This first is a graph which encodes how the type changes as one descends the binary rooted tree whose vertices are labelled by the addresses in~$\AddrSet$.  The second is a semigroup that will enable us to extend the notion of ``type'' of a cone to any clopen subset of~$\Cant$.  

\begin{defn}
\label{def:typegraph}
Let $\Part$~be a type system with $\sim$~its associated equivalence relation on~$\AddrSet$.  We associate a directed graph~$\graph{\Part}$, called the \emph{type graph}, as follows.
The vertices of~$\graph{\Part}$ are the types in~$\Part$.  If $P$~is a type, say with some address~$\alpha$ of type~$P$, then there is an edge labelled~$0$ from~$P$ to~$Q$ if the address~$\alpha0$ has type~$Q$.  Similarly there is an edge labelled~$1$ from~$P$ to~$R$ if the address~$\alpha1$ has type~$R$.
\end{defn}

Note that the fact that $\sim$~is coherent ensures that the definition of the edges in~$\graph{\Part}$ is independent of the choice of representative~$\alpha \in P$.  Hence this graph is well-defined.

\begin{notate}
Let $\Part$~be a type system and $P \in \Part$, say with some representative~$\alpha$.  We write~$P_{\gamma}$ for the type of the address~$\alpha\gamma$ for each $\gamma \in \AddrSet$.
\end{notate}

The coherence of~$\sim$ ensures this is well-defined independent of the choice of~$\alpha$.  Furthermore, if one starts at vertex~$P$ in the graph~$\graph{\Part}$ and follows the path labelled by the word~$\gamma$, then this path terminates at the vertex~$P_{\gamma}$.  In particular, $P_{0}$~and~$P_{1}$ are the two vertices adjacent to~$P$ and correspond to the types of the two descendants of~$\alpha$ in the binary rooted tree labelled by the addresses in~$\AddrSet$.

Observe that if $P$~is an infinite class in~$\Part$ then both $P_{0}$~and~$P_{1}$ are also infinite classes.  Hence there is a subgraph~$\Gamma^{\ast}$ of~$\graph{\Part}$ whose vertices are those members of~$\Part$ that are infinite classes.  We have observed in Lemma~\ref{lem:P*-action} above that $\Stab{V}{\Part}$~acts upon the set~$\infPart$ of these infinite classes and consequently it also acts, via automorphisms, on this subgraph~$\Gamma^{\ast}$.

\subsection{The semigroup associated to the equivalence relation}\label{subs:semigroup}

The semigroup~$\sgp{\Part}$ that we shall associate to a type system~$\Part$ will be a homomorphic image of the semigroup~$\sgp{\Cant}$ of continuous functions from~$\Cant$ to the set~$\Nat$ of natural numbers (endowed with the discrete topology) that are not identically zero.  If $U$~is a subset of~$\Cant$, we write~$\chi_{U}$ for the characteristic function of~$U$; that is, the function that takes the value~$1$ on~$U$ and the value~$0$ elsewhere.  To simplify notation, we use~$\chi_{\alpha}$, for $\alpha \in \AddrSet$, as an abbreviation for the characteristic function of the cone~$\cone{\alpha}$.  These characteristic functions generate~$\sgp{\Cant}$ and we shall exploit the following presentation for this semigroup.

\begin{lemma}
\label{lem:S(C)-pres}
The semigroup~$\sgp{\Cant}$ of non-zero continuous functions $\Cant \to \Nat$ is isomorphic to the additively-written commutative semigroup with presentation consisting of generators~$x_{\alpha}$ for each~$\alpha \in \AddrSet$ and relations $x_{\alpha} = x_{\alpha0} + x_{\alpha1}$ for each~$\alpha \in \AddrSet$.
\end{lemma}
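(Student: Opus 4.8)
The plan is to build the evident surjection onto $\sgp{\Cant}$ and show it is injective by pushing formal sums to a common ``level''. Write $M$ for the commutative semigroup defined by the stated presentation, so that an element of $M$ is a class of non-empty formal sums $\sum_{\alpha\in\AddrSet} n_{\alpha}x_{\alpha}$ with each $n_{\alpha}\in\Nat$, all but finitely many zero, under the congruence generated by the relations. Since $\cone{\alpha}$ is the disjoint union of $\cone{\alpha0}$ and $\cone{\alpha1}$, one has $\chi_{\alpha} = \chi_{\alpha0} + \chi_{\alpha1}$ in $\sgp{\Cant}$, so the assignment $x_{\alpha}\mapsto\chi_{\alpha}$ extends to a well-defined semigroup homomorphism $\phi\colon M\to\sgp{\Cant}$.

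For surjectivity, take $f\in\sgp{\Cant}$. As $\Cant$ is compact and $\Nat$ is discrete, $f$ is locally constant, so $\Cant$ is partitioned into finitely many cones $\cone{\alpha_{1}},\dots,\cone{\alpha_{n}}$ on which $f$ is constant with respective values $c_{1},\dots,c_{n}\in\Nat$; then $f = \sum_{i}c_{i}\chi_{\alpha_{i}}$. Because $f$ is not identically zero some $c_{i}$ is positive, and discarding the terms with $c_{i}=0$ exhibits $f$ as the image under $\phi$ of a non-empty formal sum. Hence $\phi$ is onto.

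Injectivity is the substantive point. Fix $N\in\Nat$. Iterating the relation $x_{\alpha}=x_{\alpha0}+x_{\alpha1}$ shows that $x_{\alpha}=\sum_{\length{\eta}=N-\length{\alpha}}x_{\alpha\eta}$ in $M$ whenever $\length{\alpha}\leq N$; hence any formal sum $s=\sum_{\alpha}n_{\alpha}x_{\alpha}$ all of whose terms have length at most $N$ equals in $M$ the sum $s_{N}:=\sum_{\length{\beta}=N}m_{\beta}x_{\beta}$, where $m_{\beta}=\sum_{\alpha\prefix\beta}n_{\alpha}$. Moreover, if $\length{\beta}=N$ and $w\in\cone{\beta}$, then for each $\alpha$ occurring in $s$ one has $\length{\alpha}\leq N$, so $\chi_{\alpha}(w)=1$ exactly when $\alpha\prefix\beta$; thus $\phi(s)$ takes the constant value $m_{\beta}$ on $\cone{\beta}$.

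To conclude, suppose $s,t$ are non-empty formal sums with $\phi(s)=\phi(t)$, and choose $N$ at least as large as the length of every address occurring in $s$ or in $t$. Forming $s_{N}$ and $t_{N}$ as above, the coefficient of $x_{\beta}$ in $s_{N}$ is the constant value of $\phi(s)$ on $\cone{\beta}$, which equals the constant value of $\phi(t)=\phi(s)$ on $\cone{\beta}$, which is the coefficient of $x_{\beta}$ in $t_{N}$, for every $\beta$ of length $N$. Therefore $s_{N}$ and $t_{N}$ are identical as formal sums, and so $s=s_{N}=t_{N}=t$ in $M$. The only step demanding any care is the bookkeeping in the subdivision, namely checking the formula $m_{\beta}=\sum_{\alpha\prefix\beta}n_{\alpha}$ and that $\phi(s)$ really is constant on each level-$N$ cone; the rest is routine.
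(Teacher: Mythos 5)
Your proposal is correct and follows essentially the same route as the paper: map $x_{\alpha}\mapsto\chi_{\alpha}$, get surjectivity from the fact that the characteristic functions of cones generate $\sgp{\Cant}$ (which you justify via local constancy and compactness), and prove injectivity by subdividing both formal sums down to a common length~$N$ and reading off the coefficients as the constant values of the image function on the level-$N$ cones. The only difference is that you spell out the surjectivity and the subdivision bookkeeping a little more explicitly than the paper does.
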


\begin{proof}
Define $T$~be the commutative semigroup with presentation
\[
\presentation{ \text{$x_{\alpha}$ (for $\alpha \in \AddrSet$)} }{ \text{$x_{\alpha} = x_{\alpha0} + x_{\alpha1}$ for each $\alpha \in \AddrSet$}}.
\]
Since the characteristic functions~$\chi_{\alpha}$ satisfy the same relations, we observe that there is a surjective homomorphism $\phi \colon T \to \sgp{\Cant}$ given by $x_{\alpha} \mapsto \chi_{\alpha}$.  Suppose that $f$~and~$g$ are elements of~$T$ that satisfy $f\phi = g\phi$.  Express $f$~and~$g$ as sums of generators~$x_{\alpha}$ and let $N$~be the length of the longest address~$\alpha$ involved in these sums.  By repeated use of the relations $x_{\alpha} = x_{\alpha0} + x_{\alpha1}$, we can write $f = \sum_{\gamma \in C} c_{\gamma}x_{\gamma}$ and $g = \sum_{\gamma \in C} d_{\gamma} x_{\gamma}$ where $C$~is the set of addresses of length~$N$ and $c_{\gamma}$~and~$d_{\gamma}$ are non-negative integers.  The images of $f$~and~$g$ under~$\phi$ take the values $c_{\gamma}$~and~$d_{\gamma}$ respectively on the (disjoint) cones~$\cone{\gamma}$ and the hypothesis then implies that $c_{\gamma} = d_{\gamma}$ for each $\gamma \in C$.  It follows that $\phi$~is injective and hence $\sgp{\Cant} \cong T$, as claimed.
\end{proof}

Fix a type system~$\Part$ determined by some coherent and reduced equivalence relation~$\sim$ on~$\AddrSet$.  We shall associate a semigroup~$\sgp{\Part}$ with generators that correspond to the types in~$\Part$.  We reserve the lowercase letters $p$,~$q$ and~$r$ for the generators that correspond to the types in~$\Part$ denoted by $P$,~$Q$ and~$R$.  This will be extended to include any decoration that is applied to one these symbols.  Thus, if $P \in \Part$, then  $p_{\gamma}$~will denote the generator of~$\sgp{\Part}$ that corresponds to the type~$P_{\gamma}$, this being the type that is reached by following the path in the graph~$\graph{\Part}$ labelled by the word~$\gamma$.  In particular, $p_{0}$~and~$p_{1}$ will be the generators that correspond to the types $P_{0}$~and~$P_{1}$, these being the two vertices of~$\graph{\Part}$ that are adjacent to~$P$ (attached by edges with labels $0$~and~$1$, respectively).  Similarly, when we write $p^{(1)}$,~$p^{(2)}$, \dots,~$p^{(m)}$, these will be the generators of~$\sgp{\Part}$ that correspond to some family $P^{(1)}$,~$P^{(2)}$, \dots,~$P^{(m)}$ of types in~$\Part$.

\begin{defn}\label{defn:semigroup}
Suppose that $\Part$~is a type system on~$\AddrSet$.  Define~$\sgp{\Part}$ to be the commutative semigroup (whose operation we write additively) with generators~$p$ in one-one correspondence with the types~$P$ in~$\Part$ subject to all relations $p = p_{0} + p_{1}$ for each $P \in \Part$.
\end{defn}

\begin{rem}
In \cite{glu21}, the authors introduce the notion of an arboreal semigroup presentation, connecting it to a labelled planar forest diagram. The main difference between our semigroup and theirs is that we are taking our semigroup to be commutative and working with a single binary rooted tree instead of a more general forest. 
\end{rem}

Recall that the assumption of coherence ensures that the types $P_{0}$~and~$P_{1}$ are well-defined independent of the choice of representative for~$P$.  Hence the above definition for~$\sgp{\Part}$ makes sense in this context.  We denote the generators of~$\sgp{\Part}$ with a lowercase letter since it is common for the relations assumed to imply that $p = q$ for particular distinct types $P$~and~$Q$ from~$\Part$.  We are therefore able to distinguish between equality within this semigroup and genuine equality between types from~$\Part$.  As a first observation, and linking back to the graph~$\graph{\Part}$ defined above, if $P,Q \in \Part$ and there is a path from~$P$ to~$Q$ in~$\graph{\Part}$ then, upon applying the defining relation at each step of the path, we observe
\[
p = q + x
\]
for some $x \in \sgp{\Part}$ (and where, according to our convention, $p$~and~$q$ are the generators of~$\sgp{\Part}$ associated to $P$~and~$Q$).

\begin{defn}
\label{def:s-type}
By Lemma~\ref{lem:S(C)-pres}, there is a surjective homomorphism $\psi \colon \sgp{\Cant} \to \sgp{\Part}$ given by $\chi_{\alpha} \mapsto p$ whenever the address~$\alpha$ belongs has type~$P$ in~$\Part$.  If $U$~is a non-empty clopen subset of~$\Cant$, we define the \emph{s-type of~$U$ (under $\Part$)}, denoted~$\stypep{\Part}{U}$ (or $\stype{U}$ if $\Part$ is understood), to be the image of the characteristic function~$\chi_{U}$ under this homomorphism.
\end{defn}

The idea of this definition is to extend the concept of ``type'' from addresses (which correspond to cones) to arbitrary clopen subsets of~$\Cant$.  However, one should note that it is possible for two addresses $\alpha$~and~$\beta$ to have different types in~$\Part$ but that the defining relations result in the corresponding cones $\cone{\alpha}$~and~$\cone{\beta}$ having the same s-type.  (This reflects our earlier comment that we might have types satisfying $P \neq Q$ but the corresponding generators $p$~and~$q$ being equal in~$\sgp{\Part}$.)  Despite this possible collapse in s-types, this extension of ``type'' to clopen sets is useful and the following is the key property that we shall use:

\begin{lemma}
\label{lem:clopen-type}
Let $\Part$~be a type system with associated equivalence relation~$\sim$ and $\sgp{\Part}$~be the commutative semigroup it determines.  Suppose that $U$~and~$U'$ are non-empty clopen subsets of~$\Cant$ that have the same s-type.  Then there exist decompositions $U = \bigcup_{i=1}^{n} \cone{\gamma_{i}}$ and $U' = \bigcup_{i=1}^{n} \cone{\delta_{i}}$ into the same number of disjoint cones such that $\gamma_{i} \sim \delta_{i}$ for $i = 1$,~$2$, \dots,~$n$.
\end{lemma}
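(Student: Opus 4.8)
The plan is to reduce the statement to a word problem in the commutative semigroup~$\sgp{\Part}$ and then to realise the resulting algebraic identity geometrically as a common subdivision of $U$ and~$U'$. First I would write $U = \bigcup_{\gamma \in A} \cone{\gamma}$ and $U' = \bigcup_{\delta \in B} \cone{\delta}$ as finite disjoint unions of cones, which is possible since $U$~and~$U'$ are clopen. Writing $[\alpha]$ for the type of an address~$\alpha$ and applying the homomorphism~$\psi$ of Definition~\ref{def:s-type} to the identities $\chi_{U} = \sum_{\gamma \in A} \chi_{\gamma}$ and $\chi_{U'} = \sum_{\delta \in B} \chi_{\delta}$ in~$\sgp{\Cant}$, the hypothesis $\stype{U} = \stype{U'}$ translates into the statement that $\sum_{\gamma \in A} [\gamma]$ and $\sum_{\delta \in B} [\delta]$ have the same image in~$\sgp{\Part}$, where $F$ is the free commutative semigroup on the set~$\Part$ of types and $\sgp{\Part}$ is its quotient by the congruence generated by the relations $P = P_{0} + P_{1}$.

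The key step is to solve this word problem. On~$F$ introduce the rewriting relation~$\Rightarrow$ in which $\mu \Rightarrow \nu$ means that $\nu$ is obtained from~$\mu$ by replacing a single occurrence of a generator~$P$ by the pair $P_{0}$,~$P_{1}$; geometrically this is the subdivision of one cone. I claim that two elements of~$F$ have the same image in~$\sgp{\Part}$ exactly when they admit a common $\Rightarrow^{*}$\nbd descendant. The relation ``having a common $\Rightarrow^{*}$\nbd descendant'' is plainly reflexive and symmetric, is compatible with the semigroup operation (run the two reduction sequences on disjoint parts of a product), and relates each~$P$ to~$P_{0} + P_{1}$; since it is clearly contained in the congruence generated by the relations $P = P_{0} + P_{1}$, it suffices to prove that it is transitive, which amounts to confluence of~$\Rightarrow$. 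Confluence follows from a strong diamond property: if $\mu \Rightarrow \nu_{1}$ and $\mu \Rightarrow \nu_{2}$ are obtained by subdividing occurrences $o_{1}$ and~$o_{2}$ of generators in~$\mu$, then either $o_{1} = o_{2}$, in which case $\nu_{1} = \nu_{2}$, or $o_{1} \neq o_{2}$, in which case subdividing the surviving occurrence in each of $\nu_{1}$~and~$\nu_{2}$ produces a common one\nbd step descendant; thus a single\nbd step divergence reconverges in at most one further step on each side, and strong confluence implies confluence of~$\Rightarrow^{*}$. I expect this to be the main obstacle, although it is routine once phrased in rewriting language; alternatively one can run the argument through a uniform ``all generators subdivided to depth~$M$'' normal form, but the confluence statement is still needed.

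Finally I would realise a common descendant as a geometric subdivision. Choose a common $\Rightarrow^{*}$\nbd descendant~$\rho$ of $\sum_{\gamma \in A} [\gamma]$ and $\sum_{\delta \in B} [\delta]$. Following a reduction sequence from $\sum_{\gamma \in A} [\gamma]$ to~$\rho$ and tracking ancestry, each occurrence~$[\gamma]$ is grown into a finite binary tree of types; realising this tree below the address~$\gamma$ yields a finite antichain $\Sigma_{\gamma}$ of addresses with $\cone{\gamma} = \bigcup_{\sigma \in \Sigma_{\gamma}} \cone{\gamma\sigma}$ (a disjoint union) whose leaves carry the types $[\gamma\sigma]$, and altogether $\set{[\gamma\sigma]}{\gamma \in A,\ \sigma \in \Sigma_{\gamma}}$ equals the multiset~$\rho$. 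Carrying out the same construction starting from $\sum_{\delta \in B} [\delta]$ produces antichains $T_{\delta}$ with $U' = \bigcup_{\delta \in B,\ \tau \in T_{\delta}} \cone{\delta\tau}$ a disjoint union and $\set{[\delta\tau]}{\delta \in B,\ \tau \in T_{\delta}}$ also equal to~$\rho$. Since these two families of cones have the same multiset of types, namely~$\rho$, they have the same cardinality~$n$, and we may enumerate the cones of the first as $\cone{\gamma_{1}}, \dots, \cone{\gamma_{n}}$ and, after reordering, those of the second as $\cone{\delta_{1}}, \dots, \cone{\delta_{n}}$ so that $[\gamma_{i}] = [\delta_{i}]$, that is $\gamma_{i} \sim \delta_{i}$, for every~$i$. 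These are the required decompositions.
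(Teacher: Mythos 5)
Your argument is correct, and it reaches the same combinatorial heart of the matter as the paper by a genuinely different route. You translate the hypothesis into the word problem for the presentation of~$\sgp{\Part}$ over the free commutative semigroup on the set of types and solve it by showing the one-step subdivision rewriting $P \Rightarrow P_{0}+P_{1}$ has the diamond property, so that equality in~$\sgp{\Part}$ coincides with joinability; you then transport a common descendant back to cone decompositions of $U$ and~$U'$ by tracking ancestry of occurrences. The paper never leaves the geometric side: it works in~$\sgp{\Cant}$, defines the relation~$\rhorel$ (``admit matched decompositions into cones with $\alpha_{i}\sim\beta_{i}$'') directly, and proves this is a congruence equal to $\ker\psi$; the transitivity step, which plays the role of your confluence argument, is handled by refining both decompositions of the middle function to a common uniform depth~$N$ (using coherence) and observing that two decompositions of the same function into cones indexed by addresses of equal length must coincide. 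What each approach buys: the paper's version avoids the rewriting formalism and the back-translation from multisets of types to cones, since the matched decompositions are present from the start; yours isolates the purely algebraic content (a Church--Rosser property for the arboreal presentation of~$\sgp{\Part}$) in a form independent of the ambient Cantor space. Two small points to make explicit if you write this up: the identification of the leaf types, $[\gamma\sigma]=([\gamma])_{\sigma}$, is exactly where coherence of~$\sim$ is used (it is what makes $P_{0},P_{1}$ well defined and compatible with subdividing a cone of type~$P$), and, as in the paper's proof, $U$ and $U'$ should be taken non-empty so that their characteristic functions lie in~$\sgp{\Cant}$ and the decompositions into cones are non-trivial.
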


\begin{proof}
The s-type of a non-empty clopen subset~$U$ is its image of the characteristic function~$\chi_{U}$ under the homomorphism $\psi \colon \sgp{\Cant} \to \sgp{\Part}$ and so we seek to identify the kernel of~$\psi$ as a congruence on~$\sgp{\Cant}$.  Define a relation~$\rhorel$ on~$\sgp{\Cant}$ by $f \rhorel g$ if there exist decompositions $f = \sum_{i=1}^{m} \chi_{\alpha_{i}}$ and $g = \sum_{i=1}^{m} \chi_{\beta_{i}}$ where $\alpha_{i} \sim \beta_{i}$ for each~$i$.  This relation is certainly reflexive and symmetric.  Suppose that $f \rhorel g$ and $g \rhorel h$.  Then there are decompositions
\begin{equation}
f = \sum_{i=1}^{m} \chi_{\alpha_{i}}, \qquad 
g = \sum_{i=1}^{m} \chi_{\beta_{i}} = \sum_{j=1}^{n} \chi_{\gamma_{j}}, \qquad
h = \sum_{j=1}^{n} \chi_{\delta_{j}}
\label{eq:rho-decomp}
\end{equation}
such that $\alpha_{i} \sim \beta_{i}$ for each~$i$ and $\gamma_{j} \sim \delta_{j}$ for each~$j$.  Let $N$~be the maximum length of the addresses $\beta_{i}$~and~$\gamma_{i}$.  Repeatedly exploit the relation $\chi_{\alpha} = \chi_{\alpha0} + \chi_{\alpha1}$ to express each~$\chi_{\beta_{i}}$ as a sum of characteristic functions~$\chi_{\beta_{i}\zeta}$ where $\length{\zeta} = N - \length{\beta_{i}}$.  There is an analogous decomposition of~$\chi_{\alpha_{i}}$ as a sum of terms~$\chi_{\alpha_{i}\zeta}$ and here $\alpha_{i}\zeta \sim \beta_{i}\zeta$ since $\sim$~is coherent.  We can do the same thing for the terms corresponding to the relationship $g \rhorel h$.  Hence we can assume that in Equation~\eqref{eq:rho-decomp} that $\length{\beta_{i}} = \length{\gamma_{j}} = N$ for all $i$~and~$j$.  However, this means that any pair of addresses from the~$\beta_{i}$ and the~$\gamma_{j}$ are either equal or incomparable and so the two decompositions for the function~$g$ must be essentially the same.  Consequently $m = n$ in this case and we may relabel the addresses so that $\beta_{i} = \gamma_{i}$ for each~$i$.  It then follows that $\alpha_{i} \sim \delta_{i}$ and hence $f \rhorel h$.  We conclude that $\rhorel$~is an equivalence relation on~$\sgp{\Cant}$.  It is also a congruence on this semigroup since if $f \rhorel g$ then it immediately follows that $(f + \chi_{\gamma}) \rhorel (g + \chi_{\gamma})$ for any address~$\gamma$.

If $f \rhorel g$, then by definition $f\psi = g\psi$ in~$\sgp{\Part}$.  On the other hand, it follows from Lemma~\ref{lem:S(C)-pres} and the definition of~$\sgp{\Part}$ that the kernel of~$\psi$ is the congruence on~$\sgp{\Cant}$ generated by the condition $(\chi_{\alpha},\chi_{\beta}) \in \ker\psi$ whenever $\alpha \sim \beta$.  Since $\chi_{\alpha} \rhorel \chi_{\beta}$ when $\alpha \sim \beta$, it follows that $\ker\rho$~is contained in the congruence~$\rhorel$.  Therefore, the above congruence~$\rhorel$ is the kernel of~$\psi$.

Now consider the clopen subsets $U$~and~$U'$ as in the statement of the lemma.  As they have the same s-type, $\chi_{U} \rhorel \chi_{U'}$; that is, there are decompositions $\chi_{U} = \sum_{i=1}^{n} \chi_{\gamma_{i}}$ and $\chi_{U'} = \sum_{i=1}^{n} \chi_{\delta_{i}}$ such that $\gamma_{i} \sim \delta_{i}$ for each~$i$.  The definition of the characteristic function then implies that $U$~is the disjoint union of the cones~$\cone{\gamma_{i}}$ while $U'$~is the disjoint union of the cones~$\cone{\delta_{i}}$.  This completes the proof.
\end{proof}

This semigroup~$\sgp{\Part}$ will be used to produce elements of~$V$ that act (partially) on the set~$\AddrSet$ of addresses in some specific manner.  For example, Lemmas~\ref{lem:multinuke-sgp} and~\ref{lem:adjunct-sgp} below consider the structure of this semigroup under certain hypotheses on the type system~$\Part$ (with equivalence relation~$\sim$).  The last part of each of these two lemmas observes that, for appropriate addresses $\alpha$,~$\beta$, $\alpha'$ and~$\beta'$ with $\alpha \sim \alpha'$ and $\beta \sim \beta'$, we can show that $U = \Cant \setminus (\cone{\alpha} \cup \cone{\beta})$ and $U' = \Cant \setminus (\cone{\alpha'} \cup \cone{\beta'})$ have the same s-type.  We then may apply Lemma~\ref{lem:clopen-type} to $U$~and~$U'$ and hence construct two decompositions of~$\Cant$ into disjoint cones that can be paired such that corresponding cones have the same type.  With use of Lemma~\ref{lem:basic}\ref{i:Fix-form}, our conclusion is that there is an element~$g \in \Fix{V}{\Part}$ satisfying $\alpha^{g} = \alpha'$ and $\beta^{g} = \beta'$.

\section{Simple type systems}
\label{sec:simpletypes}

In this section, we classify finite simple type systems.  Recall from Definition~\ref{def:typeprops} that a simple type system is one whose only proper quotient consists of a single type and that a type system is finite when it has finitely many equivalence classes.  

Our first task is to develop useful language to describe type systems and subsets of type systems.

\begin{defn}
Let $\Part$~be a type system on~$\AddrSet$ and let $\Part[S]$~be a subset of~$\Part$.
\begin{enumerate}
\item We say that $\Part[S]$~is \emph{child-closed} if whenever $P \in \Part[S]$ then also $P_{0}$~and~$P_{1}$ belong to~$\Part[S]$.
\item If $P\in  \Part[S]$ and $\gamma$~is a  word, we say \emph{$\gamma$~determines a path through~$\Part[S]$ from~$P$} if $P_{\zeta} \in \Part[S]$ for all prefixes~$\zeta$ of~$\gamma$.
\item We say that $\Part[S]$~is \emph{strongly connected} if for all $P,Q \in \Part[S]$ there exists a non-empty word~$\gamma$ so that $\gamma$ determines a path through $\Part[S]$ with $P_\gamma=Q$.
\item We say that $S$~is a \emph{nucleus} if $S$~is child-closed and strongly connected.
\end{enumerate}
\end{defn}

We begin by introducing a number of special categories of type systems that will arise within the final classification of finite simple type systems. 

\begin{defn}
\label{def:nuclear}
A type system~$\Part$ on~$\Omega$ is called \emph{nuclear} if there exists a positive integer~$t$ and a subset~$\Part[Q]$ of~$\Part$ such that:
\begin{enumerate}
\renewcommand{\labelenumi}{(\alph{enumi})}
\item if $\alpha$~is an address with $\length{\alpha} \geq t$, then the type of~$\alpha$ is a member of~$\Part[Q]$;
\item the subset $\Part[Q]$~is a nucleus (that is, child-closed and strongly connected).
\end{enumerate}
\end{defn}

The following are variations on the theme of a nuclear type system.  Indeed, a nuclear type system is a special case of a multinuclear type system, but with precisely one nucleus.

\begin{defn}
\label{def:nuketypes}
Let $\Part$~be a type system on~$\AddrSet$.
\begin{enumerate}
\item \label{i:multinuke}
We shall say that $\Part$~is \emph{multinuclear} if there exist a positive integer~$t$ and disjoint subsets $\Part[Q]^{(1)}$,~$\Part[Q]^{(2)}$, \dots,~$\Part[Q]^{(k)}$ of~$\Part$ such that
\begin{enumerate}
\item if $\alpha \in \AddrSet$ with $\length{\alpha} \geq t$ then the type of~$\alpha$ is a member of one of the~$\Part[Q]^{(i)}$; and
\item for each index $i$, the subset  $\Part[Q]^{(i)}$ is a nucleus.
\end{enumerate}
The subsets~$\Part[Q]^{(i)}$ will be called the \emph{nucleii} of the type system.

\item We say that $\Part$~is \emph{binuclear} if it is multinuclear with precisely two nucleii $\Part[Q]^{(1)}$~and~$\Part[Q]^{(2)}$.

\item \label{i:quasi}
We say that $\Part$~is \emph{quasinuclear} if there exist a positive integer~$t$, and non-empty disjoint subsets~$\Part[Q],\Part[R] \subseteq \Part$  such that
\begin{enumerate}[ref=(\alph*)]
\item \label{i:q-depth}
if $\alpha \in \AddrSet$ with $\length{\alpha} \geq t$ then the type of~$\alpha$ is in~$\Part[Q]\cup \Part[R]$;
\item \label{i:q-nucleus}
the set $\Part[R]$ is a nucleus;
\item \label{i:q-strong}
the set $\Part[Q]$ is strongly connected;
\item \label{i:q-child}
the set $\Part[R]\cup\Part[Q]$ is child-closed; and
\item \label{i:q-reach}
for every $Q\in \Part[Q]$ and $R\in \Part[R]$, there is a word $\gamma$ so that $Q_\gamma=R$.
\end{enumerate}
Given such a setup, we define $\branchPartQ$~to be the set of equivalence classes~$Q \in \Part[Q]$ such that $Q_{0}$~and~$Q_{1}$ are both in~$\Part[Q]$.  If $\branchPartQ$~is non-empty, then we say that $\Part$~is \emph{branching quasinuclear}.  If $\Part$ is quasinuclear, we refer to the set $\Part[Q]\cup\Part[R]$ as the \emph{terminal set of $\Part$} and to~$\Part[R]$ as the \emph{nucleus of~$\Part$}.
\end{enumerate}
\end{defn}

The terms appearing in Definitions~\ref{def:nuclear} and~\ref{def:nuketypes} appear in our classification of finite simple type systems.  In order to aid our proof of this classification, we introduce some additional terminology and establish the lemmas that we need. 

\begin{defn}
Let $\Part$~be a type system on~$\AddrSet$ and let $\Part[S]$~be a subset of~$\Part$.
\begin{enumerate}
\item We say that $\Part[S]$~contains a \emph{cycle} if there exists $P \in \Part[S]$ and a non-empty word~$\gamma$ that determines a path through~$\Part[S]$ at $P$ with $P_{\gamma} = P$.  Such a path will be called a \emph{cycle (in~$\Part[S]$) based at~$P$}.
\item We say that $\Part[S]$~is \emph{stable} if for every $Q \in \Part[S]$, there exists some $P \in \Part[S]$ such that there is a cycle in~$\Part[S]$ based at $P$ and also a word~$\gamma$ that determines a path through~$\Part[S]$ from~$P$ with $P_{\gamma} = Q$.
\end{enumerate}
\end{defn}

Strongly connected subsets are necessarily stable, but stable subsets are more general and help us to encode the nature of binuclear and quasinuclear type systems.  For example, with the notation of Definition~\ref{def:nuketypes}\ref{i:quasi}, if $\Part$~is quasinuclear then the set~$\Part[Q] \cup \Part[R]$ is stable but not strongly connected.

The following observation is immediate from the definitions.

\begin{lemma}
\label{lem:cycle-desc}
Let $\Part$~be a type system on~$\AddrSet$ and suppose there is a cycle in $\Part$ based at some~$P$ in~$\Part$. Then $\Part[Q] = \set{P_{\zeta}}{\zeta \in \AddrSet}$ is child-closed and stable.
\end{lemma}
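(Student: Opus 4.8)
The plan is to unwind the definitions of ``child-closed'' and ``stable'' and exhibit the required cycles and paths directly, using the cycle based at $P$ as a universal witness.

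First I would address child-closure. Take any $R \in \Part[Q]$, so $R = P_{\zeta}$ for some word $\zeta \in \AddrSet$. Then $R_{0} = P_{\zeta 0}$ and $R_{1} = P_{\zeta 1}$, and since $\zeta 0, \zeta 1 \in \AddrSet$, both of these types lie in $\Part[Q]$ by the very definition of $\Part[Q]$ as $\set{P_{\zeta}}{\zeta \in \AddrSet}$. Hence $\Part[Q]$ is child-closed. (Note that one must be slightly careful: ``child-closed'' as defined requires $P_{0}, P_{1} \in \Part[S]$ whenever $P \in \Part[S]$, and here $\Part[Q]$ is manifestly closed under the operations $\gamma \mapsto \gamma 0$ and $\gamma \mapsto \gamma 1$ applied to the indexing words, which is exactly what is needed. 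One should also observe that paths within $\Part[Q]$ pose no difficulty since \emph{every} type reachable from $P$ by following any word stays in $\Part[Q]$, so any word determines a path through $\Part[Q]$ from $P$.)

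Next I would handle stability. Let $\gamma$ be the given non-empty word with $P_{\gamma} = P$, so there is a cycle in $\Part[Q]$ based at $P$ (here I use that $\gamma$ determines a path through $\Part[Q]$ from $P$, which holds automatically as just noted). Now let $Q \in \Part[Q]$ be arbitrary; write $Q = P_{\zeta}$ for some word $\zeta$. If $\zeta$ is non-empty, then $\zeta$ itself determines a path through $\Part[Q]$ from $P$ with $P_{\zeta} = Q$, and we are done by taking this same $P$ as the base of the cycle. If $\zeta = \emptyword$, then $Q = P$, and we may instead use the word $\gamma$ (which is non-empty and determines a path through $\Part[Q]$ from $P$ with $P_{\gamma} = P = Q$). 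In either case the conditions in the definition of ``stable'' are met, so $\Part[Q]$ is stable.

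I do not anticipate any real obstacle here; the statement is essentially a bookkeeping exercise and the excerpt itself flags it as ``immediate from the definitions.'' The only point requiring the mildest care is the edge case in the definitions where a word is required to be non-empty (in the definitions of ``cycle'' and of a path being a witness for stability), which is why the cycle word $\gamma$ is invoked as a fallback when $Q = P$. A secondary small point is making sure that the notion of ``$\gamma$ determines a path through $\Part[Q]$'' is trivially satisfied for $\Part[Q] = \set{P_{\zeta}}{\zeta \in \AddrSet}$: every prefix $\eta$ of $\gamma$ has $P_{\eta} \in \Part[Q]$ by construction, so this is free. With those observations in place the proof is a couple of lines.
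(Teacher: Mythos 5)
Your proof is correct and is exactly the direct unwinding of the definitions that the paper has in mind: it states this lemma without proof, flagging it as immediate, and your argument (child-closure from $P_{\zeta 0},P_{\zeta 1}\in\Part[Q]$, stability witnessed by the given cycle at $P$ together with the path $\zeta$, both of which automatically stay inside $\Part[Q]$) is precisely that routine verification, with the $\zeta=\emptyword$ edge case handled sensibly. No issues.
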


This set~$\Part[Q]$ above is the set of vertices in the graph~$\graph{\Part}$ that one can reach by following a path starting at~$P$.

\begin{lemma}
\label{lem:CycleInComplement}
Let $\Part$~be a simple type system on~$\AddrSet$.  If $\Part[S]$~is a non-empty subset of~$\Part$ that is child-closed and such that the complement~$\Part \setminus \Part[S]$ contains a cycle, then $\card{\Part[S]} = 1$.
\end{lemma}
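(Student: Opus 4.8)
We want to show: if $\Part$ is a simple type system, $\Part[S]$ is non-empty and child-closed, and the complement $\Part \setminus \Part[S]$ contains a cycle, then $\card{\Part[S]} = 1$. The plan is to build a proper quotient of $\Part$ from the child-closed set $\Part[S]$ and then use simplicity to force $\card{\Part[S]} = 1$. The natural quotient to consider is the one that collapses all of $\Part[S]$ to a single type while keeping $\Part \setminus \Part[S]$ untouched; concretely, define an equivalence relation $\approx$ on $\AddrSet$ by declaring $\alpha \approx \beta$ whenever $\alpha$ and $\beta$ both have type in $\Part[S]$, and otherwise $\alpha \approx \beta$ iff $\alpha \sim \beta$.

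**Key steps.** First I would check that $\approx$ is coherent. This is where child-closedness of $\Part[S]$ does the work: if $\alpha \approx \beta$ via both having type in $\Part[S]$, then $\alpha 0$ and $\beta 0$ both have type in $\{P_0 : P \in \Part[S]\} \subseteq \Part[S]$, so $\alpha 0 \approx \beta 0$, and similarly $\alpha 1 \approx \beta 1$; if instead $\alpha \sim \beta$, then $\alpha 0 \sim \beta 0$ and $\alpha 1 \sim \beta 1$ by coherence of $\sim$, hence $\alpha 0 \approx \beta 0$ and $\alpha 1 \approx \beta 1$. Second, pass to the reduced equivalence relation $\approx'$ associated to $\approx$ (via the construction of Lemma~\ref{lem:reduction}, where $\alpha \approx' \beta$ iff $\alpha\eta \approx \beta\eta$ for all but finitely many $\eta$) to get an honest type system $\Part[Q]$ that is a quotient of $\Part$: since $\sim$ refines $\approx$ refines $\approx'$, every $\sim$-class sits inside a $\approx'$-class, so this is a quotient. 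Third, apply simplicity: $\Part[Q]$ is either the trivial-quotient (isomorphic to $\Part$, so $\approx'$ coincides with $\sim$) or the universal one. I would rule out the universal case using the hypothesis that $\Part \setminus \Part[S]$ contains a cycle: if $\gamma$ is a non-empty word based at some type $P \notin \Part[S]$ with $P_\gamma = P$ and all intermediate types outside $\Part[S]$, then for any address $\alpha$ of type $P$, all the addresses $\alpha, \alpha\gamma, \alpha\gamma^2, \dots$ have type $P \notin \Part[S]$; these are infinitely many addresses whose $\approx$-class (and $\approx'$-class) is just their $\sim$-class $\{P\}$-worth, never merged with anything in $\Part[S]$. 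So the address $\alpha$ is $\approx'$-equivalent only to addresses of type $P$, meaning the $\approx'$-class of $\alpha$ is a proper subset of $\AddrSet$ — hence $\approx'$ is not universal. Therefore $\approx'$ equals $\sim$, which forces any two addresses with type in $\Part[S]$ to already be $\sim$-equivalent; that is, $\Part[S]$ contains a single type.

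**Main obstacle.** The delicate point is the interaction between the raw relation $\approx$ and its reduction $\approx'$. I would need to verify that the reduction does not accidentally collapse the cycle in $\Part \setminus \Part[S]$ — i.e., that $\alpha \not\approx' \beta$ when $\alpha$ has type $P \notin \Part[S]$ lying on a cycle avoiding $\Part[S]$ and $\beta$ has type in $\Part[S]$. The argument is that $\approx$ agrees with $\sim$ on the subtree below $\alpha$ along the cycle direction: following $\gamma^k$ from $\alpha$ stays in type $P$ forever, and $\approx$-equivalence of $\alpha$ with $\beta$ would require $\alpha\eta \approx \beta\eta$ for cofinitely many $\eta$, but choosing $\eta = \gamma^k$ (for $k$ large) gives $\alpha\gamma^k$ of type $P$, while $\beta\gamma^k$ has type $P'_\gamma{}^{\!k}$ for some $P' \in \Part[S]$, which stays in $\Part[S]$ by child-closedness — and $P \notin \Part[S]$, so $\alpha\gamma^k \not\approx \beta\gamma^k$ (they are neither $\sim$-equivalent as distinct singletons nor both in $\Part[S]$). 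Actually this requires a touch of care: I also need that no type in $\Part[S]$ is $\approx$- or $\sim$-identified with $P$ to begin with, which holds because $P \notin \Part[S]$ and $\approx$ only merges within $\Part[S]$ or within $\sim$-classes, and $\sim$-classes are the types themselves. Once this is pinned down, the rest is bookkeeping, so I expect the coherence check and this no-collapse check to be the only real content, with everything else following from Lemma~\ref{lem:reduction} and the definition of simplicity.
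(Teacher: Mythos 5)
Your proposal is correct and is essentially the paper's own argument: the relation you obtain by reducing your naive merge via Lemma~\ref{lem:reduction} is exactly the relation $\approx$ the paper defines directly (``for all but finitely many $\eta$, either $\alpha\eta\sim\beta\eta$ or both types lie in $\Part[S]$''), and the cycle-plus-child-closedness argument with $\eta=\gamma^{k}$ ruling out universality, followed by simplicity forcing $\approx$ to coincide with $\sim$, is the same as in the paper. The only cosmetic difference is that you delegate the coherent/reduced verification to Lemma~\ref{lem:reduction} rather than checking it by hand, and your parenthetical claim that the $\approx'$-class of $\alpha$ is exactly its $\sim$-class is stronger than what you prove (distinct types outside $\Part[S]$ could in principle be merged), but you only use the weaker, correctly justified fact that $\alpha\not\approx'\beta$, so no gap results.
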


\begin{proof}
Write $\sim$~for the equivalence relation on~$\AddrSet$ that determines~$\Part$.  Let $P \in \Part \setminus \Part[S]$ such that there is a cycle in $\Part \setminus \Part[S]$ based at~$P$.  We define a new relation~$\approx$ on~$\AddrSet$ by the rule $\alpha \approx \beta$ when
\begin{enumerate}
\item[$(\ast)$] for all but finitely many words~$\eta$,  either $\alpha\eta \sim \beta\eta$ or the $\Part$\nbd types of~$\alpha\eta$ and of~$\beta\eta$ both belong to~$\Part[S]$.
\end{enumerate}

We shall show that $\approx$~defines a type system on~$\AddrSet$ that is a quotient of~$\Part$.  It is immediate that this relation is reflexive and symmetric.  Suppose $\alpha \approx \beta$ and $\beta \approx \gamma$.  For all but finitely many words~$\eta$, the condition appearing in~$(\ast)$ applies to the pair $\alpha\eta$~and~$\beta\eta$ and to the pair $\beta\eta$~and~$\gamma\eta$.  If $\alpha\eta \sim \beta\eta \sim \gamma\eta$, then $\alpha\eta \sim \gamma\eta$ since $\sim$~is an equivalence relation.  Otherwise, it must be the case that the $\Part$\nbd types of all three of $\alpha\eta$,~$\beta\eta$ and~$\gamma\eta$ belong to~$\Part[S]$ (with two of these types coinciding if $\alpha\eta \sim \beta\eta$ or $\beta\eta \sim \gamma\eta$).  We conclude that $\alpha \approx \gamma$.  This shows that $\approx$~is an equivalence relation on~$\AddrSet$.  It is also straightforward to verify that $(\ast)$~defines a relation that is coherent and reduced.  Let $\Part'$~be the type system on~$\AddrSet$ determined by~$\approx$.  This is a quotient of~$\Part$ in view of the first part of the condition appearing in~$(\ast)$.

Suppose that $\approx$~is the universal relation on~$\Omega$.  Recall that there is a cycle in~$\Part \setminus \Part[S]$ based at~$P$.  Let $\alpha$~be an address of $\Part$\nbd type~$P$ and let $\gamma$~be a non-empty word that determines a cycle in~$\Part \setminus \Part[S]$ based at $P$.  Let $\beta$~be an address with $\Part$\nbd type in~$\Part[S]$.  By assumption, $\alpha \approx \beta$.  Hence, for sufficiently large~$k$, the condition in~$(\ast)$ holds when $\eta = \gamma^{k}$ and so either $\alpha\eta \sim \beta\eta$ or the $\Part$\nbd types of $\alpha\eta$~and~$\beta\eta$ both belong to~$\Part[S]$.  However $\alpha\eta$~has $\Part$\nbd type~$P$, which is not in~$\Part[S]$, since $P_{\gamma} = P$, while $\beta\eta$~has $\Part$\nbd type in~$\Part[S]$ as $\Part[S]$~is child-closed.  This is a contradiction.

Consequently, by simplicity of the type system~$\Part$, it must be the case that $\Part' = \Part$.  Now if $\alpha$~and~$\beta$ have $\Part$\nbd types in~$\Part[S]$, then the $\Part$\nbd types of $\alpha\eta$~and~$\beta\eta$ are in~$\Part[S]$ for all words~$\eta$ and hence $\alpha \approx \beta$.  It follows therefore that $\alpha \sim \beta$ and we conclude that $\card{\Part[S]} = 1$.
\end{proof}

The majority of the remaining work to classify the finite simple type systems is contained in the following result:

\begin{lemma}
\label{lem:stable+childclosed}
Let $\Part$~be a simple type system on~$\AddrSet$ which admits a cycle.  Then $\Part$~contains precisely one, two or three stable child-closed subsets.  Moreover,
\begin{enumerate}
\item if $\Part$~contains exactly two stable child-closed subsets then one has cardinality~$1$ and is a subset of the other;
\item if $\Part$~contains three stable child-closed subsets then two of them have cardinality~$1$ and the third is the union of the first two.
\end{enumerate}
\end{lemma}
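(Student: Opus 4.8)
Write $\sim$~for the equivalence relation determining~$\Part$, call a type~$P$ a \emph{cycle vertex} if $\Part$~contains a cycle based at~$P$ (such a type exists by hypothesis), and let $\Part[R]_{P} = \set{P_{\zeta}}{\zeta\in\AddrSet}$ denote the set of types reachable from~$P$ in~$\graph{\Part}$.  The plan is to analyse the collection~$\Sigma$ of all non-empty stable child-closed subsets of~$\Part$, ordered by inclusion.  First I would record two closure properties.  By Lemma~\ref{lem:cycle-desc}, $\Part[R]_{P}$~is child-closed and stable for every cycle vertex~$P$, so $\Sigma \neq \emptyset$; and $\Sigma$~is closed under finite unions, since a union of child-closed sets is child-closed and the cycle-and-path witnesses to the stability of a member of the union persist inside any larger stable child-closed set containing it.  Hence $\Part[D] := \bigcup\set{\Part[R]_{P}}{\text{$P$ a cycle vertex}}$ lies in~$\Sigma$, and by the definition of stability every $\Part[S]\in\Sigma$ satisfies $\Part[S]\subseteq\Part[D]$, so $\Part[D]$~is the maximum element of~$\Sigma$.

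The technical heart of the argument is the identification of the minimal elements of~$\Sigma$: I claim these are exactly the nuclei (the strongly connected child-closed subsets), and moreover that every member of~$\Sigma$ contains a nucleus.  This uses finiteness: the type graph has out-degree two, so from any type inside a child-closed set one can follow edges until a vertex repeats, thereby reaching a cycle inside that set.  Given $\Part[S]\in\Sigma$, choose a minimal non-empty stable child-closed $\Part[T]\subseteq\Part[S]$.  For any $Q\in\Part[T]$, stability supplies a cycle vertex $P\in\Part[T]$ together with a path to~$Q$ inside~$\Part[T]$, so $\Part[R]_{P}\subseteq\Part[T]$; since $\Part[R]_{P}\in\Sigma$, minimality forces $\Part[T]=\Part[R]_{P}$.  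Applying the same reasoning starting from~$Q$ (reach a cycle vertex~$P'$ inside~$\Part[T]$, conclude $\Part[R]_{P'}=\Part[T]$) shows that every type of~$\Part[T]$ reaches every type of~$\Part[T]$ along a non-empty path inside~$\Part[T]$, that is, $\Part[T]$~is a nucleus.  Conversely a nucleus is child-closed and strongly connected, hence stable, and is a minimal element of~$\Sigma$ because any child-closed proper subset of a strongly connected set is empty.  I would also note two facts used repeatedly below: distinct nuclei are disjoint (a shared type~$P$ would force each to equal~$\Part[R]_{P}$), and a singleton nucleus~$\{P\}$ satisfies $P_{0}=P_{1}=P$.

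Next I would bound the number of nuclei using Lemma~\ref{lem:CycleInComplement}.  If there are at least two nuclei $N\neq N'$, then $N'\subseteq\Part\setminus N$ and $N'$~contains a cycle, so, as $N$~is non-empty and child-closed, Lemma~\ref{lem:CycleInComplement} gives $\card{N}=1$: every nucleus is a single self-looping type.  If there were three such nuclei $\{P_{1}\}$, $\{P_{2}\}$, $\{P_{3}\}$ with the~$P_{i}$ distinct, then $\{P_{1},P_{2}\}$~is child-closed and its complement contains the cycle based at~$P_{3}$, so Lemma~\ref{lem:CycleInComplement} would give $\card{\{P_{1},P_{2}\}}=1$, a contradiction.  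Therefore $\Part$~has exactly one or exactly two nuclei.  Here I would isolate the observation that if $\Part[A]$~is child-closed and some cycle vertex lies outside~$\Part[A]$, then the entire cycle through that vertex avoids~$\Part[A]$ (otherwise child-closedness would drag the cycle vertex into~$\Part[A]$), so $\Part\setminus\Part[A]$~contains a cycle.

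Finally I would split into the two cases.  If $\Part$~has a single nucleus~$\Part[Q]$, then $\Part[Q]$~is the minimum of~$\Sigma$ (every member of~$\Sigma$ contains it) and $\Part[D]$~is the maximum; were there $\Part[S]\in\Sigma$ with $\Part[Q]\subsetneq\Part[S]\subsetneq\Part[D]$, there would be a cycle vertex outside~$\Part[S]$ (otherwise $\Part[D]\subseteq\Part[S]$), so $\Part\setminus\Part[S]$~contains a cycle and Lemma~\ref{lem:CycleInComplement} would give $\card{\Part[S]}=1$, impossible since $\Part[S]\supsetneq\Part[Q]\neq\emptyset$.  Hence $\Sigma=\{\Part[Q]\}$ or $\Sigma=\{\Part[Q],\Part[D]\}$, and in the latter situation a cycle vertex outside~$\Part[Q]$ exists (else $\Part[D]\subseteq\Part[Q]$), forcing $\card{\Part[Q]}=1$; this is conclusion~(i).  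If $\Part$~has two nuclei $\{P_{1}\}$ and~$\{P_{2}\}$, then $\{P_{1}\}$, $\{P_{2}\}$ and~$\{P_{1},P_{2}\}$ all lie in~$\Sigma$, and I would show $\Part[D]=\{P_{1},P_{2}\}$: any type in $\Part[D]\setminus\{P_{1},P_{2}\}$ lies in some~$\Part[R]_{v}$ with $v$~a cycle vertex, necessarily $v\notin\{P_{1},P_{2}\}$ since $\Part[R]_{P_{i}}=\{P_{i}\}$, so the cycle through~$v$ lies in $\Part\setminus\{P_{1},P_{2}\}$, contradicting Lemma~\ref{lem:CycleInComplement}.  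Since every member of~$\Sigma$ is non-empty, contained in $\Part[D]=\{P_{1},P_{2}\}$, and contains a nucleus, we get $\Sigma=\{\{P_{1}\},\{P_{2}\},\{P_{1},P_{2}\}\}$, which is conclusion~(ii).  The step I expect to be the main obstacle is the structural claim of the second paragraph---that the minimal elements of~$\Sigma$ are precisely the nuclei and that every element of~$\Sigma$ contains one---since this is what reduces the theorem to the bounded combinatorial question of how many nuclei a simple type system can support; the remainder is a disciplined sequence of appeals to Lemma~\ref{lem:CycleInComplement}.
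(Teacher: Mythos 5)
Your proposal takes a genuinely different route from the paper's proof --- you organise all non-empty stable child-closed subsets into a poset~$\Sigma$, identify its minimal elements with nuclei and its maximum with~$\Part[D]$, bound the number of nuclei via Lemma~\ref{lem:CycleInComplement}, and read off the possible shapes of~$\Sigma$ --- and in the finite case the argument is correct. However, the lemma as stated carries no finiteness hypothesis (in the paper finiteness is invoked only afterwards, in the proof of Theorem~\ref{thm:finitetypes}, to guarantee that a cycle exists), and the step you yourself call the technical heart genuinely needs $\Part$ to be finite in two places: the assertion that from any type inside a child-closed set one can follow edges until a vertex repeats fails for an infinite type graph (a forward path need never revisit a vertex), and the instruction ``choose a minimal non-empty stable child-closed $\Part[T]\subseteq\Part[S]$'' is not available without a chain condition. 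As a result, for infinite~$\Part$ you have not shown that any nucleus exists, nor that every member of~$\Sigma$ contains one, and both facts are load-bearing in your final case analysis (the exhaustiveness of the split ``one nucleus or two'' and the identification of the unique nucleus as the minimum of~$\Sigma$).

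The paper's proof sidesteps all of this: given two distinct stable child-closed subsets with some $Q\in\Part[Q]'\setminus\Part[Q]$, stability of~$\Part[Q]'$ already supplies a cycle based at some $P\in\Part[Q]'$ together with a path from~$P$ to~$Q$; child-closure of~$\Part[Q]$ forces $P$, and hence the whole cycle, to avoid~$\Part[Q]$, so Lemma~\ref{lem:CycleInComplement} gives $\card{\Part[Q]}=1$ at once. This pairwise dichotomy (either nested with the smaller set a singleton, or two disjoint singletons whose union is again stable and child-closed) is then assembled combinatorially to yield the bound of three and configurations (i) and~(ii), with no appeal to minimal elements or to cycles reachable \emph{from} a prescribed type, hence no finiteness. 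The rest of your argument --- disjointness of nuclei, the ``at most two nuclei'' count, the maximality of~$\Part[D]$, and the endgame appeals to Lemma~\ref{lem:CycleInComplement} --- is sound and finiteness-free; the gap is precisely the reduction of arbitrary stable child-closed subsets to nuclei, so you should either restrict the statement to finite type systems (which is all Theorem~\ref{thm:finitetypes} requires) or replace that reduction by an argument, like the paper's, that uses only the witnesses provided directly by stability.
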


\begin{proof}
Let $\Part$~be a simple type system that contains some cycle.  By Lemma~\ref{lem:cycle-desc}, there is at least one stable child-closed subset of~$\Part$.

Suppose first that $\Part[Q]$~and~$\Part[Q]'$ are two distinct stable child-closed subsets of~$\Part$.  We may assume that there exists $Q \in \Part[Q]' \setminus \Part[Q]$.  Then by stability there is some $P \in \Part[Q]'$  with a cycle~$C$ in~$\Part[Q]'$ based at $P$ and a (non-empty) word~$\gamma$ such that $\gamma$~determines a path through~$\Part[Q]'$ from~$P$ to~$Q$.  Since $\Part[Q]$~is child-closed, necessarily $P \notin \Part[Q]$ and the cycle $C$ based at~$P$ is disjoint from~$\Part[Q]$.  Hence we deduce that $\card{\Part[Q]} = 1$ by Lemma~\ref{lem:CycleInComplement}.  If it is not the case that $\Part[Q] \subseteq \Part[Q]'$, then there is some member of~$\Part[Q] \setminus \Part[Q]'$ and the same argument shows $\card{\Part[Q]'} = 1$.  As a consequence $\Part[Q]$~and~$\Part[Q]'$ are then disjoint and both singleton sets.  Note in this case that the union $\Part[Q] \cup \Part[Q]'$ is also child-closed and stable.

We have shown that if $\Part[Q]$~and~$\Part[Q']$ are distinct stable child-closed sets then either
\begin{enumerate}
\item[(1)] $\Part[Q] \subseteq \Part[Q]'$ and $\card{\Part[Q]} = 1$ (or \viceversa), or
\item[(2)] $\card{\Part[Q]} = \card{\Part[Q]'} = 1$ and $\Part[Q] \cup \Part[Q]'$~is also child-closed and stable.
\end{enumerate}
In particular, if there are exactly two stable child-closed subsets then their configuration is as described in part~(i) of the statement.

We now consider each of the above possibilities to consider how the stable child-closed subsets could relate to a third such subset~$\Part[Q]''$.  Suppose first that $\Part[Q] \subseteq \Part[Q]'$ so that $\card{\Part[Q]} = 1$ as in~(1) above.  Then since $\card{\Part[Q]'} > 1$ it must be the case that $\Part[Q]'' \subseteq \Part[Q]'$ also and $\card{\Part[Q]''} = 1$.  Then $\Part[Q] \cup \Part[Q]''$~is stable and child-closed, so since both $\Part[Q] \cup \Part[Q]''$ and~$\Part[Q]'$ contain more than one type it must be the case that $\Part[Q'] = \Part[Q] \cup \Part[Q]''$.  In particular, $\Part[Q]'' = \Part[Q]' \setminus \Part[Q]$.  Alternatively if $\Part[Q]$~and~$\Part[Q]'$ are as in~(2), then applying the above observation to the sets $\Part[Q] \cup \Part[Q]'$~and~$\Part[Q]''$ shows that the only possibility is that $\Part[Q]'' = \Part[Q] \cup \Part[Q]'$.  In conclusion, since in either case there is only one possibility for~$\Part[Q]''$, we deduce that $\Part$~cannot contain more than three stable child-closed subsets and, moreover, if there are three then they are arranged as specified in~(ii) in the statement.
\end{proof}

We give specialised names for the categories of type systems that arise as finite simple type systems.  Firstly, if a binuclear (or multinuclear) type system has the property that the cardinality of each of its nucleii is~$1$, then we say the type system is \emph{atomic binuclear} (or \emph{atomic multinuclear}, respectively).  Similarly, a quasinuclear type system such that its nucleus contains a single type will be called \emph{atomic quasinuclear}.

We can now state our main result of this section.

\FiniteTypes

\begin{proof}
Let $\Part$~be a finite simple type system on~$\AddrSet$.  The finiteness ensures that there must exist some cycle in~$\Part$.  We shall show that each of the options from Lemma~\ref{lem:stable+childclosed} correspond to the specified kinds of type systems.

Suppose first that $\Part$~contains precisely one stable child-closed subset~$\Part[Q]$.  Then, by Lemma~\ref{lem:cycle-desc}, every cycle in the graph~$\graph{\Part}$ must be contained within~$\Part[Q]$.  As $\Part$~is finite, there is a positive integer~$t$ such that after following a path of length~$t$ one has crossed a cycle.  Since $\Part[Q]$~is child-closed, it follows that if $\length{\alpha} \geq t$ then the type of~$\alpha$ belongs to~$\Part[Q]$.  Child-closure also ensures that if $P \in \Part[Q]$, then both $P_{0}$~and~$P_{1}$ belong to~$\Part[Q]$.  Now let $P,Q \in \Part[Q]$.  After following a path of sufficient length from~$P$, we reach a cycle and then, by Lemma~\ref{lem:cycle-desc}, the descendants of some point on the cycle is a stable child-closed subset and so equals~$\Part[Q]$.  In particular, there is a word~$\gamma$ such that $P_{\gamma} = Q$.  Hence $\Part$~is nuclear with $\Part[Q]$~as its nucleus.

Second consider the case when $\Part$~contains precisely two stable child-closed subsets, say $\Part[R]$~and~$\Part[S]$, where $\Part[R] \subseteq \Part[S]$ and $\card{\Part[R]} = 1$.  Let $R$~denote the type belonging to~$\Part[R]$ and set $\Part[Q] = \Part[S] \setminus \Part[R]$.  We shall show that $\Part$~is quasinuclear; that is, the conditions of Definition~\ref{def:nuketypes}\ref{i:quasi} hold.  As $\Part$~is finite, there is a positive integer~$t$ such that after following a path of length~$t$ one has crossed a cycle in~$\Part$.  Consequently, with use of Lemma~\ref{lem:cycle-desc}, the type of~$\alpha$ belongs to~$\Part[S]$ when $\length{\alpha} \geq t$, so Condition~\ref{i:q-depth} holds.  Conditions~\ref{i:q-nucleus} and~\ref{i:q-child} hold immediately since $\Part[R]$~and~$\Part[S]$ are both child-closed and $\card{\Part[R]} = 1$.  It remains to show that Conditions \ref{i:q-strong}~and~\ref{i:q-reach} hold.

Fix $P \in \Part[Q]$; that is, $P \in \Part[S]$ but $P \neq R$.  Consider the set $\Part[U] = \set{P_{\gamma}}{\gamma \in \AddrSet}$, which consists of the vertices in~$\graph{\Part}$ that one can reach by following a path beginning at~$P$.  Certainly $\Part[U]$~is child-closed.  If it were the case that $\Part[U] = \{P\}$, then $\Part[U]$~would also be stable but this would contradict the assumption that $\Part[R]$~and~$\Part[S]$ are the only stable child-closed subsets as both these contain~$R$.  Hence $\card{\Part[U]} > 1$.  Now as $\Part[S]$~is stable, there exists $P' \in \Part[S]$ such that there is a path in~$\graph{\Part}$ from~$P'$ to~$P$ and there is some cycle based at~$P'$.  If this cycle were not also contained in~$\Part[U]$, then no vertex on the cycle belongs to~$\Part[U]$ (as $\Part[U]$~is child-closed) and then Lemma~\ref{lem:CycleInComplement} tells us $\card{\Part[U]} = 1$, contrary to what we have established.  Hence the cycle, and in particular~$P'$, is contained in~$\Part[U]$.  It follows that $\Part[U]$~coincides with the set of all~$P'_{\gamma}$ for $\gamma \in \AddrSet$ and so $\Part[U]$~is stable by Lemma~\ref{lem:cycle-desc}.  Therefore $\Part[U] = \Part[S]$.  In particular, there is a path in~$\graph{\Part}$ from~$P$ to every vertex in~$\Part[Q]$ and to the vertex~$R$.  This shows that $\Part$~is indeed quasinuclear.

Finally if $\Part$~contains three stable child-closed subsets, then they have the form $\Part[Q]$,~$\Part[Q]'$ and $\Part[Q] \cup \Part[Q]'$ with $\card{\Part[Q]} = \card{\Part[Q]'} = 1$.  Again, by finiteness of~$\Part$, there is a positive integer~$t$ such that after following a path of length~$t$ one has crossed a cycle in~$\Part$.  Necessarily then the type of~$\alpha$ belongs to $\Part[Q]$ or~$\Part[Q]'$ when $\length{\alpha} \geq t$ and now we see that the conditions for~$\Part$ to be binuclear as in Definition~\ref{def:nuketypes} are satisfied.  This completes the classification of finite simple type systems.
\end{proof}

\section{Maximality of stabilizer subgroups}
\label{sec:stabismax}

We now turn our attention to showing that, for simple type systems which are nuclear, multinuclear, or atomic quasinuclear, the corresponding stabilizer is a maximal subgroup of Thompson's group~$V$ (see Theorems~\ref{thm:branchtype} and~\ref{thm:quasicycle}).  In particular, this includes all finite simple type systems as described in Theorem~\ref{thm:finitetypes} (see Theorem~\ref{thm:finitesimple}).

Simple type systems~$\Part$ that are atomic quasinuclear but not branching are dealt with in Theorem~\ref{thm:quasicycle} where we observe that $\Stab{V}{\Part}$~equals the stabilizer of a finite set of points in~$\Cant$ of the same tail class.  The majority of this section is therefore concerned with the multinuclear case and the atomic branching quasinuclear case.  We analyze these two cases in parallel establishing information that will enable us to prove Theorem~\ref{thm:branchtype}.  The essential strategy is as follows: If there is a subgroup~$H$ of~$V$ that strictly contains~$\Stab{V}{\Part}$ then we will use Proposition~\ref{prop:typequotient} to produce a type system that is a quotient of~$\Part$ while Lemma~\ref{lem:notStab} will be used to show that this is a \emph{proper} quotient.  Since $\Part$~is simple, we will use the definition of this type system to deduce that $H$~contains all transpositions in~$V$ and hence $H = V$ completing the proof.

We begin by establishing various facts about the type systems under consideration, starting with information about the associated semigroup~$\sgp{\Part}$ as defined in Definition~\ref{defn:semigroup}.  (See also Definition~\ref{def:s-type} for the meaning of s-type.)  In particular, Lemma~\ref{lem:multinuke-sgp} provides results about multinuclear type systems, while Lemma~\ref{lem:adjunct-sgp} gives analogous results about quasinuclear type systems that are both atomic and branching.  (Recall that a quasinuclear type system is atomic when its nucleus~$\Part[R]$ contains a single type.)  Note that nuclear and binuclear type systems are special cases of multinuclear type systems, so facts about multinuclear type systems apply to them.

\begin{lemma}
\label{lem:multinuke-sgp}
Let $\Part$~be a multinuclear type system on~$\AddrSet$ with $\Part[Q]^{(1)}$,~$\Part[Q]^{(2)}$, \dots,~$\Part[Q]^{(k)}$ its nucleii and $t$~a positive integer such that an address~$\alpha$ has type in some~$\Part[Q]^{(i)}$ when $\length{\alpha} \geq t$.  Then
\begin{enumerate}
\item \label{i:multinuke-gps}
each subsemigroup $G_{i} = \langle \, q \mid Q \in \Part[Q]^{(i)} \, \rangle$ of~$\sgp{\Part}$ has the structure of a group;
\item \label{i:nuke-group}
if $\Part$~is nuclear (that is, when $k = 1$), then $\sgp{\Part}$~is a group;
\item \label{i:multinuke-compltype}
if $\alpha$,~$\alpha'$, $\beta$ and~$\beta'$ are addresses of length at least~$t+2$ such that $\alpha \perp \beta$, \ $\alpha' \perp \beta'$, \ $\alpha \sim \alpha'$ and $\beta \sim \beta'$, then the complements $\Cant 
\setminus (\cone{\alpha} \cup \cone{\beta})$ and $\Cant \setminus (\cone{\alpha'} \cup \cone{\beta'})$ have the same s-type.
\end{enumerate}
\end{lemma}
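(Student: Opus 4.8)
The plan is to prove the three parts in turn: parts~(i) and~(ii) are bookkeeping inside the commutative semigroup~$\sgp{\Part}$, while part~(iii), a cancellation argument, is where the real work lies.

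\textbf{Part~(i).} Fix~$i$, write $\Part[Q] = \Part[Q]^{(i)}$ and $G = G_i$ (a nucleus being non-empty). The crucial observation is that if $P, P' \in \Part[Q]$ and a non-empty word~$\gamma$ determines a path through~$\Part[Q]$ from~$P$ to~$P'$, then applying the relation $p_\zeta = p_{\zeta 0} + p_{\zeta 1}$ at each step gives an equation $p = p' + b$ in~$\sgp{\Part}$ in which $b$~is the sum of the generators of the types branched off the path; since $\Part[Q]$~is child-closed all those branched-off types lie in~$\Part[Q]$, so $b \in G$, and $b$~is a non-empty sum as $\gamma$~is non-empty. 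Now fix $Q_0 \in \Part[Q]$. Combining $q_0 = q_{00} + q_{01}$ with paths through~$\Part[Q]$ from~$Q_{00}$ and~$Q_{01}$ back to~$Q_0$ (strong connectedness) gives $q_0 = 2q_0 + d$ for some $d \in G$. Hence $e := q_0 + d$ is idempotent, and using $q = q_0 + b$ for an arbitrary generator~$q$ (path $Q \to Q_0$) one checks $e + q = (2q_0 + d) + b = q_0 + b = q$, so $e$~is an identity for~$G$. Then $q_0 + d = e$ makes~$q_0$ a unit, and $q_0 = q + b'$ (path $Q_0 \to Q$) gives $q + (b' + d) = e$, so every generator, hence (using $e+e=e$) every element of~$G$, is a unit; thus $G$~is an abelian group.

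\textbf{Part~(ii).} When $k = 1$, each type~$P$ is the type of some address~$\alpha$, and subdividing~$\cone{\alpha}$ into the cones of length~$t$ it contains (or keeping~$\alpha$ if $\length{\alpha} \geq t$) and applying the defining relations writes~$p$ as a sum of generators of types at depth~$\geq t$, each lying in~$\Part[Q]^{(1)}$ and hence in~$G_1$. Thus $\sgp{\Part} = G_1$, a group by~(i).

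\textbf{Part~(iii).} Since $\length{\alpha}, \length{\beta}, \length{\alpha'}, \length{\beta'} \geq t+2 \geq 3$, the unions $\cone{\alpha}\cup\cone{\beta}$ and $\cone{\alpha'}\cup\cone{\beta'}$ are proper in~$\Cant$, so $U$~and~$U'$ are non-empty with well-defined s-types; as $\alpha \perp \beta$ the sets $\cone{\alpha}, \cone{\beta}, U$ partition~$\Cant$, and likewise for the primed data, so applying~$\psi$ and using $\stype{\cone{\alpha}} = \stype{\cone{\alpha'}}$, $\stype{\cone{\beta}} = \stype{\cone{\beta'}}$ (from $\alpha\sim\alpha'$, $\beta\sim\beta'$) we get
\[
\stype{\cone{\alpha}} + \stype{\cone{\beta}} + \stype{U} \;=\; \stype{\Cant} \;=\; \stype{\cone{\alpha}} + \stype{\cone{\beta}} + \stype{U'}.
\]
The main obstacle is to cancel $\stype{\cone{\alpha}} + \stype{\cone{\beta}}$, impossible in~$\sgp{\Part}$ in general. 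Let $i_\alpha, i_\beta$ be the indices of the nucleii containing the types of~$\alpha$ and~$\beta$ (these exist as $\length{\alpha}, \length{\beta} \geq t$); then $\stype{\cone{\alpha}} \in G_{i_\alpha}$ and $\stype{\cone{\beta}} \in G_{i_\beta}$, groups by~(i), with identities $e_{i_\alpha}, e_{i_\beta}$. I claim $e_{i_\alpha} + \stype{U} = \stype{U}$, $e_{i_\beta} + \stype{U} = \stype{U}$, and similarly for~$U'$; granted this, adding the inverses of $\stype{\cone{\alpha}}$ and $\stype{\cone{\beta}}$ (computed in $G_{i_\alpha}$, $G_{i_\beta}$) to both sides collapses the displayed equation to $\stype{U} = \stype{U'}$. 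To prove the claim it suffices to exhibit a cone $\cone{\gamma} \subseteq U$ with its type in~$\Part[Q]^{(i_\alpha)}$ (then $e_{i_\alpha}$ is absorbed, since $\stype{\cone{\gamma}} \in G_{i_\alpha}$ is a summand of~$\stype{U}$), and symmetrically for~$\beta$ and for~$U'$. This is where $\length{\alpha} \geq t+2$ is used: the grandparent of~$\alpha$ then has length~$\geq t$, so the grandparent, parent, sibling~$\alpha^{s}$, and ``aunt''~$\alpha^{a}$ of~$\alpha$ all have types in some nucleus, which child-closure of the pairwise-disjoint nucleii forces to equal~$\Part[Q]^{(i_\alpha)}$. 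The cones $\cone{\alpha^{s}}$, $\cone{\alpha^{a}0}$, $\cone{\alpha^{a}1}$ are each disjoint from~$\cone{\alpha}$ and each have their type in~$\Part[Q]^{(i_\alpha)}$. A short case analysis of how~$\cone{\beta}$ can nest with these three cones — using $\alpha \perp \beta$ to rule out $\beta$~being short enough to be a prefix of~$\alpha$ — shows that at least one of the three must also be disjoint from~$\cone{\beta}$, hence contained in~$U$; this is the required~$\cone{\gamma}$. Swapping~$\alpha$ and~$\beta$, and running the same argument with~$\alpha',\beta'$ (with $i_{\alpha'} = i_\alpha$, $i_{\beta'} = i_\beta$ since $\alpha\sim\alpha'$, $\beta\sim\beta'$), yields the remaining instances. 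The delicate points are this final case analysis and keeping track of which identities~$e_i$ a given s-type absorbs; parts~(i) and~(ii) are routine once the path-to-relation observation is in place.
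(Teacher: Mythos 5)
Your parts (i) and (ii) are correct and essentially the paper's own argument: the idempotent is manufactured from $q_0 = 2q_0 + d$ (one subdivision plus two return paths) rather than from a single step-and-return relation $q = q + e$, but the mechanism — child-closure puts the branched-off summands in $G_i$, strong connectedness supplies the return paths, and then every generator is a unit — is the same, as is the depth-$t$ subdivision in (ii). Part (iii) is also correct but takes a genuinely different route. The paper first partitions $\Cant = C_1 \cup \dots \cup C_k$, where $C_i$ is the union of the cones over the minimal addresses with type in $\Part[Q]^{(i)}$, shows that $C_i \setminus (\cone{\alpha} \cup \cone{\beta})$ is a non-empty union of cones with s-type in the group $G_i$ (this is where $t+2$ enters), and cancels inside the single group $G_i$ via $z + p + q = z' + p + q = \stype{C_i}$, splitting into the cases where the types of $\alpha$ and $\beta$ lie in the same or in different nucleii. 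You instead keep one global equation $p + q + \stype{U} = p + q + \stype{U'}$ over all of $\Cant$, add inverses of $p$ and $q$ computed in $G_{i_\alpha}$ and $G_{i_\beta}$, and justify the resulting cancellation by absorption: the sibling of $\alpha$ or a child of its aunt gives a cone inside $U$ whose type lies in $\Part[Q]^{(i_\alpha)}$, so $e_{i_\alpha} + \stype{U} = \stype{U}$, and similarly for $e_{i_\beta}$, for $U'$, and with the roles of $\alpha,\beta$ swapped; here $\length{\alpha} \geq t+2$ plays exactly the same role as in the paper, forcing the grandparent of $\alpha$ to have type already in the nucleus, and your case analysis showing one of the three candidate cones misses $\cone{\beta}$ is sound. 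What your route buys is uniformity — no decomposition into the $C_i$ and no case split on whether the two nucleii coincide; what the paper's route buys is that the cancellation happens in one stroke inside a single group, with $\stype{C_i}$ as the common value. You are also right, and it is worth keeping explicit, that the $e_i$ are identities only of the subgroups $G_i$ and not of $\sgp{\Part}$, which is precisely why the absorption step is needed before the identities can be discarded.
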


\begin{proof}
\ref{i:multinuke-gps}~Fix an index~$i$, write $G_{i}$~for the subsemigroup generated by those generators corresponding to the types in~$\Part[Q]^{(i)}$ and also fix some $Q \in \Part[Q]^{(i)}$.  First follow the edge (with label~$0$) in the type graph~$\graph{\Part}$ from~$Q$ to~$Q_{0}$ and then, since the nucleus~$\Part[Q]^{(i)}$ is strongly connected, follow a path from~$Q_{0}$ back to~$Q$.  Note that, for every vertex on the path, both edges from it lead to vertices in~$\Part[Q]^{(i)}$ as this nucleus is child-closed.  Hence the generator~$q$ corresponding to~$Q$ satisfies $q = q + e$ for some $e \in G_{i}$.  Now if $P$~is any member of~$\Part[Q]^{(i)}$, there is a path from~$P$ to~$Q$ and this also only passes through vertices in~$\Part[Q]^{(i)}$.  Hence $p = q + x$ for some $x \in G_{i}$.  Then $p + e = x + q + e = x + q = p$.  This shows that $e$~is an identity element for the subsemigroup~$G_{i}$.

Let $R$~be a type in~$\Part[Q]^{(i)}$ such that the corresponding~$r$ is a summand of~$e$ when expressed as a sum from the generating set for~$G_{i}$.  Then $e = r + y$ for some $y \in G_{i}$.  If $P \in \Part[Q]^{(i)}$, there is a path from~$R$ to~$P$ and hence $r = p + z$ for some $z \in G_{i}$.  Now $e = p + z + y$ and we deduce that $p$~has an inverse in~$G_{i}$.  Since all its generators are invertible, we conclude that $G_{i}$~is a group.

\ref{i:nuke-group}~If $\Part$~is nuclear, say with $\Part[Q]$~as nucleus, set $G = \langle \, q \mid Q \in \Part[Q] \, \rangle$.  Now if $P \in \Part$, then $p = \sum p_{\gamma} \in G$ where we sum over all words~$\gamma$ of length~$t$.  Hence $\sgp{P} = G$ and this is a group by~\ref{i:multinuke-gps}.

\ref{i:multinuke-compltype}~After following a path of length~$t$ in the graph~$\graph{\Part}$, one arrives at a vertex in some nucleus~$\Part[Q]^{(i)}$ and then from that point all subsequent edges lead to a vertex in the same subset.  Hence, for each~$i$, there is a positive integer~$m_{i}$ and words $\gamma_{1}^{(i)}$,~$\gamma_{2}^{(i)}$, \dots,~$\gamma_{m_{i}}^{(i)}$ that are minimal amongst those that have type in~$\Part[Q]^{(i)}$.  These words have the properties that any distinct pair of words $\gamma_{j}^{(i)}$~and~$\gamma_{j'}^{(i')}$ are incomparable, all have length~$\leq t$, and that any word of length~$t$ or more has exactly one~$\gamma_{j}^{(i)}$ as a prefix.  Observe that
\[
\Cant = \bigcup_{i=1}^{k} \bigcup_{j=1}^{m_{i}} \cone{\gamma_{j}^{(i)}}
\]
expresses~$\Cant$ as a disjoint union of cones with the~$\gamma_{j}^{(i)}$ as their addresses.  Write $C_{i} = \bigcup_{j=1}^{m_{i}} \cone{\gamma_{j}^{(i)}}$ for~$i = 1$, $2$, \dots,~$k$.

Let $P$~and~$Q$ be the types of $\alpha$~and~$\beta$, respectively, and let $x$~and~$y$ be the respective s-types of the complements $U = \Cant \setminus ( \cone{\alpha} \cup \cone{\beta} )$ and $U' = \Cant \setminus ( \cone{\alpha'} \cup \cone{\beta'} )$.  We shall show that $x = y$ by considering possibilities for $P$~and~$Q$.  Note by assumption that $\alpha' \in P$ and $\beta' \in Q$.

\paragraph{Case~1:} $P$~and~$Q$ belong to the same nucleus~$\Part[Q]^{(i)}$.

As $\alpha$~and~$\beta$ have length~$\geq t+2$, at most two of the words $\gamma_{1}^{(i)}00$,~$\gamma_{1}^{(i)}01$, $\gamma_{1}^{(i)}10$ and~$\gamma_{1}^{(i)}11$ can occur as prefixes of either $\alpha$~or~$\beta$ and the other two are incomparable to both $\alpha$~and~$\beta$.  As a consequence, $C_{i} \setminus ( \cone{\alpha} \cup \cone{\beta} )$ is a union of cones indexed by addresses with some~$\gamma_{j}^{(i)}$, for $1 \leq j \leq m_{i}$, as prefix and this union involves at least some such cones.  Therefore $\stype{ C_{i} \setminus ( \cone{\alpha} \cup \cone{\beta} ) } = z$ is some element of the group~$G_{i}$.  Similarly $\stype{ C_{i} \setminus ( \cone{\alpha'} \cup \cone{\beta'} ) } = z' \in G_{i}$.  Now the equation $z + p + q = z' + p + q$ holds in the group~$G_{i}$, as both sides equal the s-type of~$C_{i}$, and so we deduce $z = z'$.  Since
\[
U = C_{1} \cup C_{2} \cup \dots \cup C_{i-1} \cup \bigl( C_{i} \setminus ( \cone{\alpha} \cup \cone{\beta} ) \bigr) \cup C_{i+1} \cup \dots \cup C_{k},
\]
it follows that $x = \sum_{\ell \neq i} \stype{C_{\ell}} + z$ and similarly $y = \sum_{\ell \neq i} \stype{C_{\ell}} + z'$.  We deduce that $x = y$ in this case.

\paragraph{Case~2:} $P \in \Part[Q]^{(i)}$ and $Q \in \Part[Q]^{(i')}$ with $i \neq i'$.

A similar argument to that used in Case~1 shows that $\stype{ C_{i} \setminus \cone{\alpha} } = \stype{ C_{i} \setminus \cone{\alpha'} }$ and $\stype{ C_{i'} \setminus \cone{\beta} } = \stype{ C_{i'} \setminus \cone{\beta'} }$.  We then deduce that $x = y$ since they are, respectively, the sums of the left- and right-hand sides of these two equations together with terms of the form~$\stype{C_{\ell}}$ for $\ell \neq i,i'$.  This establishes the final part of the lemma.
\end{proof}

If $\Part$~is a multinuclear type system on~$\Omega$, the associated semigroup~$\sgp{\Part}$ is generated by the groups~$G_{i}$ listed in Lemma~\ref{lem:multinuke-sgp} but it need not be the direct product of these groups.  The following simple example illustrates this and it will be important to bear this in mind when we use this semigroup in what follows.

\begin{example}
Let $\Part$~be the type system determined by the label diagram in Figure~\ref{fig:NotProduct} where $\textsf{A}$~is the type of the empty word.  This type system is multinuclear with two nuclei $\{\mathsf{P}\}$~and~$\{\mathsf{Q}\}$.  The semigroup~$\sgp{\Part}$ associated with~$\Part$ is the (additively-written) commutative semigroup generated by two elements $p$~and~$q$ subject to the additional relations $p = 2p$ and $q = 2q$.  Hence $\sgp{\Part} = \{p,q,p+q\}$ where all three elements are idempotents and the element~$p+q$ is a zero (in the semigroup sense); that is, $(p+q)+x = p+q$ for all $x \in \sgp{\Part}$.  The two groups, as described in Lemma~\ref{lem:multinuke-sgp}, associated to the nuclei are $G_{1} = \{p\}$ and $G_{2} = \{q\}$, which are both isomorphic to the trivial group.  For this type system, if $U$~is a non-empty clopen subset of~$\Cant$, then it has s-type equal to~$p$ when it is contained in the cone~$\cone{0}$, equal to~$q$ when it is contained in the cone~$\cone{1}$, and equal to~$p+q$ when it has non-empty intersection with both of these cones.
\begin{figure}
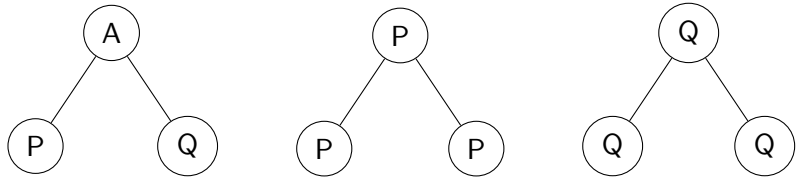

\begin{center}
\labelcaret{A}{P}{Q}
\qquad
\labelcaret{P}{P}{P}
\qquad
\labelcaret{Q}{Q}{Q}
\end{center}
\caption{Label diagram for a particular multinuclear type system}
\label{fig:NotProduct}
\end{figure}
\end{example}

\begin{lemma}
\label{lem:adjunct-sgp}
Let $\Part$~be a quasinuclear type system on~$\AddrSet$ that is both branching and atomic.  Let $t$,~$\Part[Q]$ and~$\Part[R]$ be as in Definition~\ref{def:nuketypes}\ref{i:quasi} and let $R$~be the unique type in~$\Part[R]$.  Then
\begin{enumerate}
\item \label{i:adjunct-id}
the element~$r$ (corresponding to the type~$R$) is an identity element for~$\sgp{\Part}$;
\item \label{i:adjunct-sgpgen}
$\sgp{\Part}$~is generated by~$r$ and the elements~$q$ for $Q \in \branchPartQ$;
\item \label{i:adjunct-ideal}
if $\Ideal$~is the subsemigroup generated by the~$q$ for $Q \in \branchPartQ$, then $I$~is an ideal of~$\sgp{\Part}$ and has the structure of a group.
\end{enumerate}
Moreover, there exists an integer~$N \geq 2$ such that
\begin{enumerate}
\setcounter{enumi}{3}
\item \label{i:adjunct-compltype1}
if $\alpha$~and~$\beta$ are incomparable addresses with $\length{\alpha}, \length{\beta} \geq N$, then $\stype{ \Cant \setminus ( \cone{\alpha} \cup \cone{\beta} ) } \in \Ideal$;
\item \label{i:adjunct-perpinQdag}
if $\alpha$~and~$\alpha'$ are addresses of length at least~$N$, then there exists an address~$\beta$ incomparable with both $\alpha$~and~$\alpha'$ such that $\length{\beta} \geq N$ and $\beta$~has type in~$\branchPartQ$;
\item \label{i:adjunct-compltype2}
if $\alpha$,~$\alpha'$, $\beta$ and~$\beta'$ are addresses of length~$\geq N$ such that $\alpha \perp \beta$, \ $\alpha' \perp \beta'$, \ $\alpha \sim \alpha'$ and $\beta \sim \beta'$, then the complements $\Cant \setminus ( \cone{\alpha} \cup \cone{\beta} )$ and $\Cant \setminus ( \cone{\alpha'} \cup \cone{\beta'} )$ have the same s-type.
\end{enumerate}
\end{lemma}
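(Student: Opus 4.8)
The plan is to establish the six parts in the order (i), (ii), (iii), (v), (iv), (vi), since each relies on the previous ones; once part~(i) is in hand I write $r$ for the identity of~$\sgp{\Part}$. For part~(i): since $\Part[R]=\{R\}$ is child-closed, $R_0=R_1=R$, so the defining relation gives $r=r+r$ and hence $nr=r$ for all $n\geq1$; to see $r$ is an identity it suffices to prove $r+p=p$ for each generator~$p$, and expanding~$p$ to depth~$t$ writes it as a sum of generators of types in $\Part[Q]\cup\{R\}$, from each of which (using condition~\ref{i:q-reach} for those in~$\Part[Q]$) one can split off a summand~$r$, so after absorbing repeated copies of~$r$ we obtain $r+p=p$. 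For part~(ii): the structural fact to isolate is that the subgraph of $\graph{\Part}$ spanned by $\Part[Q]\setminus\branchPartQ$ together with the edges that stay inside~$\Part[Q]$ is acyclic — a cycle there would be a set of types from which one could reach only that cycle and~$R$, hence never~$\branchPartQ$, contradicting strong connectivity of~$\Part[Q]$ together with $\branchPartQ\neq\emptyset$. Consequently, for $P\in\Part[Q]$ the chain obtained by repeatedly following the unique child in~$\Part[Q]$ terminates (at a type in~$\branchPartQ$, or at a type both of whose children are~$R$), and a short induction along it shows $p$ lies in the subsemigroup generated by~$r$ and the generators of~$\branchPartQ$; expanding an arbitrary generator to depth~$t$ reduces to this case.

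For part~(iii): by (i) and~(ii) we have $\sgp{\Part}=\{r\}\cup\Ideal$, so $\Ideal$ is automatically an ideal. Every $\Part[Q]$-generator lies in~$\Ideal$ (its depth expansion contains a $\branchPartQ$-term and the ideal absorbs the remainder), and using strong connectivity of~$\Part[Q]$ to route paths between any two of its types in either direction one gets $p+\sgp{\Part}=\Ideal$ for every $\Part[Q]$-generator~$p$. Now fix $Q\in\branchPartQ$ and a cycle in~$\Part[Q]$ based at~$Q$; the defining relations give $q=q+e$, and because $Q\in\branchPartQ$ the branched-off term~$e$ in fact lies in~$\Ideal$ (a whole child-subsum is forced into~$\Ideal$), not merely in $\{r\}\cup\Ideal$. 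Propagating $q=q+e$ around~$\Part[Q]$ exactly as in the proof of Lemma~\ref{lem:multinuke-sgp}\ref{i:multinuke-gps} shows~$e$ is an identity for~$\Ideal$, and then writing each generator~$q^{(i)}$ as a summand of~$e$ via $e\in\Ideal=q^{(i)}+\sgp{\Part}$ exhibits~$q^{(i)}$ as invertible, so~$\Ideal$ is a group.

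For parts (iv)--(vi) the crux is~(v). Let $\mathcal D\subseteq\Cant$ be the set of points no prefix of which has type~$R$; by condition~\ref{i:q-reach} every type reaches~$R$, so~$\mathcal D$ is closed with empty interior, yet the acyclicity from~(ii) forces every point of~$\mathcal D$ to have infinitely many $\branchPartQ$-typed prefixes, and there is a point of~$\mathcal D$ below every $\Part[Q]$-typed address. Starting from a $\Part[Q]$-typed address of bounded depth and using twice that a type in~$\branchPartQ$ has both children in~$\Part[Q]$, one produces three pairwise disjoint cones, each meeting~$\mathcal D$, any two of which first differ at depth at most a constant~$M$ depending only on~$\Part$. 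Taking $N>M$, no two of the three cones lie inside a single cone of depth~$\geq N$, so two cones of depth~$\geq N$ cannot cover~$\mathcal D$; since every deep $\branchPartQ$-typed address comparable to~$\alpha$ or~$\alpha'$ would force $\mathcal D\subseteq\cone\alpha\cup\cone{\alpha'}$, this yields~(v). Part~(iv) then follows by applying~(v) to the pair~$(\alpha,\beta)$ and noting that, apart from the one $\branchPartQ$-typed cone so obtained (of s-type in~$\Ideal$), every cone of the complement contributes an s-type in $\{r\}\cup\Ideal$, which~$\Ideal$ absorbs. Part~(vi) follows by comparing $\stype{\Cant}=\stype{\Cant\setminus(\cone\alpha\cup\cone\beta)}+\stype{\cone\alpha}+\stype{\cone\beta}$ with its primed analogue, using $\stype{\cone\alpha}=\stype{\cone{\alpha'}}$ and $\stype{\cone\beta}=\stype{\cone{\beta'}}$ (equal types give equal s-types) and cancelling, which is legitimate because all terms lie in $\{r\}\cup\Ideal$ and $\Ideal$ is a group by~(iii).

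The main obstacle is part~(iii): the group structure depends on seeing that the branched-off contributions of cycles in~$\Part[Q]$ genuinely land in~$\Ideal$ rather than only in $\{r\}\cup\Ideal$, which is exactly where one must use that the chosen type is in~$\branchPartQ$, so that both its children lie in~$\Part[Q]$. The combinatorial argument for~(v) — locating three sufficiently separated $\branchPartQ$-typed cones, the step where the branching hypothesis is indispensable — is the secondary difficulty.
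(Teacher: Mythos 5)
Your parts (i)--(iii) track the paper's proof in all essentials: $r=r+r$ plus a path from any type to $R$ makes $r$ an identity; collapsing the relations $q=q'+r$ along a route into $\branchPartQ$ gives (ii); and your cycle based at a vertex of $\branchPartQ$, whose first branched-off child is a $\Part[Q]$-generator and hence lies in $\Ideal$, is exactly the paper's mechanism (there phrased as $p_0=p+x$, $e=p_1+x$ with $p_1\in\Ideal$) for producing an identity and inverses in $\Ideal$.  For (iv)--(v) you diverge: the paper fixes once and for all two incomparable addresses $\delta_1,\delta_2$ with types in $\branchPartQ$, takes $N=\max\{\length{\delta_1},\length{\delta_2},t\}+1$, and notes that among the four pairwise incomparable children $\delta_i x$ at most two can be prefixes of $\alpha$ and $\alpha'$; a surviving child is then extended along $\graph{\Part}$ into $\branchPartQ$, giving (v), and (iv) is obtained directly in the same way (no appeal to (v) needed).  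Your covering argument with $\mathcal{D}$ and three separated cones is a workable, if heavier, substitute; note that the step ``two cones of depth $\geq N$ cannot cover $\mathcal{D}$'' is cleanest when run on the witness points, since two of them would share the prefix $\alpha$ of length $\geq N$ while lying in cones that already separate by depth $M<N$.  Deriving (iv) from (v) by splitting off the cone of the $\branchPartQ$-typed address is fine.

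There are, however, two places where your justification does not cover the statement as given.  First, the lemma does not assume $\Part$ finite (and the paper applies Theorem~\ref{thm:branchtype}, hence this lemma, to type systems that need not be finite, e.g.\ in Proposition~\ref{prop:BranchinQuasinuclearK}), yet twice you draw a finiteness-style conclusion from acyclicity of the subgraph on $\Part[Q]\setminus\branchPartQ$: the termination of the deterministic chain in (ii), and ``every point of $\mathcal{D}$ has infinitely many $\branchPartQ$-typed prefixes'' in (v).  Acyclicity forbids cycles but not infinite injective forward paths when $\Part[Q]$ is infinite, so by itself it proves neither claim.  Both claims are true, but the correct reason is strong connectivity: from a vertex of $\Part[Q]\setminus\branchPartQ$ every path through $\Part[Q]$ is forced to follow its unique $\Part[Q]$-child, and strong connectivity of $\Part[Q]$ (Definition~\ref{def:nuketypes}) together with $\branchPartQ\neq\emptyset$ supplies some finite path through $\Part[Q]$ into $\branchPartQ$, so the forced chain meets $\branchPartQ$ after finitely many steps; this is exactly the paper's device of choosing a shortest word $\gamma$ with $Q_\gamma\in\branchPartQ$.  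Second, in (vi) the cancellation is not ``legitimate because all terms lie in $\{r\}\cup\Ideal$'': the monoid $\sgp{\Part}$ need not be cancellative, since $r+i=e+i$ for every $i\in\Ideal$ while possibly $r\neq e$.  What legitimises it is part (iv), which places both complement s-types in $\Ideal$ (together with the separate easy case in which $\alpha$ and $\beta$ both have type $R$, where $r$ being the identity suffices) --- this is how the paper argues.  Since you establish (iv) before (vi), this is a repair of wording rather than a missing ingredient, but as stated the reason given is insufficient.
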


\begin{proof}
In this proof we refer to the five conditions~\ref{i:q-depth}--\ref{i:q-reach} appearing in Definition~\ref{def:nuketypes}\ref{i:quasi}.  By hypothesis, $\branchPartQ \neq \emptyset$.

\ref{i:adjunct-id}~Let $P \in \Part$.  By Conditions \ref{i:q-depth}~and~\ref{i:q-reach}, there is a path in~$\graph{\Part}$ from~$P$ to~$R$.  Hence we deduce $p = x + r$ for some $x \in \sgp{\Part}$.  Since $\Part[R] = \{R\}$ and this is child-closed by Condition~\ref{i:q-nucleus}, it follows that $r = r + r$ is one of the defining relations in~$\sgp{\Part}$.  Hence $p + r = x + r + r = p$.  As this holds for all generators~$p$, we conclude that $r$~is an identity element of~$\sgp{\Part}$.

\ref{i:adjunct-sgpgen}~If $P \in \Part$, then by the defining relations in~$\sgp{\Part}$, \ $p = \sum_{\gamma} p_{\gamma}$, where we sum over all words~$\gamma$ of length~$t$.  By Condition~\ref{i:q-depth}, each $P_{\gamma} \in \Part[Q] \cup \{R\}$ and hence $\sgp{\Part}$~is generated by~$r$ together with all~$q$ for $Q \in \Part[Q]$.  If $Q \in \Part[Q]$, then by Condition~\ref{i:q-strong} there is a path in~$\graph{\Part}$ from~$Q$ to some vertex in~$\branchPartQ$.  Choose~$\gamma$ to be the shortest word such that $Q_{\gamma} \in \branchPartQ$, say $\gamma = y_{1}y_{2}\dots y_{m}$ where each $y_{i} \in \letters$.  Then $Q_{y_{1}y_{2}\dots y_{i}} \in \Part[Q] \setminus \branchPartQ$ for $i < m$, so the corresponding defining relation is $q_{y_{1}y_{2}\dots y_{i}} = q_{y_{1}y_{2}\dots y_{i+1}} + r$ and this equals $q_{y_{1}y_{2}\dots y_{i+1}}$ by use of part~\ref{i:adjunct-id}.  Hence we deduce $q = q_{\gamma}$.  It follows that $\sgp{P}$~is generated by~$r$ together with the elements~$q$ for~$Q \in \branchPartQ$.

\ref{i:adjunct-ideal}~Since $r$~is the identity element for~$\sgp{\Part}$, it follows from part~\ref{i:adjunct-sgpgen} that $\sgp{P} = I \cup \{r\}$ and that $I$~is an ideal.  Note also that the argument from~\ref{i:adjunct-sgpgen} shows that if $Q \in \Part[Q]$ then the corresponding generator~$q$ is an element of~$I$.  It remains to show that $\Ideal$~is a group.

Fix some $P \in \branchPartQ$.  Then $P_{0},P_{1} \in \Part[Q]$ by definition of~$\branchPartQ$.  There is some path in~$\graph{\Part}$ from~$P_{0}$ to~$P$.  Hence we deduce that $p_{0} = p + x$ for some $x \in \sgp{\Part}$.  Let $e = p_{1} + x$.  Then $e \in I$ since $p_{1} \in I$ and $I$~is an ideal.  Furthermore, $p = p_{0} + p_{1} = p + e$.  There is also a path in~$\graph{\Part}$ from~$P_{1}$ to~$P$ and so $p_{1} = p + y$ for some $y \in \sgp{\Part}$.

Now if $Q$~is any type in~$\branchPartQ$, there is a path in~$\graph{\Part}$ from~$P$ to~$Q$ and so $q = p + z$ for some $z \in \sgp{\Part}$.  Then $q + e = p + z + e = q$.  This shows that $e$~is an identity element in~$\Ideal$.  There is also a path from~$P_{1}$ to~$Q$, so $p_{1} = q + w$ for some $w \in \sgp{\Part}$.  Then
\[
e = p_{1} + x = p + x + y = q + (p_{0} + w + x + y)
\]
and here $p_{0} + w + x + y \in \Ideal$ since $\Ideal$~is an ideal.  In conclusion, every~$q$ corresponding to $Q \in \branchPartQ$ has an inverse in~$\Ideal$.  Hence $\Ideal$~is indeed a group.

We shall now establish the existence of the integer~$N$ required for parts~\ref{i:adjunct-compltype1}--\ref{i:adjunct-compltype2}.  There exists some member in~$\branchPartQ$, say with representative~$\gamma \in \AddrSet$.  Then $\gamma0$~and~$\gamma1$ both have types in~$\Part[Q]$ and so there exist paths in~$\graph{\Part}$ from the corresponding vertices to some members of~$\branchPartQ$.  Hence there exist incomparable addresses $\delta_{1}$~and~$\delta_{2}$ that have types in~$\branchPartQ$.  Take $N = \max\{ \length{\delta_{1}}, \length{\delta_{2}}, t \} + 1$.  Then $N \geq 2$.  We shall now verify that the three required claims hold for such~$N$.

\ref{i:adjunct-compltype1}~Suppose that $\length{\alpha}, \length{\beta} \geq N$.  Then at least one (indeed, at least two) of the incomparable addresses $\delta_{1}0$,~$\delta_{1}1$, $\delta_{2}0$ and~$\delta_{2}1$ is not a prefix of~$\alpha$ or of~$\beta$.  Let $\delta_{i}x$, for $x \in \letters$, satisfy $\delta_{i}x \nprefix \alpha$ and $\delta_{i}x \nprefix \beta$.  Since $\delta_{i}x$~is shorter than these two addresses, we deduce $\cone{\delta_{i}x} \subseteq U = \Cant \setminus ( \cone{\alpha} \cup \cone{\beta} )$.  Write~$P$ for the type of~$\delta_{i}x$.  Then the corresponding element~$p$ is a summand of the s-type of~$U$ and, since $I$~is an ideal of~$\sgp{\Part}$, we deduce $\stype{U} \in I$, as claimed.

\ref{i:adjunct-perpinQdag}~Suppose $\length{\alpha}, \length{\alpha'} \geq N$.  Again, for some~$x \in \{0,1\}$, the address~$\delta_{i}x$ is not a prefix of $\alpha$~or~$\alpha'$, so that $\delta_{i}$~is incomparable with both these addresses.  By assumption, the type of~$\delta_{i}x$ is in~$\Part[Q]$.  Follow a sufficiently long path in~$\graph{\Part}$ from the corresponding vertex to some member of~$\branchPartQ$ to produce an address~$\beta$ with $\delta_{i} \prefix \beta$, \ $\length{\beta} \geq N$ and such that $\beta$~has type in~$\branchPartQ$.  Then $\beta$~is incomparable with both $\alpha$~and~$\alpha'$, as required.

\ref{i:adjunct-compltype2}~Let $\alpha$,~$\alpha'$, $\beta$ and~$\beta'$ be as in the statement.  Let $P$~and~$Q$ be the types of $\alpha$~and~$\beta$, respectively, and let $x$~and~$y$ be the respective s-types of the complements $\Cant \setminus ( \cone{\alpha} \cup \cone{\beta} )$ and $\Cant \setminus ( \cone{\alpha'} \cup \cone{\beta'} )$.  Then $p + q + x = p + q + y$ since both sides are the s-type of~$\Cant$.  If $P = Q = R$, then we deduce immediately that $x = y$ since $r$~is the identity element in~$\sgp{\Part}$.  Suppose then that either $P$~or~$Q$ is a member of~$\Part[Q]$.  Then $p + q \in \Ideal$, since $\Ideal$~is an ideal of~$\sgp{\Part}$.  We also know $x,y \in \Ideal$ by part~\ref{i:adjunct-compltype1}.  Since $\Ideal$~is a group, we deduce $x = y$ completing the proof of the lemma.
\end{proof}

We shall give a name to the integer parameter that appears in the above two lemmas.

\begin{defn}
We define the \emph{stable depth}~$\sd{\Part}$ of the type systems~$\Part$ under consideration as follows:
\begin{enumerate}
\item If $\Part$~is a multinuclear type system, its \emph{stable depth} is~$t+2$, where $t$~is the smallest integer such that an address~$\alpha$ has type in one of the nucleii whenever $\length{\alpha} \geq t$.
\item If $\Part$~is a quasinuclear type system that is both branching and atomic, its \emph{stable depth}~is the smallest integer~$N$ such that parts \ref{i:adjunct-compltype1}--\ref{i:adjunct-compltype2} of Lemma~\ref{lem:adjunct-sgp} hold.
\end{enumerate}
\end{defn}

The use of this same terminology for both classes of type systems will allow us to use Lemmas~\ref{lem:multinuke-sgp} and~\ref{lem:adjunct-sgp} in the same way.  Our definition ensures that $\sd{\Part} \geq 2$ in either case.  Hence if $\alpha$~and~$\beta$ are addresses of length at least the stable depth, then they necessarily have small support.

\begin{lemma}
\label{lem:nuke-incomparables}
Let $\Part$~be a type system on~$\AddrSet$, determined by the equivalence relation~$\sim$, that is either multinuclear or atomic branching quasinuclear.  Let $N \geq \sd{\Part}$ and $\alpha$,~$\alpha'$, $\beta_{1}$, $\beta_{2}$, \dots,~$\beta_{n}$ be addresses of length at least~$N$.  Then there exist pairwise incomparable addresses $\gamma_{1}$,~$\gamma_{2}$, \dots,~$\gamma_{n}$ such that, for each~$i$, \ $\beta_{i} \sim \gamma_{i}$, \ $\length{\gamma_{i}} \geq N$ and $\gamma_{i}$~is incomparable with both $\alpha$~and~$\alpha'$.
\end{lemma}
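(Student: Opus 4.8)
The plan is to locate, disjointly from $\cone{\alpha}$ and $\cone{\alpha'}$, one or several cones inside which the type graph can be navigated freely, and then to realise the types of $\beta_{1},\dots,\beta_{n}$ on pairwise disjoint sub-cones of those cones. First I would isolate the common engine. \emph{Sub-claim.} If $\cone{\mu}$~is a cone whose type lies in a child-closed, strongly connected subset~$\Part[S]$ of~$\Part$, then for every $k\geq 1$, every choice of types $S^{(1)},\dots,S^{(k)}\in\Part[S]$ (not necessarily distinct) and every integer~$L$, there are pairwise incomparable addresses $\gamma_{1},\dots,\gamma_{k}$, each having $\mu$~as a prefix, each of length at least~$L$, with $\gamma_{\ell}$ of type~$S^{(\ell)}$. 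This follows by a short induction on~$k$: for $k=1$, strong connectedness supplies a non-empty word taking the type of~$\mu$ to~$S^{(1)}$, and one then lengthens by traversing a cycle based at~$S^{(1)}$ sufficiently often; for $k>1$, one passes to the child cones $\cone{\mu 0}$ and $\cone{\mu 1}$, whose types lie in~$\Part[S]$ by child-closure, and handles $S^{(1)},\dots,S^{(k-1)}$ in the first and $S^{(k)}$ in the second.

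For the multinuclear case I would argue as follows. Write $\sd{\Part}=t+2$, with $t$~as in Definition~\ref{def:nuketypes}\ref{i:multinuke}, and let $P^{(i)}$~be the type of~$\beta_{i}$. For each~$i$, let $\eta_{i}$~be the length-$t$ prefix of~$\beta_{i}$, which exists since $\length{\beta_{i}}\geq N\geq t+2$; its type lies in some nucleus~$\Part[Q]^{(j_{i})}$, and iterating child-closure from~$\eta_{i}$ down to~$\beta_{i}$ shows $P^{(i)}$~lies in~$\Part[Q]^{(j_{i})}$ as well. Addresses $\eta_{i}$~taking distinct values are pairwise incomparable, so it suffices to treat each value separately: fix a value~$\eta$, let $J=\set{i}{\eta_{i}=\eta}$, and note that the~$P^{(i)}$ for $i\in J$ all lie in a single nucleus~$\Part[Q]^{(j)}$. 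Because $\length{\alpha},\length{\alpha'}\geq t+2$, each of $\cone{\alpha},\cone{\alpha'}$ is either disjoint from~$\cone{\eta}$ or properly contained in one of the two cones $\cone{\eta 0},\cone{\eta 1}$, and two such sets cannot cover~$\cone{\eta}$; hence $\cone{\eta}\setminus(\cone{\alpha}\cup\cone{\alpha'})$ is a non-empty clopen set and contains a cone~$\cone{\mu}$ with $\eta\prefix\mu$, whose type is in~$\Part[Q]^{(j)}$. The sub-claim applied to~$\cone{\mu}$ with $\Part[S]=\Part[Q]^{(j)}$, the target types~$P^{(i)}$ for $i\in J$, and $L=N$ then yields pairwise incomparable $\gamma_{i}$ ($i\in J$), each extending~$\mu$, of length~$\geq N$, with $\gamma_{i}$ of type~$P^{(i)}$ (so $\gamma_{i}\sim\beta_{i}$) and with $\cone{\gamma_{i}}\subseteq\cone{\mu}$ disjoint from $\cone{\alpha}\cup\cone{\alpha'}$. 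Taking the union of these families over the distinct values of~$\eta$ gives the required addresses: within a value they are incomparable by construction, and across values because the corresponding~$\eta$'s are.

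For the atomic branching quasinuclear case I would instead invoke Lemma~\ref{lem:adjunct-sgp}\ref{i:adjunct-perpinQdag} for the pair $\alpha,\alpha'$ (both of length~$\geq N\geq\sd{\Part}$) to obtain an address~$\delta$ of length~$\geq N$, incomparable with both $\alpha$~and~$\alpha'$, whose type lies in~$\branchPartQ$. Here the relevant working set is the terminal set $\Part[Q]\cup\Part[R]$, which is child-closed; it is not strongly connected, since one cannot leave the type~$R$, but it has the three features needed to run the inductive construction inside~$\cone{\delta}$: the subset~$\Part[Q]$ is strongly connected; from any member of~$\Part[Q]$ one can reach every member of $\Part[Q]\cup\Part[R]$ (by strong connectedness within~$\Part[Q]$, using Condition~\ref{i:q-reach} to reach~$R$); and from any member of~$\Part[Q]$ one can return to the non-empty set~$\branchPartQ$, at a type of which a cone splits into two cones with types in~$\Part[Q]$. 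Running the induction of the sub-claim with these substitutes — splitting only after first navigating back into~$\branchPartQ$, and lengthening a branch that has reached~$R$ by appending zeros since $R_{0}=R$ — produces pairwise incomparable $\gamma_{1},\dots,\gamma_{n}$, each extending~$\delta$, of length~$\geq N$, with $\gamma_{i}$ of the same type as~$\beta_{i}$ (this type lying in $\Part[Q]\cup\Part[R]$ since $\length{\beta_{i}}\geq N\geq t$), so that $\gamma_{i}\sim\beta_{i}$. Since each $\cone{\gamma_{i}}\subseteq\cone{\delta}$ is disjoint from $\cone{\alpha}\cup\cone{\alpha'}$, this is the family we want.

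I expect the main obstacle to be the bookkeeping in the multinuclear case: several of the~$\beta_{i}$ may have types in a common nucleus, and their images~$\gamma_{i}$ must be made pairwise incomparable while simultaneously being kept clear of $\cone{\alpha}\cup\cone{\alpha'}$. Partitioning the indices according to the length-$t$ prefix of~$\beta_{i}$, and using that two cones of depth $\geq t+2$ cannot cover a cone of depth~$t$, is what makes this clean; everything else reduces to the sub-claim and its mild variant for the quasinuclear terminal set. (One also uses, without further comment, that $\sd{\Part}\geq t$ in both cases, which is immediate from the definitions.)
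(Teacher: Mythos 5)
Your proposal is correct and follows essentially the same route as the paper: locate, via the depth-$(t+2)$ versus depth-$t$ pigeonhole in the multinuclear case or via Lemma~\ref{lem:adjunct-sgp}\ref{i:adjunct-perpinQdag} in the atomic branching quasinuclear case, a cone clear of $\cone{\alpha}\cup\cone{\alpha'}$ whose type lies in the relevant nucleus (resp.\ in $\branchPartQ$), and then realize the types of the $\beta_{i}$ on pairwise incomparable extensions by navigating the type graph, splitting to create incomparability. The only differences are organizational --- you group the $\beta_{i}$ by their length-$t$ prefixes and package the splitting/lengthening step as a sub-claim, whereas the paper reduces to a single nucleus and builds the $\gamma_{i}$ inductively by refining the last one --- and these do not change the substance of the argument.
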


\begin{proof}
Consider first the case when $\Part$~is multinuclear and adopt the notation of Definition~\ref{def:nuketypes}\ref{i:multinuke}.  By definition, $N \geq t+2$.  Any address~$\theta$ of length at least~$N$ has type in one of the nucleii~$\Part[Q]^{(i)}$ and any descendent of~$\theta$ also has type belonging to~$\Part[Q]^{(i)}$.  As a consequence, it is sufficient to deal with the case when $\alpha$,~$\alpha'$, $\beta_{1}$, $\beta_{2}$, \dots,~$\beta_{n}$ all have types in the same nucleus~$\Part[Q]^{(i)}$, since incomparability follows immediately for addresses with types in different nucleii.  We proceed by induction on~$n$.  Let $\delta$~be a prefix of~$\alpha$ of length~$t$, so that $\delta$~also has type in~$\Part[Q]^{(i)}$.  Since $\length{\alpha}, \length{\alpha'} \geq t+2$, at least one of $\delta00$,~$\delta01$, $\delta10$ and~$\delta11$ is incomparable with both $\alpha$~and~$\alpha'$.  Write $\zeta$~for one of these four addresses satisfying this requirement.  Now follow a sufficiently long path in~$\graph{\Part}$ from the type of~$\zeta$ to that of~$\beta_{1}$ to produce an address~$\gamma_{1}$ that is incomparable with~$\alpha$ and satisfies $\gamma_{1} \sim \beta_{1}$ and $\length{\gamma_{1}} \geq N$.  This covers the base case~$n = 1$.

Now assume that $n > 1$ and there are incomparable addresses $\gamma_{1}$,~$\gamma_{2}$, \dots,~$\gamma_{n-1}$ such that $\beta_{i} \sim \gamma_{i}$, \ $\length{\gamma_{i}} \geq N$ and $\gamma_{i} \perp \alpha, \alpha'$ for $i = 1$,~$2$, \dots,~$n-1$.  Now $\gamma_{n-1}0$~and~$\gamma_{n-1}1$ are incomparable.  Follow paths in~$\graph{\Part}$ from the types of $\gamma_{n-1}0$~and~$\gamma_{n-1}1$ to those of $\beta_{n-1}$~and~$\beta_{n}$.  Replace~$\gamma_{n-1}$ by the first of the resulting addresses and define~$\gamma_{n}$ to be the second.  Then the new~$\gamma_{n-1}$ and~$\gamma_{n}$ are incomparable with the previous $\gamma_{1}$,~$\gamma_{2}$, \dots,~$\gamma_{n-2}$ and with both $\alpha$~and~$\alpha'$ and satisfy $\gamma_{n-1} \sim \beta_{n-1}$ and $\gamma_{n} \sim \beta_{n}$.  This completes the induction step in the multinuclear case.

Now suppose that $\Part$~is quasinuclear and both branching and atomic.  We use the notation of Definition~\ref{def:nuketypes}\ref{i:quasi} and write~$R$ for the unique type in~$\Part[R]$.  By Lemma~\ref{lem:adjunct-sgp}\ref{i:adjunct-perpinQdag}, there exists~$\zeta$ with type in~$\branchPartQ$ such that $\length{\zeta} \geq N$ and $\zeta$~is incomparable with both $\alpha$~and~$\alpha'$.  We claim that there exist pairwise incomparable addresses $\zeta_{1}$,~$\zeta_{2}$, \dots,~$\zeta_{n}$ that have $\zeta$~as prefix and that each has type in~$\branchPartQ$.  When $n = 1$, $\zeta_{1} = \zeta$ achieves the claim.  Suppose as inductive hypothesis that we have found such $\zeta_{1}$,~$\zeta_{2}$, \dots,~$\zeta_{n-1}$.  Then $\zeta_{n-1}0$~and~$\zeta_{n-1}1$ both have type in~$\Part[Q]$ and from these we can follow paths in~$\graph{\Part}$ to types in~$\branchPartQ$.  Replace~$\zeta_{n-1}$ by the resulting address determined by the first and take~$\zeta_{n}$ to be the second resulting address.  We have then completed the induction and established our claim.  Now for each~$i$, there is a path from the type~$\zeta_{i}$ to that of~$\beta_{i}$ (whether this is~$R$ or belongs to~$\Part[Q]$).  Hence we find an address~$\gamma_{i}$ with $\zeta_{i}$~as prefix and $\gamma_{i} \sim \beta_{i}$.  The resulting $\gamma_{1}$,~$\gamma_{2}$, \dots,~$\gamma_{n}$ then satisfy the claim of the lemma.
\end{proof}

\begin{rem}
For many of the applications of the above lemma, we shall not specify the addresses $\alpha$~and~$\alpha'$.  In these cases, all we seek are incomparable addresses~$\gamma_{i}$ that satisfy $\gamma_{i} \sim \beta_{i}$.
\end{rem}

The following lemma will be useful for showing that, for suitable type systems~$\Part$, if an overgroup~$H$ of~$\Fix{V}{\Part}$ contains a suitable transposition~$\swap{\alpha}{\beta}$ with $\alpha \nsim \beta$ then it contains many transpositions of this form.

\begin{lemma}
\label{lem:conjugator}
Let $\Part$~be a type system that is determined by the equivalence relation~$\sim$ on~$\AddrSet$ and assume that $\Part$~is either multinuclear or atomic branching quasinuclear.  Let $N \geq \sd{\Part}$.
\begin{enumerate}
\item \label{i:conjugator}
If $\alpha$,~$\alpha'$, $\beta$ and~$\beta'$ are addresses of length at least~$N$ such that $\alpha \sim \alpha'$, \ $\beta \sim \beta'$, \ $\alpha \perp \beta$ and $\alpha' \perp \beta'$, then there exists $g \in \Fix{V}{\Part}$ such that $\alpha^{g} = \alpha'$ and $\beta^{g} = \beta'$.

\item \label{i:allswaps}
Suppose that $H$~is a subgroup of\/~$V$ with $\Fix{V}{\Part} \leq H$ such that $\swap{\alpha}{\beta} \in H$ for some incomparable addresses $\alpha$~and~$\beta$ with $\length{\alpha}, \length{\beta} \geq N$.  Then $\swap{\alpha'}{\beta'} \in H$ for all incomparable addresses $\alpha'$~and~$\beta'$ with $\length{\alpha'}, \length{\beta'} \geq N$, \ $\alpha' \sim \alpha$ and $\beta' \sim \beta$.
\end{enumerate}
\end{lemma}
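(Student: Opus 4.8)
The plan is to prove part~\ref{i:conjugator} by constructing an explicit element of $\Fix{V}{\Part}$, and then to deduce part~\ref{i:allswaps} from it by a conjugation argument.

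For part~\ref{i:conjugator}, I would compare the clopen complements $U = \Cant \setminus (\cone{\alpha} \cup \cone{\beta})$ and $U' = \Cant \setminus (\cone{\alpha'} \cup \cone{\beta'})$; both are non-empty, since $N \geq \sd{\Part} \geq 2$ forces $\alpha$~and~$\beta$ (respectively $\alpha'$~and~$\beta'$) to have small support. Because $N \geq \sd{\Part}$, the length hypotheses match exactly those of Lemma~\ref{lem:multinuke-sgp}\ref{i:multinuke-compltype} in the multinuclear case and those of Lemma~\ref{lem:adjunct-sgp}\ref{i:adjunct-compltype2} in the atomic branching quasinuclear case, so in either case $U$~and~$U'$ have the same s-type. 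I would then apply Lemma~\ref{lem:clopen-type} to obtain decompositions $U = \bigcup_{i=1}^{n} \cone{\gamma_i}$ and $U' = \bigcup_{i=1}^{n} \cone{\delta_i}$ into the same number of disjoint cones with $\gamma_i \sim \delta_i$ for each~$i$. Since $\alpha \perp \beta$, the cones $\cone{\alpha}, \cone{\beta}, \cone{\gamma_1}, \dots, \cone{\gamma_n}$ partition $\Cant$, and likewise $\cone{\alpha'}, \cone{\beta'}, \cone{\delta_1}, \dots, \cone{\delta_n}$; hence there is a well-defined $g \in V$ given by the prefix substitution $\alpha \mapsto \alpha'$, $\beta \mapsto \beta'$, $\gamma_i \mapsto \delta_i$ ($1 \leq i \leq n$). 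Every domain cone of~$g$ is sent to a cone with a $\sim$-equivalent address, so Lemma~\ref{lem:basic}\ref{i:Fix-form} gives $g \in \Fix{V}{\Part}$; and by construction $\alpha^g = \alpha'$ and $\beta^g = \beta'$.

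For part~\ref{i:allswaps}, let $\alpha'$~and~$\beta'$ be incomparable addresses with $\length{\alpha'}, \length{\beta'} \geq N$, $\alpha' \sim \alpha$ and $\beta' \sim \beta$. As $\swap{\alpha}{\beta}$ is a transposition we have $\alpha \perp \beta$, so the hypotheses of part~\ref{i:conjugator} are met and there exists $g \in \Fix{V}{\Part} \leq H$ with $\alpha^g = \alpha'$ and $\beta^g = \beta'$. Since $g$ carries $\cone{\alpha}$ onto $\cone{\alpha'}$ and $\cone{\beta}$ onto $\cone{\beta'}$, it also carries the common complement $\Cant \setminus (\cone{\alpha} \cup \cone{\beta})$ onto $\Cant \setminus (\cone{\alpha'} \cup \cone{\beta'})$; tracking an arbitrary point of $\Cant$ through $g^{-1}$, then $\swap{\alpha}{\beta}$, then $g$, I would conclude $g^{-1}\swap{\alpha}{\beta}g = \swap{\alpha'}{\beta'}$. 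Then $\swap{\alpha}{\beta} \in H$ and $g \in H$ give $\swap{\alpha'}{\beta'} \in H$.

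The substantive content lies entirely in part~\ref{i:conjugator}, and even there the work has essentially been done inside Lemmas~\ref{lem:multinuke-sgp} and~\ref{lem:adjunct-sgp}: what remains is the routine-but-necessary check that the length conditions in those lemmas line up with the assumption $N \geq \sd{\Part}$, together with the bookkeeping needed to see that the two families of cones really do partition $\Cant$ so that the prefix substitution map is a legitimate element of~$V$. Part~\ref{i:allswaps} is then purely formal; the one point to watch is the partial-action convention in the conjugation identity $g^{-1}\swap{\alpha}{\beta}g = \swap{\alpha'}{\beta'}$.
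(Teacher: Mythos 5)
Your proposal is correct and follows essentially the same route as the paper: compare the s-types of the two complements via Lemma~\ref{lem:multinuke-sgp}\ref{i:multinuke-compltype} or Lemma~\ref{lem:adjunct-sgp}\ref{i:adjunct-compltype2}, apply Lemma~\ref{lem:clopen-type} to get matching cone decompositions, build the prefix substitution $g \in \Fix{V}{\Part}$, and obtain part~\ref{i:allswaps} by conjugating $\swap{\alpha}{\beta}$ by~$g$. Your extra checks (small support, Lemma~\ref{lem:basic}\ref{i:Fix-form}, the conjugation identity) only make explicit what the paper leaves implicit.
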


\begin{proof}
\ref{i:conjugator}~Let $U = \Cant \setminus ( \cone{\alpha} \cup \cone{\beta} )$ and $U' = \Cant \setminus ( \cone{\alpha'} \cup \cone{\beta'} )$.  By Lemma~\ref{lem:multinuke-sgp}\ref{i:multinuke-compltype} or by Lemma~\ref{lem:adjunct-sgp}\ref{i:adjunct-compltype2}, as appropriate, the s-types of $U$~and~$U'$ are equal.  Now apply Lemma~\ref{lem:clopen-type} to these complements and hence express
\[
\Cant = \cone{\alpha} \cup \cone{\beta} \cup \biggl( \bigcup_{i=1}^{n} \cone{\gamma_{i}} \biggr) = \cone{\alpha'} \cup \cone{\beta'} \cup \biggl( \bigcup_{i=1}^{n} \cone{\delta_{i}} \biggr)
\]
as disjoint unions of cones such that $\gamma_{i} \sim \delta_{i}$ for each~$i$.  We may therefore define an element~$g$ of~$V$ by the prefix substitutions $\alpha \mapsto \alpha'$, \ $\beta \mapsto \beta'$ and $\gamma_{i} \mapsto \delta_{i}$ for $i = 1$,~$2$, \dots,~$n$.  This element achieves what is claimed.

\ref{i:allswaps}~By part~\ref{i:conjugator}, there exists $g \in \Fix{V}{\Part}$ with $\alpha^{g} = \alpha'$ and $\beta^{g} = \beta'$.  Then $\swap{\alpha'}{\beta'} = \swap{\alpha}{\beta}^{g} \in H$.
\end{proof}

\begin{defn}
If $\Part$~is a multinuclear type system on~$\AddrSet$ with $\Part[Q]^{(1)}$,~$\Part[Q]^{(2)}$, \dots,~$\Part[Q]^{(k)}$ its nucleii, we denote by  $\addrs{\Part[Q]^{(i)}}$ the set of addresses~$\alpha$ with type in~$\Part[Q]^{(i)}$ and write
\[
\supt{\Part[Q]^{(i)}}= \bigcup \set{ \cone{\alpha} }{ \text{$\alpha$~has type in~$\Part[Q]^{(i)}$} }.
\]
\end{defn}

Since each nucleus~$\Part[Q]^{(i)}$ of a multinuclear type system is child-closed, if $\alpha$~and~$\beta$ have types in $\Part[Q]^{(i)}$~and~$\Part[Q]^{(j)}$, respectively, with $i \neq j$, then $\alpha \perp \beta$.  Consequently, the above multinuclear type system~$\Part$ produces a disjoint union decomposition
\[
\Cant = \bigcup_{i=1}^{k} \supt{\Part[Q]^{(i)}}
\]
associated to its nucleii.

\begin{lemma}
\label{lem:FixTransitiveProperClopeninSType}
Suppose $\Part$~is a multinuclear type system on~$\AddrSet$ with $\Part[Q]^{(1)}$,~$\Part[Q]^{(2)}$, \dots,~$\Part[Q]^{(k)}$ its nucleii.  Let $U$~and~$V$ be non-empty clopen subsets of~$\Cant$ such that
\begin{enumerate}
\item $\stype{U} = \stype{V}$,
\item $U \cap \supt{\Part[Q]^{(i)}} \neq \supt{\Part[Q]^{(i)}}$ for all~$i$, and
\item $V \cap \supt{\Part[Q]^{(i)}} \neq \supt{\Part[Q]^{(i)}}$ for all~$i$.
\end{enumerate}
Then there exists $g \in \Fix{V}{\Part}$ such that $Ug = V$.
\end{lemma}

\begin{proof}
Let $\sim$~be the equivalence relation that determines the type system~$\Part$.  For $i = 1$,~$2$, \dots,~$k$, define $S_{i} = \supt{\Part[Q]^{(i)}}$.  As $\stype{U} = \stype{V}$, Lemma~\ref{lem:clopen-type} produces decompositions
\[
U = \bigcup_{j=1}^{n} \cone{\gamma_{j}}
\AND
V = \bigcup_{j=1}^{n} \cone{\delta_{j}}
\]
into disjoint cones such that $\gamma_{j} \sim \delta_{j}$ for $j = 1$,~$2$, \dots,~$n$.  Refining these addresses if necessary, we may assume that each~$\gamma_{j}$ and each~$\delta_{j}$ has type in some nucleus~$\Part[Q]^{(i)}$.  Now fix $i \in \{1,2,\dots,k\}$ and, after relabelling for convenience of notation, suppose that $\gamma_{1}$,~$\gamma_{2}$, \dots,~$\gamma_{r}$ are the corresponding addresses for~$U$ that have type in~$\Part[Q]^{(i)}$.  Then $\delta_{1}$,~$\delta_{2}$, \dots,~$\delta_{r}$ are the addresses for~$V$ with type in~$\Part[Q]^{(i)}$.  Hence
\[
U \cap S_{i} = \bigcup_{j=1}^{r} \cone{\gamma_{j}}
\AND
V \cap S_{i} = \bigcup_{j=1}^{r} \cone{\delta_{j}}.
\]
Consequently, $U \cap S_{i}$~is empty if and only if $V \cap S_{i}$~is empty (namely, when $r = 0$).  If this were the case, then immediately the complements $S_{i} \setminus U$ and $S_{i} \setminus V$ has the same s-type as they both equal~$S_{i}$.  Let us suppose then that $r > 0$.  The above formulae for the intersections then ensure that $\stype{U \cap S_{i}} = \stype{V \cap S_{i}}$.  Now the latter is an element of the group~$G_{i}$ appearing in Lemma~\ref{lem:multinuke-sgp}\ref{i:multinuke-gps} and, since $S_{i} \setminus U$ and $S_{i} \setminus V$ are non-empty by hypothesis,
\[
\stype{S_{i}} = \stype{U \cap S_{i}} + \stype{S_{i} \setminus U} = \stype{V \cap S_{i}} + \stype{S_{i} \setminus V}
\]
is an equation in this group.  Therefore $\stype{S_{i} \setminus U} = \stype{S_{i} \setminus V}$ in this case also.  We may therefore apply Lemma~\ref{lem:clopen-type} to the clopen sets $S_{i} \setminus U$ and~$S_{i} \setminus V$ and hence express~$S_{i}$ as disjoint unions
\[
S_{i} =
\biggl( \bigcup_{j=1}^{r} \cone{\gamma_{j}} \biggr) \cup \biggl( \bigcup_{\ell=1}^{s} \cone{\gamma'_{\ell}} \biggr)
= \biggl( \bigcup_{j=1}^{r} \cone{\delta_{j}} \biggr) \cup \biggl( \bigcup_{\ell=1}^{s} \cone{\delta'_{\ell}} \biggr)
\]
where $\gamma'_{\ell} \sim \delta'_{\ell}$ for each~$\ell$.  Repeating this argument for each $i \in \{1,2,\dots,k\}$,  enables us to write
\[
\Cant =
\biggl( \bigcup_{j=1}^{n} \cone{\gamma_{j}} \biggr) \cup \biggl( \bigcup_{\ell=1}^{m} \gamma'_{\ell} \biggr)
= \biggl( \bigcup_{j=1}^{n} \cone{\delta_{j}} \biggr) \cup \biggl( \bigcup_{\ell=1}^{m} \delta'_{\ell} \biggr)
\]
where $\gamma'_{\ell} \sim \delta'_{\ell}$ for each~$\ell$.
(As an aside, the value of~$n$ here is unchanged from that when we refined the addresses $\gamma_{j}$~and~$\delta_{j}$ to ensure they had type in the nucleii~$\Part[Q]^{(i)}$.  What we have done is find suitable partitions of the complements $\Cant\setminus U$ and~$\Cant\setminus V$.)
We may now define an element~$g$ of~$V$ by the prefix substitutions $\gamma_{j} \mapsto \delta_{j}$ for $1 \leq j \leq n$ and $\gamma'_{\ell} \mapsto \delta'_{\ell}$ for $1 \leq \ell \leq m$.  The result is an element~$g$ in~$\Fix{V}{\Part}$ that maps~$U$ to~$V$.
\end{proof}

\begin{prop}
\label{prop:typequotient}
Let $\Part$~be a type system that is determined by the equivalence relation~$\sim$ on~$\AddrSet$ and assume that $\Part$~is either multinuclear or atomic branching quasinuclear.  Let $H$~be a subgroup of~$V$ such that $\Fix{V}{\Part} \leq H$ and let $N \geq \sd{\Part}$.  Define a relation~$\approx$ on~$\AddrSet$ by the rule $\alpha \approx \beta$ when
\begin{quote}
for all words~$\eta$ with $\length{\eta} = N$, there exist incomparable addresses $\gamma$~and~$\delta$ with $\length{\gamma}, \length{\delta} \geq N$, \ $\gamma \sim \alpha\eta$, \ $\delta \sim \beta\eta$ and $\swap{\gamma}{\delta} \in H$.
\end{quote}
Then the partition determined by $\approx$~is a type system on~$\AddrSet$ that is a quotient of~$\Part$.
\end{prop}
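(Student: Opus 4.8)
The plan is to verify directly that the relation $\approx$ defined in the statement is reflexive, symmetric, transitive, coherent, and reduced, and then to check that $\alpha\sim\beta$ implies $\alpha\approx\beta$ (so that the resulting type system is a quotient of $\Part$, in the sense of Definition~\ref{def:typeprops}). Reflexivity and symmetry are essentially immediate: for reflexivity, given $\alpha$ and a word $\eta$ of length $N$, we must produce incomparable $\gamma,\delta$ with $\gamma\sim\alpha\eta$, $\delta\sim\alpha\eta$ and $\swap{\gamma}{\delta}\in H$; here $\length{\alpha\eta}\geq N\geq\sd{\Part}$, so Lemma~\ref{lem:nuke-incomparables} (with $n=2$ and $\beta_1=\beta_2=\alpha\eta$) supplies incomparable $\gamma,\delta$ with $\gamma\sim\delta\sim\alpha\eta$, and then $\swap{\gamma}{\delta}=g^{-1}\swap{\gamma_0}{\delta_0}g$-type reasoning via Lemma~\ref{lem:conjugator}\ref{i:conjugator} moving $\gamma,\delta$ to a fixed incomparable pair shows $\swap{\gamma}{\delta}\in\Fix{V}{\Part}\leq H$ --- in fact $\swap{\gamma}{\delta}\in\Fix{V}{\Part}$ already since $\gamma\sim\delta$ means the transposition fixes parts. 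Symmetry is built into the definition. For the quotient property, if $\alpha\sim\beta$ then $\alpha\eta\sim\beta\eta$ for every $\eta$ by coherence of $\sim$, and the same Lemma~\ref{lem:nuke-incomparables} argument produces the required incomparable $\gamma\sim\alpha\eta$, $\delta\sim\beta\eta$ with $\swap{\gamma}{\delta}\in\Fix{V}{\Part}\leq H$; hence $\alpha\approx\beta$.

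Next I would handle coherence and reducedness, which are the most delicate conditions but should follow from a common observation. Suppose $\alpha\approx\beta$; I claim $\alpha 0\approx\beta 0$ and $\alpha 1\approx\beta 1$. Fix $x\in\{0,1\}$ and a word $\eta$ with $\length{\eta}=N$. Apply the hypothesis $\alpha\approx\beta$ to the word $x\eta'$ where $\eta'$ is chosen appropriately --- the issue is that $x\eta$ has length $N+1$, not $N$, so one needs to relate the condition at length $N$ to the condition at longer words. The clean way is to prove a stability lemma first: if the defining condition holds for all words of length $N$, then it holds for all words of length $\geq N$. This follows because any word $\zeta$ of length $>N$ factors as $\eta\mu$ with $\length{\eta}=N$; from incomparable $\gamma\sim\alpha\eta$, $\delta\sim\beta\eta$ with $\swap{\gamma}{\delta}\in H$ one passes to $\gamma\mu'$, $\delta\mu'$ (using Lemma~\ref{lem:partialact} to track the partial action) after conjugating by a suitable element of $\Fix{V}{\Part}$ produced by Lemma~\ref{lem:conjugator}\ref{i:conjugator} --- here one uses coherence of $\sim$ to get $\gamma\mu\sim\alpha\eta\mu=\alpha\zeta$. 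Once this stability is in hand, coherence of $\approx$ is immediate: the condition for $\alpha x\approx\beta x$ at words $\eta$ of length $N$ is the condition for $\alpha\approx\beta$ at words $x\eta$ of length $N+1\geq N$, which holds. Reducedness runs symmetrically: if $\alpha 0\approx\beta 0$ and $\alpha 1\approx\beta 1$, then the condition for $\alpha\approx\beta$ at a word $\eta$ of length $N$ is obtained by looking at $\eta=x\eta''$ --- this requires the reverse passage, shortening witnesses, which is where one must argue that a witness $\swap{\gamma}{\delta}\in H$ for a longer word can be assembled from, or used to produce, a witness for the word with one fewer initial letter.

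The remaining condition is transitivity, and this is where I expect the main obstacle to lie. Suppose $\alpha\approx\beta$ and $\beta\approx\gamma$; we want $\alpha\approx\gamma$. Fix $\eta$ with $\length{\eta}=N$. From $\alpha\approx\beta$ we get incomparable $\gamma_1\sim\alpha\eta$, $\delta_1\sim\beta\eta$ with $\swap{\gamma_1}{\delta_1}\in H$; from $\beta\approx\gamma$ we get incomparable $\gamma_2\sim\beta\eta$, $\delta_2\sim\gamma\eta$ with $\swap{\gamma_2}{\delta_2}\in H$. To chain these we need a single pair of transpositions sharing a common middle address, so I would use Lemma~\ref{lem:nuke-incomparables} to choose witnesses more carefully --- specifically, first pick a fixed incomparable triple $a\sim\alpha\eta$, $b\sim\beta\eta$, $c\sim\gamma\eta$ all of length $\geq N$ (legitimate because $\length{\alpha\eta},\length{\beta\eta},\length{\gamma\eta}\geq N\geq\sd{\Part}$ and incomparable representatives of prescribed types exist), then use Lemma~\ref{lem:conjugator}\ref{i:allswaps} to transport $\swap{\gamma_1}{\delta_1}\in H$ to $\swap{a}{b}\in H$ and $\swap{\gamma_2}{\delta_2}\in H$ to $\swap{b}{c}\in H$; then $\swap{a}{b}\swap{b}{c}=\swap{a}{c}\swap{a}{b}$, so $\swap{a}{c}=\swap{a}{b}\swap{b}{c}\swap{a}{b}^{-1}$-type algebra --- more simply, $\swap{a}{c}=\swap{b}{c}\swap{a}{b}\swap{b}{c}\in H$ --- giving incomparable $a\sim\alpha\eta$, $c\sim\gamma\eta$ with $\swap{a}{c}\in H$, which is exactly $\alpha\approx\gamma$. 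The subtlety I want to be careful about is that Lemma~\ref{lem:conjugator}\ref{i:allswaps} applies to $H$ with $\Fix{V}{\Part}\leq H$ and moves a transposition on incomparable addresses of length $\geq N$ to another on incomparable addresses of the same types and length $\geq N$, so one must ensure $a,b,c$ all have length $\geq N$ and are pairwise incomparable, which is precisely what the strengthened $n=3$ form of Lemma~\ref{lem:nuke-incomparables} provides. Once transitivity, coherence, and reducedness are all established, $\approx$ defines a type system by Definition~\ref{defn:typesystem}, and the quotient property was checked above, completing the proof.
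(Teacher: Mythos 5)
Your handling of reflexivity, symmetry, the quotient property, and transitivity is sound and essentially coincides with the paper's proof (pairwise incomparable representatives from Lemma~\ref{lem:nuke-incomparables}, transported via Lemma~\ref{lem:conjugator}\ref{i:allswaps}, then a conjugation of transpositions). The gap lies in coherence and, more seriously, in reducedness. For coherence you reduce everything to a ``stability lemma'' and propose to prove it by conjugating the witness $\swap{\gamma}{\delta}\in H$ by an element of $\Fix{V}{\Part}$ supplied by Lemma~\ref{lem:conjugator}\ref{i:conjugator}. That mechanism cannot work: conjugating by $g\in\Fix{V}{\Part}$ replaces $\gamma,\delta$ by $\gamma^{g}\sim\gamma\sim\alpha\eta$ and $\delta^{g}\sim\delta\sim\beta\eta$, so the types of the swapped addresses are unchanged, whereas a witness for the longer word $\zeta=\eta\mu$ must swap addresses equivalent to $\alpha\eta\mu$ and $\beta\eta\mu$, whose types are in general different from those of $\alpha\eta$ and $\beta\eta$; in particular there is no $g\in\Fix{V}{\Part}$ with $\gamma^{g}=\gamma\mu$ unless $\gamma\sim\gamma\mu$. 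The correct move runs in the opposite direction: use Lemma~\ref{lem:nuke-incomparables} to choose a third address $\delta'$ with $\delta'\sim\delta$ incomparable to both $\gamma$ and $\delta$; then $\swap{\delta\mu}{\delta'\mu}\in\Fix{V}{\Part}\leq H$ because $\delta\mu\sim\delta'\mu$, and conjugating this element of $\Fix{V}{\Part}$ by the witness $\swap{\gamma}{\delta}\in H$ produces $\swap{\gamma\mu}{\delta'\mu}\in H$, where $\gamma\mu\sim\alpha\eta\mu$ and $\delta'\mu\sim\delta\mu\sim\beta\eta\mu$. This is exactly how the paper proves coherence (appending one letter at a time), and your sketch needs to be repaired along these lines.

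For reducedness you only note that ``the reverse passage, shortening witnesses'' is required, and that is precisely the missing idea: nothing in your stability lemma, which only lengthens words, produces it. The step that closes this gap in the paper is the factorisation of a transposition into the transpositions of the two halves of its cones. Concretely, given $\alpha0\approx\beta0$ and $\alpha1\approx\beta1$ and a word $\eta=x\zeta$ of length $N$ (so $\length{\zeta}=N-1$), pick by Lemma~\ref{lem:nuke-incomparables} incomparable $\gamma\sim\alpha\eta$ and $\delta\sim\beta\eta$ of length at least~$N$; for each $y\in\{0,1\}$ the hypothesis $\alpha x\approx\beta x$ applied to the word $\zeta y$, combined with Lemma~\ref{lem:conjugator}\ref{i:allswaps}, gives $\swap{\gamma y}{\delta y}\in H$ (note $\gamma y\sim\alpha x\zeta y$ and $\delta y\sim\beta x\zeta y$); then $\swap{\gamma}{\delta}=\swap{\gamma0}{\delta0}\,\swap{\gamma1}{\delta1}\in H$, which is a witness for the word~$\eta$, so $\alpha\approx\beta$. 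Without this (or an equivalent) argument your proof of reducedness is incomplete, so as it stands the proposal does not establish that $\approx$ determines a type system.
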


\begin{proof}
Suppose first that $\alpha$~and~$\beta$ are addresses with $\alpha \sim \beta$.  Let $\eta$~be any word with $\length{\eta} = N$.  Since $\sim$~is coherent, necessarily $\alpha\eta \sim \beta\eta$.  By Lemma~\ref{lem:nuke-incomparables}, there exist incomparable addresses $\gamma$~and~$\delta$ with $\length{\gamma}, \length{\delta} \geq N$ and $\gamma \sim \alpha\eta \sim \beta\eta \sim \delta$.  Hence $\swap{\gamma}{\delta} \in \Fix{V}{\Part} \leq H$.  As this holds for all choices of~$\eta$, we conclude that $\alpha \approx \beta$.  It follows first that $\approx$~is reflexive (upon taking $\alpha = \beta$) and secondly that, once we know $\approx$~is a coherent and reduced equivalence relation on~$\AddrSet$, it will define a type system which is a quotient of~$\sim$.  The definition of~$\approx$ is certainly symmetric in $\alpha$~and~$\beta$.  The next step then is to show that $\approx$~is transitive.

Suppose that $\alpha$,~$\beta$ and~$\gamma$ are addresses satisfying $\alpha \approx \beta \approx \gamma$.  Let $\eta$~be any word of length~$N$.  Then, by definition of~$\approx$ and Lemma~\ref{lem:conjugator}\ref{i:allswaps}, \ $\swap{\alpha'}{\beta'} \in H$ for all incomparable addresses $\alpha'$~and~$\beta'$ of length~$\geq N$ with $\alpha' \sim \alpha\eta$ and $\beta' \sim \beta\eta$ and $\swap{\beta'}{\gamma'} \in H$ for all incomparable addresses $\beta'$~and~$\gamma'$ of length~$\geq N$ with $\beta' \sim \beta\eta$ and $\gamma' \sim \gamma\eta$.  By Lemma~\ref{lem:nuke-incomparables}, there exist a choice of incomparable addresses $\alpha'$,~$\beta'$ and~$\gamma'$ such that $\alpha' \sim \alpha\eta$, \ $\beta' \sim \beta\eta$ and $\gamma' \sim \gamma\eta$.  Then $\swap{\alpha'}{\beta'}, \swap{\beta'}{\gamma'} \in H$ for these addresses and so
\[
\swap{\alpha'}{\gamma'} = \swap{\alpha'}{\beta'}^{\swap{\beta'}{\gamma'}} \in H.
\]
This holds for all~$\eta$ of length~$N$ and so we conclude $\alpha \approx \gamma$.  This shows that $\approx$~is transitive.

We now show that the equivalence relation~$\approx$ is coherent.  Suppose that $\alpha \approx \beta$ for some addresses $\alpha$~and~$\beta$.  Let $\eta$~be any word of length~$N$ and write $\eta = \zeta y$ where $\length{\zeta} = n-1$ and $y \in \{0,1\}$.  If $x \in \{0,1\}$, then $x\zeta$~has length~$n$, so by the hypothesis combined with Lemma~\ref{lem:conjugator}\ref{i:allswaps}, \ $\swap{\gamma}{\delta} \in H$ for all incomparable addresses $\gamma$~and~$\delta$ of length~$\geq N$ with $\gamma \sim \alpha x\zeta$ and $\delta \sim \beta x\zeta$.  By Lemma~\ref{lem:nuke-incomparables}, there exist incomparable addresses $\gamma$,~$\delta$ and~$\delta'$ of length $\geq N$ such that $\gamma \sim \alpha x\zeta$ and $\delta \sim \delta' \sim \beta x\zeta$.  Since $\sim$~is coherent, $\delta y \sim \delta'y$ and therefore $\swap{\delta y}{\delta'y} \in \Fix{V}{\Part} \leq H$.  On the other hand, $\swap{\gamma}{\delta} \in H$ by assumption, so
\[
\swap{\gamma y}{\delta'y} = \swap{\delta y}{\delta'y}^{\swap{\gamma}{\delta}} \in H.
\]
Note $\alpha x\eta = \alpha x\zeta y \sim \gamma y$ and $\beta x\eta = \beta x\zeta y \sim \delta'y$.  Since $\eta$~is an arbitrary word of length~$N$, we conclude $\alpha x \approx \beta x$ for any $x \in \{0,1\}$.  This shows that $\approx$~is coherent.

Finally, suppose $\alpha$~and~$\beta$ are addresses such that $\alpha0 \approx \beta0$ and $\alpha1 \approx \beta 1$.  Let $x,y \in \{0,1\}$ and $\zeta$~be any word of length~$N-1$.  Then $\eta = x\zeta$ is an arbitrary word of length~$N$.  Let $\gamma$~and~$\delta$ be incomparable addresses of length~$\geq N$ such that $\gamma \sim \alpha\eta$ and $\delta \sim \beta \eta$.  Now $\gamma y \sim \alpha\eta y = \alpha x\zeta y$ and $\delta y \sim \beta\eta y = \beta x\zeta y$.  Since $\alpha x \approx \beta x$ by assumption, use of Lemma~\ref{lem:conjugator}\ref{i:allswaps} tells us that $\swap{\gamma y}{\delta y} \in H$.  This is true for both $y \in \{0,1\}$ and therefore
\[
\swap{\gamma}{\delta} = \swap{\gamma0}{\delta0} \, \swap{\gamma1}{\delta1} \in H.
\]
As $\eta$~is an arbitrary word of length~$N$, we conclude that $\alpha \approx \beta$.  This shows that $\approx$~is reduced and completes the proof of the proposition.
\end{proof}

\begin{lemma}
\label{lem:notStab}
Let $\Part$~be a type system that is determined by the equivalence relation~$\sim$ on~$\AddrSet$ and assume that $\Part$~is either multinuclear or atomic branching quasinuclear.  Let $N \geq \sd{\Part}$ and let $H$~be a subgroup of~$V$ such that $\Stab{V}{\Part} < H$.  Then there exist incomparable addresses $\alpha$~and~$\beta$ of length at least~$N$ and $h \in H$ such that $\alpha \sim \beta$ but $\alpha^{h} \nsim \beta^{h}$.
\end{lemma}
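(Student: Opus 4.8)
The plan is to construct the required pair directly, starting from an element of~$H$ that lies outside $\Stab{V}{\Part}$. Since $\Stab{V}{\Part} < H$, fix $h_0 \in H \setminus \Stab{V}{\Part}$ and choose an integer $M \geq N$ large enough that $h_0$ is defined on every address of length at least~$M$ and, moreover, $\length{\alpha^{h_0}} \geq N$ whenever $\length{\alpha} \geq M$; this is possible because $h_0$ is a prefix substitution map. Applying Lemma~\ref{lem:reducedStab} with $\Gamma$ the (cofinite) set of addresses of length at least~$M$, the fact that $h_0 \notin \Stab{V}{\Part}$ yields addresses $\alpha_0, \beta_0 \in \Gamma$ for which ``$\alpha_0 \sim \beta_0$ if and only if $\alpha_0^{h_0} \sim \beta_0^{h_0}$'' fails. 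If $\alpha_0 \nsim \beta_0$ while $\alpha_0^{h_0} \sim \beta_0^{h_0}$, I would replace $(h_0, \alpha_0, \beta_0)$ by $(h_0^{-1}, \alpha_0^{h_0}, \beta_0^{h_0})$, whose addresses still have length at least~$N$ by the choice of~$M$; this reduces us to the remaining case. So we may take $h \in H$ and addresses $\alpha_0 \sim \beta_0$ of length at least~$N$, on which $h$ is defined, with $\alpha_0^h \nsim \beta_0^h$ (and necessarily $\alpha_0 \neq \beta_0$).

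If $\alpha_0 \perp \beta_0$ the pair $\alpha_0, \beta_0$ together with~$h$ already works, so suppose $\alpha_0$ and~$\beta_0$ are comparable; say $\alpha_0 \prefix \beta_0$, so that $\cone{\beta_0} \subseteq \cone{\alpha_0}$. The idea is to insert a third address that is equivalent to both $\alpha_0$ and~$\beta_0$ but incomparable with each. Applying Lemma~\ref{lem:nuke-incomparables} with $\sd{\Part}$ in place of its parameter~$N$ and with $\alpha_0$ in all three of the roles $\alpha$, $\alpha'$, $\beta_1$, I obtain an address~$\gamma_0$ with $\gamma_0 \sim \alpha_0$, \ $\length{\gamma_0} \geq \sd{\Part}$ and $\gamma_0 \perp \alpha_0$; since $\cone{\beta_0} \subseteq \cone{\alpha_0}$ it follows that $\gamma_0 \perp \beta_0$ as well. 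Because $\length{\gamma_0} \geq \sd{\Part}$, the common type~$P$ of $\gamma_0$ and~$\alpha_0$ lies in a nucleus (when $\Part$ is multinuclear) or in $\Part[Q] \cup \Part[R]$ (when $\Part$ is atomic branching quasinuclear), and in either situation $P$ lies on a cycle of~$\graph{\Part}$: there is a non-empty word~$\zeta$ with $P_\zeta = P$, and hence $P_{\zeta^m} = P$ for all $m \geq 1$. Taking $m$ large, set $\gamma = \gamma_0 \zeta^m$; then $\gamma$ has type~$P$, so $\gamma \sim \alpha_0$, while also $\gamma \perp \alpha_0$, \ $\gamma \perp \beta_0$, \ $\length{\gamma} \geq N$, and $\gamma^h$ is defined (as $h$ is defined off a finite set of addresses).

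It remains to observe that $\gamma \sim \alpha_0 \sim \beta_0$ whereas $\alpha_0^h \nsim \beta_0^h$, so $\gamma^h$ cannot be $\sim$\nbd related to both $\alpha_0^h$ and~$\beta_0^h$; thus $\gamma^h \nsim \alpha_0^h$ or $\gamma^h \nsim \beta_0^h$. In the first case I take $(\alpha, \beta) = (\gamma, \alpha_0)$ and in the second $(\alpha, \beta) = (\gamma, \beta_0)$; either way $\alpha$ and~$\beta$ are incomparable, have length at least~$N$, satisfy $\alpha \sim \beta$, and satisfy $\alpha^h \nsim \beta^h$ with $h \in H$, as required. The main obstacle is precisely this passage to an \emph{incomparable} violating pair: producing some violating pair from $h_0 \notin \Stab{V}{\Part}$ is immediate via Lemma~\ref{lem:reducedStab}, but one must then trade it for an incomparable one without losing control of lengths (so that $h$ acts on every address involved and Lemma~\ref{lem:nuke-incomparables} applies), and this is exactly what the auxiliary address~$\gamma$, built from strong connectivity of the nucleus, accomplishes.
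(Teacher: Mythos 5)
Your overall strategy is correct and genuinely different from the paper's. The paper descends one more level to find a letter $x$ with $(\alpha x)^{h} \nsim (\beta x)^{h}$ and then manufactures a third address inside the sibling cone by following a path in $\graph{\Part}$, with a separate and rather delicate reduction in the quasinuclear case to addresses whose types lie in $\{R\} \cup \branchPartQ$ (needed because no path may exist from~$R$ back into~$\Part[Q]$); you instead obtain the third address from Lemma~\ref{lem:nuke-incomparables} and then pump it along a cycle to make it long enough to lie in the domain of~$h$, so that lemma absorbs all of the quasinuclear case analysis. Your opening move --- Lemma~\ref{lem:reducedStab} applied to the cofinite set of addresses of length at least~$M$, together with the inverse trick --- is a clean substitute for the paper's push-down via coherence and reducedness, and the closing ``two out of three'' transitivity trick is the same in both arguments.

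There is, however, one incorrectly justified step. You claim that $\length{\gamma_{0}} \geq \sd{\Part}$ forces the common type~$P$ of $\gamma_{0}$ and~$\alpha_{0}$ into the terminal set $\Part[Q] \cup \Part[R]$ in the atomic branching quasinuclear case. That is not what the stable depth controls there: $\sd{\Part}$ is defined as the least~$N$ for which conclusions \ref{i:adjunct-compltype1}--\ref{i:adjunct-compltype2} of Lemma~\ref{lem:adjunct-sgp} hold, not as a depth beyond which all types are terminal, and it can be strictly smaller than every admissible~$t$. For instance, prepend two levels of transient types above the system of Example~\ref{ex:BranchingTypeSystem}: conditions \ref{i:adjunct-compltype1}--\ref{i:adjunct-compltype2} already hold at $N = 2$ (every depth\nbd two generator equals the generator of the branching type in~$\sgp{\Part}$, and the ideal~$\Ideal$ is a one-element group), yet depth\nbd two addresses carry a transient type admitting no cycle. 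So the sentence as written would fail in general. Fortunately the fact you actually need --- a non-empty word~$\zeta$ with $P_{\zeta} = P$ --- is immediate in your situation for a different reason: you are in the case $\alpha_{0} \properprefix \beta_{0}$ with $\alpha_{0} \sim \beta_{0}$, so writing $\beta_{0} = \alpha_{0}\zeta$ gives $P_{\zeta} = P$ directly (and in the multinuclear case your justification is fine, since $\sd{\Part} = t+2$). With that one-line repair your proof goes through.
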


\begin{proof}
There exists some~$h \in H$ that does not stabilize the equivalence relation~$\sim$.  Replacing~$h$ by its inverse if necessary, we can assume there exist addresses $\alpha$~and~$\beta$ such that $\alpha^{h}$~and~$\beta^{h}$ are both defined, $\alpha \sim \beta$ but that $\alpha^{h} \nsim \beta^{h}$.  First note that if $\eta$~is any word of length~$N$, then $\alpha\eta \sim \beta\eta$ since $\sim$~is coherent, while $(\alpha\eta)^{h} = \alpha^{h}\eta$ and $(\beta\eta)^{h} = \beta^{h}\eta$.  Since $\sim$~is reduced, there must exist a word~$\eta$ of length~$N$ such that $\alpha^{h}\eta \nsim \beta^{h}\eta$.  Hence we may first replace $\alpha$~and~$\beta$ by the resulting addresses $\alpha\eta$~and~$\beta\eta$ so as to assume that $\length{\alpha}, \length{\beta} \geq N$.  We shall show that such addresses exist that are incomparable by considering the two possibilities for~$\Part$ in turn.

Suppose first that $\Part$~is multinuclear and that $\alpha$~has type in the nucleus~$\Part[Q]^{(i)}$ for some~$i$.  Then $\alpha0$~and~$\alpha1$ also have types in~$\Part[Q]^{(i)}$ and, again since $\sim$~is reduced, there is some $x \in \letters$ such that $(\alpha x)^{h} \nsim (\beta x)^{h}$.  If $\alpha x$~and~$\beta x$ are incomparable, then we can replace $\alpha$~and~$\beta$ by $\alpha x$~and~$\beta x$ to achieve what is sought.  Suppose instead that $\alpha x \prefix \beta x$.  Let $y$~be the other letter to~$x$ in~$\letters$.  There is a path indexed by the word~$\zeta$ in~$\graph{\Part}$ from the type of~$\alpha y$ to the type of~$\alpha x$.  Note that $\alpha y \zeta$~is incomparable with~$\alpha x$ and therefore also incomparable with~$\beta x$.  Take $\gamma_{0} = \alpha y \zeta$, \ $\gamma_{1} = \alpha x$ and $\gamma_{2} = \beta x$.  Then $\gamma_{0} \sim \gamma_{1} \sim \gamma_{2}$, \ $\gamma_{0}$~is incomparable with $\gamma_{1}$~and~$\gamma_{2}$, \ $\gamma_{0}^{h}$~is defined and $\gamma_{1}^{h} \nsim \gamma_{2}^{h}$.  Then either $\gamma_{0}^{h} \nsim \gamma_{1}^{h}$ or $\gamma_{0}^{h} \nsim \gamma_{1}^{h}$.  Hence we may replace $\alpha$~and~$\beta$ by~$\gamma_{0}$ and one of $\gamma_{1}$~or~$\gamma_{2}$ to achieve the claim.

Now consider the case when $\Part$~is atomic branching quasinuclear and we use the notation of Definition~\ref{def:nuketypes}\ref{i:quasi}, writing~$R$ for the unique type in its nucleus.  Then either the type of $\alpha$~and~$\beta$ is in~$\Part[Q]$ or they both have type~$R$.  In the former case, there is some shortest word~$\zeta$ (possibly empty) such that $\alpha\zeta$~has type in~$\branchPartQ$.  Let $\zeta_{1}$,~$\zeta_{2}$, \dots,~$\zeta_{k}$ be all the words of length equal to that of~$\zeta$.  If $\theta$~is a proper prefix of~$\zeta$, then $\alpha\theta$~has type in~$\Part[Q] \setminus \branchPartQ$, so one of $\alpha\theta0$~and~$\alpha\theta1$ has type~$R$ and the other has type in~$\Part[Q]$.  Moreover, it must be the latter that is a prefix of~$\alpha\zeta$.  As a consequence, precisely one of the addresses~$\alpha\zeta_{i}$ has type in~$\branchPartQ$ (namely~$\alpha\zeta$) and the others have type~$R$.  Using the same argument as in the first paragraph of the proof, we may replace $\alpha$~and~$\beta$ by $\alpha\zeta_{i}$~and~$\beta\zeta_{i}$ for some choice of~$i$ and hence assume that either $\alpha$~and~$\beta$ have type~$R$ or they both have type in~$\branchPartQ$.  In either case, we now apply a similar argument to the multinuclear case.  There is a letter $x \in \letters$ such that $(\alpha x)^{h} \nsim (\beta x)^{h}$.  If $\alpha x \perp \beta x$, then we use these addresses.  Otherwise, assume $\alpha x \prefix \beta x$ and let $y$~be the other letter in~$\letters$.  There is a path in~$\graph{\Part}$ from the type of~$\alpha y$ to that of~$\alpha x$.  Following this path produces an address~$\gamma_{0}$ with prefix~$\alpha y$ satisfying $\gamma_{0} \sim \alpha x$ and such that $\gamma_{0}^{h}$~is defined.  We then apply the same argument with $\gamma_{0}$, \ $\gamma_{1} = \alpha x$ and $\gamma_{2} = \beta x$ as before to complete the proof.
\end{proof}

We may now establish the first of our theorems about the maximality of the stabilizer of a type system.

\BranchType

\begin{proof}
As $\Part$ is a simple type system, the associated relation~$\sim$ is neither trivial nor universal.  Hence $\Stab{V}{\Part}$~is a proper subgroup of~$V$ by Lemma~\ref{lem:Stab-proper}.  Suppose that $H$~is a subgroup of~$V$ with $\Stab{V}{\Part} < H \leq V$, let $N \geq \sd{\Part}$ \ (the stable depth of~$\Part$) and let $\approx$~be the equivalence relation on~$\AddrSet$ with type system $\Part[Q]$ provided by Proposition~\ref{prop:typequotient}.  By Lemma~\ref{lem:notStab}, there exist $h \in H$ and incomparable addresses $\alpha$~and~$\beta$ with $\length{\alpha}, \length{\beta} \geq N$, \ $\alpha \sim \beta$ and $\alpha^{h} \nsim \beta^{h}$.  If $\eta$~is any word of length~$N$, then $\alpha\eta \sim \beta\eta$ by coherence, so $\swap{\alpha\eta}{\beta\eta} \in \Fix{V}{\Part} \leq H$.  Then
\[
\swap{\alpha^{h}\eta}{\beta^{h}\eta} = \swap{\alpha\eta}{\beta\eta}^{h} \in H.
\]
This holds for all~$\eta$ of length~$N$ and so $\alpha^{h} \approx \beta^{h}$.  Hence $\Part[Q]$~is a \emph{proper} quotient of~$\Part$ and so $\approx$ must be the universal relation by our assumption of simplicity.

Now suppose $\gamma$~and~$\delta$ are any incomparable addresses.  Then $\gamma \approx \delta$, so if $\eta$~is a word of length~$N$ then, with use of Lemma~\ref{lem:conjugator}\ref{i:allswaps}, we deduce $\swap{\gamma'}{\delta'} \in H$ for all incomparable addresses $\gamma'$~and~$\delta'$ of length~$\geq N$ with $\gamma' \sim \gamma\eta$ and $\delta' \sim \delta\eta$.  In particular, $\swap{\gamma\eta}{\delta\eta} \in H$.  This holds for all words~$\eta$ of length~$N$ and hence $\swap{\gamma}{\delta} \in H$ as it is the product of all transpositions~$\swap{\gamma\eta}{\delta\eta}$ as $\eta$~ranges over all words of length~$N$.  Since $V$~is generated by all such transpositions~$\swap{\gamma}{\delta}$, we conclude that $H = V$.  This establishes that $\Stab{V}{\Part}$ is a maximal subgroup of~$V$.
\end{proof}


The final case to consider is that of a simple type system that is atomic quasinuclear but not branching.

\begin{thm}
\label{thm:quasicycle}
Let $\Part$~be a simple type system on~$\AddrSet$ that is atomic quasinuclear but not branching.  Then there is a finite subset~$D$ of~$\Cant$ consisting of rational elements of the same tail class such that $\Stab{V}{\Part} = \Stab{V}{D}$ and this is a maximal subgroup of~$V$.
\end{thm}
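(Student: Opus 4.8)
The plan is to pin down the structure of such a $\Part$ exactly, read off an explicit finite set $D\subseteq\Cant$, prove $\Stab{V}{\Part}=\Stab{V}{D}$ by a double inclusion, and then quote Corollary~\ref{cor:Stab-tailclass}. I expect the structural analysis, and in particular the primitivity statement below, to be the main point; everything after it is bookkeeping driven by that picture. So, writing $t$, $\Part[Q]$ and $\Part[R]=\{R\}$ as in Definition~\ref{def:nuketypes}\ref{i:quasi}: since $\Part[Q]\cup\{R\}$ is child-closed and $\branchPartQ=\emptyset$, every $Q\in\Part[Q]$ has at most one of $Q_{0},Q_{1}$ in $\Part[Q]$ and the other equal to $R$; as $\Part[Q]$ is strongly connected and non-empty this forces every vertex of $\Part[Q]$ to have exactly one child in $\Part[Q]$, so the restriction of $\graph{\Part}$ to $\Part[Q]$ is a single directed cycle $Q^{(1)}\to Q^{(2)}\to\dots\to Q^{(m)}\to Q^{(1)}$, with labels $u_{1},\dots,u_{m}\in\{0,1\}$ say. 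Condition~\ref{i:q-depth} forces every type not in $\Part[Q]\cup\{R\}$ to occur only for addresses of length $<t$, so there are finitely many such ``transient'' types and $\Part$ is finite. Simplicity then forces the cyclic word $u_{1}u_{2}\cdots u_{m}$ to be \emph{primitive}: if it had period $m'<m$ then identifying $Q^{(i)}$ with $Q^{(i+m')}$ for all $i$ respects the two child operations (as $u_{i}=u_{i+m'}$), and the reduction of the resulting relation is a type system on strictly fewer types yet with at least the two distinct types ``collapsed cycle'' and $R$ --- a proper non-universal quotient of $\Part$, a contradiction.

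\emph{The set $D$.} Let $D$ be the set of $w\in\Cant$ all of whose sufficiently long prefixes have type in $\Part[Q]$, and write $v_{n}$ for the length-$n$ prefix of $v$. Because a prefix of type $R$ forces every longer prefix to have type $R$, a point of $D$ in fact has every prefix of length $\geq t$ of type in $\Part[Q]$; conversely, following the unique cycle continuation from an address of type in $\Part[Q]$ produces a point of $D$. One reads off that $D$ is exactly the set of points $\beta\cdot\overline{u_{i}u_{i+1}\cdots u_{i+m-1}}$ as $\beta$ runs over the finitely many addresses of type in $\Part[Q]$ whose parent is transient (with $\beta$ the empty address if the root already has type in $\Part[Q]$); hence $D$ is non-empty, finite, consists of eventually periodic (rational) points, and all of them lie in the tail class of $\overline{u_{1}u_{2}\cdots u_{m}}$. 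Moreover, for $\length{\alpha}\geq t$: the type of $\alpha$ is $R$ if and only if $\alpha\nprefix v$ for every $v\in D$; and if $\alpha\prefix v$ for the (necessarily unique) such $v$, then the tail $\xi$ determined by $v=\alpha\xi$ is a cyclic shift $\overline{u_{i}u_{i+1}\cdots}$ which, by primitivity, recovers $i$ and hence the type $Q^{(i)}$ of $\alpha$.

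\emph{The inclusion $\Stab{V}{\Part}\subseteq\Stab{V}{D}$.} For $\length{\delta}\geq t$ one has ``$\delta$ has type $R$'' if and only if $\delta0\sim\delta1$ (the two children of $R$ are $R$, whereas the two children of a cycle vertex are distinct). Given $g\in\Stab{V}{\Part}$, Lemma~\ref{lem:reducedStab} shows $\sim$ is preserved on any cofinite set of addresses on which $g$ is defined, so for all but finitely many $\delta$ one has $\delta$ of type $R$ iff $\delta^{g}$ of type $R$; hence $g$ sends all but finitely many addresses of type in $\Part[Q]$ to addresses of type in $\Part[Q]$. If $v\in D$, then $v_{n}$ has type in $\Part[Q]$ for all large $n$, and writing $v_{n}=v_{N}\theta$ we get $(v_{n})^{g}=(v_{N})^{g}\theta$ (Lemma~\ref{lem:partialact}), again of type in $\Part[Q]$; these run through all sufficiently long prefixes of $v^{g}$, so $v^{g}\in D$. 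Thus $D^{g}\subseteq D$, and applying the same argument to $g^{-1}$ gives $D^{g}=D$, i.e.\ $g\in\Stab{V}{D}$.

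\emph{The inclusion $\Stab{V}{D}\subseteq\Stab{V}{\Part}$, and conclusion.} Given $g$ with $D^{g}=D$, express $g$ as a prefix substitution with domain cones $\cone{\alpha_{i}}$ and range cones $\cone{\beta_{i}}$, refined (by repeated subdivision) until every $\length{\alpha_{i}}$ and every $\length{\beta_{i}}$ is $\geq t$ and large enough that each of $\cone{\alpha_{i}},\cone{\beta_{i}}$ meets $D$ in at most one point. Then $\cone{\alpha_{i}}$ meets $D$ exactly when $\cone{\beta_{i}}$ does, and if $\cone{\alpha_{i}}\cap D=\{v\}$ with $v=\alpha_{i}\xi$ then $\cone{\beta_{i}}\cap D=\{v^{g}\}$ with $v^{g}=\beta_{i}\xi$. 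A short check now shows that for every $\alpha=\alpha_{i}\eta$ --- that is, for cofinitely many addresses --- either $\alpha$ and $\alpha^{g}$ are both incomparable to every point of $D$, or $\alpha\prefix v$ and $\alpha^{g}\prefix v^{g}$ with the same tail past $\alpha$ and $\alpha^{g}$; in either case the type description above forces $\alpha$ and $\alpha^{g}$ to have the same type. Hence $g\in\Fix{V}{\Part}\subseteq\Stab{V}{\Part}$. Combining the two inclusions, $\Stab{V}{\Part}=\Stab{V}{D}$; since $D$ is a non-empty finite subset of $\Cant$ whose elements share a tail class, Corollary~\ref{cor:Stab-tailclass} shows this subgroup is maximal in $V$.
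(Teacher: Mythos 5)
Your proposal is correct and takes essentially the same route as the paper: the same structural analysis showing that $\Part[Q]$ is a single directed cycle whose labelling word must be primitive (forced by simplicity via a glued-and-reduced quotient, exactly the content of the paper's Lemma~\ref{lem:cycle-nopower}), the same finite set $D$ of eventually periodic points tracking the cycle, and the same appeal to Corollary~\ref{cor:Stab-tailclass}.  The only cosmetic difference is that you prove both inclusions $\Stab{V}{\Part}\subseteq\Stab{V}{D}$ and $\Stab{V}{D}\subseteq\Fix{V}{\Part}$ directly, whereas the paper establishes only $\Stab{V}{D}\leq\Fix{V}{\Part}\leq\Stab{V}{\Part}<V$ and then lets the maximality of $\Stab{V}{D}$ force all the inclusions to be equalities.
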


The proof of this theorem will occupy the remainder of this section.  Let $\Part$ be a simple type system on~$\AddrSet$, defined by the equivalence relation~$\sim$, that is atomic quasinuclear.  Suppose that the positive integer~$t$ and the subsets $\Part[Q]$~and~$\Part[R]$ of~$\Part$ are as in Definition~\ref{def:nuketypes}\ref{i:quasi} and that $R$~is the unique type in~$\Part[R]$.  We assume that $\branchPartQ = \emptyset$.  This means that if $Q \in \Part[Q]$, then one edge from~$Q$ in the graph~$\graph{\Part}$ leads to another member of~$\Part[Q]$ and the other leads to the vertex~$R$.

Since every address of length~$\geq t$ either has type in~$\Part[Q]$ or has type~$R$ and since if $\alpha \in R$ then $\alpha0,\alpha1 \in R$ also, there exist finitely many distinct addresses $\beta_{1}$,~$\beta_{2}$, \dots,~$\beta_{m}$ that are minimal subject to having type that is in~$\Part[Q]$.  These addresses are consequently incomparable with each other.  Let $Q^{(1)}$~be the type of~$\beta_{1}$.  If we start at the vertex~$Q^{(1)}$ in the graph~$\graph{\Part}$, there is one edge leading to a member of~$\Part[Q]$.  Repeatedly following the unique edge to the member of~$\Part[Q]$ must eventually lead back to~$Q^{(1)}$ in view of Condition~\ref{i:q-reach} of the definition.  Equally, in following this path we must have passed through all members of~$\Part[Q]$, so there exist letters $x_{1}$,~$x_{2}$, \dots,~$x_{n}$ in~$\letters$ such that the distinct types in~$\Part[Q]$ are $Q^{(1)}$,~$Q^{(2)}$, \dots,~$Q^{(n)}$ where $Q^{(i)}$~is the type of the address $\beta_{1}x_{1}x_{2}\dots x_{i-1}$ for $i = 1$,~$2$, \dots,~$n$.  Furthermore, when we define $\zeta = x_{1}x_{2}\dots x_{n}$, the address~$\beta_{1}\zeta$ has the same type~$Q^{(1)}$ as~$\beta_{1}$.

\begin{lemma}
\label{lem:cycle-nopower}
Under the above hypothesis that $\Part$~is a simple type system that is atomic quasinuclear with equivalence relation~$\sim$ and for which $\branchPartQ = \emptyset$, the word~$\zeta$ labelling the cycle starting at the vertex~$Q^{(1)}$ cannot be expressed as a power of some word of shorter length.
\end{lemma}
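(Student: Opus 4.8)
The plan is a proof by contradiction modelled on the quotient construction in the proof of Lemma~\ref{lem:CycleInComplement}. Suppose $\zeta = w^{k}$ where $\length{w} = \ell$ and $k \geq 2$, so that $1 \leq \ell < n$ and $\ell$ divides~$n$. As observed in the discussion preceding the lemma, the assumption $\branchPartQ = \emptyset$ together with strong connectivity of~$\Part[Q]$ forces~$\Part[Q]$ to be a single directed cycle $Q^{(1)} \to Q^{(2)} \to \dots \to Q^{(n)} \to Q^{(1)}$, where the edge leaving~$Q^{(i)}$ and remaining in~$\Part[Q]$ carries the label~$x_{i}$ and the other edge leaving~$Q^{(i)}$ leads to~$R$. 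The equation $\zeta = w^{k}$ says exactly that $x_{i} = x_{j}$ whenever $i \equiv j \pmod{\ell}$ (indices taken in $\{1,\dots,n\}$, the condition $\ell \mid n$ making the cyclic arithmetic well-posed), and that following the length-$\ell$ prefix~$w$ of~$\zeta$ from~$Q^{(1)}$ lands at~$Q^{(\ell+1)}$.

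The key device is a type-level equivalence relation~$E$ on~$\Part$: declare $Q^{(i)} \mathrel{E} Q^{(j)}$ exactly when $i \equiv j \pmod{\ell}$, and let every type not in~$\Part[Q]$ form a singleton $E$-class. The heart of the argument is the claim that, for all $i,j$ with $i \equiv j \pmod{\ell}$ and for every word~$\eta$, the types reached by following~$\eta$ from~$Q^{(i)}$ and from~$Q^{(j)}$ are $E$-related. I would prove this by induction on~$\length{\eta}$. The base case is the definition of~$E$. For the inductive step, write $\eta = x\mu$; since $i \equiv j \pmod{\ell}$ we have $x_{i} = x_{j}$, so either $x = x_{i} = x_{j}$, in which case the two paths descend to~$Q^{(i+1)}$ and~$Q^{(j+1)}$ with $i+1 \equiv j+1 \pmod{\ell}$ and we apply the inductive hypothesis to~$\mu$, or $x \neq x_{i} = x_{j}$, in which case both paths descend to~$R$, all of whose descendants are~$R$. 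This is the one place where the hypothesis $\zeta = w^{k}$ is genuinely used, and it is the step that needs care; the rest is bookkeeping.

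Next I would mimic Lemma~\ref{lem:CycleInComplement}: define a relation~$\approx$ on~$\AddrSet$ by setting $\alpha \approx \beta$ when, for all but finitely many words~$\eta$, the types of~$\alpha\eta$ and~$\beta\eta$ are $E$-related. The verification that~$\approx$ is an equivalence relation which is coherent and reduced --- hence defines a type system --- and that it is a quotient of~$\sim$ is routine and parallels the corresponding part of the proof of Lemma~\ref{lem:CycleInComplement}; in particular the reduced property follows automatically from the ``all but finitely many~$\eta$'' quantifier, and coherence of~$\sim$ gives that $\alpha \sim \beta$ implies $\alpha \approx \beta$. This type system is not the universal one: taking an address of type~$Q^{(1)}$ (for instance~$\beta_{1}$) and an address~$\beta$ of type~$R$, the types of~$\beta_{1}\zeta^{m}$ and~$\beta\zeta^{m}$ are~$Q^{(1)}$ and~$R$ for every $m \geq 0$, and $Q^{(1)} \not\mathrel{E} R$ because~$\Part[Q]$ and~$\Part[R]$ are disjoint.

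Since~$\Part$ is simple and~$\approx$ is a quotient that is not the universal relation, we must have $\approx\, =\, \sim$. On the other hand, applying the key claim with $i = 1$ and $j = \ell+1$ shows that for \emph{every} word~$\eta$ the types of~$\beta_{1}\eta$ and of $(\beta_{1}w)\eta = \beta_{1}(w\eta)$ are $E$-related (the type of~$\beta_{1}w$ being~$Q^{(\ell+1)}$), so $\beta_{1} \approx \beta_{1}w$ and hence $\beta_{1} \sim \beta_{1}w$, i.e.\ $Q^{(1)} = Q^{(\ell+1)}$. As $1 \leq \ell < n$, this contradicts the fact that~$Q^{(1)},\dots,Q^{(n)}$ are distinct types, completing the proof. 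The main obstacle is the inductive computation of the second paragraph establishing that $E$-equivalence is preserved under descent through the type graph; everything else is modelled closely on the earlier quotient-construction arguments.
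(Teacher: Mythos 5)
Your proof is correct and follows essentially the same route as the paper: both arguments build a quotient type system that glues $Q^{(i)}$ to $Q^{(j)}$ whenever $i \equiv j \pmod{\ell}$ and then contradict simplicity. The only difference is packaging — the paper quantifies over all words of length exactly~$t$ and uses the periodicity $x_{i+\ell}=x_{i}$ inside its coherence and reducedness checks, whereas you use a cofinite quantifier (making those checks automatic, as in Lemma~\ref{lem:CycleInComplement}) and isolate the periodicity in your inductive claim, reaching the same contradiction.
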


\begin{proof}
Suppose that $\zeta$~is a power of a word of length~$\ell < n$.  This means $x_{i+\ell} = x_{i}$ for every value of~$i \geq 1$ with $i+\ell \leq n$.  Define a relation~$\approx$ on~$\AddrSet$ by the rule $\alpha \approx \beta$ when
\begin{quote}
for all words~$\eta$ with $\length{\eta} = t$, either $\alpha\eta \sim \beta\eta$, or $\alpha\eta \in Q^{(i)}$ and $\beta\eta \in Q^{(j)}$ where $i \equiv j \pmod{\ell}$.
\end{quote}
The goal of this definition is to glue together every $\ell$th equivalence class~$Q^{(i)}$ while ensuring that the result still defines a type system.  The fact that $\approx$~is reflexive and symmetric follows immediately, while it is straightforward to check that $\approx$~is transitive.  Suppose $\alpha \approx \beta$.  Let $\theta$~be any word of length~$t-1$ and let $x,y \in \letters$.  If $\alpha x\theta \sim \beta x\theta$, then it follows from coherence that $\alpha x\theta y \sim \beta x\theta y$ and since $\theta y$~is an arbitrary word of length~$t$, we deduce $\alpha x \approx \beta x$.  Otherwise, $\alpha x\theta \in Q^{(i)}$ and $\beta x\theta \in Q^{(j)}$ for some $i$~and~$j$ with $i \equiv j \pmod{\ell}$.  Our assumption on the word~$\zeta$ ensures that the letter~$y$ either labels the edges in~$\graph{\Part}$ from both $Q^{(i)}$~and~$Q^{(j)}$ to vertices in~$\Part[Q]$ or it labels the edges to the vertex~$R$.  Hence either $\alpha x\theta y \in Q^{(i+1)}$ and $\beta x\theta y \in Q^{(j+1)}$ (computed modulo~$n$) or they both have type~$R$, in which case $\alpha x\theta y \sim \beta x\theta y$.  Again, since $\theta y$~is an arbitrary word of length~$t$ we conclude $\alpha x \approx \beta x$ in this case.  This shows that $\approx$~is a coherent equivalence relation on~$\AddrSet$.

Suppose $\alpha0 \approx \beta0$ and $\alpha1 \approx \beta1$.  Let $\eta$~be an arbitrary word of length~$t$ and write $\eta = x\theta$ where $x \in \letters$ and $\length{\theta} = t-1$.  Now $\length{\alpha\eta},\length{\beta\eta} \geq t$, so each of these addresses either has type in one of the~$Q^{(i)}$ or type~$R$.  Suppose $\alpha\eta$~has type~$R$, so that $\alpha\eta0, \alpha\eta1 \in R$ also.  Since $\alpha x \approx \beta x$, it must be the case that $\alpha x\zeta y = \alpha\eta y$ and $\beta x\zeta y = \beta\eta y$ are related under~$\sim$ for both choices of $y \in \letters$, so $\beta\eta0, \beta\eta1 \in R$.  As $\sim$~is reduced, the only possibility is that $\beta\eta$~has type~$R$ and hence $\alpha\eta \sim \beta\eta$ in this case.  On the other hand, if $\alpha\eta \in Q^{(i)}$ for some~$i$, then $\alpha\eta y_{1} \in Q^{(i+1)}$ (computed modulo~$n$) and $\alpha\eta y_{2} \in R$ for the appropriate choice of $y_{1},y_{2} \in \letters$.  Since $\alpha x \approx \beta x$, we deduce that $\beta\eta y_{1} \in Q^{(j)}$ where $j \equiv i+1 \pmod{\ell}$ and $\beta\eta y_{2} \in R$.  The assumption on the word~$\zeta$ ensures that $y_{1}$~is the label on the edge from~$Q^{(j-1)}$ to~$Q^{(j)}$.  If $\gamma \in Q^{(j-1)}$, then $\gamma y_{1} \sim \beta\eta y_{1}$ (as both belong to~$Q^{(j)}$) and $\gamma y_{2} \sim \beta\eta y_{2}$ (as both belong to~$R$).  Hence $\gamma \sim \beta\eta$ since $\sim$~is reduced.  Thus $\beta\eta \in Q^{(j-1)}$ and here $j-1 \equiv i \pmod{m}$.  This shows that the condition for~$\approx$ is satisfied whether $\alpha\eta \in Q^{(i)}$ or~$R$; that is, $\alpha \approx \beta$.  Hence $\approx$~is also reduced.

Finally, the type system defined by~$\approx$ is a proper quotient of~$\Part$, since if $\alpha \sim \beta$ then $\alpha\eta \sim \beta\eta$ for any word~$\eta$ while some addresses are related under~$\approx$ that were not related under~$\sim$.  This contradicts the hypothesis that $\Part$~is simple.  Therefore, there can be no decomposition of~$\zeta$ as a power of some shorter word.
\end{proof}

Let us return to the distinct words $\beta_{1}$,~$\beta_{2}$, \dots,~$\beta_{m}$.  For each~$i$, there exists $k = k(i)$ such that $\beta_{i}$~has type~$Q^{(k)}$.  Note that $\beta_{i}x_{k(i)}x_{k(i)+1}\dots x_{n}$~has type~$Q^{(1)}$ for each~$i$.  Define $u_{i} = \beta_{j}x_{k(i)}x_{k(i)+1}\dots x_{m} \overline{\zeta} \in \Cant$ and set $D = \{ u_{1},u_{2},\dots,u_{n} \}$.  The elements of~$D$ then share the same tail class and are certainly rational.  An address with one of the types~$Q^{(j)}$ has some~$\beta_{i}$ as a prefix and the corresponding vertex in the graph~$\graph{\Part}$ is reached by first following the path labelled by~$\beta_{i}$ and then following a sufficient subsequent path around the cycle comprising $Q^{(1)}$,~$Q^{(2)}$, \dots,~$Q^{(n)}$ (starting at the vertex~$Q^{(k(i))}$).  Hence the set of all addresses that have type in any of the~$Q^{(j)}$ is given by
\[
\bigcup_{j=1}^{n} Q^{(j)} = \set{ \alpha }{ \text{$\beta_{i} \prefix \alpha \properprefix u_{i}$ for some~$i$} }.
\]

Now let $g \in \Stab{V}{D}$.  Fix some~$i \in \{1,2,\dots,m\}$ and set $u_{\ell} = u_{i}^{g}$.  By refining the cones in the domain of~$g$ appropriately, we can assume that there is an integer~$v \geq 0$ such that $g$~is given by a prefix substitution on the cone with address $\gamma_{i} = \beta_{i}x_{k(i)}x_{k(i)+1}\dots x_{n}\zeta^{v}$ (which has type~$Q^{(1)}$); say $\gamma_{i}^{g} = \delta$ for some address~$\delta$.  Now $u_{i} = \gamma_{i}\overline{\zeta}$, so $u_{\ell} = \delta\overline{\zeta}$.  It follows then by Lemma~\ref{lem:cycle-nopower} that $\delta$~has the same form as~$\gamma_{i}$: \ $\delta = \beta_{\ell}x_{k(\ell)}x_{k(\ell)+1}\dots x_{n}\zeta^{w}$ for some $w \geq 0$ and so $\delta$~also has type~$Q^{(1)}$.  Now if $\alpha$~is an address of some type~$Q^{(j)}$ with~$\gamma_{i}$ as a prefix, say $\alpha = \gamma_{i}\eta$ where $\eta$~is a prefix of some power of~$\zeta$, then $\alpha^{g} = \delta\eta \in Q^{(j)}$ also.  This observation applies for each $i \in \{1,2,\dots,n\}$ and so we conclude that, for each~$j$, \ $\alpha^{g} \in Q^{(j)}$ for all but finitely many~$\alpha \in Q^{(j)}$.  In particular, all sufficiently long addresses of type~$Q^{(j)}$ are images of addresses of the same type~$Q^{(j)}$ under the transformation~$g$.  Now if $\alpha \in R$ such that $\alpha^{g}$~is defined, then it cannot be the case that $\alpha^{g} \in Q^{(j)}$ as otherwise all addresses of the form~$\alpha^{g}\eta$ (for $\eta \in \AddrSet$) would include arbitrary long addresses of type~$Q^{(j)}$ but they would all be images of addresses of type~$R$.  In conclusion, but for finitely many exceptions, $g$~maps the addresses in each of the classes $Q^{(1)}$,~$Q^{(2)}$, \dots,~$Q^{(n)}$ and~$R$ back into the same class; that is, $g \in \Fix{V}{\Part}$.

By use of Lemma~\ref{lem:Stab-proper}, we now have the inclusions $\Stab{V}{D} \leq \Fix{V}{\Part} \leq \Stab{V}{\Part} < V$.  Since $\Stab{V}{D}$~is a maximal subgroup of~$V$ as observed in Corollary~\ref{cor:Stab-tailclass}, it follows that the first three subgroups are equal and we have completed the proof of Theorem~\ref{thm:quasicycle}. \qed

\spc

Finally, Theorem~\ref{thm:finitetypes} tells us that a simple type system with finitely many types is either one considered in Theorem~\ref{thm:branchtype} or in Theorem~\ref{thm:quasicycle}.  Hence we have established our main result about stabilizers of finite simple type systems:

\FiniteSimpleThm

\section{Description of the stabilizers of simple type systems}
\label{sec:descriptions}

As proven in Section~\ref{sec:simpletypes}, every finite simple type system is either nuclear, atomic binuclear, or atomic quasinuclear.  Furthermore, as observed in the previous section, the stabilizer subgroup associated to any finite simple type system is a maximal subgroup of~$V$. In this section we describe the maximal subgroups associated with each of the three types.

\subsection{Atomic multinuclear type systems}

As we shall see, any atomic multinuclear type system with two or more nuclei has maximal stabilizer, even though such a type system is simple if and only if it has exactly two nuclei.

\begin{prop}
\label{prop:AtomicMultinuclearPartition}
Let $\Part$~be an atomic multinuclear type system with $n$~nuclei where $n \geq 2$.  Then  there exists a partition\/ $\{E_{1},E_{2},\dots,E_{n}\}$ of\/~$\Cant$ into clopen sets such that $\Stab{V}{\Part} = \Stab{V}{\{E_{1},E_{2},\dots,E_{n}\}}$.
\end{prop}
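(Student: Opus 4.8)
The plan is to extract the promised clopen partition directly from the nuclei structure. Let $\Part[Q]^{(1)}, \dots, \Part[Q]^{(n)}$ be the nuclei of the atomic multinuclear type system $\Part$, each being a singleton $\{R^{(i)}\}$, and let $t$ be the threshold so that every address of length $\geq t$ has one of the types $R^{(i)}$. For each $i$, set
\[
E_{i} = \bigcup \set{\cone{\alpha}}{\text{$\length{\alpha} = t$ and $\alpha$ has type $R^{(i)}$}}.
\]
Since child-closure of each $\Part[Q]^{(i)} = \{R^{(i)}\}$ forces every descendant of a type-$R^{(i)}$ address to again have type $R^{(i)}$, the collection $\{E_{1}, \dots, E_{n}\}$ is a partition of $\Cant$ into clopen sets (each $E_i$ is a finite union of cones, hence clopen, and every address of length exactly $t$ contributes to exactly one $E_i$). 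Moreover an address $\alpha$ with $\length{\alpha} \geq t$ has type $R^{(i)}$ if and only if $\cone{\alpha} \subseteq E_i$; and $E_i$ is precisely the union of all cones whose defining address has type $R^{(i)}$.

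Next I would verify the equality $\Stab{V}{\Part} = \Stab{V}{\{E_{1},\dots,E_{n}\}}$ by a double inclusion, using Lemma~\ref{lem:reducedStab} to test membership in $\Stab{V}{\Part}$ on a cofinite set of addresses. For the inclusion $\subseteq$: if $g \in \Stab{V}{\Part}$, then by Lemma~\ref{lem:reducedStab} there is a cofinite $\Gamma$ on which $g$ is defined and preserves $\sim$; shrinking $\Gamma$ so that it contains only addresses of length $\geq t$ and is closed under descendants, each type class restricted to $\Gamma$ is one of the $R^{(i)}$, so $g$ permutes these classes, say sending the $R^{(i)}$-class into the $R^{(\sigma(i))}$-class for a permutation $\sigma$; subdividing a decomposition of $\Cant$ into cones indexed by addresses in $\Gamma$ then shows $(E_i)^g = E_{\sigma(i)}$, so $g$ stabilizes the partition $\{E_1,\dots,E_n\}$. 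Conversely, if $g$ stabilizes the clopen partition, then $g$ induces a permutation $\sigma$ of the blocks with $(E_i)^g = E_{\sigma(i)}$; for all but finitely many addresses $\alpha$ (namely those of length $\geq t$ on which $g$ is defined by a prefix substitution), $\cone{\alpha} \subseteq E_i$ forces $\cone{\alpha^g} \subseteq E_{\sigma(i)}$, hence $\alpha$ has type $R^{(i)}$ precisely when $\alpha^g$ has type $R^{(\sigma(i))}$; thus $\alpha \sim \beta \iff \alpha^g \sim \beta^g$ on this cofinite set, and Lemma~\ref{lem:reducedStab} gives $g \in \Stab{V}{\Part}$.

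The main obstacle I anticipate is purely bookkeeping rather than conceptual: one must be careful that the cofinite sets supplied by Lemma~\ref{lem:reducedStab} and by the prefix-substitution description of $g$ can be simultaneously refined so that all relevant addresses have length $\geq t$, are closed downward under the partial action, and index an honest cone decomposition of $\Cant$ — this is exactly the kind of refinement used repeatedly in Lemma~\ref{lem:basic}\ref{i:Fix-form} and Lemma~\ref{lem:reducedStab}, so it causes no real difficulty. A minor point to state carefully is that $E_i$ being nonempty for each $i$ follows from the definition of multinuclear (each $\Part[Q]^{(i)}$ is genuinely a nucleus, hence nonempty, and is reached by some address, which by child-closure forces some length-$t$ address to have type $R^{(i)}$); this guarantees $\{E_1,\dots,E_n\}$ is a genuine partition into $n$ nonempty clopen pieces.
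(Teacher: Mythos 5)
Your proposal is correct and follows essentially the same route as the paper: define each $E_{i}$ as the union of cones of type-$R^{(i)}$ addresses (your length-$t$ version agrees with this by child-closure of the singleton nuclei) and verify both inclusions on a cofinite set of long addresses via the definition of $\Stab{V}{\Part}$. The only point to make fully explicit is that in the converse direction $\alpha^{g}$ may have length less than $t$; this is handled either by the further refinement you flag as routine bookkeeping, or, as the paper does, by using reducedness to show that $\cone{\alpha} \subseteq E_{i}$ implies $\alpha$ has type $R^{(i)}$ for \emph{every} address $\alpha$, not just those of length at least~$t$.
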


\begin{proof}
Let $\{Q_{1}\}$,~$\{Q_{2}\}$, \dots,~$\{Q_{n}\}$ be the nuclei of~$\Part$ and, for each~$i$, let $E_{i}$~be the union of all cones $\cone{\alpha}$ for which $\alpha$~has type~$Q_{i}$.  Since each~$\{Q_{i}\}$ is child-closed and all but finitely many addresses have one of the types~$Q_{i}$, the collection $\{E_{1},E_{2},\dots,E_{n}\}$ forms a partition of~$\Cant$. Moreover, since each of these sets is open (being a union of cones) they must all be clopen.  Note then that, since the associated equivalence relation on~$\AddrSet$ is reduced, an address $\alpha \in \AddrSet$ has type~$Q_{i}$ if and only if $\cone{\alpha} \subseteq E_{i}$.

Now if an element $g\in G$ stabilizes this partition, then it permutes the sets $E_{1}$,~$E_{2}$, \dots,~$E_{n}$ according to some permutation $\sigma \in S_{n}$.  Then, for all $\alpha \in \AddrSet$, if $\alpha$~has one of the types~$Q_{i}$ then $\alpha^{g}$~is defined and has type~$Q_{i^{\sigma}}$, and it follows that $g \in \Stab{V}{\Part}$.  Conversely, if $g \in \Stab{V}{\Part}$, there is a permutation $\sigma \in S_{n}$ such that, for each~$i$, the element~$g$ maps all but finitely many addresses of type~$Q_{i}$ to addresses of type~$Q_{i^{\sigma}}$  Then $g$~maps all but finitely many cones in~$E_{i}$ into~$E_{i^{\sigma}}$, and it follows easily that $(E_{i})^{g}= E_{i^{\sigma}}$ for each~$i$ and hence $g$~stabilizes the partition.
\end{proof}

\begin{rem}
If $\{E_{1},E_{2},\dots,E_{n}\}$ is any partition of~$\Cant$ into clopen sets, then the stabilizer of $\{E_{1},E_{2},\dots,E_{n}\}$ is in fact the stabilizer of a uniquely determined atomic multinuclear type system~$\Part$.  Specifically, define an equivalence relation~$\sim$ on~$\AddrSet$ by $\alpha \sim \beta$ if and only if
\[
\set{\gamma \in \AddrSet}{\cone{\alpha\gamma} \subseteq E_{i}} = \set{\gamma \in \AddrSet}{\cone{\beta\gamma} \subseteq E_{i}}
\]
for each~$i$.  Then $\sim$~determines a type system~$\Part$ which is atomic multinuclear, and $\Stab{V}{\Part}$ is precisely the stabilizer of $\{E_{1},E_{2},\dots,E_{n}\}$.
\end{rem}

Since the stabilizer of a finite simple type system is maximal, it follows from Proposition~\ref{prop:AtomicMultinuclearPartition} that the stabilizer of a partition $\{E_{1},E_{2},\dots,E_{n}\}$ of the Cantor set into clopen subsets is maximal for~$n=2$.  The following proposition extends this result to arbitrary values of~$n$.

\begin{prop}
Let $\{E_{1},E_{2},\dots,E_{n}\}$ be a partition of\/~$\Cant$ into clopen sets, where $n\geq 2$.  Then the stabilizer $\Stab{V}{\{E_{1},E_{2},\dots,E_{n}\}}$ is a maximal subgroup of\/~$V$ isomorphic to the permutational wreath product $V\operatorname{wr} S_n\cong V^{n} \rtimes S_{n}$, where $S_{n}$~denotes the symmetric group of degree~$n$.
\end{prop}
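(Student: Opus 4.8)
The plan is to treat the isomorphism and the maximality separately, and to dispose of the maximality assertion for $n=2$ at once: there the partition $\{E_{1},E_{2}\}$ is, by Proposition~\ref{prop:AtomicMultinuclearPartition} together with the Remark following it, the one associated to a simple atomic binuclear type system, so $\Stab{V}{\{E_{1},E_{2}\}}$ is maximal by Theorem~\ref{thm:finitesimple}. For the rest of the maximality argument I may therefore assume $n\geq 3$ (which has the bonus that every transposition of two cones lying in distinct blocks automatically has small support).

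For the isomorphism I would set $G=\Stab{V}{\{E_{1},\dots,E_{n}\}}$ and consider the homomorphism $\pi\colon G\to S_{n}$ recording the permutation that $g$~induces on the blocks. One shows $\pi$~is surjective, and that its kernel~$K$ is exactly the set of elements of~$V$ fixing each~$E_{i}$ setwise; restricting such an element to each block gives $K\cong\prod_{i=1}^{n}V_{E_{i}}$, where $V_{E_{i}}$~denotes the group of prefix substitution bijections of the clopen set~$E_{i}$ onto itself. Since a nonempty clopen subset of~$\Cant$ is a finite disjoint union of cones and any two such subsets admit a prefix substitution bijection between them, each $V_{E_{i}}\cong V$, whence $K\cong V^{n}$. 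Fixing prefix substitution bijections $h_{i}\colon\Cant\to E_{i}$ one then builds a section $\sigma\mapsto\bar\sigma$ of~$\pi$ by letting $\bar\sigma$ act on~$E_{i}$ as the composite $h_{i}^{-1}h_{i\sigma}$; a short check that conjugation by~$\bar\sigma$ carries the $i$-th factor of~$K$ to the $(i\sigma)$-th factor under the identifications given by the~$h_{i}$ yields $G\cong V^{n}\rtimes S_{n}=V\operatorname{wr}S_{n}$.

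For maximality, first $G\neq V$: the type system determined by $\{E_{1},\dots,E_{n}\}$ (as in the Remark after Proposition~\ref{prop:AtomicMultinuclearPartition}) is atomic multinuclear with $n\geq 2$ nuclei, so its relation is neither trivial nor universal and $G\neq V$ by Lemma~\ref{lem:Stab-proper}. Now suppose $G<H\leq V$ and pick $h\in H\setminus G$. Since $h$~does not stabilize the block-partition, some $(E_{i})^{h}$ is not contained in a single block and hence meets two distinct blocks $E_{j}$~and~$E_{k}$. I would then choose a cone $\cone{\delta}\subsetneq E_{j}$ with $\cone{\delta}\subseteq(E_{i})^{h}$ and a cone $\cone{\delta'}\subsetneq E_{k}$ with $\cone{\delta'}\subseteq(E_{i})^{h}$ (deep enough that $\delta^{h^{-1}},(\delta')^{h^{-1}}$ are defined); these cones are incomparable, and setting $\gamma=\delta^{h^{-1}}$, $\gamma'=(\delta')^{h^{-1}}$ gives $\cone{\gamma},\cone{\gamma'}\subseteq E_{i}$ with $\gamma\perp\gamma'$. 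Hence $\swap{\gamma}{\gamma'}\in G\leq H$, and so $\tau_{0}:=\swap{\delta}{\delta'}=\swap{\gamma}{\gamma'}^{h}\in H$; the key point for later is that $\cone{\delta}$~and~$\cone{\delta'}$ are \emph{proper} subsets of their respective blocks.

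The final step is to show $H$~contains every transposition $\swap{\mu}{\nu}$ of incomparable cones; since such transpositions generate~$V$ and every one of them is a product of transpositions $\swap{\mu}{\nu}$ with each of $\cone{\mu},\cone{\nu}$ contained in a single block (just subdivide until this holds), it suffices to treat the latter. If $\cone{\mu}$~and~$\cone{\nu}$ lie in the same block, or are two distinct whole blocks, then $\swap{\mu}{\nu}$ stabilizes the block-partition, so $\swap{\mu}{\nu}\in G\leq H$. If $\cone{\mu}\subsetneq E_{a}$ and $\cone{\nu}\subsetneq E_{b}$ with $a\neq b$, then, using that the clopen sets $E_{j}\setminus\cone{\delta}$, $E_{k}\setminus\cone{\delta'}$, $E_{a}\setminus\cone{\mu}$, $E_{b}\setminus\cone{\nu}$ are all nonempty and that any two nonempty clopen sets admit a prefix substitution bijection, I would assemble block by block an element $g\in G$ with $\delta^{g}=\mu$ and $(\delta')^{g}=\nu$, so that $\swap{\mu}{\nu}=\tau_{0}^{g}\in H$. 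In the remaining case exactly one of $\cone{\mu},\cone{\nu}$ is a whole block, and then $\swap{\mu}{\nu}=\swap{\mu0}{\nu0}\,\swap{\mu1}{\nu1}$ is a product of two transpositions of the preceding kind, hence lies in~$H$. This exhausts all cases, so $H=V$ and $G$~is maximal. I expect the main obstacle to be the bookkeeping of this last paragraph: pinning down and invoking the elementary fact that $V$, via prefix substitution bijections, acts transitively on the nonempty clopen subsets of~$\Cant$, and verifying that the block-by-block construction really produces a globally defined element of~$V$ that lies in~$G$, including the degenerate configurations in which some block is a single cone.
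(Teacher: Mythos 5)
Your proof is correct and takes essentially the same route as the paper's: conjugate an in-block transposition by the element $h\in H\setminus G$ to obtain one transposition exchanging proper subcones of two distinct blocks, conjugate by elements of the stabilizer to obtain all such transpositions, and combine these with the in-block (and block-swapping) transpositions to generate $V$, with the wreath-product identification argued just as in the paper but in more detail. The only difference is cosmetic: your separate treatment of $n=2$ via Theorem~\ref{thm:finitesimple} is unnecessary, since small support plays no role here and your main argument already covers $n=2$.
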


\begin{proof}
Let $G$ be the stabilizer of the partition.  To prove that $G$ is maximal, let $H$ be a subgroup of $V$ that properly contains~$G$, and let $h \in H \setminus G$.  Then there exist $E_{i}$,~$E_{j}$ and~$E_{k}$ with $j \neq k$ such that $(E_i)^h$ intersects both $E_j$ and~$E_k$. Thus we can find disjoint cones $\cone{\alpha}, \cone{\beta} \subseteq E_i$ such that $\alpha^h$ and $\beta^h$ are both defined, $\cone{\alpha^h}$ is properly contained in~$E_j$, and $\cone{\beta^h}$ is properly contained in~$E_k$.  Since $\swap{\alpha}{\beta}$ stabilizes $\{E_{1},E_{2},\dots,E_{n}\}$, the transposition $\swap{\alpha^h}{\beta^h}=\swap{\alpha}{\beta}^h$ lies in $H$.  Conjugating further by elements of $G$, we find that $H$ contains all transpositions $\swap{\gamma}{\delta}$ for which $\cone{\gamma}$~is properly contained in some~$E_r$ and $\cone{\delta}$~is properly contained in some~$E_s$ with $r\neq s$.   Note that $H$~also contains all transpositions whose cones are contained in the same $E_{r}$.  It follows that $H$ contains all transpositions $\swap{\gamma}{\delta}$ for which $\gamma$ and $\delta$ are sufficiently long, and therefore $H = V$.  This proves that $G$~is maximal.

Finally, observe that for each $E_i$ the subgroup of elements of $V$ that are supported on $E_i$ is isomorphic to~$V$.  It follows that the subgroup of all elements of $V$ that map each $E_i$ to itself is isomorphic to $V^n$, so $G$ is isomorphic to the given semidirect product.
\end{proof}

\begin{example}
\label{ex:AtomicMultinuclearTypeSystem}
Figure~\ref{fig:AtomicMultinuclearTypeSystem} shows the label diagram for an atomic multinuclear type system with nuclei $\{\mathsf{Q}\}$, $\{\mathsf{R}\}$, and $\{\mathsf{S}\}$, where $\mathsf{A}$ is the label for the empty word.  Note that the word $1$ has label~$\mathsf{B}$, all words that begin with $0$ have label $\mathsf{Q}$, all words that begin with $10$ have label $\mathsf{R}$, and all words that begin with $11$ have label $\mathsf{S}$.  If $\Part$ is the corresponding type system, then $\Stab{V}{\Part}$ is the stabilizer in $V$ of the partition $\{0\Cant,10\Cant,11\Cant\}$.
\begin{figure}
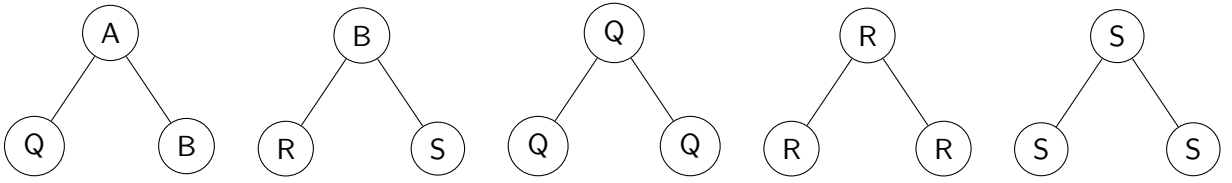

\begin{center}
  \labelcaret{A}{Q}{B}
  \hfill
  \labelcaret{B}{R}{S}
  \hfill
  \labelcaret{Q}{Q}{Q}
  \hfill
  \labelcaret{R}{R}{R}
  \hfill
  \labelcaret{S}{S}{S}
\end{center}
\caption{Label diagram for the partition $\{0\Cant,10\Cant,11\Cant\}$, which has stabilizer isomorphic to $V^3\rtimes S_3$.}
  \label{fig:AtomicMultinuclearTypeSystem}
\end{figure}
\end{example}

We close this subsection by demonstrating that distinct simple multinuclear type systems give rise to distinct maximal subgroups.  It is easy to use this lemma (by creating distinct infinite simple nuclear type systems) to show that there are uncountably-many maximal subgroups of $V$, however, in Section \ref{sec:uncountIso} we will obtain the stronger result that in fact there are uncountably many isomorphism classes of maximal subgroups of $V$.

\begin{lemma}
\label{lem:distinctStabs}
Let $\Part$~and~$\Part[Q]$ be distinct type systems on~$\AddrSet$ that are both multinuclear and simple.  Then $\Stab{V}{\Part} \neq \Stab{V}{\Part[Q]}$.
\end{lemma}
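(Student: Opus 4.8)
The plan is to prove the contrapositive: assuming $\Stab{V}{\Part} = \Stab{V}{\Part[Q]} =: H$, I will deduce that $\Part = \Part[Q]$.  Write $\sim$~and~$\approx$ for the equivalence relations of~$\Part$ and~$\Part[Q]$ respectively and put $d = \max\{\sd{\Part},\sd{\Part[Q]}\}$.  First I dispose of a degenerate case: a multinuclear equivalence relation is never the trivial one (the trivial type system has no cycle in its type graph, hence no nucleus), so if~$\Part$ is the universal type system then $H = \Stab{V}{\Part} = V$, and Lemma~\ref{lem:Stab-proper} forces $\approx$~to be trivial or universal, whence $\approx$~is universal and $\Part = \Part[Q]$; symmetrically if~$\Part[Q]$ is universal.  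So from now on assume that neither~$\Part$ nor~$\Part[Q]$ is universal.

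The key tool is a recognition principle for transpositions: \emph{if $\alpha$~and~$\beta$ are incomparable addresses of length at least~$\sd{\Part}$, then $\swap{\alpha}{\beta} \in \Stab{V}{\Part}$ if and only if $\alpha \sim \beta$} (and likewise for~$\Part[Q]$).  The ``if'' direction is Lemma~\ref{lem:basic}\ref{i:Fix-form}, which puts $\swap{\alpha}{\beta}$ into~$\Fix{V}{\Part}$.  For the ``only if'' direction I argue contrapositively: if $\alpha \nsim \beta$, apply Lemma~\ref{lem:nuke-incomparables} (with $\alpha,\beta$ in the role of $\alpha,\alpha'$ and with $\beta_{1} = \alpha$) to find an address~$\gamma$ of length $\geq \sd{\Part}$ with $\gamma \sim \alpha$ and $\gamma$ incomparable with both $\alpha$~and~$\beta$; then $\gamma \nsim \beta$, whereas $\swap{\alpha}{\beta}$ fixes~$\gamma$ and carries~$\alpha$ to~$\beta$, so it fails the criterion of Lemma~\ref{lem:reducedStab} on the cofinite set where it is defined and thus is not in~$\Stab{V}{\Part}$.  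Because $H = \Stab{V}{\Part} = \Stab{V}{\Part[Q]}$, the recognition principle shows that $\sim$~and~$\approx$ agree on every pair of incomparable addresses of length~$\geq d$; since both relations are coherent and reduced, comparing the length-$d$ extensions of an arbitrary incomparable pair $\alpha \perp \beta$ (note $\alpha\eta \perp \beta\eta$ for all~$\eta$) upgrades this to: $\sim$~and~$\approx$ agree on \emph{all} incomparable pairs.

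It remains to treat comparable pairs, which is the heart of the matter.  I would first record the following consequence of the previous paragraph: if $T$~is a $\Part$\nbd type lying in a nucleus of~$\Part$, then all addresses of $\Part$\nbd type~$T$ and length~$\geq d$ have one and the same $\Part[Q]$\nbd type; indeed, given two such addresses $\mu,\nu$, Lemma~\ref{lem:nuke-incomparables} produces an address of $\Part$\nbd type~$T$ and length~$\geq d$ incomparable with both, and incomparable addresses of equal $\Part$\nbd type are now known to be $\approx$\nbd equivalent.  Now suppose $\alpha \sim \alpha\eta$ with $\eta$~a non-empty word, and let $P$~be the $\Part$\nbd type of~$\alpha$, so that $P_{\eta} = P$; then $P$, and hence every~$P_{\gamma}$, lies in a nucleus of~$\Part$ (the addresses $\alpha\eta^{k}$ all have $\Part$\nbd type~$P$ and are arbitrarily long, so $P$~is on a cycle).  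Let $\pi_{L}$~denote the prefix of~$\overline{\eta}$ of length~$L$ and fix a length $L \geq d$.  For a word~$\gamma$ of length~$L$ with $\gamma \neq \pi_{L}$, the addresses $\alpha\gamma$~and~$\alpha\eta\gamma$ are incomparable and have the same $\Part$\nbd type~$P_{\gamma}$ (using $P_{\eta} = P$), so $\alpha\gamma \approx \alpha\eta\gamma$; and for $\gamma = \pi_{L}$ the addresses $\alpha\pi_{L}$ and $\alpha\eta\pi_{L} = \alpha\pi_{L+\length{\eta}}$ have the same $\Part$\nbd type~$P_{\pi_{L}}$, lie in a nucleus, and have length~$\geq d$, so again $\alpha\pi_{L} \approx \alpha\eta\pi_{L}$ by the recorded consequence.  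Thus $\alpha\gamma \approx \alpha\eta\gamma$ for \emph{every} word~$\gamma$ of length~$L$, and since $\approx$~is reduced we conclude $\alpha \approx \alpha\eta$.  Running the same argument with $\Part$~and~$\Part[Q]$ exchanged gives the reverse implication, so $\sim$~and~$\approx$ agree on comparable pairs as well; hence $\sim = \approx$ and $\Part = \Part[Q]$, a contradiction with distinctness.

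I expect the comparable-pairs step to be the only genuine obstacle.  Transpositions, and hence the group~$H$, only detect incomparable pairs directly, so the $\sim$\nbd relationship of a comparable pair $\alpha \properprefix \alpha\eta$ must be reconstructed from the incomparable pairs $(\alpha\gamma,\alpha\eta\gamma)$; all but one branch of such~$\gamma$ (the branch following~$\overline{\eta}$) is handled this way, and closing off that last branch is precisely what forces the use of the nucleus structure together with the reducedness of the type systems.
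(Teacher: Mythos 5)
Your proposal is correct, but it is organized differently from the paper's argument, and the difference is worth spelling out. You first prove a biconditional recognition principle (for incomparable addresses of length at least the stable depth, $\swap{\alpha}{\beta}\in\Stab{V}{\Part}$ if and only if $\alpha\sim\beta$), deduce that $\sim$ and $\approx$ agree on all incomparable pairs, and then treat comparable pairs $\alpha\sim\alpha\eta$ by a separate argument along the spine $\overline{\eta}$: off-spine extensions give incomparable pairs of equal type, the on-spine extension is handled by your ``recorded consequence'' that long addresses of a common nuclear $\Part$\nbd type share a $\Part[Q]$\nbd type, and reducedness closes the gap. The paper avoids this case split entirely: for any $\alpha\sim\beta$ and any word $\eta$ of length~$N$ it uses Lemma~\ref{lem:nuke-incomparables} to produce a single address $\gamma$ with $\gamma\sim\alpha\eta\sim\beta\eta$ and $\gamma$ incomparable to both, so that $\swap{\alpha\eta}{\gamma},\swap{\beta\eta}{\gamma}\in\Fix{V}{\Part}\leq\Stab{V}{\Part[Q]}$; it then takes an $\approx$\nbd partner $\delta$ of $\gamma$ incomparable to everything in sight and, since $\delta$ is fixed by these swaps, reads off $\alpha\eta\approx\gamma\approx\beta\eta$ directly from the stabilizing property (Lemma~\ref{lem:reducedStab}), whether or not $\alpha\eta$ and $\beta\eta$ are comparable. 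Both arguments rest on the same two pillars (Lemma~\ref{lem:nuke-incomparables} and the cofinite stabilization criterion, with reducedness used to descend from the length-$N$ extensions), and indeed your ``only if'' step is logically the mirror image of the paper's $\delta$\nbd transport step; what the paper's routing through the common third address buys is uniformity and brevity, while your version isolates a clean standalone characterization of which small transpositions lie in $\Stab{V}{\Part}$ and makes explicit exactly where the nucleus structure (child-closure plus types on cycles) is needed, at the cost of the extra spine analysis. Your preliminary disposal of the universal/trivial cases is harmless but unnecessary under the paper's convention that a simple type system is neither trivial nor universal.
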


\begin{proof}
Let $\Part$~and~$\Part[Q]$ be two type systems on~$\AddrSet$ both of which are multinuclear and simple such that $\Stab{V}{\Part} = \Stab{V}{\Part[Q]}$.  Let $\sim$~and~$\approx$ be the equivalence relations on~$\AddrSet$ that define $\Part$~and~$\Part[Q]$, respectively, and take the integer~$N$ to be greater than the stable depths of both $\Part$~and~$\Part[Q]$.  Consider two addresses $\alpha$~and~$\beta$ with $\alpha \sim \beta$.  Let $\eta$~be a word of length~$N$.  Since $\sim$~is coherent, $\alpha\eta \sim \beta\eta$.  By Lemma~\ref{lem:nuke-incomparables}, there exists an address~$\gamma$ that is incomparable with both $\alpha\eta$~and~$\beta\eta$ and such that $\gamma \sim \alpha\eta \sim \beta\eta$ and $\length{\gamma} \geq N$.  Therefore $\swap{\alpha\eta}{\gamma}$ and $\swap{\beta\eta}{\gamma}$ both belong to~$\Fix{V}{\Part}$ and hence to $\Stab{V}{\Part[Q]}$.

Now by an application of Lemma~\ref{lem:nuke-incomparables}, this time to~$\approx$, there exists~$\delta$ that is incomparable with $\alpha\eta$ and~$\gamma$ and such that $\delta \approx \gamma$.  Since $g = \swap{\alpha\eta}{\gamma} \in \Stab{V}{\Part[Q]}$, we deduce $\delta^{g} \approx \gamma^{g}$; that is, $\delta \approx \alpha\eta$.  It follows that $\alpha\eta \approx \gamma$.  Similarly, one establishes $\beta\eta \approx \gamma$.  We deduce that $\alpha\eta \approx \beta\eta$ for all words~$\eta$ of length~$N$ and it follows that $\alpha \approx \beta$ since $\approx$~is reduced.  By symmetry, we now conclude that the type systems $\Part$~and~$\Part[Q]$ are the same.  This establishes the lemma.
\end{proof}

\subsection{Atomic quasinuclear type systems}

As we have seen, atomic quasinuclear type systems can be classified into non-branching and branching types.  As proven in Theorem~\ref{thm:quasicycle}, the stabilizer of a simple, non-branching, atomic quasinuclear type system is the same as the stabilizer of some finite set of points in $\Cant$ that have the same tail class.  The following example illustrates such type systems.

\begin{example}
\label{ex:NonBranchingAtomicQuasinuclearTypeSystem}
Figure~\ref{fig:NonBranchingAtomicQuasinuclearTypeSystem} shows the label diagram for a simple, non-branching, atomic quasinuclear type system with nucleus $\{\mathsf{R}\}$, where $\mathsf{A}$ is the label for the empty word.  Here:
\begin{itemize}
    \item The empty word $\emptyword$ is the only address with label~$\mathsf{A}$.
    \item Every address of the form $(01)^n0$ or $(10)^n$ for $n\geq 0$ has label~$\mathsf{B}$.
    \item Every address of the form $(01)^n$ or $(10)^n1$ for $n\geq 0$ has label~$\mathsf{C}$.
    \item Every other address has label $\mathsf{R}$.
\end{itemize}
If $\Part$ is the corresponding type system, then $\Stab{V}{\Part}$ is precisely the stabilizer in $V$ of the two-point set $\bigl\{\overline{01},\overline{10}\bigr\}$.
\begin{figure}
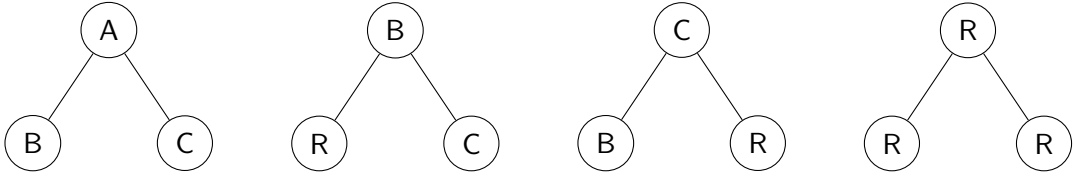

\begin{center}
  \labelcaret{A}{B}{C}
  \qquad
  \labelcaret{B}{R}{C}
  \qquad
  \labelcaret{C}{B}{R}
  \qquad
  \labelcaret{R}{R}{R}
\end{center}
\caption{Label diagram for a type system whose stabilizer is the stabilizer in $V$ of the two-point set $\bigl\{\overline{01},\overline{10}\bigr\}$.}
  \label{fig:NonBranchingAtomicQuasinuclearTypeSystem}
\end{figure}
\end{example}

The branching case is slightly more complicated.  For the following proposition, recall that an action of a group $G$ on compact metric space $X$ is \emph{minimal} if the $G$\nbd orbit of each point in~$X$ is dense in~$X$.    Equivalently (and explaining the name), an action is minimal if the  only $G$\nbd invariant closed subsets are~$\emptyset$ and $X$.

\begin{prop}\label{prop:BranchinQuasinuclearK}
Let $\Part$ be a simple, branching, atomic quasinuclear type system.  Then there exists a non-empty, nowhere dense, perfect subset $K$ of\/ $\Cant$ such that\/ $\Stab{V}{\Part}= \Stab{V}{K}$.  Furthermore, the action of\/ $\Stab{V}{K}$ on $K$ is minimal. 
\end{prop}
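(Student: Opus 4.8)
The plan is to take $K$ to be the boundary of the subtree of addresses whose type lies in~$\Part[Q]$.  Fix $t$,~$\Part[Q]$,~$\Part[R] = \{R\}$ as in Definition~\ref{def:nuketypes}\ref{i:quasi} and set
\[
K = \set{ w \in \Cant }{ \text{every finite prefix of~$w$ of length at least~$t$ has type in~$\Part[Q]$} },
\]
which is closed since for each $n \geq t$ the words whose prefix of length~$n$ has type in~$\Part[Q]$ form a finite union of cones.  I would first record the dictionary underlying everything: for an address~$\alpha$ with $\length{\alpha} \geq t$ one has $\cone{\alpha} \cap K \neq \emptyset$ if and only if the type of~$\alpha$ lies in~$\Part[Q]$.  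Indeed, if that type is~$R$ then every descendant of~$\alpha$ has type~$R$ (Condition~\ref{i:q-nucleus}), so $\cone{\alpha} \cap K = \emptyset$; conversely, since $\Part[Q]$~is strongly connected (Condition~\ref{i:q-strong}) every vertex of~$\Part[Q]$ in~$\graph{\Part}$ has an outgoing edge leading back into~$\Part[Q]$, so an address of type in~$\Part[Q]$ can be extended to an infinite word all of whose prefixes have type in~$\Part[Q]$.  This immediately gives $K \neq \emptyset$, and it gives nowhere density: if a cone~$\cone{\gamma}$ were contained in~$K$, a descendant~$\gamma'$ of~$\gamma$ of length~$\geq t$ would have type $Q \in \Part[Q]$, and Condition~\ref{i:q-reach} would produce~$\delta$ with $Q_{\delta} = R$, so that $\gamma'\delta \in \cone{\gamma} \setminus K$.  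Perfectness is similar: given $w \in K$ and a prefix~$\alpha$ of~$w$ of length~$\geq t$, follow a path through~$\Part[Q]$ (Condition~\ref{i:q-strong}) from the type of~$\alpha$ to a member of~$\branchPartQ$, non-empty by hypothesis; at the resulting address both children have type in~$\Part[Q]$, yielding two distinct points of~$K$ extending~$\alpha$, so~$K$ has no isolated points.

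The next step is to show $\Stab{V}{\Part} \leq \Stab{V}{K}$.  The crucial point is that every $g \in \Stab{V}{\Part}$ fixes the class~$R$.  The infinite classes of~$\Part$ are exactly those in $\Part[Q] \cup \{R\}$ (the class~$R$ is infinite as $R_{0} = R_{1} = R$, and each $Q \in \Part[Q]$ is infinite by strong connectedness); they span the subgraph~$\Gamma^{\ast}$ of~$\graph{\Part}$ on which $\Stab{V}{\Part}$ acts by automorphisms (Lemma~\ref{lem:P*-action} and the discussion after Definition~\ref{def:typegraph}), and $R$~is the unique vertex of~$\Gamma^{\ast}$ both of whose outgoing edges are loops: no $Q \in \Part[Q]$ can have this property, since then $Q_{\gamma} = Q \neq R$ for all words~$\gamma$, contradicting Condition~\ref{i:q-reach}.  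Hence $\Part[Q]$~is invariant under the action of~$\Stab{V}{\Part}$ on infinite classes.  Now take $g \in \Stab{V}{\Part}$ and $w \in K$, and choose a prefix~$\alpha$ of~$w$ long enough that $\alpha^{g}$~is defined with $\length{\alpha^{g}} \geq t$ and that~$g$ preserves equivalence under~$\sim$ among all the addresses extending~$\alpha$.  Writing $w = \alpha v$, every prefix of~$w^{g} = \alpha^{g}v$ extending~$\alpha^{g}$ has the form $\alpha^{g}\mu = (\alpha\mu)^{g}$ where $\alpha\mu$ is a prefix of~$w$ of length~$\geq t$; the type of~$\alpha\mu$ is then an infinite class lying in~$\Part[Q]$, so the type of $(\alpha\mu)^{g}$ lies in~$\Part[Q]$ by the previous paragraph.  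A prefix of~$w^{g}$ that is a proper prefix of~$\alpha^{g}$ cannot have type~$R$, as then~$\alpha^{g}$ would too.  Therefore $w^{g} \in K$, so $K^{g} \subseteq K$; applying this to~$g^{-1}$ gives $K^{g} = K$.

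For the reverse inclusion I would argue that $\Stab{V}{K}$~is a proper subgroup of~$V$ and invoke maximality.  Since $K$~is non-empty, closed and nowhere dense, one can pick incomparable addresses~$\alpha'$,~$\beta'$ with $\cone{\alpha'} \cap K \neq \emptyset$ and $\cone{\beta'} \cap K = \emptyset$, and then the transposition~$\swap{\alpha'}{\beta'}$ moves a point of~$K$ off of~$K$, so $\Stab{V}{K} \neq V$.  On the other hand $\Part$~is a simple, branching, atomic quasinuclear type system, so $\Stab{V}{\Part}$~is a maximal subgroup of~$V$ by Theorem~\ref{thm:branchtype}, and the chain $\Stab{V}{\Part} \leq \Stab{V}{K} < V$ now forces $\Stab{V}{\Part} = \Stab{V}{K}$.  (Alternatively: the relation on addresses that identifies~$\alpha$ and~$\beta$ when $K[\alpha] = K[\beta]$ is a type system by Example~\ref{ex:stabilizerSet}, is a quotient of~$\Part$, and is not the universal one, so it equals~$\Part$ by simplicity; Example~\ref{ex:stabilizerSet} then gives $\Stab{V}{K} = \Fix{V}{\Part} \leq \Stab{V}{\Part}$ directly.)

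Finally, for minimality of the action of~$\Stab{V}{K}$ on~$K$, it suffices to show that the orbit of any $w \in K$ meets every non-empty set of the form $\cone{\alpha'} \cap K$, since these form a base for the subspace topology on~$K$.  Put $N = \sd{\Part}$; we may take $\length{\alpha'} \geq N$, so the type~$Q'$ of~$\alpha'$ lies in~$\Part[Q]$.  Let~$\alpha$ be the prefix of~$w$ of length~$N$, of type $Q \in \Part[Q]$; by strong connectedness there is a word~$\mu$ with $(Q')_{\mu} = Q$, whence $\alpha'\mu \sim \alpha$ and $\length{\alpha'\mu} \geq N$.  By Lemma~\ref{lem:adjunct-sgp}\ref{i:adjunct-perpinQdag} there is an address~$\beta$ with $\length{\beta} \geq N$ incomparable with both~$\alpha$ and~$\alpha'\mu$, and Lemma~\ref{lem:conjugator}\ref{i:conjugator} applied to the pairs $(\alpha,\beta)$ and $(\alpha'\mu,\beta)$ produces $g \in \Fix{V}{\Part} \leq \Stab{V}{K}$ with $\alpha^{g} = \alpha'\mu$.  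Writing $w = \alpha v$ gives $w^{g} = \alpha'\mu v \in \cone{\alpha'}$, and $w^{g} \in K$ since~$g$ stabilizes~$K$, so the orbit of~$w$ meets $\cone{\alpha'} \cap K$ as desired.  I expect the main obstacle to be the middle two steps --- establishing that $\Stab{V}{\Part}$ actually preserves the \emph{set}~$K$ --- since this requires pinning~$R$ down inside the type graph and then tracking types carefully through the partial action; once the identification $\Stab{V}{\Part} = \Stab{V}{K}$ is in hand, minimality is a transitivity argument fed by the conjugator construction of Lemma~\ref{lem:conjugator}.
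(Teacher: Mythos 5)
Your proposal is correct and follows essentially the same route as the paper: your set $K$ coincides with the paper's $\Cant\setminus U$ (where $U$ is the union of the cones whose addresses have type~$R$), your inclusion $\Stab{V}{\Part}\leq\Stab{V}{K}$ rests on the same device of recognising $R$ as the unique graph-theoretically distinguished class preserved by the action, the reverse inclusion is exactly the paper's properness-plus-maximality argument via Theorem~\ref{thm:branchtype}, and minimality is likewise obtained by moving a point of~$K$ into the target cone by a type-respecting element (the paper uses a single transposition between cones of equal type where you invoke Lemmas~\ref{lem:adjunct-sgp} and~\ref{lem:conjugator}). The only nit is that in the minimality step you should take the relevant prefixes to have length at least $\max\{\sd{\Part},t\}$ so that their types are guaranteed to lie in $\Part[Q]$, a harmless adjustment.
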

\begin{proof}
Let $\{R\}$ be the nucleus for $\Part$, let $U$ be the union of all cones $\cone{\alpha}$ for which $\alpha$~has type~$R$, and let $K = \Cant\setminus U$.  Note that $U$~is open since it is a union of cones, and thus $K$~is closed.  Furthermore, since every cone $\cone{\beta}$ contains a cone~$\cone{\alpha}$ for which $\alpha$~has type~$R$, the open set $U$~is dense in~$\Cant$, and therefore $K$~is nowhere dense.  Finally, since $\Part$ is branching there is a cone $\cone{\gamma}$ for which $\gamma$~does not have type~$R$, and any such cone contains at least two points from~$K$.  Therefore $K$ is non-empty and {\bf has no isolated points}.

To prove that $\Stab{V}{\Part}=\Stab{V}{K}$, note first that $R$ is the only terminal vertex in the type graph for~$\Part$.  Let $g \in \Stab{V}{\Part}$.  Since $\{R\}$~is child-closed and every address~$\alpha$ has some descendant~$\alpha\eta$ of type~$R$, it follows that $g$~maps all but finitely many addresses of type~$R$ to addresses of type~$R$.  We conclude that $U^g\subseteq U$, and indeed $U^g=U$ since the same statement holds for~$g^{-1}$. This proves that $g\in\Stab{V}{K}$, and hence $\Stab{V}{\Part}\leq \Stab{V}{K}$.  For the opposite conclusion, observe that $\Stab{V}{K}$ is a proper subgroup of $V$ since it does not contain any transpositions $\swap{\alpha}{\beta}$ for which $\cone{\alpha} \subseteq U$ and $\cone{\beta} \nsubseteq U$. Since $\Stab{V}{\Part}$ is maximal by Theorem~\ref{thm:branchtype}, we conclude that $\Stab{V}{\Part}=\Stab{V}{K}$.

Finally, to prove that the action of $\Stab{V}{K}$ on~$K$ is minimal, let $v,w \in K$ and let $W$ be a neighbourhood of~$w$. Choose a cone~$\cone{\alpha}$ containing~$v$ whose type lies in the terminal set $\Part[Q] \cup \{R\}$ for~$\Part$, and let $\cone{\beta}$~be any cone containing~$w$ which is contained in~$W$.  Then $\beta$~does not have type~$R$, so there exists a cone~$\cone{\gamma}$ contained in~$\cone{\beta}$ such that $\gamma$~has the same type as~$\alpha$.  Then the transposition $\swap{\alpha}{\gamma}$ lies in $\Stab{V}{\Part}$ and maps~$v$ into~$\cone{\beta}$ and hence into~$W$, which proves that the action of $\Stab{V}{K}$ on~$K$ is minimal.
\end{proof}

\begin{rem}
If $\Part$ is a simple atomic quasinuclear type system with nucleus $\{R\}$, then it is not hard to show that the corresponding relation $\sim$ on $\Omega$ must be defined by the formula
\[
\alpha\sim \beta \IFF
\set{\gamma \in \Omega}{\text{$\alpha\gamma$~has type~$R$}} = \set{\gamma \in \Omega}{\text{$\beta\gamma$~has type~$R$}}.
\]
Since any element of $\Stab{V}{\Part}$ maps all but finitely many addresses of type $R$ to addresses of type $R$, it follows that any element of $\Stab{V}{\Part}$ preserves the type of all but finitely many addresses, and hence $\Stab{V}{\Part}=\Fix{V}{\Part}$.
\end{rem}

\begin{example}\label{ex:BranchingTypeSystem}
Figure~\ref{fig:BranchingTypeSystem} shows the label diagram for a simple, branching, atomic quasinuclear type system with nucleus $\{\mathsf{R}\}$, where $\mathsf{A}$ is the label for the empty word.  In this case:
\begin{itemize}
    \item An address has label $\mathsf{A}$ if it lies in $\{00,1\}^*$, i.e., if every maximal string of consecutive $0$'s in the word has even length.
    \item An address has label $\mathsf{B}$ if it lies in $\{00,1\}^*0$, i.e., if it has the form $\alpha 0$ for some word $\alpha$ with label $\mathsf{A}$.
    \item An address has label $\mathsf{R}$ if it starts with $0^{2n+1}1$ or contains $10^{2n+1}1$ for some~$n$.
\end{itemize}
If $\Part$ is the corresponding type system, then $\Stab{V}{\Part}$ is equal to $\Stab{V}{K}$, where $K=\{00,1\}^\omega$ is the set of all infinite concatenations of $00$ and~$1$. That is, $K$ is the set of all infinite words in~$\Cant$ for which every maximal string of consecutive zeros is either infinite or has even length.
\begin{figure}
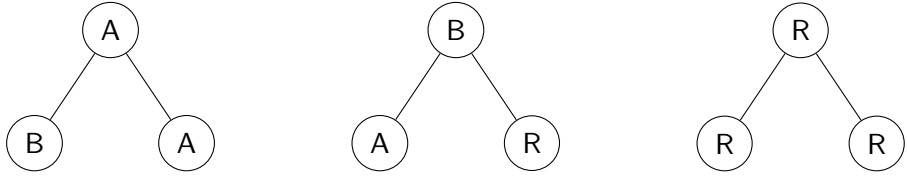

\begin{center}
  \labelcaret{A}{B}{A}
  \qquad\qquad
  \labelcaret{B}{A}{R}
  \qquad\qquad
  \labelcaret{R}{R}{R}
\end{center}
\caption{Label diagram for a type system whose stabilizer is the stabilizer in $V$ of the Cantor subset $K=\{00,1\}^\omega$.}
  \label{fig:BranchingTypeSystem}
\end{figure}
\end{example}

\subsection{Nuclear type systems}

We now turn to the maximal subgroups determined by a nuclear type system.  We shall observe that these subgroups are isomorphic to full groups of subshifts of finite type. This class of Thompson-like groups was introduced by Matsumoto in \cite{Matsumoto} and studied extensively by Matui in \cite{Matui}.

We begin by defining these groups.  Given a finite directed multigraph $\Gamma$ with edge set $\mathcal{E}(\Gamma)$, the corresponding \textit{(one-sided) subshift of finite type} is the pair $(X_\Gamma,\sigma)$, where
\[
X_{\Gamma} = \set{e_{1}e_{2}e_{3}\dots\in \mathcal{E}(\Gamma)^{\mathbb{N}}}{\text{$(e_{1},e_{2},e_{3},\dots)$ is an infinite directed path in $\Gamma$}}
\]
and $\sigma\colon X_{\Gamma} \to X_{\Gamma}$ is the shift map.  Note that $X_{\Gamma}$~inherits a topology as a closed subset of the infinite product~$\mathcal{E}(\Gamma)^{\mathbb{N}}$, where $\mathcal{E}(\Gamma)$ has the discrete topology.  With respect to this topology, $X_{\Gamma}$~is compact, Hausdorff, and totally disconnected.

Let $A_{\Gamma}$~denote the adjacency matrix for~$\Gamma$, i.e., the matrix whose $(i,j)$-th entry is the number of directed edges from vertex~$i$ to vertex~$j$.  The subshift $(X_{\Gamma},\sigma)$ is said to be \textit{irreducible} if $A_{\Gamma}$ is an irreducible matrix, or equivalently if $\Gamma$~is strongly connected.  An irreducible subshift $(X_{\Gamma},\sigma)$ is \textit{branching} if $\Gamma$~is not a directed cycle, or equivalently if the adjacency matrix~$A_{\Gamma}$ is not a permutation matrix.  If $(X_{\Gamma},\sigma)$ is irreducible and branching, then $X_{\Gamma}$~has no isolated points and is therefore a Cantor space.

Given a finite directed path $\alpha = (e_{1},e_{2},\dots,e_{n})$ in~$\Gamma$, the corresponding \emph{cone} $C(\alpha,X_{\Gamma})$ is the set of all points in~$X_{\Gamma}$ that have $e_{1}e_{2}\dots e_{n}$ as a prefix.  Note that $C(\alpha,X_{\Gamma})$~is a clopen subset of~$X_{\Gamma}$ and each point in $C(\alpha,X_{\Gamma})$ has the form~$\alpha\psi$ for some infinite directed path~$\psi$ in~$\Gamma$ whose initial vertex is the same as the terminal vertex of~$\alpha$.  If $\alpha$ and $\beta$ are two finite directed paths in~$\Gamma$ with the same terminal vertex, there is a \emph{prefix-replacement homeomorphism} $C(\alpha,X_{\Gamma}) \to C(\beta,X_{\Gamma})$ that maps each point~$\alpha \psi$ in $C(\alpha,X_{\Gamma})$ to the corresponding point~$\beta\psi$ in $C(\beta,X_{\Gamma})$.

\begin{defn}
Let $\Gamma$ be a finite directed multigraph and let $(X_{\Gamma},\sigma)$ be the resulting subshift of finite type.
\begin{enumerate}
\item A homeomorphism $f \colon U \to V$ between open subsets of~$X_\Gamma$ is \textit{Thompson-like} if for every point $p \in U$ there exists a cone $C(\alpha,X_{\Gamma}) \subseteq U$ containing~$p$ and a cone $C(\beta,X_{\Gamma})$ containing~$p^{f}$ such that~$f$ maps $C(\alpha,X_{\Gamma})$ to $C(\beta,X_{\Gamma})$ by a prefix-replacement homeomorphism.
\item The \emph{full group}~$V(\Gamma)$ over~$\Gamma$ is the group of all Thompson-like homeomorphisms $X_{\Gamma} \to X_{\Gamma}$.
\item More generally, if $E \subseteq X_{\Gamma}$ is a non-empty clopen set, the \emph{restricted full group}~$V(\Gamma,E)$ is the group of all Thompson-like homeomorphisms $E \to E$.
\end{enumerate}
\end{defn}

Since $X_{\Gamma}$~is compact, a homeomorphism $f \colon X_{\Gamma} \to X_{\Gamma}$ lies in~$V(\Gamma)$ if and only if there exist partitions $C(\alpha_{1},X_{\Gamma})$, $C(\alpha_{2},X_{\Gamma})$, \dots, $C(\alpha_{n},X_{\Gamma})$ and $C(\beta_{1},X_{\Gamma})$, $C(\beta_{2},X_{\Gamma})$, \dots, $C(\beta_{n},X_{\Gamma})$ of~$X_{\Gamma}$ into cones such that $f$~maps each $C(\alpha_{i},X_{\Gamma})$ to $C(\beta_{i},X_{\Gamma})$ by a prefix-replacement homeomorphism.  A similar statement holds for elements of $V(\Gamma,E)$ for any clopen subset~$E$ of~$X_{\Gamma}$.  Note also that $V(\Gamma,E)$ and $V(\Gamma,E')$ are isomorphic whenever there exists a Thompson-like homeomorphism $E \to E'$.

The theory of the groups $V(\Gamma)$ and $V(\Gamma,E)$ was developed by Matui in~\cite{Matui}, where $V(\Gamma)$~is viewed as the topological full group of the \'{e}tale groupoid associated to $(X_{\Gamma},\sigma)$ and $V(\Gamma,E)$~is the topological full group of the restriction of this \'{e}tale groupoid to~$E$.  Matui proves the following results about such groups.

\begin{thm}[Matui~\cite{Matui}]
\label{thm:Matui}
Let $(X_{\Gamma},\sigma)$ be an irreducible, branching subshift of finite type and let $G=V(\Gamma,E)$ for some non-empty clopen set $E \subseteq X_{\Gamma}$.  Then:
\begin{enumerate}
\item The derived subgroup~$[G,G]$ is simple and finitely generated.
\item The group~$G$ is finitely presented.  Indeed, it has type~$\mathrm{F}_{\infty}$.
\item \label{part:abelianization}
The abelianization $G/[G,G]$ is isomorphic to $(H_{0} \otimes \Zint_{2}) \oplus H_{1}$ where
\[
H_{0} = \coker (I - A_{\Gamma})
\AND
H_{1} = \ker (I - A_{\Gamma}).
\]
Here $I$~is the $n \times n$~identity matrix and we regard $I - A_{\Gamma}$ as the homomorphism $\Zint^{n} \to \Zint^{n}$ determined by the right action of the matrix on row vectors.
\end{enumerate}
\end{thm}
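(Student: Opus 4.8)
The plan is to transfer all three assertions from Matui's general theory of topological full groups of \'{e}tale groupoids, after recognising $G = V(\Gamma,E)$ as exactly such a group. First I would set up the dictionary. The one-sided subshift $(X_{\Gamma},\sigma)$ carries the Deaconu--Renault groupoid $\mathcal{G}_{\Gamma}$, whose arrows are triples $(x,m-n,y)$ with $x,y\in X_{\Gamma}$ and $\sigma^{m}(x)=\sigma^{n}(y)$, equipped with the \'{e}tale topology generated by the compact open bisections $\{(\alpha\psi,\ \length{\alpha}-\length{\beta},\ \beta\psi)\}$ attached to pairs of finite directed paths $\alpha,\beta$ with a common terminal vertex. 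A prefix-replacement homeomorphism $C(\alpha,X_{\Gamma})\to C(\beta,X_{\Gamma})$ is precisely one of these bisections, so a Thompson-like homeomorphism of $X_{\Gamma}$ is exactly an element of the topological full group $[[\mathcal{G}_{\Gamma}]]$; more generally $V(\Gamma,E)=[[\mathcal{G}_{\Gamma}|_{E}]]$, the topological full group of the restriction of $\mathcal{G}_{\Gamma}$ to the clopen set $E$. I would then verify the standing hypotheses of Matui's theorems: irreducibility of $(X_{\Gamma},\sigma)$ (that is, $\Gamma$ strongly connected) makes $\mathcal{G}_{\Gamma}$ minimal, because tail-equivalence classes are dense; the branching hypothesis ($\Gamma$ not a single directed cycle) makes $\mathcal{G}_{\Gamma}$ purely infinite; and passing to a non-empty clopen set $E$ preserves minimality and pure infiniteness, and leaves the groupoid homology unchanged, since $\mathcal{G}_{\Gamma}|_{E}$ is Kakutani (Morita) equivalent to a corner of $\mathcal{G}_{\Gamma}$.

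Granting this, parts~(i) and~(ii) are direct citations of~\cite{Matui}. For the full group of an irreducible branching one-sided subshift of finite type, Matui shows that $[[\mathcal{G}_{\Gamma}]]$ has type~$\mathrm{F}_{\infty}$ (hence is finitely presented), by producing a contractible complex on which it acts with finite cell stabilisers and finitely many orbits of cells in each dimension; he also shows that the commutator subgroup of the topological full group of any purely infinite minimal groupoid on a Cantor space is simple, and that for subshift-of-finite-type groupoids the full group, and with more work its commutator subgroup, is finitely generated. Since $\mathcal{G}_{\Gamma}|_{E}$ again lies in this class, the same conclusions pass to $G=V(\Gamma,E)$.

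For part~(iii) I would invoke Matui's computation of the abelianisation of the topological full group of a purely infinite minimal \'{e}tale groupoid. Built from an \emph{index} map to $H_{1}$ and a \emph{sign} map to $H_{0}\otimes\Zint_{2}$, there is a natural homomorphism fitting into a short exact sequence
\[
0 \;\longrightarrow\; H_{0}(\mathcal{G})\otimes\Zint_{2} \;\longrightarrow\; [[\mathcal{G}]]^{\mathrm{ab}} \;\longrightarrow\; H_{1}(\mathcal{G}) \;\longrightarrow\; 0,
\]
which splits for subshift-of-finite-type groupoids; and the groupoid homology of $\mathcal{G}_{\Gamma}$ is concentrated in degrees $0$ and $1$, with $H_{0}(\mathcal{G}_{\Gamma})=\coker(I-A_{\Gamma}^{\mathsf{t}})$ and $H_{1}(\mathcal{G}_{\Gamma})=\ker(I-A_{\Gamma}^{\mathsf{t}})$ --- the standard Cuntz--Krieger / Pimsner--Voiculescu computation. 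Because $H_{\ast}$ is unchanged by restriction to the non-empty clopen set $E$, this yields the claimed isomorphism $G/[G,G]\cong(H_{0}\otimes\Zint_{2})\oplus H_{1}$, once one observes that the paper's convention of letting $I-A_{\Gamma}$ act on the right on row vectors is exactly $I-A_{\Gamma}^{\mathsf{t}}$ acting on column vectors.

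The main obstacle is bookkeeping rather than new mathematics: strictly, the theorem is a restatement of results of~\cite{Matui}, so the work lies in (a)~pinning down $\mathcal{G}_{\Gamma}$ and checking the purely-infinite-minimal hypotheses for $\mathcal{G}_{\Gamma}$ and its clopen restrictions, and (b)~tracking Matui's homological machinery carefully enough to obtain the abelianisation sequence, its splitting in the subshift-of-finite-type case, the vanishing of higher homology, and the correct orientation (transpose or not) of $I-A_{\Gamma}$. Step~(b) is where errors creep in, so I would state the conventions explicitly and sanity-check on small examples: a single vertex with $n$ loops gives $V(\Gamma)$ the Higman--Thompson group $V_{n}$, with $H_{0}=\coker(I-A_{\Gamma})=\Zint/(n-1)\Zint$ and $H_{1}=\ker(I-A_{\Gamma})=0$, so the formula predicts $G/[G,G]\cong\Zint/(n-1)\Zint\otimes\Zint_{2}$, which is trivial for $n$ even (e.g.\ $V=V_{2}$ is perfect) and $\Zint_{2}$ for $n$ odd --- matching the classical behaviour of the Higman--Thompson groups.
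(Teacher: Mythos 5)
Your proposal is correct and matches the paper's treatment: the paper gives no independent proof of this theorem, but simply quotes Matui's results and includes a remark translating its notation ($V(\Gamma)=\llbracket G\rrbracket$, $V(\Gamma,E)=\llbracket G|Y\rrbracket$) into Matui's topological-full-group framework, which is exactly the dictionary-plus-citation argument you outline. Your additional verification of the purely infinite minimal hypotheses, the homology computation with the transpose convention, and the Higman--Thompson sanity check are consistent with (and slightly more detailed than) what the paper records.
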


For part~\ref{part:abelianization} above, note that it follows from the Rank--Nullity Theorem that $H_{1}$~is isomorphic to the torsion-free part of~$H_{0}$.  Note also that $H_{0} \otimes \Zint_{2} \cong H_{0}/2H_{0}$ is trivial if and only if $H_{0}$~is finite of odd order.  We conclude that $G$~is simple if and only if $H_{0}$~is finite of odd order and $G$~is virtually simple if and only if $H_{0}$~is finite.

\begin{rem}
Our definitions and notation are very different from those used by Matui in \cite{Matui}, though the groups and conclusions are the same.  Matui uses a subshift of finite type to define an \'{e}tale groupoid, and then considers the topological full group associated to this \'etale groupoid.  In particular, our group $V(\Gamma)$ is denoted $\llbracket G\rrbracket$ in \cite{Matui}, our group $V(\Gamma,E)$ is denoted $\llbracket G|Y \rrbracket$, and the corresponding derived subgroups are denoted $D(\llbracket G \rrbracket)$ and $D(\llbracket G|Y \rrbracket)$, respectively.
\end{rem}

We now relate the restricted full groups $V(\Gamma,E)$ to nuclear type systems.

\begin{defn}
If $\Part$~is a nuclear type system, we define the \emph{nucleus graph} for~$\Part$ to be the induced subgraph of the type graph for~$\Part$ whose vertices correspond to the types in the nucleus. 
\end{defn}
By the definition of nuclear type systems, this nucleus graph is always strongly connected.

\begin{thm}
\label{thm:FixIsMatui}
Let $\Part$~be a finite nuclear type system, let\/ $\Delta$~be its nucleus graph and let\/ $(X_{\Delta},\sigma)$~be the corresponding subshift of finite type.  Then there exists a non-empty clopen set $E \subseteq X_{\Delta}$ such that\/ $\Fix{V}{\Part} \cong V(\Delta,E)$ and the corresponding $H_{0}$ is isomorphic to~$\sgp{\Part}$.
\end{thm}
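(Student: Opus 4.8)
The plan is to produce an explicit homeomorphism from $\Cant$ onto a suitable clopen subset $E\subseteq X_\Delta$ that conjugates $\Fix{V}{\Part}$ onto $V(\Delta,E)$, and then to identify Matui's invariant $H_0$ with $\sgp{\Part}$ via a presentation argument. First I would fix notation: let $\Part[Q]$ be the nucleus, let $\beta_1,\dots,\beta_m$ be the addresses minimal subject to having type in $\Part[Q]$, and let $Q^{(i)}$ be the type of $\beta_i$. These $\beta_i$ are pairwise incomparable and $\Cant=\bigcup_i\cone{\beta_i}$, since every point has a prefix of the threshold length whose type lies in the nucleus and hence a shortest such prefix. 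Because $\Delta$ is strongly connected and each of its vertices has out-degree two, I can choose pairwise incompatible finite directed paths $\pi_1,\dots,\pi_m$ in $\Delta$ with $\pi_i$ terminating at $Q^{(i)}$ (separate $m$ cones at a common depth, then extend each to reach $Q^{(i)}$), and set $E=\bigsqcup_i C(\pi_i,X_\Delta)$, a non-empty clopen set. Since in $\Delta$ each vertex has exactly one out-edge with each label from $\letters$, an infinite directed path is determined by its starting vertex and its label sequence; using this I define $\Psi\colon\Cant\to E$ by $\Psi(\beta_i u)=\pi_i\psi$, where $\psi$ is the path starting at $Q^{(i)}$ with label sequence $u$. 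One checks $\Psi$ is a bijection, hence a homeomorphism, carrying $\cone{\beta_i}$ onto $C(\pi_i,X_\Delta)$ and, more generally, $\cone{\beta_i\eta}$ onto the cone $C(\rho,X_\Delta)$ where $\rho$ is $\pi_i$ followed by the $\Delta$-path out of $Q^{(i)}$ labelled $\eta$; crucially, the terminal vertex of $\rho$ is the type $(Q^{(i)})_{\eta}$ of the address $\beta_i\eta$.

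Next I would verify that $g\mapsto\Psi g\Psi^{-1}$ is an isomorphism $\Fix{V}{\Part}\to V(\Delta,E)$. For $g\in\Fix{V}{\Part}$, use Lemma~\ref{lem:basic}\ref{i:Fix-form} together with a refinement through $\{\cone{\beta_i}\}$ to present $g$ as a prefix substitution each of whose domain cones has some $\beta_i$ as a prefix and is mapped to a cone of the same type; since $\Part[Q]$ is child-closed, such a domain cone and its image both have type in $\Part[Q]$ and hence equal $\cone{\beta_i\eta}$ and $\cone{\beta_j\theta}$ for suitable $i,j,\eta,\theta$ with $(Q^{(i)})_{\eta}=(Q^{(j)})_{\theta}$. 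Transporting through $\Psi$, the corresponding cones $C(\rho,X_\Delta)$ and $C(\rho',X_\Delta)$ have equal terminal vertices, so $\Psi g\Psi^{-1}$ restricts on $C(\rho,X_\Delta)$ to the prefix-replacement homeomorphism onto $C(\rho',X_\Delta)$; thus $\Psi g\Psi^{-1}\in V(\Delta,E)$. Injectivity is clear. For surjectivity, present $h\in V(\Delta,E)$ as a prefix-replacement shuffle of cones contained in $E$; each such cone is $C(\rho_a,X_\Delta)$ with $\pi_i\prefix\rho_a$ for some $i$, and $\Psi^{-1}$ carries it onto a cone of $\Cant$ whose address has type equal to the terminal vertex of $\rho_a$. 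Since $h$ preserves terminal vertices, $\Psi^{-1}h\Psi$ is a type-preserving prefix substitution of $\Cant$, hence lies in $\Fix{V}{\Part}$ by Lemma~\ref{lem:basic}\ref{i:Fix-form}, and it maps to $h$.

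To handle the $H_0$ statement, note first that $\Delta$ is strongly connected, hence irreducible, and branching (all out-degrees are two, so $A_\Delta$ is not a permutation matrix), so Theorem~\ref{thm:Matui} applies to $G=V(\Delta,E)$ and yields $H_0=\coker(I-A_\Delta)$, independently of $E$. On the semigroup side, the relations $p=p_{0}+p_{1}$ for the non-nucleus types of $\Part$ (which lie on no cycle of the type graph, by the defining property of a nucleus) let each non-nucleus generator of $\sgp{\Part}$ be rewritten in terms of nucleus generators, so a Tietze reduction presents $\sgp{\Part}$ as the commutative semigroup on generators $q$ for $Q\in\Part[Q]$ subject only to $q=q_{0}+q_{1}$. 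By Lemma~\ref{lem:multinuke-sgp}\ref{i:nuke-group} this semigroup is a group, so by the standard universal-property comparison it coincides with the abelian group on the same presentation; reading off the relation matrix, this group is $\Zint^{n}$ (where $n=\card{\Part[Q]}$) modulo the subgroup spanned by the rows of $I-A_\Delta$, which is precisely $\coker(I-A_\Delta)$. Hence $\sgp{\Part}\cong H_0$.

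The main obstacle is the correct choice of $E$: the naive recipe of sending a point of $\Cant$ to the infinite path recording its nucleus types lands in $X_\Delta$ but fails to be injective when distinct minimal addresses $\beta_i,\beta_j$ share a type, so it cannot induce an isomorphism of groups. Allocating one cone $C(\pi_i,X_\Delta)$ per $\beta_i$, with the prescribed terminal vertex, is what resolves this, and it is exactly here that strong connectivity and branching of $\Delta$ are used. The presentation bookkeeping in the third paragraph is the other point requiring a little care, though it is routine.
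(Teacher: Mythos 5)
Your proposal is correct and takes essentially the same approach as the paper: the paper identifies $\Fix{V}{\Part}$ with $V(\graph{\Part},E_A)$ via the edge-labelling homeomorphism and then chooses $E\subseteq X_\Delta$ as a disjoint union of cones over paths in $\Delta$ ending at the appropriate nucleus vertices, which is precisely the conjugating homeomorphism $\Psi\colon\Cant\to E$ that you build in a single step, and your $H_0$ identification (Tietze reduction to the nucleus generators, comparison with $\coker(I-A_\Delta)$, using that $\sgp{\Part}$ is a group) is also the paper's argument, carried out with slightly more care on the semigroup-versus-group presentation point. The one detail you gloss over in the surjectivity step --- refining the cone presentation of $h\in V(\Delta,E)$ so that every domain \emph{and} image cone has some $\pi_i$ as a prefix --- is routine and does not affect the argument.
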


\begin{proof}
As usual, let $\graph{\Part}$~be the type graph of~$\Part$, so that $\Delta \subseteq \graph{\Part}$, and let $A$~be the type of the empty string. Let $E_{A}$~be the clopen set in~$X_{\graph{\Part}}$ consisting of all infinite directed paths with initial vertex~$A$, and note each such path is determined by the $0$--$1$ labeling on its edges.  The $0$--$1$ labeling on the edges of~$\graph{\Part}$ defines a map $\pi \colon X_{\graph{\Part}} \to \Cant$ which associates to each directed path its sequence of binary labels.  The map~$\pi$ restricts to a homeomorphism $E_{A} \to \Cant$.  Note that $\pi$ maps each cone $C(\textbf{e},X_{\graph{\Part}})$ to the cone $\alpha\Cant$, where $\alpha$ is the label on the finite path $\textbf{e}$, and therefore $\pi$ conjugates $V(\graph{\Part},E_{A})$ to $\Fix{V}{\Part}$.  Thus $\Fix{V}{\Part}$ is isomorphic to $V(\graph{\Part},E_{A})$.

Now, we can view~$X_{\Delta}$ as the subspace of~$X_{\graph{\Part}}$ consisting of all infinite directed paths whose initial vertex lies in the nucleus.  Note that  $C(\alpha,X_{\Delta})=C(\alpha,X_{\graph{\Part}})$ for any finite path $\alpha$ starting at a vertex in the nucleus.  If $E\subseteq X_{\Delta}$ is a clopen set, it follows that Thompson-like homeomorphisms of $E$ with respect to $X_\Delta$ are the same as Thompson-like homeomorphisms of $E$ with respect to $X_{\graph{\Part}}$, and therefore $V(\Delta,E) = V(\graph{\Part},E)$.

We now show that there exists a clopen set $E \subseteq X_{\Delta}$ which maps to~$E_{A}$ by a Thompson-like homeomorphism. Since $\Part$~is nuclear, we can partition~$\Cant$ into finitely many cones $\cone{\alpha_{1}}$,~$\cone{\alpha_{2}}$, \dots,~$\cone{\alpha_{k}}$ such that the type of each~$\alpha_{i}$ lies in the nucleus.  Then each~$\alpha_{i}$ corresponds to a path~$\gamma_{i}$ in~$\graph{\Part}$ from~$A$ to some vertex~$v_{i}$ in~$\Delta$.  The list $v_{1}$,~$v_{2}$, \dots,~$v_{k}$ of these terminal vertices may include repetition, but since $\Delta$~is strongly connected and is not a cycle, there exists an $m > 0$ so that each vertex of~$\Delta$ is the endpoint of at least $k$~distinct directed paths in~$\Delta$ of length~$m$.  It follows that we can find $k$~distinct directed paths $\delta_{1}$,~$\delta_{2}$, \dots,~$\delta_{k}$ in~$\Delta$ of length~$m$ such that each~$\delta_{i}$ ends at the corresponding vertex~$v_{i}$.  Let
\[
E = C(\delta_{1},\Delta) \cup C(\delta_{2},\Delta) \cup \dots \cup C(\delta_{k},\Delta)
\]
and note that these cones are pairwise disjoint since $\delta_{1}$,~$\delta_{2}$, \dots,~$\delta_{k}$ are distinct and all have the same length.  Then the homeomorphism $f \colon E \to E_A$ that maps each $C(\delta_{i},\Delta)$ to $C(\gamma_{i},\graph{\Part})$ by prefix replacement is Thompson-like and therefore $V(\Delta,E) \cong V(\graph{\Part},E_{A}) \cong \Fix{V}{\Part}$.

Finally, we know from Lemma~\ref{lem:multinuke-sgp}\ref{i:multinuke-compltype} that $\sgp{\Part}$~is an abelian group.  Moreover, if $\Part[N]$~denotes the set of types in the nucleus of~$\Part$, it is easy to see that
\[
\sgp{\Part} \cong \presentation{ t \in \Part[N] }{\text{$t = t_{0} + t_{1}$ for each $t \in \Part[N]$}}.
\]
Let $A_{\Gamma}$~be the adjacency matrix for~$\Gamma$, and for each type~$t$ let $e[t]$~be the corresponding standard basis row vector.  Note that for each type~$t$, the corresponding row of~$A_{\Gamma}$ is $e[t_{0}] + e[t_{1}]$, so the corresponding row of $I - A_{\Gamma}$ is $e[t]-e[t_0]-e[t_1]$.  Then $H_{0} = \coker (I-A_{\Gamma})$ has precisely the same presentation as $\sgp{\Part}$, which proves that they are isomorphic.
\end{proof}

Combining this with Theorem~\ref{thm:Matui} and the fact that $\Fix{V}{\Part}$ has finite index in $\Stab{V}{\Part}$ when $\Part$ is finite (by Lemma~\ref{lem:P*-action}), we obtain the following:

\begin{cor}
\label{cor:ConclusionsFromMatui}
Let $\Part$~be a finite nuclear type system.  Then:
\begin{enumerate}
\item The derived subgroup of\/ $\Fix{V}{\Part}$ is simple and finitely generated.
\item The groups\/ $\Fix{V}{\Part}$ and\/ $\Stab{V}{\Part}$ are finitely presented.  Indeed, they have type\/~$\mathrm{F}_{\infty}$.
\item \label{i:Fix-simple}
$\Fix{V}{\Part}$~is simple if and only if $\sgp{\Part}$~is finite of odd order, and\/ $\Fix{V}{\Part}$ and\/ $\Stab{V}{\Part}$ are virtually simple if and only if $\sgp{\Part}$~is finite.
\end{enumerate}
\end{cor}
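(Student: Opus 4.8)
The plan is to derive everything from Theorem~\ref{thm:FixIsMatui}, Theorem~\ref{thm:Matui}, and the finite-index relationship between $\Fix{V}{\Part}$ and $\Stab{V}{\Part}$. First I would record the two structural facts that make that machinery applicable. By Theorem~\ref{thm:FixIsMatui} there is a non-empty clopen set $E \subseteq X_{\Delta}$ with $\Fix{V}{\Part} \cong V(\Delta,E)$, where $\Delta$~is the nucleus graph; this graph is strongly connected by the definition of a nuclear type system, and it is branching because every vertex of~$\Delta$ has out-degree~$2$ (the two edges labelled $0$~and~$1$ land in the nucleus by child-closure), so its adjacency matrix has all row sums equal to~$2$ and hence is not a permutation matrix. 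Thus $(X_{\Delta},\sigma)$~is irreducible and branching, and Theorem~\ref{thm:Matui} applies to $G = V(\Delta,E) \cong \Fix{V}{\Part}$. Separately, since $\Part$~is finite and nuclear, every type in the nucleus is an infinite class (from any address of type in the nucleus one can follow a cycle arbitrarily often), while every type outside the nucleus is represented only by addresses of length less than the parameter~$t$, so there are only finitely many finite classes and $\infPart$ is finite and non-empty; hence Lemma~\ref{lem:P*-action}\ref{i:Fix=ker} identifies $\Fix{V}{\Part}$ with the kernel of the action of $\Stab{V}{\Part}$ on~$\infPart$, so $\Fix{V}{\Part}$ has finite index in $\Stab{V}{\Part}$.

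Part~(i) is then immediate from Theorem~\ref{thm:Matui}(1) applied to $\Fix{V}{\Part} \cong V(\Delta,E)$. For part~(ii), Theorem~\ref{thm:Matui}(2) gives that $\Fix{V}{\Part}$ is of type~$\mathrm{F}_{\infty}$, in particular finitely presented; since both properties are inherited by finite-index overgroups, the first paragraph yields that $\Stab{V}{\Part}$ is of type~$\mathrm{F}_{\infty}$ as well.

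For part~(iii), Theorem~\ref{thm:FixIsMatui} also gives that the group $H_{0} = \coker(I - A_{\Delta})$ attached to $V(\Delta,E)$ is isomorphic to~$\sgp{\Part}$, and we recall from the discussion following Theorem~\ref{thm:Matui} that $G = V(\Delta,E)$ is simple precisely when $H_{0}$~is finite of odd order and virtually simple precisely when $H_{0}$~is finite. This settles the assertion for $\Fix{V}{\Part}$. For $\Stab{V}{\Part}$, I would argue that if $\sgp{\Part}$~is finite then, by Theorem~\ref{thm:Matui}(3), $\Fix{V}{\Part}/[\Fix{V}{\Part},\Fix{V}{\Part}]$ is finite, so the simple group $[\Fix{V}{\Part},\Fix{V}{\Part}]$ has finite index in $\Fix{V}{\Part}$ and hence in $\Stab{V}{\Part}$; being characteristic in the normal subgroup $\Fix{V}{\Part}$ it is normal in $\Stab{V}{\Part}$, so $\Stab{V}{\Part}$ is virtually simple. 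Conversely, if $\sgp{\Part}$~is infinite then $\Fix{V}{\Part}$ has infinite abelianization, hence so does every finite-index subgroup of it (by the transfer map), so $\Fix{V}{\Part}$ has no simple subgroup of finite index; since an infinite simple group has no proper subgroup of finite index, any infinite simple subgroup of $\Stab{V}{\Part}$ of finite index would lie inside $\Fix{V}{\Part}$ with finite index there, which is impossible, and $\Stab{V}{\Part}$ cannot have a finite simple subgroup of finite index since it is infinite; thus $\Stab{V}{\Part}$ is not virtually simple.

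I expect the only genuinely delicate point to be this last step: transporting simplicity and virtual simplicity between $\Fix{V}{\Part}$ and its finite-index overgroup $\Stab{V}{\Part}$, where one must be careful about the meaning of \emph{virtually simple} and use the fact that an infinite simple group has no proper subgroup of finite index. Everything else is a direct appeal to the cited results, plus the routine observations that $\Delta$ is irreducible and branching and that $\infPart$ is finite and non-empty.
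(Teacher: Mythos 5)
Your proposal is correct and follows essentially the same route as the paper, which derives the corollary in one line by combining Theorem~\ref{thm:FixIsMatui}, Theorem~\ref{thm:Matui} (with the remark following it), and the finite index of $\Fix{V}{\Part}$ in $\Stab{V}{\Part}$ coming from Lemma~\ref{lem:P*-action}. The extra details you supply (the check that the nucleus graph is irreducible and branching, and the transfer argument transporting virtual simplicity between $\Fix{V}{\Part}$ and $\Stab{V}{\Part}$) are correct elaborations of steps the paper leaves implicit.
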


\begin{example}\label{ex:HigmanThompson}
It is well-known that the class of restricted full groups $V(\Gamma,E)$ for irreducible, branching subshifts of finite type includes the Higman--Thompson groups $V_{n,r}$ as special cases.  Indeed, it is not difficult to find simple nuclear type systems whose corresponding maximal subgroups are Higman--Thompson groups.

For example, given any $n\geq 3$, let $\Part$ be the type system with types labeled $1,\ldots n-1$, where:
\begin{itemize}
    \item $(i)_0=1$ and $(i)_1=i-1$ for all $i>1$.
    \item $(1)_0=n-1$ and $(1)_1=1$.
\end{itemize}
A label diagram for the $n=5$ case is shown in Figure~\ref{fig:HigmanThompson}.  Clearly $\Part$ is nuclear, and it is not hard to check that $\Part$ is simple, so $\Stab{V}{\Part}$ is a maximal subgroup of~$V$.  If the empty word has type~$r$, where $1\leq r\leq n-1$, then it turns out that $\Stab{V}{\Part}=\Fix{V}{\Part}\cong V_{n,r}$.

\begin{figure}[hbt!]
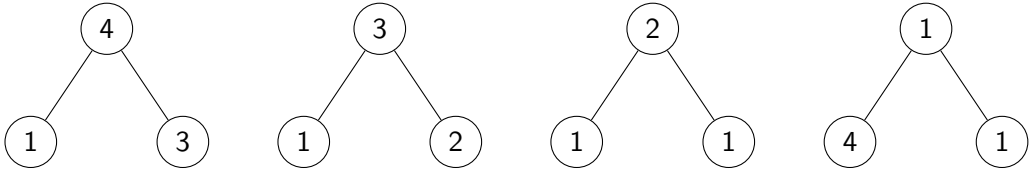

\begin{center}
  \labelcaret{4}{1}{3}\qquad
  \labelcaret{3}{1}{2}\qquad
  \labelcaret{2}{1}{1}\qquad
  \labelcaret{1}{4}{1}
\end{center}
  \caption{Label diagram for the $n=5$ case of the simple nuclear type system in Example~\ref{ex:HigmanThompson}.  Depending on the type of the empty word, the  corresponding maximal subgroup of $V$ is isomorphic to one of the Higman--Thompson groups $V_{5,4}$, $V_{5,3}$, $V_{5,2}$, or $V_{5,1}$.}
  \label{fig:HigmanThompson}
\end{figure}

One way to prove this assertion is to use a theorem of Matsumoto \cite{Matsumoto} on isomorphisms of the restricted full groups $V(\Gamma,E)$.  Note first that if $\Gamma$ is a directed graph with one vertex and $n$ loops at~$\Gamma$, then the full group $V(\Gamma)$ is isomorphic to $V_{n,1}$, and hence different clopen sets $E\subseteq X_\Gamma$ give all of the groups~$V_{n,r}$.  Matsumoto proved that if $(X_{\Gamma},\sigma)$ and $(X_\Delta,\sigma)$ are any two irreducible, branching subshifts of finite type such that $\coker(I-A_{\Gamma}) \cong \coker(I-A_\Delta)$ and $\det(I-A_\Gamma) = \det(I-A_\Delta)$, then each group $V(\Gamma,E)$ is isomorphic to $V(\Delta,E')$ for certain clopen sets~$E'$ that Matsumoto described.  If $\Gamma$ is the single-vertex graph for $V_{n,1}$ and $\Delta$ is the type graph for the type system defined above, then it is easy to check that $\coker(I-A_{\Gamma}) \cong \coker(I-A_\Delta) \cong \Zint_{n-1}$ and $\det(I-A_\Gamma) = \det(I-A_\Delta) = -(n-1)$, and applying Matsumoto's criterion for the appropriate choice of $E'$ yields the desired result.
\end{example}

\begin{example}
\label{ex:SimpleMaximal}
Here we give an example of a finitely presented, simple, maximal subgroup of~$V$ which is not isomorphic to the derived subgroup of any of the Higman--Thompson groups~$V_{n,r}$. Note that it is also not isomorphic to $T$, since it contains the symmetric group~$S_{n}$ for every $n$.

Let $\Part$~be the type system determined by the label diagram in Figure~\ref{fig:SimpleMaximal}, where the empty word has label~$\mathsf{A}$.  Note that Type~$\mathsf{A}$ is the only type that occurs exactly twice as a child of a type, so any symmetry of this type system must fix $\mathsf{A}$. It follows that this type system has no non-trivial symmetries, and therefore $\Fix{V}{\mathcal{P}} = \Stab{V}{\mathcal{P}}$. A straightforward exhaustive check shows that $\Part$ is simple, so $\Fix{V}{\Part}$ is a maximal subgroup of~$V$ by Theorem~\ref{thm:branchtype}.  The corresponding group
\[
\sgp{\Part} = \presentation{ A,B,C,D,E }{ A=B+C, \; B=D+C, \; C=B+E, \; D=C+A, \; E=A+B }
\]
is isomorphic to $\Zint_{3} \oplus \Zint_{3}$, so $\Fix{V}{\Part}$ is a simple group of type~$\mathrm{F}_\infty$ by Corollary~\ref{cor:ConclusionsFromMatui}\ref{i:Fix-simple}.  It follows from Theorem~3.10 in Matui~\cite{Matui} that if two groups $V(\Gamma,E)$ and $V(\Gamma',E')$ have isomorphic derived subgroups then their corresponding $H_{0}$~groups must be isomorphic as well.  Since the $H_{0}$~groups for the Higman--Thompson groups $V_{n,r}$ are cyclic of order~$n-1$ (since the corresponding $\Gamma$ has one vertex with $n$ loops, with different values of $r$ corresponding to different clopen sets $E\subseteq X_\Gamma$), it follows that $\Fix{V}{\Part}$ is not isomorphic to the derived subgroup $[V_{n,r},V_{n,r}]$ of any of the Higman--Thompson groups~$V_{n,r}$.

\begin{figure}[hbt!]
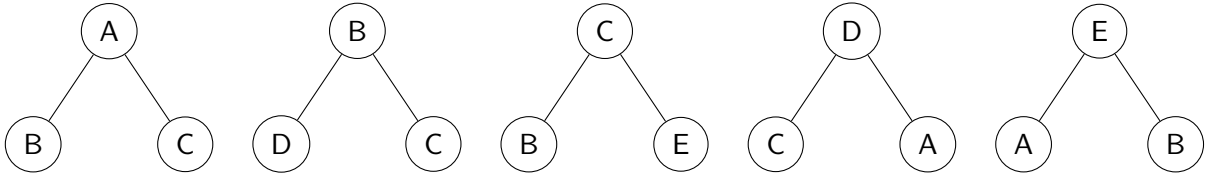
 
\begin{center}
  \labelcaret{A}{B}{C}\quad
  \labelcaret{B}{D}{C}\quad
  \labelcaret{C}{B}{E}\quad
  \labelcaret{D}{C}{A}\quad
  \labelcaret{E}{A}{B}
\end{center}
  \caption{Label diagram for the simple nuclear type system in Example~\ref{ex:SimpleMaximal}.  The corresponding maximal subgroup of $V$ is finitely presented and simple but is not isomorphic to the derived subgroup of any Higman--Thompson group~$V_{n,r}$.}
  \label{fig:SimpleMaximal}
\end{figure}

\end{example}

\begin{example}
\label{ex:InfiniteAbelianization}
Let $\Part$~be the type system determined by the label diagram in Figure~\ref{fig:InfiniteAbelianization}, where the empty word has label~$\mathsf{A}$.  Again, it is easy to observe that $\Part$~is a simple nuclear type system and so $\Fix{V}{\Part}$~is a maximal subgroup of~$V$.  In this case, the group
\[
\sgp{\Part} = \presentation{ A,B,C }{ A=A+B, \; B=A+C, \; C=B+C }
\]
is infinite cyclic, so the abelianization of~$\Fix{V}{\Part}$ is isomorphic to $\Zint_{2} \oplus \Zint$.  In particular, $\Fix{V}{\Part}$~is not virtually simple, though by Theorem~\ref{thm:Matui} its derived subgroup is simple and finitely generated.

\begin{figure}[hbt!]
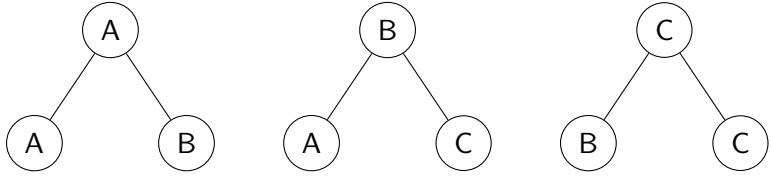
 
\begin{center}
  \labelcaret{A}{A}{B}\qquad
  \labelcaret{B}{A}{C}\qquad
  \labelcaret{C}{B}{C}
\end{center}
  \caption{Label diagram for the simple nuclear type system in Example~\ref{ex:InfiniteAbelianization}.  The corresponding maximal subgroup of $V$ has a simple derived subgroup of infinite index.}
  \label{fig:InfiniteAbelianization}
\end{figure}

\end{example}

\section{Uncountably many distinct maximal subgroups.}\label{sec:uncountIso}

In this section, we shall show that uncountable many maximal subgroups arise as the stabilizers of simple nuclear (albeit infinite) type systems, and with in fact uncountably many isomorphism types of these groups being represented. Furthermore, these constructed groups are not found as point stabilizers (e.g., as in \cite{Savchuk}); they do not even stabilize any finite set of points.  

\subsection{Uncountably many isomorphism types}

An essential ingredient for recognising when two simple multinuclear type systems $\Part_1$ and $\Part_2$ on~$\AddrSet$ induce isomorphic groups $\Fix{V}{\Part_1}$ and $\Fix{V}{\Part_2}$ is Rubin's Theorem~\cite{Rubin}.  We provide some background to this famous theorem and explain how it applies in our context.  We follow the presentation of Rubin's Theorem as it appears in~\cite{BEM}.

Let $X$ be a topological space and $G$~be a group acting on $X$.  For $g\in G$ define the \emph{support of~$g$} to be the set $\supt{g} = \set{x\in X}{xg\neq x}$.
If the action of $G$ on $X$ is faithful we say the action is a \emph{Rubin action} if
\begin{enumerate}[label=(R\arabic*)]
\item \label{pt:goodSpace} $X$ is locally compact, Hausdorff, and has no isolated points, and
\item \label{pt:locDense} for each open set $U \subseteq X$ and each $p \in U$, the closure of the orbit of $p$ under the group $G_U = \set{g \in G }{ \supt{g} \subseteq U}$ contains a neighbourhood of $p$.
\end{enumerate}
If the action satisfies Condition~\ref{pt:locDense} we say the action is \emph{locally dense} following the language of Brin from \cite{brinHigherV}.  This condition is equivalent to Rubin's original condition that no point of $U$ has a nowhere dense orbit under the action of $G_U$.

In any case, we can now state the formulation of Rubin's Theorem from \cite{BEM}.

\begin{thm}[Rubin's Theorem]
If a group $G$ has a Rubin action on two spaces $X$ and $Y$, then there is a $G$-equivariant homeomorphism $\phi:X\to Y$.
\end{thm}

We begin with a foundational observation.

\begin{lemma}
\label{lem:FixMultinuclearRubin}
If $\Part$ is a multinuclear type system on~$\AddrSet$ then $\Fix{V}{\Part}$ acts on $\Cant$ with a Rubin action.
\end{lemma}

\begin{proof}
Firstly, note that Cantor space satisfies the conditions of Condition~\ref{pt:goodSpace} in the definition of a Rubin action.

Secondly, we now show that the action of $G = \Fix{V}{\Part}$ satisfies Condition~\ref{pt:locDense} of the definition of a Rubin action.  Let $U$~be an open neighbourhood of some point $u \in \Cant$.  Let $\alpha$~be a sufficiently long prefix of~$u$ such that $\length{\alpha} \geq \sd{\Part}$ and $\cone{\alpha} \subseteq U$.  Then $\alpha$~has type in some nucleus~$\Part[Q]$ of~$\Part$.  Let $w \in \cone{\alpha}$ with $w \neq u$ and let $N$~be an arbitrary positive integer.  Take $\zeta$~to be a sufficiently long prefix of~$w$ such that $\alpha \prefix \zeta$, \ $\zeta$~is not a prefix of~$u$ and $\length{\zeta} \geq N$.  Then let $\beta$~be such that $\alpha \prefix \beta \prefix u$ with $\beta \perp \zeta$.  As both addresses $\beta$~and~$\zeta$ have $\alpha$~as prefix, they must both have type in the same nucleus~$\Part[Q]$.  Hence there exists a word~$\gamma$ such that $\zeta\gamma$~has the same type as~$\beta$.  Then $g = \swap{\beta}{\zeta\gamma} \in G$ and, as both addresses have $\alpha$~as a prefix, $g \in G_{\cone{\alpha}} \subseteq G_{U}$.  The image of~$u$ under~$g$ has~$\zeta$ as a prefix and thus the orbit of~$u$ under~$G_{U}$ contains an element that shares a prefix of length~$N$ with~$w$.  Since $N$~is arbitrary, we conclude that $w$~lies in the closure of the orbit of~$u$ under~$G_{U}$.  Hence the orbit of~$u$ is dense in~$\cone{\alpha}$.  This demonstrates that $G = \Fix{V}{\Part}$ has a Rubin action on~$\Cant$.
\end{proof}

\begin{cor}\label{cor:MultinuclearStabIsRubin}
If $\Part$ is a multinuclear type system on~$\AddrSet$ then the action of~$\Stab{V}{\Part}$ on~$\Cant$ is a Rubin action that stabilizes no non-empty finite subset of~$\Cant$.
\end{cor}

\begin{proof}
Since $\Fix{V}{\Part} \leq \Stab{V}{\Part}$, it follows by Lemma~\ref{lem:FixMultinuclearRubin} that $\Stab{V}{\Part}$ also has a Rubin action on~$\Cant$.  We then deduce immediately from Condition~\ref{pt:locDense} that no non-empty finite subset of~$\Cant$ is stabilized by~$\Stab{V}{\Part}$.
\end{proof}

We shall employ Rubin's theorem to separate isomorphism types of stabilizers of simple nuclear type systems, and show by construction that there are uncountably many such isomorphism types for such stabilizers. To do this, we note that the semigroups associated to such partitions are invariants of conjugacy under homeomorphisms of~$\Cant$.

\begin{lemma}
\label{lem:nonIsomorphicStabs}
Let $\Part$~and~$\Part[Q]$ be multinuclear type systems on~$\AddrSet$.  If $\Fix{V}{\Part}\cong \Fix{V}{\Part[Q]}$, then $\sgp{\Part}\cong \sgp{\Part[Q]}$.
\end{lemma}

%

\begin{proof}
First observe that Lemma~\ref{lem:FixMultinuclearRubin} tells us that the groups $\Fix{V}{\Part}$ and~$\Fix{V}{\Part[Q]}$ have Rubin actions on~$\Cant$.  Hence, if $\theta \colon \Fix{V}{\Part} \to \Fix{V}{\Part[Q]}$ is an isomorphism, then there is a homeomorphism $\phi \colon \Cant \to \Cant$ such that $g\theta = \phi^{-1}g\phi$ for all $g \in \Fix{V}{\Part}$.  We shall use~$\phi$ to construct an isomorphism between the semigroups $\sgp{\Part}$ and~$\sgp{\Part[Q]}$.

Let $U_{1}$~and~$U_{2}$ be non-empty proper clopen subsets of~$\Cant$ with the same s-type in~$\sgp{\Part}$.  Hence, by Lemma~\ref{lem:FixTransitiveProperClopeninSType}, there exists $g \in \Fix{V}{\Part}$ such that $U_{1}g = U_{2}$.  Thus $(U_{1}\phi)(g\theta) = U_{2}\phi$.  Express $U_{1}\phi = \bigcup_{i=1}^{n} \cone{\gamma_{i}}$ as a disjoint union of cones in~$\Cant$.  Since $g\theta \in \Fix{V}{\Part[Q]}$, we may refine the addresses~$\gamma_{i}$ and so assume that each~$\gamma_{i}$ is sufficiently long that $\gamma_{i}^{g\theta} \sim \gamma_{i}$.  Hence
\[
\stypep{\Part[Q]}{U_{2}\phi} = \operatorname{s-type}_{\Part[Q]} \biggl( \bigcup_{i=1}^{n} \cone{\gamma_{i}^{g\theta} } \biggr) = \stypep{\Part[Q]}{U_{1}\phi}
\]
since the addresses $\gamma_{i}$~and~$\gamma_{i}^{g\psi}$ have the same type in~$\Part[Q]$.

As observed in Definition~\ref{def:s-type}, there is a homomorphisms $\psi \colon \sgp{\Cant} \to \sgp{\Part}$ that maps the characteristic function~$\chi_{U}$ of each non-empty clopen subset~$U$ to its s-type in the semigroup~$\sgp{\Part}$.  Furthermore, if $\alpha$~is any address, then there is a disjoint union decomposition $(\cone{\alpha})\phi = (\cone{\alpha0})\phi \cup (\cone{\alpha1})\phi$ and hence $\chi_{(\cone{\alpha})\phi} = \chi_{(\cone{\alpha0})\phi} + \chi_{(\cone{\alpha1})\phi}$.  Therefore $\stypep{\Part[Q]}{(\cone{\alpha})\phi} = \stypep{\Part[Q]}{(\cone{\alpha0})\phi} + \stypep{\Part[Q]}{(\cone{\alpha1})\phi}$.  This shows that the map $\chi_{\cone{\alpha}} \mapsto \stypep{\Part[Q]}{(\cone{\alpha})\phi}$ preserves the defining relations listed in Lemma~\ref{lem:S(C)-pres} and hence induces a homomorphism $\sgp{\Cant} \to \sgp{\Part[Q]}$.  Putting these two homomorphism together with the observation made in the previous paragraph, we deduce that there is an induced homomorphism $\theta_{\ast} \colon \sgp{\Part} \to \sgp{\Part[Q]}$ given by $\stypep{\Part}{U} \mapsto \stypep{\Part[Q]}{U}$ for each non-empty clopen subset~$U$ of~$\Cant$.  Finally, the inverse $\theta^{-1} \colon \Fix{V}{\Part[Q]} \to \Fix{V}{\Part}$ corresponds to the homeomorphism~$\phi^{-1}$ and hence the corresponding induced semigroup homomorphism $(\theta^{-1})_{\ast} \colon \sgp{\Part[Q]} \to \sgp{\Part}$ is given by $\stypep{\Part[Q]}{U} \mapsto \stypep{\Part}{U\phi^{-1}}$ when $U$~is a non-empty proper clopen subset of~$\Cant$.  Consequently, $(\theta^{-1})_{\ast}$~is the inverse of~$\theta_{\ast}$ and we conclude that $\theta_{\ast}$~is an isomorphism between the semigroups $\sgp{\Part}$~and~$\sgp{\Part[Q]}$.
\end{proof}

\UncountablyManyIso

\begin{proof}
Let $\{p_n\}$ be a strictly increasing sequence of prime numbers.  We shall construct a simple 
nuclear type system $\Part$ such that $\Stab{V}{\Part}=\Fix{V}{\Part}$ and $\sgp{\Part}\cong 
\Zint\oplus \bigoplus_{n\geq 1}\Zint/p_n\Zint$.  By Theorem~\ref{thm:branchtype} the corresponding 
stabilizer subgroups are maximal in~$V$. Denote by $\mathcal{M}$ the set of maximal 
subgroups corresponding to different increasing sequences of primes.  By construction $\mathcal{M}$ is uncountable, and by Lemma~\ref{lem:nonIsomorphicStabs} the elements of $\mathcal{M}$ are pairwise non-isomorphic.  Finally, observe that for a given such system $\Part$, by 
Corollary \ref{cor:MultinuclearStabIsRubin}, the group $\Stab{V}{\Part}$ does not stabilize any finite set of points.

The type system $\Part$ has types $X$, $Y$, and $Z^{(n,k)}$ for $n \geq 0$ and $0 \leq k \leq p_{n+1}$, which obey the following rules:
\begin{itemize}
    \item The empty word $\emptyword$ has type $X$;
    \item $X_0=Z^{(0,0)}$ and $X_1=Y$;
    \item $Y_0 = X$ and $Y_1=Y$;
    \item $\bigl(Z^{(n-1,k)}\bigr)_{0} = Z^{(n,0)}$ and $\bigl(Z^{(n-1,k)}\bigr)_{1} = Z^{(n-1,k+1)}$ for $0\leq k\leq p_{n}-1$;
    \item $\bigl(Z^{(n-1,p_{n})}\bigr)_{0} = X$ and $\bigl(Z^{(n-1,p_{n})}\bigr)_{1}=Z^{(n-1,0)}$.
\end{itemize}
Note that these rules uniquely define a type system $\Part$ and that $\Part$ is nuclear since every type has type $X$ as a descendant. 
 From this it also follows that for any two incomparable addresses $\alpha,\beta\in \AddrSet$ there are words $\gamma,\delta\in\AddrSet$ so that $\swap{\alpha\gamma}{\beta\delta}\in  \Fix{V}{\Part}$ and thus we see that $\Stab{V}{\Part}$ does not preserve any circular order on a dense set of points in $\Cant$.    
 
 The semigroup $\sgp{\Part}$ has presentation
\[
\presentation{ x,y,z^{(0)},z^{(1)},\ldots }{ x=z^{(0)}+y,\; y=x+y,\; \text{$z^{(n-1)} = p_n z^{(n)}+x+z^{(n-1)}$ for $n\geq 1$} }
\]
where the generator~$z^{(n)}$ corresponds to the type~$Z^{(n,0)}$ for each $n \geq 1$.
Since $\Part$ is nuclear, we know from Lemma~\ref{lem:multinuke-sgp} that $\sgp{\Part}$ is an abelian group.  The relation $y=x+y$ implies that $x$ is the identity element, so the presentation simplifies to
\[
\presentation{ y,z^{(0)},z^{(1)},\ldots }{ z^{(0)} = -y,\; \text{$p_n z^{(n)}=0$ for $n\geq 1$} },
\]
which is a presentation for $\Zint\oplus \bigoplus_{n\geq 1}\Zint/p_n\Zint$.

By Lemma \ref{lem:P*-action}, $\Stab{V}{\Part}$~acts on the set $\infPart$ of infinite types (which in this case is the set of types as all types in~$\Part$ are infinite) with the kernel of this action being~$\Fix{V}{\Part}$.  However, any such permutation of types must respect the cycle structure of the type-graph, which immediately forces the type~$Y$ to map to itself (as the only type with a loop on it) and then an induction argument shows all types are fixed.  Hence $\Stab{V}{\Part} = \Fix{V}{\Part}$.

All that remains is to prove that $\Part$ is simple.  Suppose $\Part[Q]$ is a proper quotient of $\Part$ and let $\approx$ be the associated equivalence relation on~$\Part$.  We must prove that $\approx$ is the universal relation.  Since $\Part[Q]$ is a proper quotient, we know that $\approx$ identifies at least two distinct types in~$\Part$.

Suppose first that $X\approx Z^{(0,0)}$.  Observe that if $X \approx Z^{(n-1,0)}$ for some~$n$ then $X \approx Z^{(0,0)} = X_{0} \approx (Z^{n-1,0})_{0} = Z^{(n,0)}$.  Hence $X \approx Z^{(n,0)}$ for all $n \geq 0$.  Then $Z^{(0,0)} = \bigl(Z^{(0,0)}\bigr)_{1^{p_{1}+1}} \approx X_{1^{p_{1}+1}} = Y$ and consequently $Z^{(n,k)} = \bigl( Z^{(n,0)} \bigr)_{1^{k}} \approx Y_{1^{k}} = Y$ for $1 \leq k \leq p_{n+1}$.  This shows that $\approx$~is the universal relation in this case.

Now if $X \approx Y$ then $Z^{(0,0)}=X_0 \approx Y_0 = X$ and we are done by the previous paragraph.  If $X \approx Z^{(n-1,k)}$ for some $n \geq 1$ and $0 \leq k \leq p_{n}$ then $X \approx Z^{(n-1,k)} = \bigl( Z^{(n-1,k)} \bigr)_{1^{p_{n}+1}} \approx X_{1^{p_{n}+1}} = Y$ and again we are done.  In conclusion, if $X$~is identified with any other type then the resulting relation~$\approx$ is universal.

Now, since $Y_{0} = Y_{10} = X$ and each type $T \neq Y$ has the property that either $T_{0} \neq X$ or $T_{10} \neq X$, identifying~$Y$ with any other type also identifies~$X$ with another type, thus yielding the universal relation.

Finally, if $Z' = Z^{(m,j)}$ and $Z''= Z^{(n,k)}$ are distinct then there exists~$\ell$ such that $(Z')_{1^{\ell}0} = Z^{(m+1,0)}$ and $(Z'')_{1^{\ell}0} = X$.  When $m \neq n$, this is established using the fact that $p_{m}$~and~$p_{n}$ are distinct.  Consequently, $Z'\approx Z''$ implies $Z^{(m+1,0)}\approx X$ and this completes the verification that $\Part$~is simple and also establishes the theorem.
\end{proof}

\section{Conditions which prevent maximality}
\label{sec:nonproper}

In the interest of better understanding maximal subgroups of Thompson's group~$V$, it is useful to be able to determine when a subgroup is in fact a proper subgroup.  In view of the results of previous sections, we are particularly interested in the case of subgroups that do not stabilize a non-trivial type system~$\Part$; that is, in some sense are ``primitive'' in their partial action on the addresses in~$\AddrSet$.  In this section, we study two conditions that are related to this primitivity and we discover that any subgroup satisfying either of these conditions is actually~$V$.

We start by proving the surprising fact that $V$~has no proper subgroups whose partial action on~$\AddrSet$ is two-fold transitive.  Recall first that for an address~$\alpha$ in~$\AddrSet$ and an element~$g$ in~$V$, we say that $\alpha^{g}=\beta \in \AddrSet$ if $\cone{\alpha}$~is taken to~$\cone{\beta}$ under~$g$ via a prefix replacement.  Otherwise we say $\alpha^{g}$~is undefined.  This definition extends naturally, for any positive integer~$k$, to the set
\[
\smalladdr{k} = \biggl\{ (\alpha_{1},\alpha_{2},\dots,\alpha_{k}) \in \AddrSet^{k} \biggm| \text{$\cone{\alpha_{i}} \cap \cone{\alpha_{j}} = \emptyset$ for all $i \neq j$ and $\bigcup_{i=1}^{k} \cone{\alpha_{i}} \neq \Cant$} \biggr\}.
\]
Indeed, we define
\[
(\alpha_{1},\alpha_{2},\dots,\alpha_{k})^{g} = (\beta_{1},\beta_{2},\dots,\beta_{k})
\]
if $\alpha_{i}^{g} = \beta_{i}$ for all~$i$.  Otherwise, we say $(\alpha_{1},\alpha_{2},\dots,\alpha_{k})^{g}$ is undefined.

\paragraph{Remark}
When $k > 1$, it will often be the case that for a given~$g$ there will be an infinite set of sequences $(\alpha_{1},\alpha_{2},\dots,\alpha_{k})$ for which $(\alpha_{1},\alpha_{2},\dots,\alpha_{k})^{g}$ is undefined.  Nevertheless, we still refer to this as a partial action.

\begin{defn}
A subgroup~$G$ of~$V$ will be said to act \emph{$k$\nbd fold transitively on proper cones} when the induced partial action of~$G$ on~$\smalladdr{k}$ is
transitive; that is, if for all $(\alpha_{1},\alpha_{2},\dots,\alpha_{k})$, $(\beta_{1},\beta_{2},\dots,\beta_{k}) \in \smalladdr{k}$, there exists $g \in G$ such that $\alpha_{i}^{g} = \beta_{i}$ for $i = 1$,~$2$, \dots,~$k$.
\end{defn}

\begin{lemma}\label{lem:trick}
Let $G$~be a subgroup of~$V$ that acts $2$\nbd fold transitively on proper cones.  Let $g$~be an element of~$G$ such that $\cone{0}$~is invariant under~$g$.  Then there exists a $h \in G$ that agrees with~$g$ on~$\cone{0}$ and is the identity on~$\cone{10}$.
\end{lemma}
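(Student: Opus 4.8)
The plan is to reduce, at the cost of pre-composing $g$ with a suitable element of $G$ that fixes $\cone{0}$ pointwise, to the situation in which $g$ acts on $\cone{10}$ by a single prefix substitution, and then to apply the $2$\nbd fold transitivity hypothesis twice. First note that since $\cone{0}$ is $g$\nbd invariant and $g$ is a bijection of $\Cant$, the cone $\cone{1}$ is $g$\nbd invariant as well. If $g$ already restricts to the identity on $\cone{10}$, then $h=g$ works; so assume otherwise. Then $g\neq\mathrm{id}$, so every domain cone of $g$ has length at least $1$ and hence lies inside $\cone{0}$ or inside $\cone{1}$; consequently $\cone{1}$ is the disjoint union of those domain cones of $g$ that it contains. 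Since $g$ is not the identity on $\cone{1}\supseteq\cone{10}$, there is more than one such cone, so we may choose a domain cone $\cone{\mu}\subsetneq\cone{1}$ of $g$, on which $g$ acts by a prefix substitution $\mu\mapsto\nu$; note that $\cone{\nu}=(\cone{\mu})^{g}\subsetneq\cone{1}$.

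Next I would apply $2$\nbd fold transitivity on proper cones to the pairs $(0,10)$ and $(0,\mu)$ --- both lie in $\smalladdr{2}$, since in each pair the two cones are disjoint and, because $\cone{\mu}\subsetneq\cone{1}$, do not cover $\Cant$ --- to obtain $c\in G$ with $0^{c}=0$ and $10^{c}=\mu$. The relation $0^{c}=0$ forces $c$ to be the identity on $\cone{0}$, so $g_{1}:=cg$ lies in $G$, keeps $\cone{0}$ invariant, agrees with $g$ on $\cone{0}$, and on $\cone{10}$ is the single prefix substitution $10\mapsto\nu$ (for $w\in\Cant$, $(10w)^{g_{1}}=(\mu w)^{g}=\nu w$). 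A second application of $2$\nbd fold transitivity, this time to the pairs $(0,\nu)$ and $(0,10)$ (again in $\smalladdr{2}$ because $\cone{\nu}\subsetneq\cone{1}$), produces $k\in G$ with $0^{k}=0$ and $\nu^{k}=10$; in particular $k$ is the identity on $\cone{0}$. Set $h:=g_{1}k\in G$. Since $k$ fixes $\cone{0}$ pointwise and $g_{1}$ agrees with $g$ on $\cone{0}$, so does $h$; and for every $w\in\Cant$ we have $(10w)^{h}=\bigl((10w)^{g_{1}}\bigr)^{k}=(\nu w)^{k}=10w$, so $h$ restricts to the identity on $\cone{10}$, as required.

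The main obstacle --- and the reason for the two-step structure --- is that $2$\nbd fold transitivity only lets one prescribe the image of a single pair of cones, whereas in general $g$ carries $\cone{10}$ onto a clopen set that is a union of several cones, so one cannot "undo $g$ on $\cone{10}$'' in a single move. Pre-composing with $c$ collapses the behaviour of $g$ on $\cone{10}$ to a single prefix substitution, after which one transposition-like move suffices. One should also check, at each invocation of the hypothesis, that the tuples of cones involved genuinely have small support, which is precisely why the argument is arranged so that $\cone{\mu}$ and $\cone{\nu}$ are proper subcones of $\cone{1}$ rather than equal to it.
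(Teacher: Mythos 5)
Your proof is correct and follows essentially the same route as the paper's: choose an address $\mu$ with $\cone{\mu}$ a proper subcone of $\cone{1}$ on which $g$ acts by a prefix substitution $\mu\mapsto\nu$, then sandwich $g$ between two elements obtained from $2$\nbd fold transitivity that fix $\cone{0}$ pointwise and carry $10$ to $\mu$ and $\nu$ to $10$, yielding $h=cgk$. The extra care you take (handling the trivial case and locating $\mu$ among the domain cones of $g$) just makes explicit the existence claim the paper states more briefly.
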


\begin{proof}
Let $\alpha$~be any address such that $\cone{\alpha}$~is a proper subcone of~$\cone{1}$ and such that $\alpha^{g}$~is defined.  Then $\alpha^{g} = \beta$ for some address~$\beta$ and consequently $\cone{\beta}$~is also a proper subcone of~$\cone{1}$.  Since $G$~is $2$\nbd fold transitive, there exist elements $k, k' \in G$ such that
\[
0^{k} = 0, \quad (10)^{k} = \alpha, \quad 0^{k'} = 0, \quad \beta^{k'} = 10.
\]
Then $h = kgk'$ agrees with~$g$ on~$\cone{0}$ and $(10)^{h} = 10$.
\end{proof}

\begin{lemma}
\label{lem:doubletransposition}
Let $G$~be a subgroup of~$V$ that acts $2$\nbd fold transitively on proper cones.  Then $\swap{\alpha0}{\alpha1} \, \swap{\beta0}{\beta1} \in G$ for all $(\alpha,\beta) \in \smalladdr{2}$.
\end{lemma}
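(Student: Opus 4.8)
The plan is to use $2$\nbd fold transitivity to reduce to one convenient pair, and then to exhibit the required double transposition as a product $ab$ of two elements of~$G$ whose uncontrolled behaviour on a complementary cone is arranged to cancel. Since $(0,10)\in\smalladdr{2}$ and $(\alpha,\beta)\in\smalladdr{2}$, $2$\nbd fold transitivity supplies $g\in G$ with $0^{g}=\alpha$ and $(10)^{g}=\beta$; then $(00)^{g}=\alpha0$, $(01)^{g}=\alpha1$, $(100)^{g}=\beta0$ and $(101)^{g}=\beta1$, so $\swap{\alpha0}{\alpha1}\,\swap{\beta0}{\beta1}$ is the conjugate by~$g$ of $\tau:=\swap{00}{01}\,\swap{100}{101}$. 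Hence it suffices to prove $\tau\in G$.

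To build $\tau$, I would first use $2$\nbd fold transitivity to obtain $g_{0}\in G$ with $(00)^{g_{0}}=01$ and $(01)^{g_{0}}=00$; then $\cone{0}$ is invariant under~$g_{0}$, which acts on it exactly as $\swap{00}{01}$, so Lemma~\ref{lem:trick} yields $a\in G$ agreeing with $\swap{00}{01}$ on $\cone{0}$ and equal to the identity on $\cone{10}$. This $a$ then preserves $\cone{11}$; call the homeomorphism it induces there $\phi$ — this is the part we cannot control. Secondly, since $(10,11),(0,11)\in\smalladdr{2}$, another application of $2$\nbd fold transitivity gives $c\in G$ with $(10)^{c}=0$ and $(11)^{c}=11$. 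Here $(11)^{c}=11$ says precisely that $c$ is the identity on $\cone{11}$, and $(10)^{c}=0$ says that $c$ carries $\cone{10}$ onto $\cone{0}$ by the clean prefix replacement $10\mapsto0$ (hence $\cone{100},\cone{101}$ to $\cone{00},\cone{01}$).

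Next I would set $b=c^{-1}a^{-1}c\in G$ and check its action cone by cone: on $\cone{11}$ it restricts to $\phi^{-1}$ (as $c$ fixes $\cone{11}$ pointwise); on $\cone{0}$ it is the identity (as $c$ sends $\cone{0}$ into $\cone{10}$, where $a^{-1}$ is trivial); and on $\cone{10}$ it equals $\swap{100}{101}$ (conjugating $a^{-1}\mathclose{}|_{\cone{0}}=\swap{00}{01}$ by the clean map $c\colon\cone{10}\to\cone{0}$). Composing, $ab$ acts as $\swap{00}{01}$ on $\cone{0}$, as $\swap{100}{101}$ on $\cone{10}$, and trivially on $\cone{11}$, i.e.\ $ab=\tau$; since $a,b\in G$ this gives $\tau\in G$, and the reduction above finishes the proof.

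I expect the main obstacle to be exactly this cancellation: because $a$ is uncontrolled on $\cone{11}$, the two transpositions making up $\tau$ cannot be produced independently, and one must engineer the $\cone{11}$\nbd contributions to be mutually inverse. Taking $c$ with $(11)^{c}=11$ makes $c$ \emph{literally} the identity on $\cone{11}$, so conjugation by $c$ leaves the $\cone{11}$\nbd part of $a^{-1}$ untouched; and insisting that $(10)^{c}=0$ hold via a clean prefix replacement (rather than merely $\cone{10}^{c}\subseteq\cone{0}$) is what makes $b$ equal to $\swap{100}{101}$ on the nose. Both demands are simultaneously achievable precisely because $2$\nbd fold transitivity lets us prescribe the images of two cones at once.
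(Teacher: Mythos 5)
Your argument is correct, and it reaches the key element by a somewhat different mechanism than the paper. Both proofs start identically: use $2$\nbd fold transitivity together with Lemma~\ref{lem:trick} to manufacture an element $a\in G$ agreeing with $\swap{00}{01}$ on $\cone{0}$ and equal to the identity on $\cone{10}$, and both finish by conjugating a single ``model'' double transposition to an arbitrary pair in~$\smalladdr{2}$. The difference is how the uncontrolled behaviour on $\cone{11}$ is killed. The paper runs the Lemma~\ref{lem:trick} construction a second time (after conjugating into~$\cone{0}$) to obtain a second element whose junk is supported on $\cone{10}$ rather than $\cone{11}$, and then takes a commutator, which is the identity on all of $\cone{1}$ and equals $\swap{000}{001}\,\swap{010}{011}$ on~$\cone{0}$; its model pair is $(00,01)$. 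You instead transport the controlled part of~$a$ from $\cone{0}$ to $\cone{10}$ by conjugating with an element~$c$ that fixes $\cone{11}$ pointwise and maps $\cone{10}$ onto $\cone{0}$ by clean prefix replacement, so the junk contributions are exactly $\phi$ and $\phi^{-1}$ and cancel in the product; your model pair is $(0,10)$, only one application of Lemma~\ref{lem:trick} is needed, and no commutator identity is used. Your route is a little shorter; the paper's keeps the model double transposition inside the single cone~$\cone{0}$, which is the shape its final conjugation step uses.

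One small notational point, not a gap: the paper uses right actions, with $x^{gh}=(x^{g})^{h}$, and under that convention the element you describe cone by cone is $b=c\,a^{-1}c^{-1}$ rather than $c^{-1}a^{-1}c$ (your displayed formula matches your verification only if the product is read as ordinary function composition, rightmost factor applied first). Since $a,c\in G$, the intended element lies in~$G$ either way, and your cone-by-cone check of it --- identity on $\cone{0}$, equal to $\swap{100}{101}$ on $\cone{10}$, equal to $\phi^{-1}$ on $\cone{11}$ --- is correct, so the proof goes through after this relabelling.
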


\begin{proof}
We shall first show that $\swap{000}{001} \, \swap{010}{011}$ is an element of~$G$.  Since $G$~acts $2$\nbd fold transitively on proper cones, there exists an element of~$G$ that agrees with~$\swap{00}{01}$ on~$\cone{0}$.  By Lemma~\ref{lem:trick}, there exists an element $h \in G$ that agrees with~$\swap{00}{01}$ on $\cone{0} \cup \cone{10}$.  Since $G$~acts $2$\nbd fold transitively on proper cones, there exists an element $k_{1} \in G$ such that $0^{k_{1}} = 00$ and $(10)^{k_{1}} = 01$.  Then $k_{1}^{-1}hk_{1}$~agrees with~$\swap{000}{001}$ on~$\cone{0}$.  Again by Lemma~\ref{lem:trick} there exists $h' \in G$ that agrees with~$\swap{000}{001}$ on $\cone{0} \cup \cone{10}$.  There is also some $k_{2} \in G$ such that $0^{k_{2}} = 0$ and $(10)^{k_{2}} = 11$.  Then $g = k_{2}^{-1}h'k_{2}$ agrees with~$\swap{000}{001}$ on $\cone{0} \cup \cone{11}$.  Since $h$~is the identity on~$\cone{10}$ and $g$~is the identity on~$\cone{11}$, the commutator~$[h,g]$ is the identity on~$\cone{1}$, so
\[
[h,g] = [ \swap{00}{01}, \swap{000}{001} ] = \swap{000}{001} \, \swap{010}{011}.
\]
This completes the first step of the proof.  Now if $\alpha$~and~$\beta$ are any incomparable addresses of small support, there exists~$g' \in G$ such that $(00)^{g'} = \alpha$ and $(01)^{g'} = \beta$.  Then $G$~contains
\[
\bigl( \swap{000}{001} \, \swap{010}{011} \bigr)^{g'} = \swap{\alpha0}{\alpha1} \, \swap{\beta0}{\beta1},
\]
as claimed.
\end{proof}

\TwoFoldTrans

\begin{proof}
Let $\alpha$,~$\beta$ and~$\gamma$ be three incomparable addresses with small support.  Since $G$~acts $2$\nbd fold transitively on proper cones, there exists $g \in G$ such that $(00)^{g} = \alpha$ and $(01)^{g} = \beta$.  Similarly there exists $h \in G$ such that $(00)^{h} = \beta$ and $(01)^{h} = \gamma$.  Since $\cone{\alpha} \cup \cone{\beta} \cup \cone{\gamma} \neq \Cant$, there is an address~$\delta$ that is incomparable with each of $\alpha$,~$\beta$ and~$\gamma$ and such that $\zeta = \delta^{g^{-1}}$ and $\eta = \delta^{h^{-1}}$ are both defined.  As $\cone{\delta}$~is disjoint from $\cone{\alpha} \cup \cone{\beta}$, it follows that $\cone{\zeta} = (\cone{\delta})^{g^{-1}}$ is disjoint from $(\cone{\alpha} \cup \cone{\beta})^{g^{-1}} = \cone{0}$.  Similarly $\cone{\eta}$~is disjoint from~$\cone{0}$.
  
Now use Lemma~\ref{lem:doubletransposition} to tell us that $\swap{00}{01} \, \swap{\zeta0}{\zeta1}$ and $\swap{00}{01} \, \swap{\eta0}{\eta1}$ are elements of~$G$.  We conjugate the first by~$g$ and the second by~$h$ to deduce that $G$~contains
\[
\swap{\alpha}{\beta} \, \swap{\delta0}{\delta1}
\AND
\swap{\beta}{\gamma} \, \swap{\delta0}{\delta1}.
\]
We conclude that the product of these two elements, that is, the element~$\threecycle{\alpha}{\gamma}{\beta}$ also belongs to~$G$ for all choices of incomparable addresses $\alpha$,~$\beta$ and~$\gamma$ of small support.

Now let  $\alpha$~and~$\beta$ are two incomparable addresses with with small support.  Using what has just been established we deduce that $G$~contains the two elements $\threecycle{\alpha0}{\beta0}{\alpha1}$ and $\threecycle{\alpha0}{\beta0}{\beta1}$.  Hence it also contains
\begin{align*}
\bigl[ \threecycle{\alpha0}{\beta0}{\alpha1} , (\threecycle{\alpha0}{\beta0}{\beta1} \bigr]
&= \threecycle{\alpha0}{\alpha1}{\beta0} \, \threecycle{\alpha0}{\beta0}{\alpha1}^{\threecycle{\alpha0}{\beta0}{\beta1}} \\
&= \threecycle{\alpha0}{\alpha1}{\beta0} \, \threecycle{\beta0}{\beta1}{\alpha1} \\
&= \swap{\alpha0}{\beta0} \, \swap{\alpha1}{\beta1} \\
&= \swap{\alpha}{\beta}.
\end{align*}
Hence $G$~contains all transpositions~$\swap{\alpha}{\beta}$ where $\alpha$~and~$\beta$ are incomparable addresses of small support.  These transpositions generate~$V$ and we conclude that $G = V$, as claimed.
\end{proof}

Theorem~\ref{thm:2foldtrans} says that a maximal subgroup of~$V$ can only act either zero- or one-fold transitively on proper cones.  We now consider a different condition related to primitivity.

We shall use the term \emph{small transposition} for a transposition~$\swap{\alpha}{\beta}$ where the set of addresses $\{\alpha,\beta\}$ has small support.

\begin{defn}
Let $G$~be a subgroup of~$V$.  Let $\gamma$,~$\delta$, $\gamma_{0}$,~$\gamma_{1}$, \dots,~$\gamma_{n}$ be addresses in~$\AddrSet$.  We say the sequence $(\gamma_{0}, \gamma_{1}, \dots, \gamma_{n})$ is a \emph{$G$\nbd chain} connecting~$\gamma$ to~$\delta$ if 
\begin{enumerate}
\item $\gamma = \gamma_{0}$ and $\delta = \gamma_{n}$, and
\item for all~$i$ with $0 \leq i \leq n-1$, \ $\swap{\gamma_{i}}{\gamma_{i+1}}$ is a small transposition that belongs to~$G$.
\end{enumerate}
\end{defn}

We say that the \emph{length} of the $G$\nbd chain $(\gamma_{0}, \gamma_{1}, \dots, \gamma_{n})$ is~$n$ (as there are $n$~small transpositions involved).  We also will refer to any given address~$\gamma_{i}$ appearing in this sequence as a \emph{link (in the chain)}.  If $C = (\gamma_{0}, \gamma_{1}, \dots, \gamma_{n})$ is a $G$\nbd chain from~$\gamma$ to~$\delta$, then any subchain $(\gamma_{i}, \gamma_{i+1}, \dots, \gamma_{j})$ of~$C$ is also a $G$\nbd chain (namely from~$\gamma_{i}$ to~$\gamma_{j}$).

\begin{defn}
Let $G$~be a subgroup of~$V$.  We say that $G$~is \emph{swap-primitive} if for any pair of non-empty addresses $\gamma,\delta \in \AddrSet$, there is a $G$-chain connecting~$\gamma$ to~$\delta$; that is, there is a sequence $(\gamma_{0}, \gamma_{1}, \dots, \gamma_{n})$ with $\gamma_{0} = \gamma$, \ $\gamma_{n} = \delta$ and such that $\swap{\gamma_{i}}{\gamma_{i+1}}$~is a small transposition in~$G$ for each~$i$.
\end{defn}

\begin{rem}
This definition of swap-primitive is related to a criterion for primitivity of actions.  Specifically, an action of a group $G$ on a set $X$ is primitive if and only if for every pair $\{x_1,x_2\}$ of distinct elements of~$X$, the $G$-orbit of the (undirected) edge $\{x_1,x_2\}$ is a connected graph with vertex set~$X$ (see Proposition~\ref{prop:DM-prim}).  A subgroup $G$ of $V$ is swap-primitive if the graph whose edges correspond to transpositions in $G$ of small support is connected.  Thus swap-primitivity follows from  the hypothesis that the partial action of $G$ on $\AddrSet$ is primitive in the graph-theoretic sense together with the hypothesis that $G$ has at least one transposition with small support.
\end{rem}

We will prove in Theorem~\ref{thm:swap-primitiveisV} that the only subgroup of~$V$ that acts swap-primitively is~$V$ itself.  We shall need several technical lemmas and start with some basic observations about properties of chains of minimal length.

\begin{lemma}
\label{lem:chainCompression}
Let $G$~be a subgroup of\/~$V$.  If $\gamma,\delta\in\AddrSet$ and there is a $G$\nbd chain $(\gamma_{0},\gamma_{1},\dots,\gamma_{n})$ connecting~$\gamma$ to~$\delta$ such that $\gamma_{i}$~is incomparable to~$\gamma_{i+2}$ for some index~$i$, then the sequence $(\gamma_{0},\gamma_1,\dots,\gamma_{i},\gamma_{i+2},\dots\gamma_{n})$ is also a $G$\nbd chain connecting~$\gamma$ to~$\delta$.
\end{lemma}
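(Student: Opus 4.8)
The plan is to observe that the new sequence $(\gamma_{0},\gamma_{1},\dots,\gamma_{i},\gamma_{i+2},\dots,\gamma_{n})$ differs from the given $G$\nbd chain only by deletion of the single link~$\gamma_{i+1}$: every consecutive pair in it other than $(\gamma_{i},\gamma_{i+2})$ already occurred as a consecutive pair of the original chain, and the endpoints $\gamma_{0}=\gamma$ and $\gamma_{n}=\delta$ are unchanged. So the only thing requiring verification is that $\swap{\gamma_{i}}{\gamma_{i+2}}$ is a small transposition belonging to~$G$.

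For membership in~$G$, I would first note that since $\swap{\gamma_{i}}{\gamma_{i+1}}$ and $\swap{\gamma_{i+1}}{\gamma_{i+2}}$ are genuine transpositions in~$V$ we have $\gamma_{i}\perp\gamma_{i+1}$ and $\gamma_{i+1}\perp\gamma_{i+2}$; together with the hypothesis $\gamma_{i}\perp\gamma_{i+2}$, the addresses $\gamma_{i}$,~$\gamma_{i+1}$ and~$\gamma_{i+2}$ are pairwise incomparable.  Hence the transposition $\swap{\gamma_{i+1}}{\gamma_{i+2}}$ fixes every point of the cone~$\cone{\gamma_{i}}$, so $\gamma_{i}^{\swap{\gamma_{i+1}}{\gamma_{i+2}}}=\gamma_{i}$, while trivially $\gamma_{i+1}^{\swap{\gamma_{i+1}}{\gamma_{i+2}}}=\gamma_{i+2}$.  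Therefore
\[
\swap{\gamma_{i}}{\gamma_{i+2}} = \swap{\gamma_{i}}{\gamma_{i+1}}^{\swap{\gamma_{i+1}}{\gamma_{i+2}}},
\]
and since $\swap{\gamma_{i}}{\gamma_{i+1}},\swap{\gamma_{i+1}}{\gamma_{i+2}}\in G$ and $G$~is a subgroup, this conjugate lies in~$G$.

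For the small support condition, I would use the link that was removed: as $\gamma_{i+1}$~is incomparable with both $\gamma_{i}$ and~$\gamma_{i+2}$, the non-empty cone~$\cone{\gamma_{i+1}}$ is disjoint from $\cone{\gamma_{i}}$ and from~$\cone{\gamma_{i+2}}$, whence $\cone{\gamma_{i}}\cup\cone{\gamma_{i+2}}\neq\Cant$; that is, $\{\gamma_{i},\gamma_{i+2}\}$ has small support.  Thus $\swap{\gamma_{i}}{\gamma_{i+2}}$ is a small transposition in~$G$, and combined with the first observation this shows that $(\gamma_{0},\gamma_{1},\dots,\gamma_{i},\gamma_{i+2},\dots,\gamma_{n})$ is indeed a $G$\nbd chain connecting~$\gamma$ to~$\delta$.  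There is no substantial obstacle in this argument; the only point worth stating carefully is that the smallness of support for the new transposition is inherited from the deleted link~$\gamma_{i+1}$, and not from either of the two transpositions it replaces.
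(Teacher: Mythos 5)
Your proof is correct and follows essentially the same route as the paper: the conjugation identity $\swap{\gamma_{i}}{\gamma_{i+2}} = \swap{\gamma_{i}}{\gamma_{i+1}}^{\swap{\gamma_{i+1}}{\gamma_{i+2}}}$ is exactly the paper's argument, and your explicit verification that the small-support condition is inherited from the deleted link $\gamma_{i+1}$ just spells out a point the paper leaves implicit.
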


\begin{proof}
Note that for every index~$j$, the addresses $\gamma_{j}$~and~$\gamma_{j+1}$ are incomparable by assumption that $\swap{\gamma_{j}}{\gamma_{j+1}}$ is a transposition.  Under the hypothesis that $\gamma_{i} \perp \gamma_{i+2}$ for some index~$i$, observe that $G$~also contains $\swap{\gamma_{i}}{\gamma_{i+2}} = \swap{\gamma_{i}}{\gamma_{i+1}}^{\swap{\gamma_{i+1}}{\gamma_{i+2}}}$ and this is a small transposition.  Consequently the link~$\gamma_{i+1}$ may be omitted and we still have a $G$\nbd chain connecting~$\gamma$ to~$\delta$.
\end{proof}

\begin{lemma}
\label{lem:disjointChainHalves}
Let $G$~be a subgroup of\/~$V$ and $\gamma$~and~$\delta$ be addresses such that $C = (\gamma_{0},\gamma_{1},\dots,\gamma_{n})$ is a $G$\nbd chain of minimal length connecting~$\gamma$ to~$\delta$.  Set
\begin{align*}
E_{C} &= \set{\gamma_{2i}}{i \in \Zint, \; 0 \leq 2i \leq n} \\[5pt]
O_{C} &= \set{\gamma_{2i+1}}{i \in \Zint, \; 0 \leq 2i+1 \leq n}.
\end{align*}
Let $\lambda_{e}$~be the shortest element of~$E_{C}$ and $\lambda_{o}$~be the shortest element of~$O_{C}$.  Then
\begin{enumerate}
\item every element of~$E_{C}$ has $\lambda_{e}$~as a prefix,
\item \label{i:Odd-prefix}
every element of~$O_{C}$ has $\lambda_{o}$~as a prefix, and
\item $\lambda_{e}$~is incomparable to~$\lambda_{o}$.
\end{enumerate}
\end{lemma}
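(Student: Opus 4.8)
The proof rests on one structural observation: since $C$ is of \emph{minimal} length, Lemma~\ref{lem:chainCompression} forbids $\gamma_{i}\perp\gamma_{i+2}$ for every $i$ with $0\le i\le n-2$, so \emph{$\gamma_{i}$ and $\gamma_{i+2}$ are comparable for all such $i$}. I will also use the elementary fact about the prefix order that if $\mu\prefix\nu$ and $\mu'\prefix\nu$ then $\mu$ and $\mu'$ are comparable (their cones both contain the non-empty cone $\cone{\nu}$), and that every cone is non-empty. Throughout I assume $n\ge 1$, so that $E_{C}$ and $O_{C}$ are both non-empty and $\lambda_{e},\lambda_{o}$ are defined; the case $n=0$ is the degenerate one $\gamma=\delta$, where there is nothing to prove.

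For parts~(i) and~(ii) the plan is an ``extremal index'' argument by contradiction; I do~(i), and~(ii) is verbatim with $O_{C}$ in place of $E_{C}$. Write $\lambda_{e}=\gamma_{a_{0}}$ and suppose some even link fails to have $\lambda_{e}$ as a prefix. Among such links choose $\gamma_{a}$ with $\card{a-a_{0}}$ minimal; then $a\ne a_{0}$, and setting $a'=a-2$ if $a>a_{0}$ and $a'=a+2$ otherwise gives an even index $a'\in[0,n]$ with $\card{a'-a_{0}}<\card{a-a_{0}}$, so $\lambda_{e}\prefix\gamma_{a'}$. By the observation above, $\gamma_{a}$ and $\gamma_{a'}$ are comparable. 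They cannot satisfy $\gamma_{a'}\prefix\gamma_{a}$, since then $\lambda_{e}\prefix\gamma_{a'}\prefix\gamma_{a}$ would put $\gamma_{a}$ in $\cone{\lambda_{e}}$; nor can $\gamma_{a}=\gamma_{a'}$, for the same reason. Hence $\gamma_{a}\properprefix\gamma_{a'}$. Now $\gamma_{a}$ and $\lambda_{e}$ are both prefixes of $\gamma_{a'}$, so they are comparable, and since $\gamma_{a}\notin\cone{\lambda_{e}}$ and $\gamma_{a}\ne\lambda_{e}$, the only possibility is $\gamma_{a}\properprefix\lambda_{e}$, i.e.\ $\length{\gamma_{a}}<\length{\lambda_{e}}$ — contradicting that $\lambda_{e}$ is a shortest element of $E_{C}$.

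Part~(iii) will then be a short, symmetric deduction from~(i) and~(ii). Write $\lambda_{e}=\gamma_{a_{0}}$ with $a_{0}$ even and $\lambda_{o}=\gamma_{b_{0}}$ with $b_{0}$ odd, and suppose $\lambda_{e}$ and $\lambda_{o}$ are comparable; say $\lambda_{o}\prefix\lambda_{e}$, the case $\lambda_{e}\prefix\lambda_{o}$ being symmetric with the roles of $E_{C}$ and $O_{C}$ (and of~(i) and~(ii)) interchanged. As $n\ge 1$, $\lambda_{o}=\gamma_{b_{0}}$ has a neighbour $\gamma_{b_{0}\pm1}\in E_{C}$, and since $\swap{\gamma_{b_{0}}}{\gamma_{b_{0}\pm1}}$ is a transposition we get $\gamma_{b_{0}\pm1}\perp\lambda_{o}$, i.e.\ $\cone{\gamma_{b_{0}\pm1}}\cap\cone{\lambda_{o}}=\emptyset$. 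But by~(i) $\lambda_{e}\prefix\gamma_{b_{0}\pm1}$, so $\cone{\gamma_{b_{0}\pm1}}\subseteq\cone{\lambda_{e}}\subseteq\cone{\lambda_{o}}$, whence $\cone{\gamma_{b_{0}\pm1}}=\cone{\gamma_{b_{0}\pm1}}\cap\cone{\lambda_{o}}=\emptyset$, which is absurd. Therefore $\lambda_{e}$ and $\lambda_{o}$ are incomparable.

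I expect the only genuine obstacle to be part~(i): one must hit on tracking the index \emph{nearest} to the minimiser $a_{0}$ (not, say, the smallest index), and then combine the binary-tree structure of the prefix order with the ``no $\gamma_{i}\perp\gamma_{i+2}$'' consequence of minimality in exactly the right order. Parts~(ii) and~(iii) — the latter being the stated conclusion — follow from~(i) with almost no further work.
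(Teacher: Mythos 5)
Your proof is correct and follows essentially the same route as the paper: minimality plus Lemma~\ref{lem:chainCompression} gives comparability of links two apart, the prefix property of $\lambda_{e}$ (resp.\ $\lambda_{o}$) is propagated along the chain — your extremal-index argument is just the paper's induction reorganised as a minimal counterexample — and incomparability of $\lambda_{e}$ and $\lambda_{o}$ then follows from the incomparability of adjacent links exactly as in the paper.
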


\begin{proof}
First note that, for every index $i\leq n-2$, the address~$\gamma_{i}$ is comparable to~$\gamma_{i+2}$ by Lemma~\ref{lem:chainCompression}.  Write $E_{C} = \{ \gamma_{0}, \gamma_{2}, \dots, \gamma_{2m} \}$ and let $\lambda_{e}$~be its shortest member.  We shall show, by induction on~$m$, that $\lambda_{e}$~is a prefix of every member of~$E_{C}$.  If $m = 0$, then the result is trivial.  We first assume that $\lambda_{e}$~is an element of $\{ \gamma_{0}, \gamma_{2}, \dots, \gamma_{2m-2} \}$.  By induction, $\lambda_{e}$~is a prefix of each of these $m-1$~elements.  Now $\gamma_{2m-2}$~and~$\gamma_{2m}$ are comparable.  If $\gamma_{2m-2} \prefix \gamma_{2m}$ then it follows $\lambda_{e} \prefix \gamma_{2m}$.  If $\gamma_{2m} \prefix \gamma_{2m-2}$, then they share~$\lambda_{e}$ as a prefix (since both are at least as long as~$\lambda_{e}$).  This establishes the claim in this case.  The case when $\lambda_{e} = \gamma_{2m}$ follows similarly by applying induction to the set $\{ \gamma_{2}, \gamma_{4}, \dots, \gamma_{2m} \}$ instead.  Hence by induction $\lambda_{e}$~is indeed a prefix of each member of~$E_{C}$.  Part~\ref{i:Odd-prefix} follows similarly.

Suppose that $\lambda_{e}$~and~$\lambda_{o}$ are comparable.  We consider the case that $\lambda_{e} \prefix \lambda_{o}$.  Suppose $\gamma_{2j} = \lambda_{e}$.  If $j > 0$, consider the address $\gamma_{2j-1} \in O_{C}$.  This has $\lambda_{o}$~as a prefix and hence $\gamma_{2j} \prefix \gamma_{2j-1}$ by our hypothesis.  This is impossible since $\swap{\gamma_{2j-1}}{\gamma_{2j}}$~is assumed to be a valid transposition.  If $j = 0$, the same argument applies using the transposition~$\swap{\gamma_{0}}{\gamma_{1}}$ instead, while the case when $\lambda_{o} \prefix \lambda_{e}$ is also similar.  We conclude that $\lambda_{e}$~and~$\lambda_{o}$ are incomparable, as claimed.
\end{proof}

\begin{cor}
\label{cor:chainsRiverHop}
Let $G$~be a subgroup of\/~$V$ that is swap-primitive.  Suppose further that $\gamma$~and~$\delta$ are incomparable addresses of small support such that $C =(\gamma_{0}, \gamma_{1}, \dots, \gamma_{n})$ is a $G$\nbd chain of minimal length connecting~$\gamma$ to~$\delta$ where $n \geq 2$.  Let $E_{C}$~and~$O_{C}$ denote the members of the chain of even and odd index, respectively.  Then $n \geq 3$ and all of the elements of~$E_{C}$ begin with one of the letters in~$\letters$ and all those of~$O_{C}$ begin with the other letter.
\end{cor}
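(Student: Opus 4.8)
\textit{Proof plan.} The plan is to prove the two assertions in turn, using Lemmas~\ref{lem:chainCompression} and~\ref{lem:disjointChainHalves} together with swap-primitivity. For the bound $n \geq 3$, suppose instead $n = 2$, so $C = (\gamma_{0},\gamma_{1},\gamma_{2})$ with $\gamma_{0} = \gamma$, $\gamma_{2} = \delta$ and $\gamma_{0} \perp \gamma_{2}$ by hypothesis; then Lemma~\ref{lem:chainCompression} (applied with $i = 0$) shows that $(\gamma,\delta)$ is itself a $G$\nbd chain, of length~$1$, contradicting the minimality of~$C$. Hence $n \geq 3$.

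For the structural statement, let $\lambda_{e}$ and $\lambda_{o}$ be the shortest members of $E_{C}$ and $O_{C}$. By Lemma~\ref{lem:disjointChainHalves} every member of $E_{C}$ has $\lambda_{e}$ as a prefix, every member of $O_{C}$ has $\lambda_{o}$ as a prefix, and $\lambda_{e} \perp \lambda_{o}$; so it suffices to show that $\lambda_{e}$ and $\lambda_{o}$ begin with different letters. Suppose instead both begin with a letter $x \in \letters$ and let $y$ be the other letter. Since $\lambda_{e} \perp \lambda_{o}$ but both begin with $x$, neither can equal the one-letter address~$x$, so $\length{\lambda_{e}}, \length{\lambda_{o}} \geq 2$, and therefore every link $\gamma_{i}$ of~$C$ lies in $\cone x$ with $\length{\gamma_{i}} \geq 2$; hence $\cone{\gamma_{i}} \subsetneq \cone x$, so $\gamma_{i} \perp y$ and $\{\gamma_{i},y\}$ has small support.

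Now I would apply swap-primitivity to the non-empty addresses $\gamma_{1}$ and~$y$ to obtain a minimal-length $G$\nbd chain $D = (\delta_{0},\delta_{1},\dots,\delta_{m})$ from $\gamma_{1}$ to~$y$, and then analyse $D$ via Lemma~\ref{lem:disjointChainHalves} applied to~$D$: the shortest member $\lambda_{e}^{D}$ of $E_{D}$ is a non-empty prefix of $\gamma_{1} = \delta_{0} \in \cone x$, so it begins with~$x$, whence every member of $E_{D}$ begins with~$x$ and in particular $y \notin E_{D}$; thus $m$ is odd, $y = \delta_{m}$ lies in $O_{D}$ and has the shortest member of $O_{D}$ as a prefix, and since $\length y = 1$ that shortest member is~$y$ itself, so every odd-indexed $\delta_{j}$ lies in $\cone y$. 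In particular $\sigma := \delta_{1} \in \cone y$ and $\swap{\gamma_{1}}{\sigma} = \swap{\delta_{0}}{\delta_{1}} \in G$ is a small transposition. Because $\sigma \in \cone y$ while each $\gamma_{i} \in \cone x$, the pair $\{\sigma,\gamma_{i}\}$ is incomparable with small support for every~$i$, and the link-transposition $\swap{\gamma_{i}}{\gamma_{i+1}} \in G$ fixes $\cone\sigma$ while carrying $\gamma_{i}$ to $\gamma_{i+1}$; conjugating $\swap{\sigma}{\gamma_{i}}$ by it yields $\swap{\sigma}{\gamma_{i+1}}$, so from $\swap{\sigma}{\gamma_{1}} \in G$ and induction in both directions we get $\swap{\sigma}{\gamma_{i}} \in G$ for all~$i$, in particular $\swap{\sigma}{\gamma_{0}},\swap{\sigma}{\gamma_{n}} \in G$. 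Then $(\gamma_{0},\sigma,\gamma_{n})$ is a $G$\nbd chain connecting $\gamma$ to~$\delta$, and since $\gamma_{0} \perp \gamma_{n}$, Lemma~\ref{lem:chainCompression} collapses it to the length-$1$ $G$\nbd chain $(\gamma_{0},\gamma_{n})$, contradicting $n \geq 2$. Hence $\lambda_{e}$ and $\lambda_{o}$ begin with different letters, which gives the stated conclusion.

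The main obstacle is precisely the invocation of swap-primitivity: it supplies a chain $D$ from $\gamma_{1}$ to~$y$ with no control on its length or shape, so the real work is to extract from~$D$, via Lemma~\ref{lem:disjointChainHalves} applied to~$D$ itself, a single point $\sigma$ lying just outside $\cone x$ with $\swap{\gamma_{1}}{\sigma} \in G$; once that point is in hand, the conjugation step propagates $\sigma$ to every link of~$C$ and produces the contradiction cheaply, and everything else (the bound $n \geq 3$, and the reduction of the structural claim to a statement about $\lambda_{e},\lambda_{o}$) is routine given the earlier lemmas.
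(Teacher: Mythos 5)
Your proof is correct and follows essentially the same route as the paper: both arguments dispose of $n=2$ via Lemma~\ref{lem:chainCompression}, then (assuming a common first letter) use swap-primitivity to pick a minimal chain from a link of~$C$ to the opposite one-letter address, apply Lemma~\ref{lem:disjointChainHalves} to that auxiliary chain to extract a second link~$\sigma$ (the paper's~$\delta_1$) on the other side, and propagate the transposition with~$\sigma$ along~$C$ by conjugation to contradict minimality. The only differences are cosmetic: you start the auxiliary chain at $\gamma_1$ rather than $\gamma_0$ and finish by compressing the chain $(\gamma_0,\sigma,\gamma_n)$ instead of conjugating directly to obtain $\swap{\gamma_0}{\gamma_n}$.
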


\begin{proof}
First note that if $n = 2$, then a $G$\nbd chain $(\gamma_{0},\gamma_{1},\gamma_{2})$ connecting~$\gamma$ to~$\delta$ cannot be minimal since $\gamma_{0}$~and~$\gamma_{2}$ are incomparable and we may use Lemma~\ref{lem:chainCompression} to reduce to a $G$\nbd chain of length~$1$.  Hence it must be the case that $n \geq 3$.  By Lemma~\ref{lem:disjointChainHalves}, it remains to show that $\gamma_{0}$~and~$\gamma_{1}$ begin with different letters.

Suppose that $0$~is the first letter of both $\gamma_{0}$~and~$\gamma_{1}$.  Let $D = (\delta_{0}, \delta_{1}, \dots, \delta_{m})$ be a $G$\nbd chain of minimal length connecting~$\gamma_{0}$ to~$1$.  Applying Lemma~\ref{lem:disjointChainHalves} to~$C$ establishes that all~$\gamma_{i}$ have~$0$ as their first letter, while upon applying it to~$D$, we observe that all terms~$\delta_{i}$ with even index have~$0$ as first letter and that $1$~is the shortest term with odd index.  Consequently $\delta_{1}$~has~$1$ as its first letter.  Therefore $\delta_{1}$~is incomparable to all the addresses~$\gamma_{i}$.

Since $D$~is a $G$\nbd chain with $\delta_{0} = \gamma_{0}$, it is the case that $\swap{\gamma_{0}}{\delta_{1}} \in G$.  Suppose that $\swap{\gamma_{i}}{\delta_{1}} \in G$ for some~$i$ with $0 \leq i < n$.  Then $\swap{\gamma_{i+1}}{\delta_{1}} = \swap{\gamma_{i}}{\gamma_{i+1}}^{\swap{\gamma_{i}}{\delta_{1}}} \in G$ also.  Hence we conclude that $\swap{\gamma_{n}}{\delta_{1}} \in G$ and therefore $\swap{\gamma}{\delta} = \swap{\gamma_{0}}{\gamma_{n}} = \swap{\gamma_{0}}{\delta_{1}}^{\swap{\gamma_{n}}{\delta_{1}}} \in G$.  This means that $\gamma$~and~$\delta$ can actually be connected by a $G$\nbd chain of length~$1$ contrary to assumption.

A symmetric argument shows that it cannot be the case that both $\gamma_{0}$~and~$\gamma_{1}$ have $1$~as their first letter.  Hence one of them starts with the letter~$0$ and the other starts with~$1$.
\end{proof}

\begin{lemma}
\label{lem:DiamondLemma}
Let $G$~be a subgroup of\/~$V$. Suppose that $(\gamma_{0},\gamma_{1},\gamma_{2},\gamma_{3})$ are consecutive terms in some $G$\nbd chain~$C$ and that there is a prefix~$\delta$ of~$\gamma_{2}$ and an address~$\zeta$ incomparable with each~$\gamma_{i}$ such that $\swap{\delta}{\zeta} \in G$. Then $C$~is not a $G$\nbd chain of minimal length.
\end{lemma}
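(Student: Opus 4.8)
The plan is a minimality argument. Suppose, for a contradiction, that $C$~is a $G$\nbd chain of minimal length. Since any subchain of a minimal $G$\nbd chain is itself of minimal length (splicing in a shorter connecting chain would shorten~$C$), it is enough to treat the case $C=(\gamma_{0},\gamma_{1},\gamma_{2},\gamma_{3})$, a minimal $G$\nbd chain of length~$3$ connecting $\gamma_{0}$ to~$\gamma_{3}$. Consecutive links are incomparable by the definition of a chain, and by Lemma~\ref{lem:chainCompression} together with minimality, $\gamma_{0}$~is comparable with~$\gamma_{2}$ and $\gamma_{1}$~is comparable with~$\gamma_{3}$; moreover Lemma~\ref{lem:disjointChainHalves} applied to~$C$ gives $\gamma_{0}\perp\gamma_{3}$ (the shorter of $\gamma_{0},\gamma_{2}$ is incomparable with the shorter of $\gamma_{1},\gamma_{3}$, and $\gamma_{0}$, $\gamma_{3}$ extend these respectively).

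Set $g=\swap{\delta}{\zeta}\in G$ and write $\gamma_{2}=\delta\eta$. First I would record that $\gamma_{1}^{g}$, $\gamma_{2}^{g}$, $\gamma_{3}^{g}$ are all defined: $\gamma_{2}^{g}=\zeta\eta$ since $\delta\prefix\gamma_{2}$, and $\gamma_{1}^{g}$, $\gamma_{3}^{g}$ because $\gamma_{1}$ and $\gamma_{3}$ are incomparable with~$\zeta$ and neither can be a proper prefix of~$\delta$ (that would force $\gamma_{i}\prefix\gamma_{2}$, against incomparability with~$\gamma_{2}$). Note $\gamma_{2}^{g}$ extends~$\zeta$, hence is incomparable with each of $\gamma_{0}$, $\gamma_{1}$, $\gamma_{3}$. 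The main case is that in which $\delta$~is incomparable with both $\gamma_{1}$ and~$\gamma_{3}$: then $g$ fixes $\cone{\gamma_{1}}$ and~$\cone{\gamma_{3}}$ pointwise, so $\gamma_{1}^{g}=\gamma_{1}$ and $\gamma_{3}^{g}=\gamma_{3}$, and conjugating the small transpositions $\swap{\gamma_{1}}{\gamma_{2}},\swap{\gamma_{2}}{\gamma_{3}}\in G$ by~$g$ yields $\swap{\gamma_{1}}{\gamma_{2}^{g}},\swap{\gamma_{2}^{g}}{\gamma_{3}}\in G$ (again small). Hence $(\gamma_{0},\gamma_{1},\gamma_{2}^{g},\gamma_{3})$ is a $G$\nbd chain of length~$3$ from~$\gamma_{0}$ to~$\gamma_{3}$; and since $\gamma_{0}\perp\gamma_{2}^{g}$, Lemma~\ref{lem:chainCompression} allows us to delete the link~$\gamma_{1}$ and obtain a $G$\nbd chain $(\gamma_{0},\gamma_{2}^{g},\gamma_{3})$ of length~$2$ connecting $\gamma_{0}$ to~$\gamma_{3}$, contradicting minimality.

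It remains to exclude the alternative, that $\delta$~is comparable with~$\gamma_{1}$; as above, $\gamma_{1}$ cannot be a proper prefix of~$\delta$, so $\delta\properprefix\gamma_{1}$. Using that $\gamma_{0}$~is comparable with~$\gamma_{2}$ and $\gamma_{1}$~with~$\gamma_{3}$ while consecutive links are incomparable, one deduces that $\delta$~is in fact a proper prefix of \emph{every}~$\gamma_{i}$; in particular all four links begin with the first letter of~$\delta$ and so have length~$\geq 2$. But then $\gamma_{0}\perp\gamma_{3}$ with $\{\gamma_{0},\gamma_{3}\}$ of small support, so Corollary~\ref{cor:chainsRiverHop} applies to the minimal chain~$C$ and forces $\gamma_{0}$ and~$\gamma_{1}$ to begin with \emph{different} first letters --- a contradiction. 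This completes the argument.

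I expect this last step --- showing the alternative cannot occur --- to be the crux, since it is the only place that uses the rigidity of minimal chains in an essential way; Corollary~\ref{cor:chainsRiverHop} is precisely the tool supplying that rigidity. (Its proof draws on swap-primitivity of~$G$, which is available in the settings where the Diamond Lemma is invoked; without that background hypothesis one would instead need to show directly that a minimal $G$\nbd chain of length three cannot have all of its links extending a common proper prefix~$\delta$ with $\swap{\delta}{\zeta}\in G$, by analysing how $\swap{\gamma_{0}}{\gamma_{1}}$, $\swap{\gamma_{2}}{\gamma_{3}}$ and~$g$ interact inside~$\cone{\delta}$.)
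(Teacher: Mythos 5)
Your proposal is correct and is essentially the paper's own argument: the paper likewise conjugates $\swap{\gamma_{1}}{\gamma_{2}}$ and $\swap{\gamma_{2}}{\gamma_{3}}$ by $\swap{\delta}{\zeta}$ to replace $\gamma_{2}$ by $\zeta\eta$ and then shortens via Lemma~\ref{lem:chainCompression}, and it too appeals to Corollary~\ref{cor:chainsRiverHop} (hence tacitly to swap-primitivity and to the endpoints of~$C$ being incomparable of small support, exactly the caveat you flag) --- only it does so at the outset, so that the alternating first letters rule out your ``alternative case'' rather than it being handled separately. The one step you leave implicit is why your two cases are exhaustive, i.e.\ why $\delta$ comparable with~$\gamma_{3}$ but incomparable with~$\gamma_{1}$ cannot occur: in a minimal chain $\gamma_{1}$ and~$\gamma_{3}$ are comparable and $\gamma_{3} \nprefix \delta$ (else $\gamma_{3} \prefix \gamma_{2}$), so $\delta \properprefix \gamma_{3}$ forces $\delta$ and~$\gamma_{1}$ to be comparable as well --- a one-line remark that closes the case analysis.
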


\begin{proof}
Suppose that $C$~is a $G$\nbd chain of minimal length.  Then, by Corollary~\ref{cor:chainsRiverHop}, the first letters of the addresses in~$C$ alternate and we may assume that $\gamma_{0}$~and~$\gamma_{2}$ have first letter~$0$ while $\gamma_{1}$~and~$\gamma_{3}$ have first letter~$1$.  Write $\gamma_{2} = \delta\eta$.  Then $G$~contains
\[
\swap{\gamma_{1}}{\gamma_{2}}^{\swap{\delta}{\zeta}} = \swap{\gamma_{1}}{\zeta\eta}
\AND
\swap{\gamma_{2}}{\gamma_{3}}^{\swap{\delta}{\zeta}} = \swap{\zeta\eta}{\gamma_{3}}.
\]
Hence we can replace~$\gamma_{2}$ in the original $G$\nbd chain by~$\zeta\eta$.  Since all three of $\gamma_{0}$,~$\gamma_{1}$ and~$\zeta\eta$ are incomparable, we can shorten the $G$\nbd chain with use of Lemma~\ref{lem:chainCompression}.  This contradicts our assumption concerning~$C$ and establishes that, indeed, it is not of minimal length.
\end{proof}

\begin{lemma}
\label{lem:Appending}
Let $G$~be a subgroup of\/~$V$ that is swap-primitive.  If\/ $\swap{\gamma}{\delta} \in G$ and $\gamma$~and~$\delta$ start with the same letter, then $\swap{\gamma\zeta}{\delta\zeta} \in G$ for all words~$\zeta \in \AddrSet$.
\end{lemma}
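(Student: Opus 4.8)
The plan is to reduce the lemma, via induction on $\length{\zeta}$, to a single ``append one letter'' step. Since $\gamma$ and $\delta$ start with the same letter — say both begin with $0$, the case of first letter $1$ being entirely symmetric — every address of the form $\gamma\zeta$ or $\delta\zeta$ also begins with $0$. Hence it suffices to prove the following claim: whenever $\swap{\mu}{\nu}\in G$ with $\mu$ and $\nu$ both beginning with $0$, then both $\swap{\mu0}{\nu0}$ and $\swap{\mu1}{\nu1}$ lie in $G$. Granting this, the lemma follows by induction on $\length{\zeta}$, with base case $\zeta=\emptyword$ (where $\swap{\gamma\zeta}{\delta\zeta}=\swap{\gamma}{\delta}\in G$) and the inductive step applying the claim to $\mu=\gamma\zeta$, $\nu=\delta\zeta$. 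Moreover, since $\swap{\mu0}{\nu0}\,\swap{\mu1}{\nu1}=\swap{\mu}{\nu}$, it is enough to establish $\swap{\mu0}{\nu0}\in G$, the second transposition then being obtained by multiplying by $\swap{\mu}{\nu}$.

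The first step in proving $\swap{\mu0}{\nu0}\in G$ is to produce an address $\lambda$ that is incomparable with both $\mu$ and $\nu$ and satisfies $\swap{\mu0}{\lambda}\in G$. Since $G$ is swap-primitive and $\mu0$ and $1$ are non-empty addresses, there is a $G$\nbd chain connecting $\mu0$ to $1$; I would take one of minimal length, $(\gamma_{0},\gamma_{1},\dots,\gamma_{n})$ with $\gamma_{0}=\mu0$ and $\gamma_{n}=1$. The endpoints $\mu0$ and $1$ are incomparable (as $\mu$ begins with $0$) and have small support, and $n\geq 1$ since $\mu0\neq 1$. If $n=1$, take $\lambda=1$. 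If $n\geq 2$, then Corollary~\ref{cor:chainsRiverHop} applies: the terms of even index all begin with the first letter of $\gamma_{0}$, namely $0$, while those of odd index begin with $1$; in particular $\gamma_{1}$ begins with $1$, so I take $\lambda=\gamma_{1}$, noting $\swap{\mu0}{\lambda}=\swap{\gamma_{0}}{\gamma_{1}}\in G$ as it is a link of the chain. In either case $\lambda$ begins with $1$ and is therefore incomparable with both $\mu$ and $\nu$.

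The last step is a short conjugation computation. Conjugating $\swap{\mu0}{\lambda}$ by $\swap{\mu}{\nu}\in G$, which fixes $\cone{\lambda}$ pointwise and sends $\mu0$ to $\nu0$, gives $\swap{\nu0}{\lambda}\in G$. Since $\mu0$, $\nu0$ and $\lambda$ are pairwise incomparable, the identity $(a\;b)^{(b\;c)}=(a\;c)$ yields
\[
\swap{\mu0}{\nu0}=\swap{\mu0}{\lambda}^{\swap{\nu0}{\lambda}}\in G,
\]
which completes the claim and hence the lemma. I expect the only step needing genuine care is the extraction of the address $\lambda$ with first letter $1$: this is precisely where swap-primitivity, together with the alternation of first letters along a minimal chain (Corollary~\ref{cor:chainsRiverHop}) and the separate treatment of the degenerate case $n=1$, does the real work; the remaining manipulations are routine transposition algebra.
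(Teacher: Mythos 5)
Your proof is correct. The core mechanism is the same as the paper's: use swap-primitivity to get a minimal $G$\nbd chain to an address in the opposite half of the tree, read off from the chain-structure results (you use Corollary~\ref{cor:chainsRiverHop}, the paper uses Lemma~\ref{lem:disjointChainHalves} directly) a transposition $\swap{\cdot}{1\eta}\in G$ with $1\eta$ incomparable to everything in sight, and then finish with two conjugations. The only real difference is that your induction on $\length{\zeta}$, appending one letter at a time, is an avoidable detour: since $\swap{\gamma}{\delta}$ already maps $\gamma\zeta$ to $\delta\zeta$ by prefix replacement for \emph{every} word~$\zeta$, the paper applies the argument once to $\gamma\zeta$ itself -- a minimal chain from $\gamma\zeta$ to~$1$ gives $\swap{\gamma\zeta}{1\eta}\in G$, conjugating by $\swap{\gamma}{\delta}$ gives $\swap{\delta\zeta}{1\eta}\in G$, and conjugating the first by the second gives $\swap{\gamma\zeta}{\delta\zeta}$ -- with no induction and no need to treat the children $\mu0$, $\mu1$ separately. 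Your separate handling of the degenerate length\nbd$1$ chain (where the corollary's hypothesis $n\geq 2$ fails) is careful and correct, and in fact covers a case the paper's phrasing ``non-empty~$\eta$'' quietly glosses over; so the letter-by-letter version costs a little extra bookkeeping but loses nothing in rigour.
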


\begin{proof}
Suppose without loss of generality that $\gamma$~and~$\delta$ begin with~$0$.  Since there exists a $G$\nbd chain from $\gamma \zeta$ to~$1$, we know by Lemma~\ref{lem:disjointChainHalves} that $\swap{\gamma \zeta}{1\eta} \in G$ for some non-empty address~$\eta$.  Then $G$~contains $\swap{\gamma\zeta}{1\eta}^{\swap{\gamma}{\delta}} = \swap{\delta \zeta}{1\eta}$ and hence contains $\swap{\gamma\zeta}{1\eta}^{\swap{\delta \zeta}{1\eta}} = \swap{\gamma \zeta}{\delta \zeta}$.
\end{proof}

\begin{lemma}
\label{lem:sameincones}
Let\/ $G$~be a subgroup of\/~$V$ that is swap-primitive suppose and that\/ $\swap{\gamma\zeta}{\gamma\eta} \in G$ for some non-empty addresses $\gamma,\zeta,\eta \in \AddrSet$.  Then, for all non-empty addresses~$\delta$, \ $G$~contains the transposition~$\swap{\delta\zeta}{\delta\eta}$.
\end{lemma}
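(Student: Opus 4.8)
The plan is to transport the transposition $\swap{\gamma\zeta}{\gamma\eta}$ along a $G$\nbd chain connecting~$\gamma$ to~$\delta$ by repeated conjugation. Since $\gamma$~and~$\delta$ are non-empty and $G$~is swap-primitive, there is a $G$\nbd chain $(\gamma_{0},\gamma_{1},\dots,\gamma_{n})$ with $\gamma_{0} = \gamma$ and $\gamma_{n} = \delta$, so that each~$\swap{\gamma_{i}}{\gamma_{i+1}}$ is a small transposition lying in~$G$; in particular $\gamma_{i} \perp \gamma_{i+1}$ for each~$i$.

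The only auxiliary fact needed is the elementary observation, used repeatedly in this section, that for a transposition~$\swap{a}{b}$ and an element $h \in V$ for which $a^{h}$~and~$b^{h}$ are both defined, one has $\swap{a}{b}^{h} = \swap{a^{h}}{b^{h}}$. I would also first record that, since $\swap{\gamma\zeta}{\gamma\eta}$~is a transposition, $\gamma\zeta \perp \gamma\eta$, and hence $\zeta \perp \eta$; consequently $\mu\zeta \perp \mu\eta$ for every address~$\mu$, so that $\swap{\mu\zeta}{\mu\eta}$ is a legitimate transposition for every~$\mu$.

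The main step is an induction on~$i$ establishing that $\swap{\gamma_{i}\zeta}{\gamma_{i}\eta} \in G$ for $0 \leq i \leq n$. The base case $i = 0$ is the hypothesis. For the inductive step, observe that $\gamma_{i}\zeta$~and~$\gamma_{i}\eta$ lie in the cone~$\cone{\gamma_{i}}$, which is a domain cone of the transposition~$\swap{\gamma_{i}}{\gamma_{i+1}}$ (this being a valid transposition because $\gamma_{i} \perp \gamma_{i+1}$), so that $(\gamma_{i}\zeta)^{\swap{\gamma_{i}}{\gamma_{i+1}}} = \gamma_{i+1}\zeta$ and $(\gamma_{i}\eta)^{\swap{\gamma_{i}}{\gamma_{i+1}}} = \gamma_{i+1}\eta$. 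Conjugating then gives
\[
\swap{\gamma_{i+1}\zeta}{\gamma_{i+1}\eta} = \swap{\gamma_{i}\zeta}{\gamma_{i}\eta}^{\swap{\gamma_{i}}{\gamma_{i+1}}} \in G,
\]
since $\swap{\gamma_{i}\zeta}{\gamma_{i}\eta} \in G$ by the inductive hypothesis and $\swap{\gamma_{i}}{\gamma_{i+1}} \in G$ as a link of the chain. Taking $i = n$ yields $\swap{\delta\zeta}{\delta\eta} \in G$, completing the argument.

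There is no substantial obstacle here: the content is simply that conjugation by a prefix substitution supported inside~$\cone{\gamma_{i}}$ acts on addresses having prefix~$\gamma_{i}$ by replacing that prefix. The only point requiring any care is to verify that the relevant prefix substitutions are defined on the addresses being conjugated, which is immediate from $\gamma_{i}\zeta, \gamma_{i}\eta \in \cone{\gamma_{i}}$ together with the validity of $\swap{\gamma_{i}}{\gamma_{i+1}}$ as a transposition.
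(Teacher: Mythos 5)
Your proposal is correct and is essentially the paper's argument: the paper conjugates $\swap{\gamma\zeta}{\gamma\eta}$ by the product $\swap{\gamma_{0}}{\gamma_{1}}\swap{\gamma_{1}}{\gamma_{2}}\cdots\swap{\gamma_{n-1}}{\gamma_{n}}$ in a single step, which is exactly your inductive conjugation along the chain written out link by link. Your extra remarks (that $\zeta\perp\eta$ and that the prefix substitutions are defined on the conjugated addresses) are harmless elaborations of the same idea.
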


\begin{proof}
Let $(\gamma_{0},\gamma_{1},\dots,\gamma_{n})$ be a $G$\nbd chain connecting~$\gamma$ to~$\delta$.  Then the conjugate of~$\swap{\gamma\zeta}{\gamma\eta}$ by the product $\swap{\gamma_{0}}{\gamma_{1}} \swap{\gamma_{1}}{\gamma_{2}} \dots \swap{\gamma_{n-1}}{\gamma_{n}}$ equals~$\swap{\delta\zeta}{\delta\eta}$ and so this also is in~$G$.
\end{proof}

Our strategy is to show that swap-primitivity of a subgroup~$G$ implies that it contains all small transpositions.  We begin by observing that certain transpositions are already guaranteed to belong to~$G$.

\begin{lemma}
\label{lem:swap-primitiveShortSwapExistence}
Let $G$~be a subgroup of\/~$V$ that is swap-primitive.  Then there are words $\gamma, \delta \in \AddrSet$ such that
\begin{enumerate}
\item one of the transpositions $\swap{0}{10\gamma}$~or~$\swap{0}{11\gamma}$ lies in~$G$, and
\item one of the transpositions $\swap{1}{00\delta}$~or~$\swap{1}{01\delta}$ lies in~$G$.
\end{enumerate}
Moreover,
\begin{enumerate}
\setcounter{enumi}{2}
\item \label{i:PP-exist3}
if\/ $\swap{0}{10\gamma}\in G$ then $\swap{11}{10\mu}\in G$ for some~$\mu$,
\item if\/ $\swap{0}{11\gamma}\in G$ then $\swap{10}{11\mu}\in G$ for some~$\mu$,
\item if\/ $\swap{1}{01\delta}\in G$ then $\swap{00}{01\nu}\in G$ for some~$\nu$, and
\item \label{i:PP-exist6}
if\/ $\swap{1}{00\delta}\in G$ then $\swap{01}{00\nu}\in G$ for some~$\nu$.
\end{enumerate}
\end{lemma}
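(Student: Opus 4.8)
The plan is to pull each of the required transpositions out of a $G$\nbd chain of minimal length, using the structural description of minimal chains in Lemma~\ref{lem:disjointChainHalves}. The key step, which I would isolate first, is the following observation: \emph{if $x \in \letters$ and $y$~is a non-empty address whose first letter is not~$x$, and $C = (\gamma_{0},\gamma_{1},\dots,\gamma_{n})$ is a $G$\nbd chain of minimal length connecting~$x$ to~$y$, then $\gamma_{n-1}$~begins with~$x$ and $\swap{\gamma_{n-1}}{y} \in G$.} To see this, apply Lemma~\ref{lem:disjointChainHalves} to~$C$: since $\gamma_{0} = x$ lies in the even-index set~$E_{C}$, the shortest element $\lambda_{e}$ of~$E_{C}$ is a prefix of~$x$, and as addresses in a $G$\nbd chain are non-empty this forces $\lambda_{e} = x$; hence every element of~$E_{C}$ begins with~$x$, while every element of the odd-index set~$O_{C}$ begins with the other letter (because $\lambda_{o} \perp \lambda_{e} = x$). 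Now $\gamma_{n} = y$ begins with the other letter, so $\gamma_{n} \notin E_{C}$; thus $n$~is odd, $\gamma_{n-1} \in E_{C}$ begins with~$x$, and $\swap{\gamma_{n-1}}{\gamma_{n}}$ is a link of~$C$, hence lies in~$G$. (When $n = 1$ this is just $\swap{x}{y} \in G$.)

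For part~(i): swap-primitivity provides a $G$\nbd chain connecting~$0$ to~$1$; take one of minimal length. Since $\cone{0} \cup \cone{1} = \Cant$, the transposition $\swap{0}{1}$ is not small, so this chain has length at least~$2$ and $\gamma_{1}$~exists. Arguing as in the observation, $\gamma_{1}$~begins with~$1$; and $\gamma_{1} \neq 1$ since $\swap{0}{\gamma_{1}}$ must be a small transposition. Hence $\gamma_{1} = 10\gamma$ or $11\gamma$ for some $\gamma \in \AddrSet$, and $\swap{0}{\gamma_{1}} \in G$, proving~(i). Part~(ii) is the mirror image, using a minimal $G$\nbd chain connecting~$1$ to~$0$.

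Parts~(iii)--(vi) are six instances of one two-line manoeuvre; I spell out~(iii). Assume $\swap{0}{10\gamma} \in G$. Apply the observation with $x = 0$, $y = 11$ to a minimal $G$\nbd chain connecting~$0$ to~$11$, obtaining a word $\eta \in \AddrSet$ with $\swap{0\eta}{11} \in G$. Conjugating by $\swap{0}{10\gamma}$ (which maps $\cone{0}$ onto $\cone{10\gamma}$ via $0w \mapsto 10\gamma w$, and fixes~$11$ since $\cone{11}$ is disjoint from $\cone{0}$ and from $\cone{10\gamma}$) yields $\swap{11}{10\gamma\eta} = \swap{0\eta}{11}^{\,\swap{0}{10\gamma}} \in G$, which is~(iii) with $\mu = \gamma\eta$. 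The other three are identical: for~(iv) take a minimal chain from~$0$ to~$10$ and conjugate by $\swap{0}{11\gamma}$; for~(v) take a minimal chain from~$1$ to~$00$ and conjugate by $\swap{1}{01\delta}$; for~(vi) take a minimal chain from~$1$ to~$01$ and conjugate by $\swap{1}{00\delta}$.

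The genuinely delicate point is the preliminary observation, and within it the parity bookkeeping --- deducing that a minimal chain between addresses with distinct first letters has odd length, so that the penultimate term lies in~$E_{C}$ and shares the first letter of the source --- which is exactly where Lemma~\ref{lem:disjointChainHalves} and the fact that every link of a $G$\nbd chain is a (small) transposition, hence a comparison between two non-empty incomparable addresses, are used. After that, each of the six statements reduces to a single conjugation, and the only remaining checks --- that the chosen conjugating transposition sends the relevant address where claimed and preserves incomparability --- are immediate from the prefix-substitution description of transpositions together with the disjointness of the cones $\cone{00}, \cone{01}, \cone{10}, \cone{11}$.
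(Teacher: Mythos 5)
Your proof is correct and follows essentially the same route as the paper: both arguments extract the needed transposition from a link of a minimal $G$\nbd chain using the prefix/parity structure of Lemma~\ref{lem:disjointChainHalves}, then conjugate it by the given transposition $\swap{0}{10\gamma}$ (etc.) to land it in the desired cone. The only cosmetic differences are that you package the parity argument as a stand-alone observation and run the chains in the opposite direction (penultimate link of a chain from $0$ to $11$ rather than second link of a chain from $11$ to $0$), which changes nothing of substance.
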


\begin{proof}
Since $G$~is swap-primitive there is a $G$\nbd chain connecting~$0$ to~$11$.  The first link in this $G$\nbd chain is $0$ and so the second link must be an address of length at least~$2$ that begins with the letter~$1$ as a pair of links correspond to a small transposition in~$G$.  This establishes the first claim and the second has a similar proof.

For~\ref{i:PP-exist3}, assume that $\swap{0}{10\gamma} \in G$ for some $\gamma \in \AddrSet$.  There is a $G$\nbd chain $C = (\gamma_{0},\gamma_{1},\dots,\gamma_{m})$  of minimal length connecting~$11$ to~$0$.  By Lemma~\ref{lem:disjointChainHalves}, the links~$\gamma_{i}$ with odd index must have~$0$ as prefix.  Hence $\gamma_{1} = 0\zeta$ for some word~$\zeta$.  Then $G$~contains
\[
\swap{\gamma_{0}}{\gamma_{1}}^{\swap{0}{10\gamma}} = \swap{11}{0\zeta}^{\swap{0}{10\gamma}} = \swap{11}{10\gamma\zeta}
\]
and now setting $\mu = \gamma\zeta$ we have established~\ref{i:PP-exist3}.  The remaining assertions are established in similar fashion.
\end{proof}

The next lemma provides a sufficient condition for establishing that a subgroup is in fact equal to~$V$.

\begin{lemma}
\label{lem:wincondition}
Suppose that $G$~is swap-primitive subgroup of\/~$V$ that contains one of the transpositions from~$\{\swap{00}{01\gamma}, \swap{10}{11\delta} \}$ and one of the transpositions from~$\{\swap{01}{00\zeta}, \swap{11}{10\eta}\}$ for some words $\gamma, \delta, \zeta, \eta \in \AddrSet$. Then $G = V$.
\end{lemma}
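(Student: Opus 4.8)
The plan is to show that $G$ contains every small transposition; since those generate $V$, this gives $G = V$.

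First I would normalize the hypothesis. Each of the four transpositions that might occur in the hypothesis swaps two cones whose addresses begin with the same letter (for instance $\swap{00}{01\gamma} = \swap{0\cdot 0}{0\cdot 1\gamma}$), so Lemma~\ref{lem:sameincones} applies and yields, in every case, fixed words $\mu,\nu \in \AddrSet$ with
$$S_\rho := \swap{\rho 0}{\rho 1\mu} \in G \AND T_\rho := \swap{\rho 1}{\rho 0\nu} \in G$$
for all non-empty addresses $\rho$. Next I would reduce the target: using the identities $\swap{\alpha}{\beta} = \swap{\alpha 0}{\beta 0}\,\swap{\alpha1}{\beta1}$ and $\swap{0\sigma}{0\tau} = \swap{0\tau}{1}^{\swap{0\sigma}{1}}$ (together with its mirror inside $\cone 1$), one sees that $V$ is generated by the \emph{crossing} small transpositions $\swap{\alpha}{\beta}$ in which $\alpha$ begins with $0$, $\beta$ begins with $1$, and $\length\alpha,\length\beta \geq 2$. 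So it suffices to prove each such $\swap{\alpha}{\beta}$ lies in $G$.

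Fix such $\alpha$ and $\beta$. By swap-primitivity there is a $G$-chain from $\alpha$ to $\beta$; choose one, $C = (\gamma_0,\dots,\gamma_n)$, of minimal length. If $n \leq 2$, then $\gamma_0 = \alpha$ and $\gamma_n = \beta$ are incomparable and Lemma~\ref{lem:chainCompression} produces a $G$-chain of length $1$, i.e. $\swap{\alpha}{\beta} \in G$; so suppose $n \geq 3$ and seek a contradiction. By Corollary~\ref{cor:chainsRiverHop} and Lemma~\ref{lem:disjointChainHalves} the even-indexed links share a common prefix $\lambda_e$ beginning with one letter, the odd-indexed links share a common prefix $\lambda_o$ beginning with the other letter, and $\lambda_e \perp \lambda_o$. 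The driving mechanism is Lemma~\ref{lem:DiamondLemma}: if, say, $\lambda_e$ has length $\geq 2$, write $\lambda_e = \rho z$ with $\rho$ non-empty and $z$ its last letter; then $S_\rho$ (if $z = 0$) or $T_\rho$ (if $z = 1$) is a transposition $\swap{\lambda_e}{\zeta} \in G$ in which $\zeta$ begins with $\rho\bar z$, hence is incomparable with $\lambda_e$ and, having the same first letter as $\lambda_e$, with $\lambda_o$. Taking $\delta = \lambda_e$ in the window $(\gamma_0,\gamma_1,\gamma_2,\gamma_3)$ — note $\lambda_e$ is a prefix of $\gamma_2$ — Lemma~\ref{lem:DiamondLemma} contradicts minimality of $C$. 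The same argument with $\lambda_o$ in place of $\lambda_e$ handles $\length{\lambda_o}\geq 2$, using the window $(\gamma_1,\gamma_2,\gamma_3,\gamma_4)$ when $n \geq 4$ and the reversed chain $(\gamma_n,\dots,\gamma_0)$ when $n = 3$, so that an odd link sits in the $\gamma_2$-slot. One checks that for $n = 3$ at least one of $\lambda_e,\lambda_o$ has length $\geq 2$ (if $\lambda_e = 0$ then $\gamma_2 = 0$, which forces $\gamma_1$ and $\gamma_3 = \beta$ to have length $\geq 2$, so $\length{\lambda_o}\geq 2$), so only the case $n \geq 4$ with $\length{\lambda_e} = \length{\lambda_o} = 1$ remains.

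That remaining case — $n \geq 4$ and $\lambda_e,\lambda_o$ the two single-letter addresses — is the main difficulty: then the address $0$ (say) itself occurs as some even link $\gamma_p$ (with $p \geq 2$, since $\alpha = \gamma_0$ is deep), its only prefix is $0$, and none of the $S_\rho,T_\rho$ moves the address $0$. The idea is to manufacture the companion address from the chain itself: the step $\swap{\gamma_{p-1}}{\gamma_p} = \swap{\gamma_{p-1}}{0}$ lies in $G$ and $\gamma_{p-1}$ is a deep address beginning with $1$, so conjugating by $S_1$ or $T_1$ (which lie in $G$) to flip the second letter of $\gamma_{p-1}$ produces $\swap{\zeta}{0} \in G$ for an address $\zeta$ beginning with $1$ but with second letter opposite to that of $\gamma_{p-1}$. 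Since the $1$-starting links in the window $(\gamma_{p-2},\gamma_{p-1},\gamma_p,\gamma_{p+1})$ are all deep and pairwise comparable by Lemma~\ref{lem:chainCompression}, they agree with $\gamma_{p-1}$ in their first two letters, so $\zeta$ is incomparable with all four links; Lemma~\ref{lem:DiamondLemma} with $\delta = 0$ then again contradicts minimality. This exhausts the cases, giving $\swap{\alpha}{\beta}\in G$ for every crossing small transposition with deep addresses, hence $G = V$. I expect the delicate bookkeeping in this last case — verifying all the incomparabilities needed to invoke Lemma~\ref{lem:DiamondLemma} once the short links $0$ and $1$ enter the chain — to be the real obstacle.
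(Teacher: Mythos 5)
Your proposal is correct: every step I checked goes through, including the delicate final case (the incomparabilities needed for Lemma~\ref{lem:DiamondLemma} when the link $0$ occurs do hold, since $\gamma_{p\pm1}$ are forced to be deep by small support and comparable by Lemma~\ref{lem:chainCompression}, hence agree in their first two letters). Your route uses exactly the paper's toolkit (Lemmas~\ref{lem:chainCompression}, \ref{lem:disjointChainHalves}, \ref{lem:sameincones}, Corollary~\ref{cor:chainsRiverHop} and Lemma~\ref{lem:DiamondLemma}) but is organised differently. The paper's pivot is the local claim that no minimal $G$\nbd chain of length exactly~$3$ joins incomparable small-support addresses: it invokes the hypothesis transpositions only at the four two-letter prefixes $00,01,10,11$, handles a short middle link by simply reversing the window, and then disposes of $n>3$ by applying this claim to the four-term subwindows $(\gamma_0,\dots,\gamma_3)$ or $(\gamma_1,\dots,\gamma_4)$ and splicing. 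You instead shrink the generating set to ``deep crossing'' transpositions, upgrade the hypothesis via Lemma~\ref{lem:sameincones} to swaps $S_\rho,T_\rho$ based at every non-empty prefix~$\rho$, and run a global analysis of the whole minimal chain through the common prefixes $\lambda_e,\lambda_o$, with a separate conjugation trick ($S_1$ or $T_1$ applied to a chain step) when the one-letter address $0$ or $1$ appears as a link. What each buys: the paper's argument is shorter, needs the hypothesis only at depth two, and never has to worry about the links $0$ and $1$ beyond one reversal; yours pays for an extra generation lemma and an extra case, but in return the ``flip a deep link, then compress'' mechanism becomes uniform in $\rho$ and you avoid the symmetry sweep over the possible two-letter prefixes of the endpoints. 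Two small points worth writing out if you formalise this: justify the reduction to deep crossing generators explicitly (your two identities do it, since $\swap{0\sigma}{1}$ and $\swap{0}{1\tau}$ split as products of two deep crossing transpositions), and note that the window $(\gamma_{p-2},\dots,\gamma_{p+1})$ exists because $\gamma_0=\alpha$ and $\gamma_n=\beta$ are deep, so $0$ cannot be the first or last link.
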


\begin{proof}
Observe first that, by Lemma~\ref{lem:sameincones}, if $G$~contains one of the transpositions from $\{\swap{00}{01\gamma}, \swap{10}{11\delta} \}$ and one of the transpositions from  $\{\swap{01}{00\zeta}, \swap{11}{10\eta}\}$, then in fact $G$ contains a transposition of all four forms.

We claim that if $C$~is a $G$\nbd chain of length~$3$ connecting two incomparable addresses $\alpha$~and~$\beta$ of small support, then this is not a minimal length $G$\nbd chain connecting the addresses.  Suppose, for a contradiction, that $C$~is of minimal length.  We may assume that $\length{\alpha} \geq 2$, since otherwise we may simply reverse the entries in~$C$ and reverse the roles of $\alpha$~and~$\beta$.  To fix notation, assume that $\alpha = 00\mu_{0}$ for some word~$\mu_{0}$.  Then Corollary~\ref{cor:chainsRiverHop} tells us that $C$~has the form $(00\mu_{0}, 1\mu_{1}, 0\mu_{2}, 1\mu_{3})$.  If $\mu_{2}$~were the empty word, then necessarily  $\mu_{1}$~and~$\mu_{3}$ are non-empty, so $\beta = 1\mu_{3}$ has length at least~$2$ also and reversing~$C$ gives a $G$\nbd chain $(1\mu_{2},0,1\mu_{1},0\mu_{0})$ with the claimed form.  Hence we may assume that $\mu_{2}$~is non-empty.  Furthermore, by Lemma~\ref{lem:chainCompression}, $00\mu_{0}$~and~$0\mu_{2}$ must be comparable, so $\mu_{2}$~also has initial letter~$0$.  Hence $C = (00\mu_{0},1\mu_{1},00\nu,1\mu_{3})$ for some word~$\nu$.  Finally, by hypothesis, $\swap{00}{01\gamma} \in G$ for some $\gamma \in \AddrSet$.  Then $01\gamma$~is incomparable with each entry of~$C$ and so, by Lemma~\ref{lem:DiamondLemma}, $C$~is not of minimal length.  Symmetric arguments apply when $\alpha$~begins with other choices of first two letters.

Now consider a pair of incomparable addresses $\alpha$~and~$\beta$ of small support and let $C = (\gamma_{0},\gamma_{1},\dots,\gamma_{n})$ be a $G$\nbd chain of minimal length connecting them.  We have just observed that $n \neq 3$.  Assume $n > 3$.  Corollary~\ref{cor:chainsRiverHop} shows that the first letters of the~$\gamma_{i}$ alternate.  In particular, $\gamma_{0} \perp \gamma_{3}$ and $\gamma_{1} \perp \gamma_{4}$.  One of $\gamma_{0}$~and~$\gamma_{1}$ has length at least~$2$.  Hence the previous paragraph applies either to $(\gamma_{0},\gamma_{1},\gamma_{2},\gamma_{3})$ or to $(\gamma_{1},\gamma_{2},\gamma_{3},\gamma_{4})$ and shows it is not of minimal length.  This contradicts the assumption that $C$~is a $G$\nbd chain of minimal length connecting~$\alpha$ to~$\beta$.

It is not possible that $n = 2$ since then all three of $\gamma_{0}$,~$\gamma_{1}$ and~$\gamma_{2}$ are incomparable and we would deduce $\swap{\gamma_{0}}{\gamma_{2}} = \swap{\gamma_{0}}{\gamma_{1}}^{\swap{\gamma_{1}}{\gamma_{2}}} \in G$.  Hence $n = 1$, which shows that $\swap{\alpha}{\beta} \in G$.  In conclusion, all small transpositions belong to~$G$ and so $G = V$ since these elements generate~$V$.  This completes the proof of the lemma.
\end{proof}

\begin{lemma}
\label{lem:connectedto0}
Let $G$~be a subgroup of\/~$V$ that is swap-primitive.
\begin{enumerate}
\item \label{i:01-to-0}
If\/ $\swap{1}{00\delta} \in G$, then $01$~is connected to~$0$ by a $G$\nbd chain of length~$2$.
\item If\/ $\swap{1}{01\delta} \in G$, then $00$~is connected to~$0$ by a $G$\nbd chain of length~$2$.
\end{enumerate}
\end{lemma}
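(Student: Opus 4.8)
The plan is to prove part~(i) and then obtain part~(ii) by the evident mirror argument. A $G$\nbd chain of length~$2$ joining~$01$ to~$0$ is exactly a triple $(01,\mu,0)$ in which $\swap{01}{\mu}$ and $\swap{\mu}{0}$ are small transpositions in~$G$, and since $\mu$ must be incomparable with both $01$ and~$0$ it is automatically an address of length at least~$2$ beginning with the letter~$1$. So it suffices to exhibit such a~$\mu$; equivalently, because $01$ and~$0$ are comparable (so cannot be joined by a $G$\nbd chain of length~$1$), it suffices to show that a $G$\nbd chain of minimal length joining~$01$ to~$0$ has length exactly~$2$.

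First I would analyse the shape of a minimal $G$\nbd chain $C=(\gamma_{0},\dots,\gamma_{n})$ with $\gamma_{0}=01$ and $\gamma_{n}=0$. By Lemma~\ref{lem:disjointChainHalves} the shortest even-indexed link~$\lambda_{e}$ is a prefix of all even-indexed links, the shortest odd-indexed link~$\lambda_{o}$ is a prefix of all odd-indexed links, and $\lambda_{e}\perp\lambda_{o}$. As $\lambda_{e}$ is a non-empty prefix of $\gamma_{0}=01$ it is $0$ or~$01$; were $n$ odd, $\gamma_{n}=0$ would be odd-indexed, forcing $\lambda_{o}=0$ and hence $\lambda_{e}$ to begin with~$1$, which is impossible. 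Thus $n$ is even, $\lambda_{e}=0$ and $\lambda_{o}$ begins with~$1$, so every even-indexed link (in particular $\gamma_{0}=01$ and $\gamma_{n}=0$) begins with~$0$ and every odd-indexed link begins with~$1$; the first letters along~$C$ therefore alternate, starting with~$0$. If $n=2$ we are done, so the remaining task is to exclude $n\geq 4$.

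For that I would run the argument of the Diamond Lemma (Lemma~\ref{lem:DiamondLemma}) on the four consecutive links $\gamma_{0},\gamma_{1},\gamma_{2},\gamma_{3}$; the only place the proof of that lemma uses incomparability of the endpoints of a minimal chain is to secure the alternation of first letters, which we have just established for~$C$ directly. What is needed is a prefix~$p$ of~$\gamma_{2}$ and an address~$\zeta$ incomparable with each of $\gamma_{0},\gamma_{1},\gamma_{2},\gamma_{3}$ with $\swap{p}{\zeta}\in G$. Since $\gamma_{2}$ begins with~$0$, the prefix~$p$ begins with~$0$; since $\gamma_{1},\gamma_{3}$ begin with~$1$, the address~$\zeta$ must begin with~$1$ and lie strictly below the levels of~$\gamma_{1}$ and~$\gamma_{3}$. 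The hypothesis $\swap{1}{00\delta}\in G$ only yields $\swap{00\delta}{1}\in G$, whose $1$\nbd leg is the single letter~$1$, far too short to serve as~$\zeta$. So the heart of the proof is to manufacture, from $\swap{1}{00\delta}$, a transposition $\swap{00\delta\eta}{1\theta}\in G$ whose $1$\nbd leg~$1\theta$ is as deep as we please: I would obtain this by conjugating $\swap{1}{00\delta}$ by an element of~$G$ supported inside~$\cone{00\delta}$, such an element being produced from the transposition $\swap{01}{00\nu}\in G$ of Lemma~\ref{lem:swap-primitiveShortSwapExistence}\ref{i:PP-exist6}, together with its Lemma~\ref{lem:Appending} extensions and its Lemma~\ref{lem:sameincones}-orbit $\swap{\rho1}{\rho0\nu}\in G$ ($\rho$ non-empty), taking $\rho$ with $00\delta$ as a prefix. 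Feeding a correctly shaped such transposition into the Diamond-Lemma argument (using Lemma~\ref{lem:chainCompression} for the final shortening) contradicts the minimality of~$C$, forcing $n=2$; then $(01,\gamma_{1},0)$ is the required $G$\nbd chain of length~$2$.

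The main obstacle is precisely this last manufacturing step: $\swap{1}{00\delta}$ has its two legs beginning with different letters, so Lemmas~\ref{lem:Appending} and~\ref{lem:sameincones} cannot be applied to it directly, and one is forced to route through $\swap{01}{00\nu}$ (whose legs both begin with~$0$), which introduces a genuine case analysis according to how the words~$\delta$ and~$\nu$ compare. Once a transposition of the correct form is in hand the remaining bookkeeping is routine. Part~(ii) follows by the same argument after interchanging the second letters $0$ and~$1$ throughout, using the companion statement of Lemma~\ref{lem:swap-primitiveShortSwapExistence} for the hypothesis $\swap{1}{01\delta}\in G$ (which supplies $\swap{00}{01\nu}\in G$) in place of Lemma~\ref{lem:swap-primitiveShortSwapExistence}\ref{i:PP-exist6}.
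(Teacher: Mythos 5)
Your set-up (parity analysis via Lemma~\ref{lem:disjointChainHalves} showing $n$ is even with even-indexed links beginning $0$ and odd-indexed links beginning $1$, then trying to kill $n\geq 4$ by the Diamond-Lemma move, whose proof indeed only needs the alternation you have established directly) is sound, but there is a genuine gap exactly at what you call the heart of the proof, and it stems from a false claim: you assert that the auxiliary address~$\zeta$ in the Diamond-Lemma hypothesis ``must begin with~$1$''. It need not. The requirement is only that $\zeta$ be incomparable with $\gamma_{0},\gamma_{1},\gamma_{2},\gamma_{3}$; since $\gamma_{0}=01$ and $\gamma_{2}$ begin with~$01$ (note $\gamma_{2}$ is comparable to~$\gamma_{0}$ by Lemma~\ref{lem:chainCompression}, and $\gamma_{2}\neq 0$ when $n\geq 4$ by minimality, so $01\prefix\gamma_{2}$) while $\gamma_{1},\gamma_{3}$ begin with~$1$, any $\zeta$ beginning with~$00$ does the job. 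Because of this misstep you go hunting for a transposition $\swap{00\delta\eta}{1\theta}$ with a deep $1$\nbd leg, and the recipe you sketch does not produce one: conjugating $\swap{1}{00\delta}$ by an element of~$G$ supported inside~$\cone{00\delta}$ yields an involution that interchanges $\cone{1}$ and~$\cone{00\delta}$ in a twisted way, not a single transposition of the stated form, and the promised ``case analysis according to how $\delta$ and~$\nu$ compare'' is never carried out. As written, the exclusion of $n\geq 4$ — the only nontrivial step — is therefore not proved.

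The good news is that the gap closes much more easily than you expect: Lemma~\ref{lem:swap-primitiveShortSwapExistence}\ref{i:PP-exist6} already hands you $\swap{01}{00\nu}\in G$, and this is a Diamond-Lemma witness as it stands, with prefix $p=01\prefix\gamma_{2}$ and $\zeta=00\nu$ incomparable with all of $\gamma_{0},\gamma_{1},\gamma_{2},\gamma_{3}$. Conjugating $\swap{\gamma_{1}}{\gamma_{2}}$ and $\swap{\gamma_{2}}{\gamma_{3}}$ by it replaces $\gamma_{2}=01\eta$ by $00\nu\eta$, after which $\gamma_{0}$, $\gamma_{1}$, $00\nu\eta$ are pairwise incomparable and Lemma~\ref{lem:chainCompression} shortens the chain, contradicting minimality; part~(ii) is the mirror image using $\swap{00}{01\nu}$. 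With that repair your argument is a valid alternative to the paper's, which takes a different route altogether: it examines a minimal $G$\nbd chain from~$01$ to the \emph{incomparable} address~$00$ (so Corollary~\ref{cor:chainsRiverHop} applies as a black box), uses the standing assumption to force $\gamma_{2}=01\eta$, conjugates the first two transpositions by the given $\swap{1}{00\delta}$ to create an equally short but non-alternating chain — contradicting Corollary~\ref{cor:chainsRiverHop} — and then disposes of the residual case $\swap{00}{01}\in G$ via Lemma~\ref{lem:wincondition}.
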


\begin{proof}
\ref{i:01-to-0}~Suppose that $\swap{1}{00\delta} \in G$ but that $01$~is not connected to~$0$ by a $G$\nbd chain of length~$2$.  Let $(\gamma_{0},\gamma_{1},\dots,\gamma_{n})$ be a $G$\nbd chain of minimal length connecting~$01$ to~$00$ and suppose $n \geq 2$.  Here $\gamma_{0} = 01$, \ $\gamma_{1} = 1\zeta$ for some $\zeta \in \AddrSet$ by Corollary~\ref{cor:chainsRiverHop}, while $\gamma_{2} \neq 0$ by hypothesis.  By Lemma~\ref{lem:chainCompression}, $\gamma_{2}$~must be comparable with~$01$, so $\gamma_{2} = 01\eta$ for some non-empty word~$\eta$.  Now $G$~contains
\[
\swap{01}{1\zeta}^{\swap{1}{00\delta}} = \swap{01}{00\delta\zeta}
\AND
\swap{1\zeta}{01\eta}^{\swap{1}{00\delta}} = \swap{00\delta\zeta}{01\eta}
\]
and so there is also a $G$\nbd chain $(01,00\delta\zeta,01\eta,\dots,\gamma_{n})$ from~$00$ to~$01$.  Corollary~\ref{cor:chainsRiverHop} tells us that this cannot be of minimal length and so we have a contradiction to our original choice of $G$\nbd chain between $01$~and~$00$.  Hence $n = 1$ and so $\swap{00}{01} \in G$.  Finally, by Lemma~\ref{lem:swap-primitiveShortSwapExistence}\ref{i:PP-exist6}, $\swap{01}{00\nu} \in G$ for some~$\nu$.  Lemma~\ref{lem:wincondition} tells us that $G = V$.  This is then a contradiction to the assumption that $01$~is not connected to~$0$ by a $G$\nbd chain of length~$2$ and completes the proof.  The second part of the lemma is established by a symmetric argument.
\end{proof}

\swapPrimitiveIsV

\begin{proof}
By Lemma~\ref{lem:swap-primitiveShortSwapExistence}, there exists a word~$\delta$ such that one of the transpositions $\swap{1}{00\delta}$ or~$\swap{1}{01\delta}$ lies in $G$.  Let us assume that $\swap{1}{00\delta} \in G$.  By Lemma~\ref{lem:swap-primitiveShortSwapExistence}\ref{i:PP-exist6}, there is a word~$\nu$ such that $\swap{01}{00\nu}\in G$.  It follows by Lemma~\ref{lem:Appending} that $\swap{010}{00\nu0} \in G$ also.  By Lemma~\ref{lem:connectedto0}, there is a $G$\nbd chain of length~$2$ connecting~$01$ to~$0$ and this has the form $(01,1\zeta,0)$ for some $\zeta \in \AddrSet$ and hence $\swap{01}{1\zeta}, \swap{0}{1\zeta} \in G$.  We now compute
\[
\swap{010}{00\nu0}^{\swap{01}{1\zeta} \swap{010}{00\nu0} \swap{0}{1\zeta} \swap{01}{1\zeta}} = \swap{00}{0110}
\]
to deduce $\swap{00}{0110} \in G$.  Taken together with $\swap{01}{00\nu} \in G$, we conclude that $G = V$ by Lemma~\ref{lem:wincondition}.  The case when $\swap{1}{01\delta} \in G$ is similar.
\end{proof}

\section{Questions}\label{sec:questions}

At this time, we are aware of the following types of maximal subgroups of Thompson's group~$V$: stabilizers of certain type systems (both finite and infinite) and the images of the group~$T$ under automorphisms of~$V$.  The stabilizers of a finite set of elements in~$\Cant$ of the same tail class considered in Corollary~\ref{cor:Stab-tailclass} can be expressed as stabilizers of type systems.  In the case that the tail class is irrational, the corresponding type system will be infinite, while for a rational tail class the type system is atomic quasinuclear type system as discussed in Theorem~\ref{thm:quasicycle}.  It was pointed out to us by James Hyde that the centralizer of the transposition~$\swap{0}{1}$ is also a maximal subgroup of~$V$.  For this we can also construct an infinite type system that is stabilized by the subgroup.  

In the present article, we have primarily concentrated our attention on finite type systems and a subsequent one will focus on the maximal subgroups arising as stabilizers of infinite type systems.  The questions that follow are intended to explore the maximal subgroups at the boundaries of what can be achieved with our techniques.

\QMain*

If the answer to Question~\ref{ques:main} were ``Yes'' then any proper subgroup of $V$ that contains a transposition of small support would need to preserve a type system.  The following two questions arose from attempts to prove this statement.

\begin{qu}
Suppose $G$ is a subgroup of $V$ that acts transitively on proper cones, does not preserve any type system, and contains at least one transposition of small support. Is $G$ swap-primitive on cones (and thus equal to $V$)?
\end{qu}

\begin{qu}
Suppose $G$ is a proper subgroup of $V$ that contains at least one transposition of small support. Define an equivalence relation on cones via $\delta \sim \gamma$ if $\delta$ is connected to $\gamma$ by a $G$-chain.  Is this equivalence relation a type system? 
\end{qu}

If the answer to Question \ref{ques:main} were ``No'' then it might be possible to find a maximal subgroup of $V$ which preserves a circular order on the dyadics, but which was not an automorphic image of $T$.  For this reason, we pose the following:

\begin{qu}
\label{ques:AutT}
If $G$ is a maximal subgroup of $V$ which preserves a circular order on the dyadics, is $G$ an automorphic image of $T$?
\end{qu}

Finally, in view of Theorem~\ref{thm:finitetypes}:

\begin{qu}
Is there a useful classification of infinite simple type systems?
\end{qu}

\bibliographystyle{alpha}
\newcommand{\etalchar}[1]{$^{#1}$}
\def\cprime{$'$}

\noindent
James Belk, \texttt{jim.belk@gmail.com}\\
School of Mathematics \& Statistics, University of Glasgow, Glasgow, UK\\[5pt]
Collin Bleak, \texttt{collin.bleak@st-andrews.ac.uk}\\
School of Mathematics \& Statistics, University of St Andrews, St Andrews, UK\\[5pt]
Martyn Quick, \texttt{mq3@st-andrews.ac.uk}\\
School of Mathematics \& Statistics, University of St Andrews, St Andrews, UK\\[5pt]
Rachel Skipper, \texttt{rachel.skipper@utah.edu}\\
Department of Mathematics, University of Utah, Salt Lake City, Utah, USA

\end{document}